\sloppy\pagestyle{plain}
\newtheorem{theorem}{Theorem}[section]
\newtheorem{lemma}[theorem]{Lemma}
\newtheorem{corollary}[theorem]{Corollary}
\newtheorem*{corollary*}{Corollary}
\newtheorem*{maincorollary*}{Main Corollary}
\newtheorem*{conjecture*}{Conjecture}
\newtheorem*{problem*}{Problem}
\newtheorem*{theorem*}{Theorem}
\newtheorem*{maintheorem*}{Main Theorem}
\newtheorem*{auxiliarytheorem*}{Auxiliary Theorem}
\theoremstyle{definition}
\newtheorem{example}[theorem]{Example}
\newtheorem*{example*}{Example}
\theoremstyle{remark}
\newtheorem{remark}[theorem]{Remark}
\newtheorem*{remark*}{Remark}
\makeatletter\@addtoreset{equation}{section} \makeatother
\title{K-stable smooth Fano threefolds of Picard rank two}
\author{Ivan Cheltsov, Elena Denisova, Kento Fujita}
\address{\emph{Ivan Cheltsov}, \newline \textnormal{University of Edinburgh,  Edinburgh, Scotland}}
\address{\emph{Elena Denisova}, \newline \textnormal{University of Edinburgh,  Edinburgh, Scotland}}
\address{\emph{Kento Fujita}, \newline  \textnormal{Osaka University, Osaka, Japan}}
\thanks{Throughout this paper, all varieties are assumed to be projective and defined over~$\mathbb{C}$.}
\begin{document}
\maketitle

\begin{abstract}
We prove that all smooth Fano threefolds in the families
\textnumero 2.1, \textnumero 2.2, \textnumero 2.3, \textnumero 2.4, \textnumero 2.6 and \textnumero 2.7 are K-stable,
and we also prove that smooth Fano threefolds in the family \textnumero 2.5 that satisfy one very explicit generality condition are K-stable.
\end{abstract}

\tableofcontents

\section{Introduction}
\label{section:introduction}

Recently, we witnessed huge progress on K-stability of Fano varieties \cite{AbbanZhuang,Fujita2019Crelle,Li,Xu},
which resulted in an~immense breakthrough in the solution of the following problem:

\begin{problem*}
Find all K-polystable smooth Fano threefolds.
\end{problem*}

Let us describe what is  done in this direction. To do this, fix a smooth Fano threefold~$X$.
Then $X$ belongs to one of the~$105$ deformation families found by Iskovskikh, Mori, Mukai.
These families are often labeled as \textnumero 1.1, \textnumero 1.2, \textnumero 1.3, \textnumero 1.4, $\ldots$, \textnumero 7.1, \textnumero 8.1, \textnumero 9.1, \textnumero 10.1,
where the first digit stands for the rank of the Picard group of the threefolds in the family.
For the~detailed descriptions of the $105$ families, we refer the reader to \cite{Book}.

The valuative criterion for K-stability \cite{Fujita2019Crelle,Li} allows us to explain when $X$ is K-stable in relatively simple terms.
To do this, let us remind what are $\beta$-invariant and $\delta$-invariant.
Recall that $E$ is a prime divisor over $X$ if there is a birational morphism $f\colon\widetilde{X}\to X$ with normal $\widetilde{X}$ and a prime divisor $E\subset \widetilde{X}$.
We set $\beta(E)=A_X(E)-S_X(E)$,
where
$$
S_X(E)=\frac{1}{(-K_X)^3}\int_{0}^{\infty}\mathrm{vol}\big(f^*(-K_X)-uE)du,
$$
and $A_X(E)$ is the~log discrepancy of the~divisor $E$. Then, for every point $P\in X$, we let
$$
\delta_P(X)=\inf_{\substack{F/X\\ P\in C_X(F)}}\frac{A_{X}(F)}{S_X(F)},
$$
where the~infimum is taken over all prime divisors over $X$ whose centers on $X$ contain $P$.
Finally, we set
$$
\delta(X)=\inf_{P\in X}\delta_P(X).
$$
The valuative criterion says that $X$ is K-stable (respectively, K-semistable) if $\beta(F)>0$ (respecitvely, $\beta(F)\geqslant 0$) for any prime divisor $F$ over~$X$. Hence, $X$ is K-stable if $\delta(X)>1$.
We say that $X$ is K-unstable if it is not K-semistable.

The simplest way to apply the valuative criterion is to check whether $\beta(S)<0$ or not for some irreducible surface $S$ in the~threefold $X$.
This has been done in \cite{Fujita2019Crelle}.
As a result, we know that $X$ is K-unstable if it belongs to any of the following $26$ families:
\begin{center}
\textnumero 2.23, \textnumero 2.28, \textnumero 2.30, \textnumero 2.31, \textnumero 2.33, \textnumero 2.35, \textnumero 2.36, \textnumero 3.14, \\
\textnumero 3.16, \textnumero 3.18, \textnumero 3.21, \textnumero 3.22, \textnumero 3.23, \textnumero 3.24, \textnumero 3.26, \textnumero 3.28, \textnumero 3.29, \\
\textnumero 3.30, \textnumero 3.31, \textnumero 4.5, \textnumero 4.8, \textnumero 4.9, \textnumero 4.10, \textnumero 4.11, \textnumero 4.12, \textnumero 5.2.
\end{center}
To be precise, if $X$ is contained in one of these $26$ families, then  it contains an~irreducible surface $S$ such that $\beta(S)<0$,
so that $X$ is K-unstable, which implies that $X$ does not admit a K\"ahler--Einstein metric.
Almost in every case, such surface $S$ is not hard~to~find.
For instance, if $X$ is the unique smooth Fano threefold in the deformation family  \textnumero 2.35, then $X$ is a blow up of $\mathbb{P}^3$ at a point,
and $S$ is the exceptional surface of the blow up.

We listed $26$  families of smooth Fano threefolds that contain no K-semistable members.
There is another family that does not contain K-polystable members --- the family \textnumero 2.26.
This family is quite special, see \cite[\S~5.10]{Book} for its detailed description,
and it contains exactly two smooth Fano threefolds.
One of them is K-semistable and not K-polystable, while another one is K-unstable.
Moreover, it follows from \cite[Main~Theorem]{Book} that general members of the remaining $78$ families of smooth Fano threefolds
are K-polystable.

Further, all K-polystable smooth Fano 3-folds in $53$ families among these $78$ families are \mbox{described} in
\cite{AbbanZhuangSeshadri,Book,BelousovLoginov,CheltsovPark,Denisova,LTN,Liu,Malbon,XuLiu,CheltsovFujitaKishimotoOkada,CheltsovFujitaKishimotoPark}.
The remaining $25$ families are:
\begin{center}
\textnumero 1.9, \textnumero 1.10, \textnumero 2.1, \textnumero 2.2, \textnumero 2.3, \textnumero 2.4, \textnumero 2.5, \textnumero 2.6, \textnumero 2.7, \textnumero 2.9, \textnumero 2.10, \textnumero 2.11, \textnumero 2.12, \\
\textnumero 2.13, \textnumero 2.14, \textnumero 2.16, \textnumero 2.17, \textnumero 2.19, \textnumero 2.20, \textnumero 3.2, \textnumero 3.5, \textnumero 3.6, \textnumero 3.7, \textnumero 3.8, \textnumero 3.11.
\end{center}
The deformation families \textnumero 1.10, \textnumero 2.20, \textnumero 3.5, \textnumero 3.8 contain non-K-polystable members,
and \cite[\S~7]{Book} provides conjectures that describe K-polystable smooth Fano threefolds in these $4$ families.
On the other hand, all smooth Fano threefolds in the~$21$ families
\begin{center}
\textnumero 1.9, \textnumero 2.1, \textnumero 2.2, \textnumero 2.3, \textnumero 2.4, \textnumero 2.5, \textnumero 2.6, \\
\textnumero 2.7, \textnumero 2.9, \textnumero 2.10, \textnumero 2.11, \textnumero 2.12, \textnumero 2.13, \textnumero 2.14, \\
\textnumero 2.16, \textnumero 2.17, \textnumero 2.19, \textnumero 3.2, \textnumero 3.6, \textnumero 3.7, \textnumero 3.11
\end{center}
are conjectured to be K-stable \cite{Book}.
The goal of this paper is to prove this for $6$ families:

\begin{maintheorem*}
Let $X$ be a~smooth Fano threefold contained in one of the~following deformation families:
\textnumero 2.1, \textnumero 2.2, \textnumero 2.3, \textnumero 2.4, \textnumero 2.6, \textnumero 2.7.
Then  $X$ is K-stable.
\end{maintheorem*}

Therefore, to solve the problem posed above, it remains to find all K-polystable smooth Fano threefolds
in the following $19$ families:
\begin{center}
\textnumero 1.9, \textnumero 1.10, \textnumero 2.5, \textnumero 2.9, \textnumero 2.10, \textnumero 2.11, \textnumero 2.12, \textnumero 2.13, \textnumero 2.14, \\
\textnumero 2.16, \textnumero 2.17, \textnumero 2.19, \textnumero 2.20, \textnumero 3.2, \textnumero 3.5, \textnumero 3.6, \textnumero 3.7, \textnumero 3.8, \textnumero 3.11.
\end{center}
We hope that this will be done in a nearest future.

To prove Main Theorem, we use Abban--Zhuang theory \cite{AbbanZhuang} and its applications \cite{Book,Fujita2021}.
A similar approach also works for almost all smooth Fano threefolds in the~family \textnumero 2.5.
Namely, if $X$ is a smooth Fano threefold in the~family \textnumero 2.5,
we~show that $\delta_P(X)>1$ for every point $P\in X$ that satisfy certain geometric conditions.
As~a~result, we obtained

\begin{auxiliarytheorem*}
Let $X$ be a~smooth Fano threefold in the~deformation family \textnumero 2.5.
Recall that there exists the following Sarkisov link:
$$
\xymatrix{
&X\ar@{->}[ld]_{\pi}\ar@{->}[rd]^{\phi}&&\\%
V&&\mathbb{P}^1}
$$
where $V$ is a~smooth cubic threefold in $\mathbb{P}^4$,
the morphism $\pi$ is a blow up of a smooth plane cubic curve,
and $\phi$ is a morphism whose fibers are normal cubic surfaces.
Suppose~that
\begin{equation}
\label{equation:generality-condition}\tag{$\bigstar$}
\text{no fiber of the morphism $\phi$ has a Du Val singular point of type $\mathbb{D}_5$ or $\mathbb{E}_6$.}
\end{equation}
Then  $X$ is K-stable.
\end{auxiliarytheorem*}

Let us describe the structure of this paper.
In Section~\ref{section:2-1}, we prove Auxiliary Theorem,
and we prove that all smooth Fano~threefolds in the~families \textnumero 2.1 and \textnumero 2.3 are K-stable.
In Sections~\ref{section:2-2}, \ref{section:2-4}, \ref{section:2-6}, \ref{section:2-7},
we prove that all smooth Fano~threefolds in the~families \textnumero 2.2,  \textnumero 2.4,  \textnumero 2.6, \textnumero 2.7 are K-stable, respectively.
Note that Section~\ref{section:2-7} is very technical and long.

As we already mentioned, we use applications of Abban--Zhuang theory \cite{AbbanZhuang} which have been discovered in \cite{Book,Fujita2021}.
For the~background material, we refer the~reader to~\cite{Book,Fujita2021,Xu}.

\section{Families \textnumero 2.1, \textnumero 2.3, \textnumero 2.5 }
\label{section:2-1}

Fix $d\in\{1,2,3\}$. Let $V$ be one of the~following smooth Fano threefolds:
\begin{enumerate}
\item[$\boxed{d=1}$] a~smooth sextic hypersurface in $\mathbb{P}(1,1,1,2,3)$;
\item[\boxed{d=2}] a~smooth quartic hypersurface in $\mathbb{P}(1,1,1,1,2)$;
\item[\boxed{d=3}] a~smooth cubic threefold in $\mathbb{P}^4$.
\end{enumerate}
Then  $-K_{V}\sim 2H$ for an ample divisor $H\in\mathrm{Pic}(V)$ such that $H^3=d$ and $\mathrm{Pic}(V)=\mathbb{Z}[H]$.
Let $S_1$ and $S_2$ be two distinct surfaces in the~linear system $|H|$, and let $\mathcal{C}=S_1\cap S_2$.
Suppose that the~curve $\mathcal{C}$ is smooth.
Then  $\mathcal{C}$ is an elliptic curve by the~adjunction~formula.
Let $\pi\colon X\to V$ be the~blow up of the~curve $\mathcal{C}$, and let $E$ be the~$\pi$-exceptional surface.
\begin{itemize}
\item If $d=1$, then $X$ is a~smooth Fano threefold in the~deformation family \textnumero 2.1.
\item If $d=2$, then $X$ is a~smooth Fano threefold in the~deformation family \textnumero 2.3.
\item If $d=3$, then $X$ is a~smooth Fano threefold in the~deformation family \textnumero 2.5.
\end{itemize}
Moreover, all smooth Fano threefolds in these families can be obtained in this way.

Note that $(-K_X)^3=4d$. Moreover, we have the~following commutative diagram:
$$
\xymatrix{
&X\ar@{->}[ld]_{\pi}\ar@{->}[rd]^{\phi}&&\\%
V\ar@{-->}[rr]&&\mathbb{P}^1}
$$
where $V\dasharrow\mathbb{P}^1$ is the~rational map given by the~pencil that is generated by  $S_1$ and~$S_2$,
and $\phi$ is a~morphism whose general fiber is a~smooth del Pezzo surface of degree $d$.

The goal of this section is to show that $X$ is K-stable in the case when $d=1$ or $d=2$,
and to show that $X$ is K-stable in the case when $d=3$ and $X$ satisfies the~condition \eqref{equation:generality-condition}.
To show that $X$ is K-stable,  it is enough to show that $\delta_O(X)>1$ for every point $O\in X$.
This follows~from the~valuative criterion for K-stability \cite{Fujita2019Crelle,Li}.

\begin{lemma}
\label{lemma:2-1-2-3-2-5-del-Pezzo-delta}
Let $O$ be a point in $X$, let $A$ be the fiber of the~morphism $\phi$ such that~$O\in A$.
Suppose that $A$ has at most Du Val singularities at the~point $O$. Then
$$
\delta_O(X)\geqslant\left\{\aligned
&\mathrm{min}\Big\{\frac{16}{11},\frac{16}{15}\delta_O(A)\Big\}\ \text{if $O\not\in E$}, \\
&\mathrm{min}\Big\{\frac{16}{11},\frac{16\delta_{O}(A)}{\delta_{O}(A)+15}\Big\}\ \text{if $O\in E$}.
\endaligned
\right.
$$
\end{lemma}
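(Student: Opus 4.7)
The plan is to apply the Abban--Zhuang refinement \cite{AbbanZhuang} with the divisor $A$, using the del Pezzo fibration structure $\phi\colon X\to\mathbb{P}^1$.

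\emph{First,} I would compute $S_X(A)$. Using $A\sim\pi^*H-E$ and $-K_X\sim 2\pi^*H-E$, for $t\in[0,1]$ the divisor $-K_X-tA=(1-t)(-K_X)+t\pi^*H$ is nef, while for $t\in[1,2]$ one has the Zariski-type decomposition $-K_X-tA=P(t)+N(t)$ with $P(t)=(2-t)\pi^*H$ and $N(t)=(t-1)E$. Using the blow-up intersection numbers $E^3=-2d$ and $\pi^*H\cdot E^2=-d$, one computes $\mathrm{vol}(-K_X-tA)=d(4-3t)$ on $[0,1]$ and $d(2-t)^3$ on $[1,2]$. The pseudo-effective threshold is $\tau(A)=2$, and integration gives
$$
S_X(A)=\frac{1}{4d}\left(\int_0^1 d(4-3t)\,dt+\int_1^2 d(2-t)^3\,dt\right)=\frac{1}{4d}\left(\frac{5d}{2}+\frac{d}{4}\right)=\frac{11}{16},
$$
so $A_X(A)/S_X(A)=16/11$, which is the first entry in each minimum.

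\emph{Next,} I would set up the Abban--Zhuang refinement on $A$. The key geometric observation is that $\pi|_A\colon A\to S$, where $S\in|H|$ is the surface whose strict transform is $A$, is an isomorphism, and under this identification the curve $A\cap E$ corresponds to $\mathcal{C}$, which is linearly equivalent to $H|_S=-K_S$ by adjunction. Consequently both $\pi^*H|_A$ and $E|_A$ coincide numerically with $-K_A$, so $P(t)|_A=-K_A$ on $[0,1]$ and $P(t)|_A=(2-t)(-K_A)$ on $[1,2]$, while $N(t)|_A=(t-1)(A\cap E)$ on $[1,2]$. Plugging these into the Abban--Zhuang formula, one obtains, for every divisorial valuation $F$ on $A$ with centre through $O$,
$$
S\bigl(W^A_{\bullet,\bullet};F\bigr)=\frac{15}{16}\,S_A(F)+\frac{1}{16}\,\mathrm{ord}_F(A\cap E).
$$

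\emph{Finally,} I would derive the stated estimates. If $O\notin E$, then the centre of $F$ avoids $A\cap E$, so $\mathrm{ord}_F(A\cap E)=0$ and $S(W^A_{\bullet,\bullet};F)=\tfrac{15}{16}S_A(F)$, giving $A_A(F)/S(W^A_{\bullet,\bullet};F)\geqslant\tfrac{16}{15}\delta_O(A)$. If $O\in E$, the essential additional input is the inequality $\mathrm{ord}_F(A\cap E)\leqslant A_A(F)$, equivalent to log canonicity of the pair $(A,A\cap E)$ at $O$; since $A\cap E$ is a smooth curve and $A$ has at worst Du Val (canonical) singularities at $O$, this holds by a short check against the ADE classification. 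Combined with $S_A(F)\leqslant A_A(F)/\delta_O(A)$, an elementary manipulation yields $A_A(F)/S(W^A_{\bullet,\bullet};F)\geqslant 16\delta_O(A)/(\delta_O(A)+15)$, and the Abban--Zhuang inequality $\delta_O(X)\geqslant\min\{A_X(A)/S_X(A),\inf_F A_A(F)/S(W^A_{\bullet,\bullet};F)\}$ finishes the proof. The main obstacle I expect is the log canonicity step when $O$ actually sits at a singular point of $A$ on $A\cap E$, since this is the only place the Du Val hypothesis is essentially used; everything else is bookkeeping on the blow-up.
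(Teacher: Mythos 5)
Your proof follows the same route as the paper: the Zariski decomposition of $-K_X-uA$ giving $S_X(A)=\tfrac{11}{16}$, restriction to $A$ to obtain $S(W^A_{\bullet,\bullet};F)=\tfrac{1}{16}\mathrm{ord}_F(E|_A)+\tfrac{15}{16}S_A(F)$, and the two-case split according to whether $O$ lies on $E$; all these steps and computations match the paper. There is, however, a genuine gap in your justification of the key inequality $\mathrm{ord}_F(E|_A)\leqslant A_A(F)$ when $O\in E$.

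You claim that log canonicity of $(A,E|_A)$ at $O$ follows from the Du Val hypothesis together with smoothness of the curve $E|_A$, by a ``short check against the ADE classification''. That reasoning is false in general: a smooth curve germ through a Du Val singular point need not give a log canonical pair. For example, on the germ $\{xy=z^6\}$ of type $\mathbb{A}_5$, the curve $L=\{x=y=z^3\}$ is smooth, but its proper transform on the minimal resolution meets the \emph{middle} $(-2)$-curve of the chain, so $f^*L=\widetilde{L}+\sum m_iE_i$ has $m_3=\tfrac{3}{2}>1$ and $(\mathbb{A}_5,L)$ is not lc. The check you invoke would therefore break down for $\mathbb{A}_5$ (and worse types). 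The correct justification is different and more elementary: the scheme $E\cap A$ is a fibre of the projection $E\cong\mathcal{C}\times\mathbb{P}^1\to\mathbb{P}^1$ and hence a smooth curve, and since $E$ and $X$ are smooth, the transversality encoded in the smoothness of $E\cap A$ forces the Cartier divisor $A$ to be smooth along $E|_A$. Thus $(A,E|_A)$ is in fact log smooth in a neighbourhood of $E|_A$, and the scenario you single out as the main obstacle --- $O\in E$ with $O\in\mathrm{Sing}(A)$ --- simply cannot occur. Once this is noted, your final manipulation is correct.
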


\begin{proof}
Let $u$ be a non-negative real number. Then  $-K_X-uA\sim_{\mathbb{R}} (2-u)A+E$,
which implies that divisor $-K_X-uA$ is pseudoeffective if and only if $u\leqslant 2$.
For every $u\in[0,2]$, let us denote by $P(u)$ the positive part of Zariski decomposition of the divisor $-K_X-uA$,
and let us denote by $N(u)$ its negative part. Then
$$
P(u)=\left\{\aligned
&(2-u)A+E\ \text{if $0\leqslant u\leqslant 1$}, \\
&(2-u)H\ \text{if $1\leqslant u\leqslant 2$},
\endaligned
\right.
$$
and
$$
N(u)=\left\{\aligned
&0\ \text{if $0\leqslant u\leqslant 1$}, \\
&(u-1)E\ \text{if $1\leqslant u\leqslant 2$}.
\endaligned
\right.
$$
Integrating, we get $S_X(A)=\frac{11}{16}$.
Using \cite[Theorem~3.3]{AbbanZhuang} and \cite[Corollary 1.102]{Book}, we get
\begin{equation}
\label{equation:2-1-2-3-2-5-first}
\delta_O(X)\geqslant \min\left\{\frac{1}{S_X(A)},\inf_{\substack{F/A\\ O\in C_A(F)}}\frac{A_A(F)}{S(W_{\bullet,\bullet}^A;F)}\right\}=\min\left\{\frac{16}{11},\inf_{\substack{F/A\\ O\in C_A(F)}}\frac{A_A(F)}{S(W_{\bullet,\bullet}^A;F)}\right\}
\end{equation}
where the infimum is taken by all prime divisors $F$ over the~surface $A$ with~\mbox{$O\in C_A(F)$}.
The value $S(W_{\bullet,\bullet}^A;F)$ can be computed using \cite[Corollary~1.108]{Book} as follows:
$$
S\big(W_{\bullet,\bullet}^A; F\big)=
\frac{3}{4d}\int_1^{2}\big(P(u)\big\vert_{A}\big)^2(u-1)\mathrm{ord}_{F}\big(E\big\vert_{A}\big)du+
\frac{3}{4d}\int_{0}^{2}\int_0^\infty \mathrm{vol}\big(P(u)\big\vert_{A}-vF\big)dvdu.
$$

Now, let $F$ be any prime divisor over the surface $A$ such that $O\in C_A(F)$. Since
$$
P(u)\big\vert_{A}=\left\{\aligned
&-K_A\ \text{if $0\leqslant u\leqslant 1$}, \\
&(2-u)(-K_A)\ \text{if $1\leqslant u\leqslant 2$},
\endaligned
\right.
$$
we have
\begin{multline*}
\quad\quad\quad\quad\quad S\big(W_{\bullet,\bullet}^A; F\big)=
\frac{3}{4d}\int_1^{2}d(2-u)^2(u-1)\mathrm{ord}_{F}\big(E\big\vert_{A}\big)du+\\
+\frac{3}{4d}\int_{0}^{1} \int_{0}^{\infty}\mathrm{vol}\big(-K_A-vF\big)dvdu+\frac{3}{4d}\int_{1}^{2}\int_{0}^{\infty}\mathrm{vol}\big((2-u)(-K_A)-vF\big)dvdu=\\
=\frac{\mathrm{ord}_{F}\big(E\big\vert_{A}\big)}{16}+\frac{3}{4d}\int_{0}^{\infty}\mathrm{vol}\big(-K_A-vF\big)dv+\frac{3}{4d}\int_{1}^{2}(2-u)^3\int_{0}^{\infty}\mathrm{vol}\big(-K_A-vF\big)dvdu=\\
=\frac{\mathrm{ord}_{F}\big(E\big\vert_{A}\big)}{16}+\frac{3}{4d}\int_{0}^{\infty}\mathrm{vol}\big(-K_A-vF\big)dv+\frac{3}{16d}\int_{0}^{\infty}\mathrm{vol}\big(-K_A-vF\big)dv=\\
=\frac{\mathrm{ord}_{F}\big(E\big\vert_{A}\big)}{16}+\frac{15}{16d}\int_{0}^{\infty}\mathrm{vol}\big(-K_A-vF\big)dv=\\
=\frac{\mathrm{ord}_{F}\big(E\big\vert_{A}\big)}{16}+\frac{15}{16}S_A(F)\leqslant\frac{\mathrm{ord}_{F}\big(E\big\vert_{A}\big)}{16}+\frac{15A_{A}(F)}{16\delta_{O}(A)}.\quad\quad\quad\quad
\end{multline*}
Therefore, if $O\not\in E$, then $\mathrm{ord}_{F}(E\vert_{A})=0$, which implies that
$$
S\big(W_{\bullet,\bullet}^A;F\big)\leqslant\frac{15A_{A}(F)}{16\delta_{O}(A)}.
$$
Similarly, if $O\in E$, then $\mathrm{ord}_{F}(E\vert_{A})\leqslant A_A(F)$, because $(A,E\vert_A)$ is log canonical,
so that
$$
S\big(W_{\bullet,\bullet}^A; F\big)=\frac{\mathrm{ord}_{F}\big(E\big\vert_{A}\big)}{16}+\frac{15}{16}S_A(F)\leqslant\frac{A_A(F)}{16}+\frac{15A_{A}(F)}{16\delta_{O}(A)}=\frac{\delta_{O}(A)+15}{16\delta_{O}(A)}A_{A}(F).
$$
Now, using \eqref{equation:2-1-2-3-2-5-first}, we obtain the required inequality.
\end{proof}

Suppose $X$ is not K-stable. Let us seek for a contradiction.
Using the~valuative criterion for K-stability \cite{Fujita2019Crelle,Li},
we see that there exists a~prime divisor $\mathbf{F}$ over $X$~such~that
$$
\beta(\mathbf{F})=A_X(\mathbf{F})-S_X(\mathbf{F})\leqslant 0,
$$
where $A_X(\mathbf{F})$ is a~log discrepancy of the~divisor $\mathbf{F}$, and $S_X(\mathbf{F})$ is defined in \cite{Fujita2019Crelle} or~\mbox{\cite[\S~1.2]{Book}}.
Let $Z$ be the~center of the~divisor $\mathbf{F}$ on $X$.
Then  $Z$ is not a~surface~\mbox{\cite[Theorem~3.17]{Book}}.
We see that $Z$ is an~irreducible curve or a~point. Let $P$ be a~point in $Z$.
Then  $\delta_P(X)\leqslant 1$.

\begin{lemma}
\label{lemma:2-1-P-in-E-d-3}
One has $P\not\in E$.
\end{lemma}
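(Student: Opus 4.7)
The plan is to suppose $P \in E$ and derive $\delta_P(X) > 1$ via Lemma~\ref{lemma:2-1-2-3-2-5-del-Pezzo-delta}, contradicting $\delta_P(X) \leqslant 1$. Let $A$ be the fiber of $\phi$ through $P$. First I would verify that $A$ is smooth at $P$: since $\mathcal{C} = S_1 \cap S_2$ is smooth, the pencil generators $S_1, S_2$ meet transversally at every point of $\mathcal{C}$ (the differentials $df_1, df_2$ of local equations are linearly independent along $\mathcal{C}$), so every member $S = \{\lambda f_1 + \mu f_2 = 0\}$ of the pencil is smooth at each point of $\mathcal{C}$. Because $\mathcal{C}$ is then Cartier on $S$ near its smooth locus, $\pi\vert_A \colon A \to S$ is an isomorphism, and $A$ is smooth along the elliptic curve $E\vert_A \cong \mathcal{C}$, in particular at $P$. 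Moreover $A$ is a (possibly singular) del Pezzo surface of degree $d$ with $E\vert_A \sim -K_A$.

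Having met the hypotheses of Lemma~\ref{lemma:2-1-2-3-2-5-del-Pezzo-delta}, the bound
\[
\delta_P(X) \;\geqslant\; \min\Bigl\{\tfrac{16}{11},\ \tfrac{16\,\delta_P(A)}{\delta_P(A)+15}\Bigr\}
\]
reduces the task to proving $\delta_P(A) > 1$. For this I would run Abban--Zhuang on the surface $A$ with flag $E\vert_A \subset A$. Since $-K_A - t E\vert_A \sim (1-t)(-K_A)$ has purely positive Zariski decomposition on $[0,1]$, one obtains $S_A(E\vert_A) = 1/3$ and hence $A_A(E\vert_A)/S_A(E\vert_A) = 3$. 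A parallel curve-level calculation (with $P(t)\vert_{E\vert_A}$ of degree $(1-t)d$) gives
\[
S\bigl(W^{E\vert_A}_{\bullet,\bullet}; P\bigr) \;=\; \frac{1}{d}\int_0^1 \bigl((1-t)d\bigr)^2 \, dt \;=\; \frac{d}{3},
\]
so $\delta_P(A) \geqslant \min\{3,\ 3/d\} = 3/d$. For $d \in \{1, 2\}$ this already exceeds $1$, and the argument closes.

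The hard case is $d = 3$, where the flag bound is tight and yields only $\delta_P(A) \geqslant 1$. Here I would invoke the generality hypothesis~\eqref{equation:generality-condition}, which restricts the Du Val singularities of the cubic surface $A$ to the list $\mathbb{A}_1, \ldots, \mathbb{A}_5, \mathbb{D}_4$. By the K-stability classification for Du Val cubic surfaces recorded in \cite{ACCFKMGSSV}, every such $A$ is K-stable, so $\delta_P(A) \geqslant \delta(A) > 1$. The principal obstacle is precisely this $d = 3$ situation: the clean Abban--Zhuang flag argument on $A$ meets the critical value $1$ exactly, and the generality hypothesis together with the K-stability classification of Du Val cubic surfaces is essential for the strict inequality. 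The cases $d \in \{1, 2\}$ close from the flag estimate alone, with room to spare.
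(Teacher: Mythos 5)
Your strategy is genuinely different from the paper's. You reduce to $\delta_P(A)>1$ via Lemma~\ref{lemma:2-1-2-3-2-5-del-Pezzo-delta} and then run Abban--Zhuang on the del Pezzo fiber $A$ with the flag $E|_A\subset A$; the paper instead runs a direct three-dimensional Abban--Zhuang computation on $X$ with the flag $P\in\mathbf{s}\subset E$ where $E\cong\mathcal{C}\times\mathbb{P}^1$ is the exceptional divisor. The paper's choice is what makes the argument close uniformly: with that flag one gets
\[
\delta_P(X)\geqslant\min\Big\{\tfrac{1}{S_X(E)},\tfrac{1}{S(W^E_{\bullet,\bullet};\mathbf{s})},\tfrac{1}{S(W^{E,\mathbf{s}}_{\bullet,\bullet,\bullet};P)}\Big\}=\min\Big\{\tfrac{8}{3},\tfrac{16}{11},\tfrac{16}{5d}\Big\}=\tfrac{16}{5d}\geqslant\tfrac{16}{15}>1
\]
for all $d\in\{1,2,3\}$, with no extra hypothesis. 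Your route works cleanly for $d\in\{1,2\}$, and your observations that $A$ is smooth along $E|_A$ and that the hypotheses of Lemma~\ref{lemma:2-1-2-3-2-5-del-Pezzo-delta} are met are correct.

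For $d=3$, however, there is a genuine gap. After your flag estimate $\delta_P(A)\geqslant 1$ hits the critical value exactly, you invoke the generality hypothesis~\eqref{equation:generality-condition} (ruling out $\mathbb{D}_5$ and $\mathbb{E}_6$) and then claim that any Du Val cubic surface with singularities among $\mathbb{A}_1,\dots,\mathbb{A}_5,\mathbb{D}_4$ is K-stable, so $\delta_P(A)\geqslant\delta(A)>1$. This is false. The K-moduli of cubic surfaces coincides with the GIT moduli, and a cubic surface is K-stable if and only if it has at worst $\mathbb{A}_1$ singularities; a cubic with an $\mathbb{A}_k$ singularity for $k\geqslant 3$, or a $\mathbb{D}_4$, is K-unstable, so $\delta(A)<1$ for such surfaces. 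Thus the inequality $\delta(A)>1$ you need simply does not hold for most of the allowed singularity types. The weaker statement you would actually need --- that $\delta_P(A)>1$ for the specific smooth point $P$ lying on the anticanonical elliptic curve $E|_A$, away from $\mathrm{Sing}(A)$ --- is a local assertion that might be true, but it is not implied by your cited global fact and would require a separate argument. Note also that the paper's proof of this lemma makes no use of~\eqref{equation:generality-condition}: the lemma is stated and proved unconditionally, and the generality hypothesis enters only later, in Lemma~\ref{lemma:2-5-D-4}. Needing it here is a symptom that the route through $\delta_P(A)$ is too lossy in the $d=3$ case.
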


\begin{proof}
Let us compute $S_X(E)$. Note that $S_X(E)<1$ by \cite[Theorem~3.17]{Book}.
Fix $u\in\mathbb{R}_{\geqslant 0}$.
Then  $-K_X-uE$ is pseudoeffective $\iff$ $-K_X-uE$ is nef $\iff$ $u\leqslant 1$.
Thus, we have
$$
S_X(E)=\frac{1}{4d}\int_{0}^{1}\big(-K_X-uE\big)^3du=\frac{1}{4d}\int_{0}^{1}d(2u^3-6u+4)du=\frac{3}{8}.
$$

Suppose that $P\in E$. Let us seek for a~contradiction.

Note that $E\cong\mathcal{C}\times\mathbb{P}^1$.
Let $\mathbf{s}$ be a~fiber of the~projection $\phi\vert_{E}\colon E\to\mathbb{P}^1$ that contains~$P$,
and let $\mathbf{f}$ be a~fiber of the~projection $\pi\vert_{E}\colon E\to\mathcal{C}$.
Fix $u\in[0,1]$ and take $v\in\mathbb{R}_{\geqslant 0}$. Then
$$
-K_X-uE\big\vert_{E}-v\mathbf{s}\equiv (1+u-v)\mathbf{s}+d(1-u)\mathbf{f}.
$$
This implies that
\begin{center}
$(-K_X-uE)\big\vert_{E}-v\mathbf{s}$ is pseudoeffective $\iff$ $(-K_X-uE)\big\vert_{E}-v\mathbf{s}$ is nef  $\iff$ $v\leqslant 1+v$.
\end{center}
Therefore, using  \cite[Corollary~1.109]{Book}, we get
$$
S\left(W_{\bullet,\bullet}^{E};\mathbf{s}\right)=\frac{3}{4d} \int_{0}^{1} \int_{0}^{1+u} 2d(1-u)(1+u-v)dvdu=\frac{11}{16}.
$$
Similarly, using \cite[Theorem~1.112]{Book}, we get
$$
S\left(W_{\bullet,\bullet, \bullet}^{E,\mathbf{s}};P\right)=\frac{3}{4d}\int_{0}^{1} \int_{0}^{1+u}\big(d(1-u)\big)^{2}dvdu=\frac{5d}{16}.
$$
Therefore, it follows from \cite[Theorem~1.112]{Book} that
$$
1\geqslant\frac{A_X(\mathbf{F})}{S_X(\mathbf{F})}\geqslant \min\left\{\frac{1}{S_{X}(E)}, \frac{1}{S\left(W_{\bullet,\bullet}^E,\mathbf{s}\right)},
\frac{1}{S\left(W_{\bullet,\bullet,\bullet}^{E,\mathbf{s}};P\right)}\right\}=\min \left\{\frac{8}{3}, \frac{16}{11}, \frac{16}{5d}\right\}\geqslant\frac{16}{15}>1,
$$
which is a~contradiction.
\end{proof}

Let $A$ be the~fiber of the~del Pezzo fibration $\phi$ such that $A$ passes through the~point~$P$.
Then  $A$ is a~del Pezzo surface of degree $d\in\{1,2,3\}$ that has at most isolated singularities.
In particular, we see that $A$ is normal. Applying Lemmas~\ref{lemma:2-1-2-3-2-5-del-Pezzo-delta} and \ref{lemma:2-1-P-in-E-d-3}, we obtain

\begin{corollary}
\label{corollary:2-1-2-3-2-5-del-Pezzo-delta}
One has $\delta_P(A)\leqslant\frac{15}{16}$.
\end{corollary}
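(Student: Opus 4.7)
The plan is to deduce the corollary as an immediate consequence of Lemmas~\ref{lemma:2-1-2-3-2-5-del-Pezzo-delta} and~\ref{lemma:2-1-P-in-E-d-3}, by chaining the two bounds they provide. First, Lemma~\ref{lemma:2-1-P-in-E-d-3} places us in the case $P\not\in E$ of Lemma~\ref{lemma:2-1-2-3-2-5-del-Pezzo-delta}, which yields
$$
\delta_P(X) \geqslant \min\left\{\frac{16}{11},\ \frac{16}{15}\delta_P(A)\right\}.
$$

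Next, I would invoke the fact established earlier in the section that $\delta_P(X) \leqslant 1$, coming from the existence of the destabilizing divisor $\mathbf{F}$ whose center $Z$ contains $P$ (via the valuative criterion of \cite{Fujita2019Crelle,Li}). Since $\frac{16}{11} > 1 \geqslant \delta_P(X)$, the first term of the minimum cannot be the active bound, so the displayed inequality forces
$$
\frac{16}{15}\delta_P(A) \leqslant \delta_P(X) \leqslant 1,
$$
and we conclude $\delta_P(A) \leqslant \frac{15}{16}$.

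The only hypothesis to check before applying Lemma~\ref{lemma:2-1-2-3-2-5-del-Pezzo-delta} at $P$ is that $A$ has at most Du Val singularities at $P$. The preamble to the corollary already records that $A$ is a normal del Pezzo surface of degree $d\in\{1,2,3\}$ with isolated singularities; for Gorenstein del Pezzo surfaces of such low degree, isolated singularities are automatically canonical (Du Val) by the classical classification. I do not anticipate any substantive obstacle — the corollary is a purely formal consequence of Lemmas~\ref{lemma:2-1-2-3-2-5-del-Pezzo-delta} and~\ref{lemma:2-1-P-in-E-d-3} combined with the bound $\delta_P(X)\leqslant 1$ supplied by the destabilizing divisor.
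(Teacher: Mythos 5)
Your core argument reproduces the paper's proof exactly: Lemma~\ref{lemma:2-1-P-in-E-d-3} gives $P\not\in E$, so Lemma~\ref{lemma:2-1-2-3-2-5-del-Pezzo-delta} yields $\delta_P(X)\geqslant\min\{\frac{16}{11},\frac{16}{15}\delta_P(A)\}$, and since $\frac{16}{11}>1\geqslant\frac{A_X(\mathbf{F})}{S_X(\mathbf{F})}\geqslant\delta_P(X)$, the second term must be $\leqslant 1$, giving $\delta_P(A)\leqslant\frac{15}{16}$. That is precisely the one-line proof in the paper.

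However, your final paragraph contains a false claim. It is not true that Gorenstein del Pezzo surfaces of degree $d\leqslant 3$ with isolated singularities automatically have Du Val singularities. The cone over a smooth plane cubic curve (the $d=3$ case) is a normal Gorenstein del Pezzo surface with an isolated \emph{simple elliptic} singularity at its vertex, which is not canonical; degree-$1$ and degree-$2$ analogues (e.g.\ the double cover of $\mathbb{P}^2$ branched over four concurrent lines, in the $d=2$ case) also occur, and the paper explicitly considers these cases later in the section. The correct statement, from Brenton's theorem cited in the paper, is that a normal Gorenstein del Pezzo surface either has Du Val singularities or has a simple elliptic singularity. Your incorrect justification happens not to break the conclusion — at a simple elliptic point the exceptional curve $F$ of the minimal resolution has $A_A(F)=0$, so $\delta_P(A)=0\leqslant\frac{15}{16}$ trivially — but the reasoning you offered to dismiss the hypothesis check is wrong, and you should be aware that the hypothesis of Lemma~\ref{lemma:2-1-2-3-2-5-del-Pezzo-delta} genuinely excludes a case that the paper must (and does) treat separately.
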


\begin{proof}
Since $1\geqslant\frac{A_X(\mathbf{F})}{S_X(\mathbf{F})}\geqslant\delta_P(X)$ , we get $\delta_P(A)\leqslant\frac{15}{16}$ by Lemmas~\ref{lemma:2-1-2-3-2-5-del-Pezzo-delta} and \ref{lemma:2-1-P-in-E-d-3}.
\end{proof}

\begin{corollary}
\label{corollary:2-1-2-3-2-5-A-singular}
The surface $A$ is singular.
\end{corollary}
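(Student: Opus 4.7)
The plan is to derive a contradiction with Corollary~\ref{corollary:2-1-2-3-2-5-del-Pezzo-delta}, which gives $\delta_P(A)\leqslant\frac{15}{16}<1$. Suppose for contradiction that $A$ is smooth. Then $A$ is a smooth del Pezzo surface of degree $d\in\{1,2,3\}$ sitting as a fiber of $\phi$.

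It is a classical fact (see \cite{ACCFKMGSSV}, and originally Tian for the cubic case) that every smooth del Pezzo surface of degree $d\leqslant 3$ is K-stable; in particular its global $\delta$-invariant satisfies $\delta(A)\geqslant 1$. Since $\delta_P(A)\geqslant \delta(A)$ for every point $P\in A$, we would obtain $\delta_P(A)\geqslant 1$, contradicting the bound $\delta_P(A)\leqslant \frac{15}{16}$ from Corollary~\ref{corollary:2-1-2-3-2-5-del-Pezzo-delta}.

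Thus the main body of the proof is simply to invoke the known K-stability of smooth del Pezzo surfaces of degrees $1$, $2$, and $3$, combined with Corollary~\ref{corollary:2-1-2-3-2-5-del-Pezzo-delta}. No genuine obstacle is expected here: the computational heart of the argument has already been carried out in Lemma~\ref{lemma:2-1-2-3-2-5-del-Pezzo-delta} and in the proof of Lemma~\ref{lemma:2-1-P-in-E-d-3}, and this corollary is essentially the observation that a smooth del Pezzo fiber cannot destabilize $X$, pushing the search for the destabilizing center~$Z$ onto the singular fibers of $\phi$, where the analysis of Du Val singularities (and the generality condition~\eqref{equation:generality-condition} in the case $d=3$) will subsequently be used.
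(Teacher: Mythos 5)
Your proof is correct and follows essentially the same route as the paper: both assume $A$ is smooth, invoke a lower bound on the global $\delta$-invariant of a smooth low-degree del Pezzo surface to get $\delta_P(A)\geqslant\delta(A)$ large, and contradict Corollary~\ref{corollary:2-1-2-3-2-5-del-Pezzo-delta}. The only cosmetic difference is that you appeal to K-(semi)stability to get $\delta(A)\geqslant 1$, while the paper quotes the sharper bound $\delta(A)\geqslant\frac{3}{2}$ from \cite[\S~2]{ACCFKMGSSV}; either bound exceeds $\frac{15}{16}$, so the argument goes through either way.
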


\begin{proof}
If $A$ is smooth, then $\delta_P(A)\geqslant\delta(A)\geqslant\frac{3}{2}$ \cite[\S~2]{Book}, which contradicts Corollary~\ref{corollary:2-1-2-3-2-5-del-Pezzo-delta}.
\end{proof}

Let $\overline{S}$ be a~general surface in $|H|$ that passes through  $\pi(P)$,
and let $S$ be the~proper transform on $X$ of the~surface $\overline{S}$. Then
\begin{itemize}
\item the~surface $\overline{S}$ is a~smooth del Pezzo surface of degree $d$,
\item the~surface $\overline{S}$ intersects the~curve $\mathcal{C}$ transversally at $d$ points,
\item the~induced morphism $\pi\vert_{S}\colon S\to\overline{S}$ is a~blow up of the~points $\overline{S}\cap\mathcal{C}$.
\end{itemize}
Observe that  $\phi\vert_S\colon S\to \mathbb{P}^1$ is an elliptic fibration given by the~pencil $|-K_S|$.
Set $C=A\big\vert_{S}$.
Then  $C$ is a~reduced curve of arithmetic genus $1$ in $|-K_S|$ that has at most $d$ components.
In~particular, if $d=1$, then $C$ is irreducible.
Therefore, the~following cases may happen:
\begin{enumerate}
\item the~curve $C$ is irreducible, and $C$ is smooth at $P$,
\item the~curve $C$ is irreducible, and $C$ has an ordinary node at $P$,
\item the~curve $C$ is irreducible, and $C$ has an~ordinary cusp at $P$,
\item the~curve $C$ is reducible.
\end{enumerate}

Fix $u\in\mathbb{R}_{\geqslant 0}$.
Then  $-K_X-uS$ is nef $\iff$ $u\leqslant 1$ $\iff$ $-K_X-uS$ is pseudoeffective.
Using this, we see that
$$
S_X(S)=\frac{1}{4d}\int_{0}^{1}(-K_X-uS)^3du=\frac{1}{4d}\int_{0}^{1}d(4-u)(1-u)^2du=\frac{5}{16}<1,
$$
which also follows from \cite[Theorem~3.17]{Book}. Moreover, if $u\in[0,1]$, then
$$
(-K_X-uS)|_S\sim_{\mathbb{R}}(1-u)(\pi\vert_{S})^*(-K_{\overline{S}})-K_S\sim_{\mathbb{R}}(1-u)\sum_{i=1}^{d}\mathbf{e}_i+(2-u)(-K_S),
$$
where $\mathbf{e}_1,\ldots,\mathbf{e}_d$ are exceptional curves of the~blow up $\pi\vert_{S}\colon S\to\overline{S}$.

\begin{lemma}
\label{lemma:2-1-S-C-smooth-P}
Suppose that $C$ is irreducible. Then  $C$ is singular at the~point $P$.
\end{lemma}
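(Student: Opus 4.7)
\medskip
\noindent\textbf{Proof proposal.}
Suppose for contradiction that $C$ is irreducible and smooth at $P$. The plan is to apply the Abban--Zhuang theory \cite[Theorem~1.7.30]{ACCFKMGSSV} to the three-step flag $X\supset S\supset C\supset P$ to force $\delta_P(X)>1$, contradicting $\delta_P(X)\leqslant 1$. All three log discrepancies entering the estimate are equal to $1$: $A_X(S)=1$ since $S$ is a smooth Cartier divisor, $A_S(C)=1$ since $C$ is an irreducible divisor in the free linear system $|-K_S|$, and $A_C(P)=1$ since $C$ is smooth at $P$.

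The first invariant $S_X(S)=5/16$ has already been computed above. For the second invariant, starting from
$$
(-K_X-uS)\big\vert_S-vC\ \sim_{\mathbb R}\ (1-u)\sum_{i=1}^{d}\mathbf{e}_i+(2-u-v)(-K_S),
$$
I would compute the Zariski decomposition on the rational elliptic surface $S$ (noting $(-K_S)^2=0$, $\mathbf{e}_i\cdot\mathbf{e}_j=-\delta_{ij}$ and $(-K_S)\cdot\mathbf{e}_i=1$): for $v\in[0,1]$ the divisor is nef with volume $d(1-u)(3-u-2v)$, whereas for $v\in[1,2-u]$ the positive part is $(2-u-v)(\pi\vert_{S})^*(-K_{\overline S})$ and the negative part is $(v-1)\sum_{i=1}^d\mathbf{e}_i$, of volume $d(2-u-v)^2$. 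Integrating in the fashion of \cite[Corollary~1.7.26]{ACCFKMGSSV} should yield $S(W_{\bullet,\bullet}^{S};C)=11/16$.

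For the third invariant, Lemma~\ref{lemma:2-1-P-in-E-d-3} gives $P\notin E$, hence $P$ lies on none of the curves $\mathbf{e}_i$, so the contribution of the negative part at $P$ vanishes. Since $C$ is smooth at $P$, the only quantity left is $\int_0^\infty \mathrm{vol}_C(P(u,v)\vert_C-wP)\,dw=\tfrac12(\deg P(u,v)\vert_C)^2$, where $\deg P(u,v)\vert_C=P(u,v)\cdot C$ equals $d(1-u)$ on $v\in[0,1]$ and $d(2-u-v)$ on $v\in[1,2-u]$. Integrating produces $S(W_{\bullet,\bullet,\bullet}^{S,C};P)=5d/32$. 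Combining the three bounds,
$$
\delta_P(X)\ \geqslant\ \min\!\Big\{\tfrac{16}{5},\,\tfrac{16}{11},\,\tfrac{32}{5d}\Big\}\ =\ \tfrac{16}{11}\ >\ 1
$$
for every $d\in\{1,2,3\}$, which is the desired contradiction.

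The main obstacle is the Zariski decomposition step on the rational elliptic surface $S$: since $(-K_S)^2=0$ and $S$ carries many negative curves besides the $\mathbf{e}_i$ (sections of the elliptic fibration, components of reducible fibres), one has to verify that no other curve appears in the negative part for $v\in[1,2-u]$. This reduces to showing that the candidate positive part $(2-u-v)(\pi\vert_S)^*(-K_{\overline S})$ is nef on $S$, which is immediate from the ampleness of $-K_{\overline S}$ on the smooth del Pezzo $\overline S$; the transition value $v=1$ is then pinned down by the intersection numbers $(D_u-vC)\cdot\mathbf{e}_j=1-v$.
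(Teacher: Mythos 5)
Your approach is essentially the same as the paper's: both apply the flag $P\in C\subset S$ via \cite[Theorem~1.7.30]{ACCFKMGSSV}, using $S_X(S)=\tfrac{5}{16}$, the same two-piece Zariski decomposition of $(-K_X-uS)\vert_S-vC$, and Lemma~\ref{lemma:2-1-P-in-E-d-3} to discard the negative-part contribution at $P$. One arithmetic slip: your value $S(W_{\bullet,\bullet,\bullet}^{S,C};P)=\tfrac{5d}{32}$ is off by a factor of $2$; the correct value is $\tfrac{5d}{16}$. The likely source is a double-counting of the $\tfrac12$ from $\int_0^\infty\mathrm{vol}_C(P(u,v)\vert_C-wP)\,dw=\tfrac12(P(u,v)\cdot C)^2$: in the paper's normalization this half is already absorbed into the prefactor, so one integrates $\tfrac{3}{4d}\iint (P(u,v)\cdot C)^2\,dv\,du=\tfrac{d}{4}+\tfrac{d}{16}=\tfrac{5d}{16}$. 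Consequently the correct lower bound is $\delta_P(X)\geqslant\min\{\tfrac{16}{5},\tfrac{16}{11},\tfrac{16}{5d}\}$, which for $d=3$ equals $\tfrac{16}{15}$ rather than $\tfrac{16}{11}$; this is still $>1$, so the contradiction and the lemma survive.
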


\begin{proof}
As in the proof of Lemma~\ref{lemma:2-1-P-in-E-d-3}, it follows from \cite[Theorem 1.112]{Book} that
$$
1\geqslant \frac{A_X(\mathbf{F})}{S_X(\mathbf{F})}\geqslant \min\left\{\frac{1}{S_X(S)},\frac{1}{S(W_{\bullet,\bullet}^S;C)},
\frac{1}{S(W_{\bullet, \bullet,\bullet}^{S,C};P)}\right\},
$$
where $S(W_{\bullet,\bullet}^S;C)$ and $S(W_{\bullet, \bullet,\bullet}^{S,C};P)$ are defined in \cite[\S~1.7]{Book}.
Since we know that~\mbox{$S_X(S)<1$}, we see that $S(W_{\bullet,\bullet}^S;C)\geqslant 1$ or $S(W_{\bullet, \bullet,\bullet}^{S,C};P)\geqslant 1$.
Let us compute these numbers.

Let $P(u,v)$ be the~positive part of the~Zariski decomposition of $(-K_X-uS)\vert_{S}-vC$,
and let $N(u,v)$ be its~negative part, where $u\in[0,1]$ and $v\in\mathbb{R}_{\geqslant 0}$. Since
$$
(-K_X-uS)\vert_{S}-vC\sim_{\mathbb{R}}(1-u)\sum_{i=1}^{d}\mathbf{e}_i+(2-u-v)C,
$$
we see that $(-K_X-uS)\vert_{S}-vC$ is pseudoeffective $\iff$ $v\leqslant 2-u$.
Moreover, we have
$$
P(u,v)=\left\{\aligned
&(1-u)\sum_{i=1}^{d}\mathbf{e}_i+(2-u-v)C\ \text{if $0\leqslant v\leqslant 1$}, \\
&(2-u-v)\Big(C+\sum_{i=1}^{d}\mathbf{e}_i\Big)\ \text{if $1\leqslant v\leqslant 2-u$}, \\
\endaligned
\right.
$$
and
$$
N(u,v)=\left\{\aligned
&0\ \text{if $0\leqslant v\leqslant 1$}, \\
&(v-1)\sum_{i=1}^{d}\mathbf{e}_i\ \text{if $1\leqslant v\leqslant 2-u$}. \\
\endaligned
\right.
$$
Thus, it follows from \cite[Corollary~1.109]{Book} that
\begin{multline*}
\quad \quad \quad \quad \quad\quad \quad \quad  S\big(W^S_{\bullet,\bullet};C\big)=\frac{3}{4d}\int_0^1\int_0^{2-u}P(u,v)^2dvdu=\\
=\frac{3}{4d}\int_0^1\int_0^1d(1-u)(3-u-2v)dvdu+\frac{3}{4d}\int_0^1\int_1^{2-u}d(2-u-v)^2dvdu=\frac{11}{16}<1.
\end{multline*}
Thus, we conclude that  $S(W_{\bullet, \bullet,\bullet}^{S,C};P)\geqslant 1$.

Since $P\not\in\mathbf{e}_1\cup\cdots\cup\mathbf{e}_d$ by Lemma~\ref{lemma:2-1-P-in-E-d-3}, it follows from \cite[Theorem~1.112]{Book} that
\begin{multline*}
\quad \quad \quad \quad \quad \quad \quad \quad \quad S\big(W_{\bullet, \bullet,\bullet}^{S,C};P\big)=\frac{3}{4d}\int_0^1\int_0^{2-u}\big(P(u,v)\cdot C\big)^2dvdu=\\
=\frac{3}{4d}\int_0^1\int_0^1d^2(u-1)^2dvdu+\frac{3}{4d}\int_0^1\int_1^{2-u}d^2(2-u-v)^2dvdu=\frac{5d}{16}<1,\quad \quad \quad
\end{multline*}
which is a~contradiction.
\end{proof}

Now, let us show that $C$ is reducible for $d\in\{1,2\}$.

\begin{lemma}
\label{lemma:2-1-S-C-reducible}
Suppose that $C$ is irreducible.
Then  $d=3$ and $C$ has a cusp at $P$.
\end{lemma}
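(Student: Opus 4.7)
The plan is to apply the Abban--Zhuang theorem with a refined three--step flag $(S, E_P, Q)$, where $\sigma\colon \widetilde{S}\to S$ is the blow--up of the point $P$ on $S$, the exceptional curve $E_P\cong\mathbb{P}^{1}$ replaces the role of $C$ (which can no longer serve as a flag element on $S$ since, by Lemma~\ref{lemma:2-1-S-C-smooth-P}, it is singular at $P$), and $Q\in E_P$ is a point whose choice reflects the singularity type of $C$ at $P$. The adjustment from $C$ to $E_P$ comes at the price of replacing $A_S(C)=1$ with $A_S(E_P)=2$ in the second term of the Abban--Zhuang chain.

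First, I would compute the Zariski decomposition on $\widetilde{S}$ of
\[
D(u,v) := \sigma^{*}\bigl((-K_X - uS)|_{S}\bigr) - vE_P = (1-u)\sum_{i=1}^{d}\mathbf{e}_i + (2-u)\widetilde{C} + (4-2u-v)E_P,
\]
for $u\in [0,1]$, using the intersection numbers $\widetilde{C}^{2} = -4$ (because $C^{2}=0$ and $\mathrm{mult}_{P}\,C = 2$), $E_P^{2}=-1$, $\widetilde{C}\cdot E_P = 2$, $\widetilde{C}\cdot\mathbf{e}_i = 1$, $\mathbf{e}_i^{2}=-1$, and $\mathbf{e}_i\cdot E_P = 0$ (the last because $P\notin\mathbf{e}_i$ by Lemma~\ref{lemma:2-1-P-in-E-d-3}). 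The pseudoeffective range in $v$ is $[0, 4-2u]$ and it splits into three sub--intervals according to which of $\widetilde{C}$ and $\sum \mathbf{e}_i$ have entered the negative part, with transitions at $v = d(1-u)/2$ and $v = 2 + d(1-u)/2$.

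Second, I would use $A_S(E_P) = 2$ to compute $S(W_{\bullet,\bullet}^{S}; E_P)$ by integrating $\mathrm{vol}(D(u,v))$ region by region; the resulting ratio $A_S(E_P)/S(W_{\bullet,\bullet}^{S}; E_P)$ will come out strictly greater than $1$ for each $d\in\{1,2,3\}$. Then, for a point $Q\in E_P$ with $A_{E_P}(Q) = 1$, the third--level invariant $S(W^{S, E_P}_{\bullet,\bullet,\bullet}; Q)$ depends on whether $Q$ lies in the support of the restricted negative part $N(u,v)|_{E_P}$ and, crucially, on the local intersection multiplicity of $\widetilde{C}$ with $E_P$ at $Q$. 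In the cuspidal case $\widetilde{C}$ meets $E_P$ tangentially at a single point $Q_{0}$ with intersection multiplicity $2$, whereas in the nodal case it meets $E_P$ transversally at two distinct points $Q_{1}$, $Q_{2}$; accordingly the optimal $Q$ differs between the two situations.

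Finally, I would apply the Abban--Zhuang inequality
\[
1\geq \delta_{P}(X)\geq\min\left\{\frac{1}{S_X(S)},\ \frac{A_S(E_P)}{S(W_{\bullet,\bullet}^{S}; E_P)},\ \frac{A_{E_P}(Q)}{S(W^{S, E_P}_{\bullet,\bullet,\bullet}; Q)}\right\}
\]
for an optimally chosen $Q$, and analyse the resulting constraint case by case. The minimum strictly exceeds~$1$ in every configuration except when $d=3$ and $C$ has a cusp at $P$, which is precisely the exceptional case allowed by the lemma. The hard part will be the computation of the third--level invariant $S(W^{S, E_P}_{\bullet,\bullet,\bullet}; Q)$, where the boundary contribution from the components of $N(u,v)$ meeting $E_P$ must be tracked carefully across all three Zariski regions; it is exactly the fact that in the cuspidal degree--three case the combined contribution from $P(u,v)\cdot E_P$ and $\mathrm{ord}_{Q_{0}}(N(u,v)|_{E_P})$ fits together into a single clean factor of $v$ that prevents the lower bound from exceeding~$1$, whereas in all other configurations the analogous simplification fails and a contradiction with $\delta_{P}(X)\leq 1$ emerges.
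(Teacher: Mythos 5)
Your first half of the analysis is correct and coincides with the paper's: working on the blow-up $\sigma\colon\widetilde{S}\to S$ at $P$ with exceptional curve $E_P$ (the paper's $\mathbf{f}$), with the intersection numbers you list, one computes $S(W_{\bullet,\bullet}^{S};E_P)=\frac{44+5d}{32}<2=A_S(E_P)$ and, when $\widetilde{C}$ meets $E_P$ transversally (nodal case), $S(W^{S,E_P}_{\bullet,\bullet,\bullet};Q)=\frac{44+5d}{64}<1$ for either of the two intersection points, while $S(W^{S,E_P}_{\bullet,\bullet,\bullet};Q)=\frac{5d}{32}<1$ for $Q\notin\widetilde{C}$. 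Since all three terms in the Abban--Zhuang minimum strictly exceed $1$, the nodal case is excluded, and this works for every $d\in\{1,2,3\}$.

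However, the claim that the same flag $(S,E_P,Q)$ also rules out $d\in\{1,2\}$ in the cuspidal case is wrong, and the heuristic you offer for why $d=3$ alone escapes does not hold. When $\widetilde{C}$ is tangent to $E_P$ at the unique intersection point $Q_0$, the order $\mathrm{ord}_{Q_0}\!\left(\widetilde{C}\big|_{E_P}\right)$ doubles, so the correction term $F_{Q_0}$ becomes $2\cdot\frac{44-5d}{64}=\frac{44-5d}{32}$, and
$$
S\big(W^{S,E_P}_{\bullet,\bullet,\bullet};Q_0\big)=\frac{5d}{32}+\frac{44-5d}{32}=\frac{11}{8},
$$
which is strictly greater than $1$ \emph{independently of $d$}. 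The $d$-dependence cancels exactly. Consequently the Abban--Zhuang minimum equals $\min\!\left\{\tfrac{16}{5},\tfrac{64}{44+5d},\tfrac{8}{11}\right\}=\tfrac{8}{11}<1$ for every $d\in\{1,2,3\}$, and no contradiction with $\delta_P(X)\leqslant 1$ is obtained. Nothing special happens at $d=3$, and no "clean factor of $v$" phenomenon distinguishes the degrees.

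The missing piece is a deeper flag. To exclude $d\in\{1,2\}$ (once the cusp has been established), the paper passes through the minimal log resolution of the cusp and contracts the intermediate $(-2)$- and $(-3)$-type configuration, arriving at a surface $\mathscr{S}$ with an exceptional curve $\mathscr{F}$ of log discrepancy $A_S(\mathscr{F})=5$ carrying two cyclic quotient singularities $\tfrac{1}{2}(1,1)$ and $\tfrac{1}{3}(1,1)$, with $\upsilon^*(C)=\mathscr{C}+6\mathscr{F}$. Only then does the degree enter decisively: one computes $S(W_{\bullet,\bullet}^{\mathscr{S}};\mathscr{F})=\frac{66+5d}{16}$, which is $<5$ for $d\in\{1,2\}$ but $>5$ for $d=3$, and at $Q=\mathscr{C}\cap\mathscr{F}$ the inequality $S(W^{\mathscr{S},\mathscr{F}}_{\bullet,\bullet,\bullet};Q)\leqslant\frac{11}{16}<1=1-\mathrm{ord}_Q(\Delta_{\mathscr{F}})$ yields the contradiction. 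Without some such second step your argument cannot separate the degrees.
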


\begin{proof}
By Lemma~\ref{lemma:2-1-S-C-smooth-P}, the curve $C$ is singular at the point $P$.

Now, let $\sigma\colon\widetilde{S}\to S$ be the~blow up of the~point $P$, let $\mathbf{f}$ be the~$\sigma$-exceptional curve,
and let $\widetilde{\mathbf{e}}_1,\ldots,\widetilde{\mathbf{e}}_d,\widetilde{C}$ be the~proper transforms on $\widetilde{S}$ of the~curves $\mathbf{e}_1,\ldots,\mathbf{e}_d,C$, respectively.
Then  the~curve $\widetilde{C}$ is smooth, and it follows from \cite[Remark~1.113]{Book} that
$$
1\geqslant \frac{A_X(\mathbf{F})}{S_X(\mathbf{F})}\geqslant\min\left\{\inf_{O\in\mathbf{f}}\frac{1}{S(W_{\bullet,\bullet,\bullet}^{\widetilde{S},\mathbf{f}};O)},
\frac{2}{S(V_{\bullet,\bullet}^{S};\mathbf{f})},\frac{1}{S_X(S)}\right\},
$$
where $S(W_{\bullet, \bullet,\bullet}^{\widetilde{S},\mathbf{f}};O)$ and $S(V_{\bullet,\bullet}^{S};\mathbf{f})$
are defined in \cite[\S~1.7]{Book}.
Since we know that $S_X(S)<1$, we see that $S(V_{\bullet,\bullet}^{S};\mathbf{f})\geqslant 2$ or
there exists a~point $O\in\mathbf{f}$ such that $S(W_{\bullet,\bullet,\bullet}^{\widetilde{S},\mathbf{f}};O)\geqslant 1$.

Let us compute $S(V_{\bullet,\bullet}^{S};\mathbf{f})$.
Fix $u\in[0,1]$ and $v\in\mathbb{R}_{\geqslant 0}$.
Since $\sigma^*(C)\sim\widetilde{C}+2\mathbf{f}$, we get
$$
\sigma^*\big((-K_X-uS)\vert_{S}\big)-v\mathbf{f}\sim_{\mathbb{R}}(2-u)\widetilde{C}+(4-2u-v)\mathbf{f}+(1-u)\sum_{i=1}^{d}\widetilde{\mathbf{e}}_i.
$$
Then the divisor $\sigma^*((-K_X-uS)\vert_{S})-v\mathbf{f}$ is pseudoeffective $\iff$ $v\leqslant 4-2u$.

Let $P(u,v)$ be the~positive part of the~Zariski decomposition of $\sigma^*((-K_X-uS)\vert_{S})-v\mathbf{f}$,
and let $N(u,v)$ be its~negative part.
Then
$$
P(u,v)=\left\{\aligned
&(2-u)\widetilde{C}+(4-2u-v)\mathbf{f}+(1-u)\sum_{i=1}^{d}\widetilde{\mathbf{e}}_i\ \text{if $0\leqslant v\leqslant \frac{d-du}{2}$}, \\
&\frac{8+d-4u-du-2v}{4}\widetilde{C}+(4-2u-v)\mathbf{f}+(1-u)\sum_{i=1}^{d}\widetilde{\mathbf{e}}_i\ \text{if $\frac{d-du}{2}\leqslant v\leqslant \frac{4+d-du}{2}$}, \\
&\frac{4-2u-v}{4-d}\Big(2\widetilde{C}+(4-d)\mathbf{f}+2\sum_{i=1}^{d}\widetilde{\mathbf{e}}_i\Big)\ \text{if $\frac{4+d-du}{2}\leqslant v\leqslant 4-2u$},
\endaligned
\right.
$$
and
$$
N(u,v)=\left\{\aligned
&0\ \text{if $0\leqslant v\leqslant \frac{d-du}{2}$}, \\
&\frac{2v+du-d}{4}\widetilde{C}\ \text{if $\frac{d-du}{2}\leqslant v\leqslant \frac{4+d-du}{2}$}, \\
&\frac{2v+du-2d}{4-d}\widetilde{C}+\frac{2v+du-4-d}{4-d}\sum_{i=1}^{d}\widetilde{\mathbf{e}}_i\ \text{if $\frac{4+d-du}{2}\leqslant v\leqslant 4-2u$}.
\endaligned
\right.
$$
Thus, using \cite[Corollary~1.109]{Book}, we get
\begin{multline*}
S\big(W_{\bullet,\bullet}^S;\mathbf{f}\big)=\frac{3}{4d}\int_0^1\int_0^{\frac{d-du}{2}}\big(du^2-4du-v^2+3d\big)dvdu+\\
+\frac{3}{4d}\int_0^1\int_{\frac{d-du}{2}}^{\frac{4+d-du}{2}}\frac{d(1-u)(12-du+d-4u-4v)}{4}dvdu+\\
+\frac{3}{4d}\int_0^1\int_{\frac{4+d-du}{2}}^{4-2u}\frac{d(4-2u-v)^2}{4-d}dvdu=\frac{44+5d}{32}<2.
\end{multline*}
Therefore, there exists a~point $O\in\mathbf{f}$ such that $S(W_{\bullet,\bullet,\bullet}^{\widetilde{S},\mathbf{f}};O)\geqslant 1$.

Let us compute $S(W_{\bullet,\bullet,\bullet}^{\widetilde{S},\mathbf{f}};O)$. Observe that
$$
P(u,v)\cdot\mathbf{f}=\left\{\aligned
&v\ \text{if $0\leqslant v\leqslant \frac{d-du}{2}$}, \\
&\frac{d-du}{2}\ \text{if $\frac{d-du}{2}\leqslant v\leqslant \frac{4+d-du}{2}$}, \\
&\frac{d(4-2u-v)}{4-d}\ \text{if $\frac{4+d-du}{2}\leqslant v\leqslant 4-2u$}.
\endaligned
\right.
$$
Hence, it follows from \cite[Remark~1.113]{Book} that
\begin{multline*}
\quad\quad\quad\quad S(W_{\bullet,\bullet,\bullet}^{\widetilde{S},\mathbf{f}};O)=\frac{3}{4d}\int_0^1\int_0^{4-2u}\Big(\big(P(u,v)\cdot\mathbf{f}\big)\Big)^2dvdu+\\
+\frac{6}{4d}\int_0^1\int_0^{4-2u}\big(P(u,v)\cdot\mathbf{f}\big)\mathrm{ord}_O\big(N(u,v)\big|_\mathbf{f}\big)dvdu=\\
=\frac{3}{4d}\int_0^1\int_0^{\frac{d-du}{2}}v^2dvdu+\\
+\frac{3}{4d}\int_{\frac{d-du}{2}}^{\frac{4+d-du}{2}}\Big(\frac{d-du}{2}\Big)^2dvdu+
\frac{3}{4d}\int_0^1\int_{\frac{4+d-du}{2}}^{4-2u}\Big(\frac{d(4-2u-v)}{4-d}\Big)^2dvdu+\\
+\frac{6}{4d}\int_0^1\int_0^{4-2u}\big(P(u,v)\cdot\mathbf{f}\big)\mathrm{ord}_O\big(N(u,v)\big|_\mathbf{f}\big)dvdu=\\
=\frac{5d}{32}+\frac{3}{2}\int_0^1\int_0^{4-2u}\big(P(u,v)\cdot\mathbf{f}\big)\mathrm{ord}_O\big(N(u,v)\big|_\mathbf{f}\big)dvdu.\quad\quad\quad
\end{multline*}
Therefore, if $O\not\in\widetilde{C}$, we obtain $S(W_{\bullet,\bullet,\bullet}^{\widetilde{S},\mathbf{f}};O)=\frac{5d}{32}$,
which contradicts to $S(W_{\bullet,\bullet,\bullet}^{\widetilde{S},\mathbf{f}};O)\geqslant 1$.
Similarly, if $O\in\widetilde{C}$ and $\widetilde{C}$ intersects the~curve $\mathbf{f}$ transversally at the~point $O$,
then
$$
S(W_{\bullet,\bullet,\bullet}^{\widetilde{S},\mathbf{f}};O)=\frac{5d}{32}+\frac{6}{4d}\int_0^1\int_0^{4-2u}\big(P(u,v)\cdot\mathbf{f}\big)\mathrm{ord}_O\big(N(u,v)\big|_\mathbf{f}\big)dvdu=\frac{44+5d}{64}<1.
$$
which again contradicts $S(W_{\bullet,\bullet,\bullet}^{\widetilde{S},\mathbf{f}};O)\geqslant 1$.
Therefore, the~curves $\widetilde{C}$ and $\mathbf{f}$ are tangent at~$O$,
which implies that $C$ has a~cusp at the~point $P$.

Thus, to proceed, we may assume that $d=1$ or $d=2$.

Now, let us consider the~following commutative diagram:
$$
\xymatrix{
\widetilde{S}\ar@{->}[d]_{\sigma}&&\widehat{S}\ar@{->}[ll]_{\rho}&&\overline{S}\ar@{->}[ll]_{\eta}\ar@{->}[d]^{\psi}\\%
S &&&& \mathscr{S}\ar@{->}[llll]_{\upsilon}}
$$
where
\begin{itemize}
\item $\rho$ is the~blow up of the~point $\widetilde{C}\cap\mathbf{f}$,
\item $\eta$ is the~blow up of the~intersection point of the~$\rho$-exceptional curve
and the proper transform of the~curve $\widetilde{C}$,
\item $\psi$ is the~contraction of the~proper transforms of both~$(\sigma\circ\rho)$-exceptional curves,
\item $\upsilon$ is the~birational contraction of the~proper transform of the~$\eta$-exceptional curve.
\end{itemize}
Let~$\mathscr{F}$ be the~$\upsilon$-exceptional curve, let $\mathscr{C}$ be the~proper transform on $\mathscr{S}$ of the~curve~$C$.
Then  $\mathscr{F}$ and $\mathscr{C}$ are smooth,
$\mathscr{C}^2=-6$, $\mathscr{F}^{2}=-\frac{1}{6}$, $\mathscr{C}\cdot\mathscr{F}=1$, and $\upsilon^*(C)=\mathscr{C}+6\mathscr{F}$.

Observe that $\mathscr{F}$ contains two singular points of the~surfaces $\mathscr{S}$,
which are quotient singular points of type $\frac{1}{2}(1,1)$ and $\frac{1}{3}(1,1)$.
Denote these points by $Q_2$ and $Q_3$, respectively.
Note that  $\mathscr{C}$ does not contain $Q_2$ and $Q_3$.
Write $\Delta_{\mathscr{F}}=\frac{1}{2}Q_2+\frac{2}{3}Q_3$.
Then, since $A_S(\mathscr{F})=5$, it follows from \cite[Remark~1.113]{Book} that
$$
1\geqslant \frac{A_X(\mathbf{F})}{S_X(\mathbf{F})}\geqslant\min\left\{\inf_{Q\in\mathscr{F}}\frac{1-\mathrm{ord}_{Q}(\Delta_{\mathscr{F}})}{S(W_{\bullet,\bullet,\bullet}^{\mathscr{S},\mathscr{F}};Q)},
\frac{5}{S(V_{\bullet,\bullet}^{S};\mathscr{F})},\frac{1}{S_X(S)}\right\}.
$$
But we already proved that $S_X(S)<1$. Hence, we conclude that $S(V_{\bullet,\bullet}^{S};\mathscr{F})\geqslant 5$ or
there exists a~point $Q\in\mathscr{F}$ such that
$S(W_{\bullet,\bullet,\bullet}^{\mathscr{S},\mathscr{F}};Q)\geqslant 1-\mathrm{ord}_{Q}(\Delta_{\mathscr{F}})$.

Let us compute $S(V_{\bullet,\bullet}^{S};\mathscr{F})$. Take $v\in\mathbb{R}_{\geqslant 0}$. Set $P(u)=-K_X-u S$. Then
$$
\upsilon^*\big(P(u)|_S\big)-v\mathscr{F}\sim_{\mathbb{R}}(2-u)\mathscr{C}+(1-u)\sum_{i=1}^{d}\mathscr{E}_i+(12-6u-v)\mathscr{F},
$$
where $\mathscr{E}_i$ is the~proper transform on $\mathscr{S}$ of the~$(-1)$-curve $\mathbf{e}_i$.
Using this, we conclude that the~divisor $\upsilon^*(P(u)|_S)-v\mathscr{F}$ is pseudoeffective $\iff$ $v\leqslant 12-6u$.

Let $\mathscr{P}(u,v)$ be the~positive part of the~Zariski decomposition of $\upsilon^*(P(u)|_S)-v\mathscr{F}$,
and let $\mathscr{N}(u,v)$ be its~negative part.
Then
$$
\mathscr{P}(u,v)=\left\{\aligned
&(2-u)\mathscr{C}+(1-u)\sum_{i=1}^{d}\mathscr{E}_i+(12-6u-v)\mathscr{F}\ \text{if $0\leqslant v\leqslant d(1-u)$}, \\
&\frac{12+d-(6+d)u-v}{6}\mathscr{C}+(1-u)\sum_{i=1}^{d}\mathscr{E}_i+(12-6u-v)\mathscr{F}\ \text{if $d(1-u)\leqslant v\leqslant 6+d-du$}, \\
&\frac{12-6u-v}{6-d}\Big(\mathscr{C}+\sum_{i=1}^{d}\mathscr{E}_i+(6-d)\mathscr{F}\Big)\ \text{if $6+d-du\leqslant v\leqslant 12-6u$},
\endaligned
\right.
$$
and
$$
\mathscr{N}(u,v)=\left\{\aligned
&0\ \text{if $0\leqslant v\leqslant d(1-u)$}, \\
&\frac{v+d(u-1)}{6}\mathscr{C}\ \text{if $d(1-u)\leqslant v\leqslant 6+d-du$}, \\
&\frac{v+du-2d}{6-d}\mathscr{C}+\frac{v+du-6-d}{6-d}\sum_{i=1}^{d}\mathscr{E}_i\ \text{if $6+d-du\leqslant v\leqslant 12-6u$}.
\endaligned
\right.
$$
This gives
$$
\mathscr{P}(u,v)^2=\left\{\aligned
&\frac{6du^2-24du-v^2+18d}{6}\ \text{if $0\leqslant v\leqslant d(1-u)$}, \\
&\frac{d(1-u)(18-(d+6)u+d-2v)}{6}\ \text{if $d(1-u)\leqslant v\leqslant 6+d-du$}, \\
&\frac{d(12-6u-v)^2}{6(6-d)}\ \text{if $6+d-du\leqslant v\leqslant 12-6u$}.
\endaligned
\right.
$$
Thus, using \cite[Corollary~1.109]{Book} and integrating, we get
$$
S(W_{\bullet,\bullet}^\mathscr{S};\mathscr{F})=\frac{3}{4d}\int_0^1\int_0^{12-6u}\mathscr{P}(u,v)^2dvdu=\frac{66+5d}{16}\leqslant\frac{19}{4}<5=A_S(\mathscr{F}),
$$
since $d=1$ or $d=2$.
Thus, there is a~point $Q\in\mathscr{F}$ such that $S(W_{\bullet,\bullet,\bullet}^{\mathscr{S},\mathscr{F}};Q)\geqslant 1-\mathrm{ord}_{Q}(\Delta_{\mathscr{F}})$.

Now, using \cite[Remark~1.103]{Book} again, we see that
\begin{multline*}
\quad \quad \quad \quad \quad S\big(W_{\bullet,\bullet,\bullet}^{\mathscr{S},\mathscr{F}};Q\big)=\frac{3}{4d}\int_0^1\int_0^{12-6u}\Big(\big(\mathscr{P}(u,v)\cdot\mathscr{F}\big)\Big)^2dvdu+\\
+\frac{6}{4d}\int_0^1\int_0^{12-6u}\big(P(u,v)\cdot\mathscr{F}\big)\mathrm{ord}_Q\big(\mathscr{N}(u,v)\big|_{\mathscr{F}}\big)dvdu.\quad \quad \quad \quad \quad
\end{multline*}
On the~other hand, we have
$$
\mathscr{P}(u,v)\cdot\mathscr{F}=\left\{\aligned
&\frac{v}{6}\ \text{if $0\leqslant v\leqslant d(1-u)$}, \\
&\frac{d(1-u)}{6}\ \text{if $d(1-u)\leqslant v\leqslant 6+d-du$}, \\
&\frac{d(12-6u-v)}{6(6-d)}\ \text{if $6+d-du\leqslant v\leqslant 12-6u$},
\endaligned
\right.
$$
and
$$
\mathscr{N}(u,v)\cdot\mathscr{F}=\left\{\aligned
&0\ \text{if $0\leqslant v\leqslant d(1-u)$}, \\
&\frac{v-d(1-u)}{6}\ \text{if $d(1-u)\leqslant v\leqslant 6+d-du$}, \\
&\frac{v+du-2d}{6-d}\ \text{if $6+d-du\leqslant v\leqslant 12-6u$}.
\endaligned
\right.
$$
In particular, we have
$$
S(W_{\bullet,\bullet,\bullet}^{\mathscr{S},\mathscr{F}};Q)=\frac{5d}{96}+\frac{6}{4d}\int_0^1\int_0^{12-6u}\big(P(u,v)\cdot\mathscr{F}\big)\mathrm{ord}_Q\big(\mathscr{N}(u,v)\big|_{\mathscr{F}}\big)dvdu.
$$
Hence, if $Q\not\in\mathscr{C}$, then
$\frac{1}{3}\leqslant 1-\mathrm{ord}_{Q}(\Delta_{\mathscr{F}})\leqslant S(W_{\bullet,\bullet,\bullet}^{\mathscr{S},\mathscr{F}};Q)=\frac{5d}{96}<\frac{1}{3}$,
which is absurd.
Thus, we conclude that $Q=\mathscr{C}\cap\mathscr{F}$. Then
\begin{multline*}
S(W_{\bullet,\bullet,\bullet}^{\mathscr{S},\mathscr{F}};Q)=\frac{5d}{96}+\frac{6}{4d}\int_0^1\int_0^{12-6u}\big(P(u,v)\cdot\mathscr{F}\big)\mathrm{ord}_Q\big(\mathscr{N}(u,v)\big|_{\mathscr{F}}\big)dvdu\leqslant\\
\leqslant\frac{5d}{96}+\frac{6}{4d}\int_0^1\int_0^{12-6u}\big(P(u,v)\cdot\mathscr{F}\big)\big(\mathscr{N}(u,v)\cdot\mathscr{F}\big)dvdu=\frac{11}{16}<1,
\end{multline*}
which is a~contradiction,
since $S(W_{\bullet,\bullet,\bullet}^{\mathscr{S},\mathscr{F}};Q)\geqslant 1-\mathrm{ord}_{Q}(\Delta_{\mathscr{F}})=1$.
\end{proof}

In particular, we conclude that either $d=2$ or $d=3$.

\begin{corollary}
\label{corollary:2-1-C-nodal}
All smooth Fano threefolds in the~family \textnumero 2.1 are K-stable.
\end{corollary}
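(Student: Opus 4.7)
The plan is to observe that for members of family \textnumero 2.1, the value of $d$ is $1$, and the lemmas just established already rule out the existence of a destabilizing divisor in this case. The strategy is purely organizational: assemble the contradictions furnished by Lemmas~\ref{lemma:2-1-S-C-smooth-P} and~\ref{lemma:2-1-S-C-reducible} and note that they leave no possibility open when $d=1$.

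In detail, I would argue as follows. Suppose $X$ lies in family \textnumero 2.1 and is not K-stable. Then the valuative criterion produces a prime divisor $\mathbf{F}$ over $X$ with $\beta(\mathbf{F})\leqslant 0$; let $Z$ be its center and $P\in Z$. By Lemma~\ref{lemma:2-1-P-in-E-d-3} we have $P\notin E$, and by Corollaries~\ref{corollary:2-1-2-3-2-5-del-Pezzo-delta} and~\ref{corollary:2-1-2-3-2-5-A-singular} the fiber $A$ through $P$ is a singular del Pezzo surface of degree $d=1$ with at worst Du Val singularities. Take a general surface $\overline{S}\in|H|$ through $\pi(P)$ and its proper transform $S$, and set $C=A|_S\in|-K_S|$. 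Since $d=1$, the curve $C$ has at most $d=1$ irreducible components, so $C$ is irreducible.

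Now I would invoke Lemma~\ref{lemma:2-1-S-C-smooth-P} to conclude that $C$ is singular at $P$. At this point Lemma~\ref{lemma:2-1-S-C-reducible} applies: an irreducible $C$ forces $d=3$, which contradicts $d=1$. This contradiction shows no such $\mathbf{F}$ exists, so $X$ is K-stable.

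I do not expect any genuine obstacle here, since every computational input has been carried out in the preceding lemmas; the only task is to chain them together and record that the case $d=1$ is vacuous. The real technical work lies in Lemmas~\ref{lemma:2-1-S-C-smooth-P} and~\ref{lemma:2-1-S-C-reducible}, whose Zariski decompositions and Abban--Zhuang estimates have already been performed above. Consequently the proof of the corollary itself can be written in just a few lines.
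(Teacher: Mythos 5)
Your proposal is correct and follows essentially the same route as the paper: for $d=1$ the curve $C=A|_S$ has at most one component and is hence irreducible, so Lemma~\ref{lemma:2-1-S-C-reducible} forces $d=3$, a contradiction. The only minor inaccuracy is your parenthetical claim that $A$ has ``at worst Du Val singularities'' --- the paper only establishes that $A$ is normal with isolated singularities at this stage, and this stronger assertion is not used in the chain of deductions anyway.
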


Recall that $A$ is the~fiber of the~del Pezzo fibration $\phi\colon X\to\mathbb{P}^1$ that passes through~$P$.
Note also that we have the~following possibilities:
\begin{itemize}
\item $d=2$, and $A$ is a~double cover of $\mathbb{P}^2$ branched over a~reduced quartic curve;
\item $d=3$, and $A$ is a~normal cubic surface in $\mathbb{P}^3$.
\end{itemize}
Observe that $C=S\cap A$, where $S$ is a~general surface in $|\pi^*(H)|$ that contains the~point~$P$.
Since $C$ is singular at $P$, the surface $A$ must be singular at $P$, which confirms Corollary~\ref{corollary:2-1-2-3-2-5-A-singular}.
Now, using classifications of reduced singular plane quartic curves and singular normal cubic surfaces \cite{BruceWall},
we see that $P=\mathrm{Sing}(A)$, and one of the following three cases holds:
\begin{itemize}
\item $d=2$, and $A$ is a~double cover of $\mathbb{P}^2$ branched
over $4$ lines intersecting in a point;
\item $d=3$, and $A$ is a~cone in $\mathbb{P}^3$ over a smooth plane cubic curve;
\item $d=3$, and $A$ has Du Val singular point of type $\mathbb{D}_4$, $\mathbb{D}_5$, or $\mathbb{E}_6$.
\end{itemize}
Let us show that the first case is impossible.

\begin{lemma}
\label{lemma:2-1-d-2}
One has $d=3$.
\end{lemma}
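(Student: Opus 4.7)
Suppose for contradiction that $d=2$. Then, by the classification recalled above, $A$ is a double cover of $\mathbb P^2$ branched over $4$ lines passing through the image $\bar{P}\in\mathbb P^2$ of $\pi(P)$, and $A$ has a simple elliptic (non--Du Val) singularity at $P$. In particular $\delta_P(A)=0$, so Corollary~\ref{corollary:2-1-2-3-2-5-del-Pezzo-delta} is vacuous and Lemma~\ref{lemma:2-1-2-3-2-5-del-Pezzo-delta} does not apply; we must work directly on $X$.

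The plan is to run Abban--Zhuang on the smooth surface $S$ rather than on the singular $A$. In this case the curve $C=A|_S$ splits as $C=L_1+L_2$, where $L_1$ and $L_2$ are smooth rational curves tangent at $P$ with intersection multiplicity~$2$: each $L_i$ is the proper transform on $S$ of one of the two components $\bar L_i$ of the reducible anticanonical divisor $\hat S\cap\overline S\in|{-K_{\overline S}}|$, where $\hat S=\pi(A)\in|H|$. A short computation on $S$ (using that each $\bar L_i$ is a $(-1)$-curve on $\overline S$ through exactly one of the two blown-up points of $\overline S\cap\mathcal C$) gives
\[
L_1^2=L_2^2=-2,\qquad L_1\cdot L_2=2,\qquad L_i\cdot\mathbf{e}_j=\delta_{ij},\qquad L_1+L_2\sim -K_S.
\]
Applying \cite[Theorem~1.7.30]{ACCFKMGSSV} to the flag $S\supset L_1\supset P$ yields
\[
\delta_P(X)\geqslant\min\left\{\tfrac{1}{S_X(S)},\ \tfrac{1}{S(W^{S}_{\bullet,\bullet};L_1)},\ \tfrac{1}{S(W^{S,L_1}_{\bullet,\bullet,\bullet};P)}\right\}.
\]
Since $S_X(S)=\tfrac{5}{16}<1$ is already known, it suffices to show $S(W^{S}_{\bullet,\bullet};L_1)<1$ and $S(W^{S,L_1}_{\bullet,\bullet,\bullet};P)<1$; both inequalities would give $\delta_P(X)>1$ and contradict $\beta(\mathbf F)\leqslant 0$.

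To compute these invariants, I would carry out the Zariski decomposition of
\[
(-K_X-uS)|_S - vL_1=(2-u-v)L_1+(2-u)L_2+(1-u)\mathbf{e}_1+(1-u)\mathbf{e}_2
\]
for $u\in[0,1]$ and $v\geqslant 0$. Using the intersection numbers above, the decomposition changes regime at $v=\tfrac{1-u}{2}$ (when $L_2$ enters the negative part), and again at $v=1$ (when $\mathbf{e}_1$ enters), before reaching the pseudo-effective boundary. The main obstacle is the bookkeeping across these regimes: although $S$ is smooth and the intersection form is fully controlled by the five numbers above, the resulting positive part $P(u,v)$ is piecewise linear in $(u,v)$ with three or four pieces, and the integrals for $S(W^{S}_{\bullet,\bullet};L_1)$ and $S(W^{S,L_1}_{\bullet,\bullet,\bullet};P)$ split accordingly. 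Still, every step is a routine polynomial computation, and the outcome should be numerical inequalities in the same spirit as (and sharper than) those appearing in the proof of Lemma~\ref{lemma:2-1-S-C-smooth-P}.
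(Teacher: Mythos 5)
Your geometric setup for $d=2$ is correct: the surface $A$ has a simple elliptic singularity at $P$, and the curve $C=A|_S$ is a tacnode, $C=L_1+L_2$ with $L_1,L_2$ smooth rational curves meeting only at $P$ with local intersection multiplicity two; the intersection numbers $L_i^2=-2$, $L_1\cdot L_2=2$, $L_i\cdot\mathbf{e}_j=\delta_{ij}$, $L_1+L_2\sim -K_S$ all check out. But the final step does not: precisely because $L_2$ is tangent to $L_1$ at $P$, one has $\mathrm{ord}_P\big(L_2|_{L_1}\big)=2$, so once $L_2$ enters the negative part of the Zariski decomposition the correction term $F_P$ in $S\big(W^{S,L_1}_{\bullet,\bullet,\bullet};P\big)$ is large. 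Carrying the Zariski decomposition of $(-K_X-uS)|_S-vL_1$ through all regimes (there are in fact \emph{four}: besides your breaks at $v=\tfrac{1-u}{2}$ and $v=1$, a further break at $v=\tfrac{3-u}{2}$ and pseudoeffective threshold at $v=2-u$), one obtains
\[
S\big(W^{S}_{\bullet,\bullet};L_1\big)=\tfrac{49}{64}<1
\qquad\text{but}\qquad
S\big(W^{S,L_1}_{\bullet,\bullet,\bullet};P\big)=\tfrac{49}{32}>1,
\]
so the flag $S\supset L_1\supset P$ does \emph{not} give $\delta_P(X)>1$.

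To rescue the $S$-flag strategy you would need to blow up $P$ on $S$ (and likely once more, or perform a weighted blow-up) to separate the two tangent branches, in the style of the cuspidal analysis in Lemma~\ref{lemma:2-1-S-C-reducible}; that extra tower is where the real work lies, and you have not sketched it. The paper sidesteps the tacnode entirely by working on the threefold: it blows up $X$ at $P$, then blows up the line $\widetilde A\cap E_P$ along which the strict transform $\widetilde A$ is singular, and contracts the strict transform of the first exceptional plane. This produces a single divisorial extraction $\sigma\colon\widehat X\to X$ whose exceptional surface is the quadric cone $G\cong\mathbb P(1,1,2)$ with $A_X(G)=4$ and $S_X(G)=\tfrac{27}{8}$, and the contradiction is then obtained by estimating $\delta_Q\big(G,V^G_{\bullet,\bullet}\big)$ for all $Q\in G$ via \cite[Corollary~4.18]{Fujita2021}. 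That is a genuinely different route.
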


\begin{proof}
Suppose that $d=2$.
Then  $P=\mathrm{Sing}(A)$, and $A$ is a~double cover of $\mathbb{P}^2$ branched
over a~reduced reducible plane quartic curve that is a~union of $4$ distinct lines passing through one point.
Let us seek for a contradiction.

Let $\alpha\colon\widetilde{X}\to X$ be the~blow up of the~point $P$,
let $E_P$ be the~$\alpha$-exceptional divisor, and let $\widetilde{A}$ be the~proper transform on $\widetilde{X}$ of the~surface $A$.
Then  $\widetilde{A}\cap E_P$ is a~line $L\subset E_P\cong\mathbb{P}^2$,
and the~surface $\widetilde{A}$ is singular along this line.
Let $\beta\colon\overline{X}\to\widetilde{X}$ be the~blow up of the~line $L$,
let~$E_L$ be the~$\beta$-exceptional divisor,
let $\overline{A}$ be the~proper transform on $\overline{X}$ of the~surface~$\widetilde{A}$,
and let $\overline{E}_P$ be the~proper transforms on $\overline{X}$  of the~surface $E_P$.
Then
\begin{itemize}
\item $E_L\cong\mathbb{F}_2$,
\item the~intersection $\overline{E}_P\cap E_L$ is the~$(-2)$-curve in $E_L$,
\item the~surface $\overline{A}$ is smooth, and there~exists a~$\mathbb{P}^1$-bundle $\overline{A}\to\mathcal{C}$,
\item $\overline{A}\cap E_L$ is a~smooth elliptic curve that is a~section of the~$\mathbb{P}^1$-bundle $\overline{A}\to\mathcal{C}$,
\item the~surfaces $\overline{A}$ and $\overline{E}_P$ are disjoint,
\item $\overline{E}_P\cong\mathbb{P}^2$ and $\overline{E}_P\vert_{\overline{E}_P}\cong\mathcal{O}_{\mathbb{P}^2}(-2)$.
\end{itemize}
There is a~birational contraction $\gamma\colon\overline{X}\to\widehat{X}$ of the surface $\overline{E}_P$ such that
$\widehat{X}$ is a~projective threefold that has one singular point $O=\gamma(\overline{E}_P)$,
which is a~terminal cyclic quotient singularities of type $\frac{1}{2}(1,1,1)$.
Thus, there exists the~following commutative diagram
$$\xymatrix{
\overline{X}\ar[rr]^{\beta}\ar[d]_{\gamma}&&\widetilde{X}\ar[d]^{\alpha}\\
\widehat{X}\ar[rr]_{\sigma}&&X}
$$
where $\sigma$ is a~birational morphism that contracts the~surface $\gamma(E_L)$ to the~point $P$.

Let $G=\gamma(E_L)$, let $\widehat{A}=\gamma(\overline{A})$, and  let $\widehat{E}$ be the~proper transform on $\widehat{X}$ of the~surface~$E$.
Then  $A_{X}(G)=4$. Moreover, we have
\begin{align*}
\sigma^{*}(-K_X)&\sim 2\widehat{A}+\widehat{E}+8G, \\
\sigma^{*}(A)&\sim\widehat{A}+4G.
\end{align*}

Note that $\widehat{A}\cong\overline{A}$ and $G\cong\mathbb{P}(1,1,2)$,
so we can identify $G$ with a~quadric cone in~$\mathbb{P}^3$.
Note also that $O$ is the~vertex of the~cone $G$. Moreover, by construction, we have $O\not\in\widehat{A}$.
Furthermore, the~exists a~$\mathbb{P}^1$-bundle $\widehat{A}\to\mathcal{C}$
such that $G\vert_{\widehat{A}}$ is its section.

Let $\mathbf{g}$ be a~ruling of the~quadric cone $G$, let $\mathbf{l}$ be a~fiber of the~$\mathbb{P}^1$-bundle $\widehat{A}\to\mathcal{C}$,
and let~$\mathbf{f}$ be a~fiber of the~$\mathbb{P}^1$-bundle $\pi\circ\sigma\vert_{\widehat{E}}\colon\widehat{E}\to\mathcal{C}$.
Then  $G\vert_{G}\sim_{\mathbb{Q}}-\mathbf{g}$ and $\widehat{A}\vert_{G}\sim_{\mathbb{Q}}4\mathbf{g}$.
Moreover, the~intersections of the~surfaces $G$, $\widehat{A}$, $\widehat{E}$ with  the~curves $\mathbf{g}$, $\mathbf{l}$, $\mathbf{f}$
are given here:
\begin{center}
\renewcommand\arraystretch{1.4}
\begin{tabular}{|c||c|c|c|}
\hline & $G$ & $\widehat{A}$ & $\widehat{E}$ \\
\hline
\hline
$\mathbf{g}$ & $-\frac{1}{2}$ & 2 & 0 \\
\hline
$\mathbf{l}$ & 1 & $-4$ & 1 \\
\hline
$\mathbf{f}$ & 0 & 1 & $-1$ \\
\hline
\end{tabular}
\end{center}

Fix a~non-negative real number $u$.
We have $\sigma^{*}(-K_X)-uG\sim_{\mathbb{R}} 2\widehat{A}+\widehat{E}+(8-u)G$,
which implies that  $\sigma^{*}(-K_X)-uG$ is pseudo-effective $\iff$ $u\in[0,8]$.
Furthermore, if~$u\in[0,8]$, then the~Zariski decomposition of the~divisor $\sigma^{*}(-K_X)-uG$ can be described as follows:
$$
P(u)=\left\{\aligned
&2\widehat{A}+\widehat{E}+(8-u)G\ \text{if $0\leqslant u\leqslant 1$}, \\
&\frac{9-u}{4}\widehat{A}+\widehat{E}+(8-u)G \ \text{if $1\leqslant u\leqslant 5$}, \\
&\frac{8-u}{3}\big(\widehat{A}+\widehat{E}+3G\big)\ \text{if $5\leqslant u\leqslant 8$}, \\
\endaligned
\right.
$$
and
$$
N(u)=\left\{\aligned
&0\ \text{if $0\leqslant u\leqslant 1$}, \\
&\frac{u-1}{4}\widehat{A}\ \text{if $1\leqslant u\leqslant 5$}, \\
&\frac{u-2}{3}\widehat{A}+\frac{u-5}{3}\widehat{E}\ \text{if $5\leqslant u\leqslant 8$}, \\
\endaligned
\right.
$$
where $P(u)$ and $N(u)$ are the~positive and the~negative parts of the~decomposition. Then
$$
\mathrm{vol}\big(\sigma^{*}(-K_X)-uG\big)=P(u)^3=\left\{\aligned
&8-\frac{u^3}{8}\ \text{if $0\leqslant u\leqslant 1$}, \\
&\frac{18-3u}{2}\ \text{if $1\leqslant u\leqslant 5$}, \\
&\frac{(8-u)^{3}}{18}\ \text{if $5\leqslant u\leqslant 8$}. \\
\endaligned
\right.
$$
Integrating, we get $S_{X}(G)=\frac{27}{8}<4=A_{X}(G)$.
But \cite[Corollary~4.18]{Fujita2021} gives
$$
1\geqslant \frac{A_X(\mathbf{F})}{S_X(\mathbf{F})}\geqslant\min\left\{\frac{A_{X}(G)}{S_{X}(G)},
\inf_{Q\in G}\delta_{Q}\big(G,V^G_{\bullet,\bullet}\big)\right\}=\min\left\{\frac{32}{27},\inf_{Q\in G}\delta_{Q}\big(G,V^G_{\bullet,\bullet}\big)\right\},
$$
where $\delta_{Q}(G,V^G_{\bullet,\bullet})$ is defined in \cite{Fujita2021}.
Moreover, if $Q$ is a~point in $G$ and $Z$ is a~smooth curve in $G$ that passes through $Q$,
then it follows from \cite[Corollary~4.18]{Fujita2021} that
$$
\delta_{Q}\big(G,V^G_{\bullet,\bullet}\big)
\geqslant\min\left\{\frac{1}{S\big(V^G_{\bullet,\bullet};Z\big)},\frac{1-\mathrm{ord}_Q(\Delta_Z)}{S\big(W^{G,Z}_{\bullet,\bullet,\bullet};Q\big)}\right\},
$$
where $S(V^G_{\bullet,\bullet};Z)$ and $S(W^{G,Z}_{\bullet,\bullet,\bullet};Q)$ are defined in \cite{Fujita2021}, and
$$
\Delta_Z=\left\{\aligned
&0\ \text{if $O\not\in Z$}, \\
&\frac{1}{2}O\ \text{if $O\in Z$}.
\endaligned
\right.
$$
Let us show that $\delta_{Q}\big(G,V^G_{\bullet,\bullet}\big)>1$ for every $Q\in G$, which would imply a contradiction.

Let $\mathscr{C}=\widehat{A}\vert_{G}$, and let $\ell$ be the~curve in $|\mathbf{f}|$ that contains $Q$.
Then  $O\not\in\mathscr{C}$ and $O\in\ell$, so that $\Delta_{\mathscr{C}}=0$ and $\Delta_{\ell}=\frac{1}{2}O$.
Take $v\in\mathbb{R}_{\geqslant 0}$. Then
$$
P(u)\big\vert_{G}-v\ell\sim_{\mathbb{R}}\left\{\aligned
&(u-v)\mathbf{g}\ \text{if $0\leqslant u\leqslant 1$}, \\
&(1-v)\mathbf{g}\ \text{if $1\leqslant u\leqslant 5$}, \\
&\frac{8-u-3v}{3}\mathbf{g}\ \text{if $5\leqslant u\leqslant 8$}.\\
\endaligned
\right.
$$
Now, using \cite[Theorem~4.8]{Fujita2021}, we get
\begin{multline*}
S\big(W^{G}_{\bullet,\bullet};\ell\big)=
\frac{3}{8}\int_0^8\int_0^\infty \mathrm{vol}\big(P(u)\big\vert_{G}-v\ell\big)dvdu=\frac{3}{8}\int_{0}^{1}\int_{0}^{u}\frac{(u-v)^{2}}{2}dvdu+\\
+\frac{3}{12}\int_{1}^{5}\int_{0}^{1}\frac{(1-v)^{2}}{4}dvdu+\frac{3}{12}\int_{5}^{8}\int_{0}^{\frac{8-u}{3}}\frac{(8-u-3v)^2}{18}dvdu=\frac{5}{16}.
\end{multline*}
Similarly, if $Q\not\in\mathscr{C}$, then it follows from \cite[Theorem~4.17]{Fujita2021} that
\begin{multline*}
S\big(W_{\bullet,\bullet,\bullet}^{G,\ell};Q\big)=
\frac{3}{8}\int_{0}^{1}\int_{0}^{u}\Big(\big(P(u)\big\vert_{G}-v\ell\big)\cdot\ell\Big)^2dvdu+
\frac{3}{8}\int_{1}^{5}\int_{0}^{1}\Big(\big(P(u)\big\vert_{G}-v\ell\big)\cdot\ell\Big)^2dvdu+\\
+\frac{3}{8}\int_{5}^{8}\int_{0}^{\frac{8-u}{3}}\Big(\big(P(u)\big\vert_{G}-v\ell\big)\cdot\ell\Big)^2dvdu+F_Q=
\frac{3}{8}\int_{0}^{1}\int_{0}^{u}\frac{(u-v)^{2}}{4}dvdu+\\
+\frac{3}{8}\int_{1}^{5}\int_{0}^{1}\frac{(1-v)^{2}}{4}dvdu+
\frac{3}{8}\int_{5}^{8}\int_{0}^{\frac{8-u}{3}}\frac{(8-u-3v)^2}{36}dvdu=\frac{5}{32}.
\end{multline*}
so that $S(W_{\bullet,\bullet,\bullet}^{G,\ell};Q)=\frac{5}{32}$,
which implies that $\delta_{Q}(G,V^G_{\bullet,\bullet})\geqslant\frac{16}{5}$. Likewise, if $Q\in\mathscr{C}$, then
\begin{multline*}
S\big(W^{G}_{\bullet,\bullet};\mathscr{C}\big)=
\frac{3}{8}\int_{0}^{8}\big(P(u)^{2}\cdot G\big)\cdot\mathrm{ord}_{\mathscr{C}}\Big(N(u)\big\vert_{G}\Big)du+
\frac{3}{8}\int_0^8\int_0^\infty \mathrm{vol}\big(P(u)\big\vert_{G}-v\mathscr{C}\big)dvdu=\\
=\frac{3}{8}\int_{1}^{5}\frac{u-1}{8}du+\frac{3}{8}\int_{5}^{8}\frac{(u-2)(8-u)^{2}}{54}du+\frac{3}{8}\int_0^1\int_{0}^{\frac{u}{4}}\frac{(u-4v)^{2}}{2}dvdu+\\
+\frac{3}{8}\int_{1}^{5}\int_{0}^{\frac{1}{4}} \frac{(1-4v)^{2}}{2}dvdu+\frac{3}{8}\int_{5}^{8}\int_{0}^{\frac{8-u}{12}}\frac{(8-u-12v)^{2}}{18}dvdu=\frac{11}{16}
\end{multline*}
and
\begin{multline*}
\quad\quad\quad\quad S\big(W_{\bullet,\bullet,\bullet}^{G,\mathscr{C}};Q\big)=
\frac{3}{8}\int_{0}^{1}\int_{0}^{\frac{u}{4}}\Big(\big(P(u)\big\vert_{G}-v\mathscr{C}\big)\cdot\mathscr{C}\Big)^2dvdu+\\
+\frac{3}{8}\int_{1}^{5}\int_{0}^{\frac{1}{4}}\Big(\big(P(u)\big\vert_{G}-v\mathscr{C}\big)\cdot\mathscr{C}\Big)^2dvdu+\frac{3}{8}\int_{5}^{8}\int_{0}^{\frac{8-u}{12}}\Big(\big(P(u)\big\vert_{G}-v\mathscr{C}\big)\cdot\mathscr{C}\Big)^2dvdu=\\
=\frac{3}{8}\int_{0}^{1} \int_{0}^{\frac{u}{4}}(2u-8v)^{2}dvdu+\frac{3}{8}\int_{1}^{5} \int_{0}^{\frac{1}{4}}(2-8v)^{2}dvdu+\\
+\frac{3}{8}\int_{5}^{8}\int_{0}^{\frac{8-u}{12}}(\frac{(16-2u-24v)^2}{9}dvdu=\frac{5}{8}.\quad\quad\quad\quad\quad
\end{multline*}
This implies that $\delta_{Q}(G,V^G_{\bullet,\bullet})\geqslant\min\left\{\frac{16}{11},\frac{8}{5}\right\}=\frac{16}{11}$,
which is a~contradiction.
\end{proof}

\begin{corollary}
\label{corollary:2-3}
All smooth Fano threefolds in the~family \textnumero 2.3 are K-stable.
\end{corollary}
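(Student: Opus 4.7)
The plan is to derive this corollary directly from the structural reductions established earlier in the section. Throughout the argument we have been reasoning by contradiction: starting from the supposition that some smooth Fano threefold $X$ in one of the families \textnumero 2.1, \textnumero 2.3, \textnumero 2.5 fails to be K-stable, the valuative criterion of Fujita--Li supplies a destabilising prime divisor $\mathbf{F}$ over $X$ with $\beta(\mathbf{F})\leqslant 0$, whose centre contains a point $P$ with $\delta_P(X)\leqslant 1$. The task is to show that the case $d = 2$ is incompatible with everything this hypothetical $(X, P)$ is now forced to satisfy.

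First I would collate the constraints accumulated so far. Lemma~\ref{lemma:2-1-P-in-E-d-3} forces $P \notin E$; Corollaries~\ref{corollary:2-1-2-3-2-5-del-Pezzo-delta} and \ref{corollary:2-1-2-3-2-5-A-singular} make the fiber $A$ of $\phi$ through $P$ a singular (hence normal) del Pezzo surface with $\delta_P(A) \leqslant \tfrac{15}{16}$; Lemmas~\ref{lemma:2-1-S-C-smooth-P} and \ref{lemma:2-1-S-C-reducible} pin down the local shape of the anticanonical curve $C = A \cap S$ at $P$. The decisive input for the corollary is Lemma~\ref{lemma:2-1-d-2}, which consolidates all of these constraints with the Bruce--Wall classification of singular cubic surfaces and singular plane quartic branch loci to conclude that $d = 3$.

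Since family \textnumero 2.3 corresponds precisely to $d = 2$, Lemma~\ref{lemma:2-1-d-2} rules out the existence of any non-K-stable $X$ in this family, and the corollary follows at once. In effect, the proof is a single appeal to Lemma~\ref{lemma:2-1-d-2} after all the preparatory analysis of Section~\ref{section:2-1}.

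The genuine difficulty lies upstream, not here. Lemma~\ref{lemma:2-1-S-C-reducible} requires a delicate Abban--Zhuang computation on the blow-up tower $\widetilde{S} \leftarrow \widehat{S} \leftarrow \overline{S}$ followed by a contraction to the surface $\mathscr{S}$ carrying $\tfrac{1}{2}(1,1)$ and $\tfrac{1}{3}(1,1)$ quotient singularities, and Lemma~\ref{lemma:2-1-d-2} itself hinges on a $\delta$-invariant analysis on the singular quadric cone $G \cong \mathbb{P}(1,1,2)$ produced by blowing up $P$, contracting $E_L$ to $O$, and computing both $S_X(G)$ and the refined invariants $S(W^G_{\bullet,\bullet};\mathscr{C})$ and $S(W^{G,\mathscr{C}}_{\bullet,\bullet,\bullet};Q)$. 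Once those two lemmas are in hand, the corollary for family \textnumero 2.3 is a one-line numerical deduction and presents no further obstacle.
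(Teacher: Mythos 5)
Your proposal is correct and follows the paper exactly: Corollary~\ref{corollary:2-3} is an immediate consequence of Lemma~\ref{lemma:2-1-d-2}, which shows $d=3$ under the standing contradiction hypothesis, and since the family \textnumero~2.3 corresponds to $d=2$ the case is simply excluded. You also correctly identify the chain of preparatory lemmas (Lemmas~\ref{lemma:2-1-P-in-E-d-3}, \ref{lemma:2-1-S-C-smooth-P}, \ref{lemma:2-1-S-C-reducible}, \ref{lemma:2-1-d-2}) that carries the real work.
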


We see that $d=3$, so that $A$ is a singular cubic surface in $\mathbb{P}^3$ such that $P=\mathrm{Sing}(A)$.
Let $\sigma\colon\widehat{X}\to X$ be the~blow up of the~point $P$, and let $G$ be the~$\sigma$-exceptional surface.
Denote by $\widehat{A}$ and $\widehat{E}$ the~proper transforms on $\widehat{X}$ of the~surfaces $A$ and $E$, respectively.

\begin{lemma}
\label{lemma:2-1-d-3}
The surface $A$ has Du Val singularities.
\end{lemma}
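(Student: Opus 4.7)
The plan is to suppose that $A$ is not Du Val and derive $\delta_P(X)>1$, contradicting the fact that $P$ lies in the center of $\mathbf{F}$. By the classification recalled just above the statement together with Lemma~\ref{lemma:2-1-d-2}, in this case $d=3$ and $A$ is the cone in $\mathbb{P}^3$ over a smooth plane cubic $\mathcal{E}$ with vertex $P$. The argument will parallel the proof of Lemma~\ref{lemma:2-1-d-2}: I apply the Abban--Zhuang refinement \cite[Corollary~4.18]{Fujita2021} to the $\sigma$-exceptional divisor $G$, which is $\mathbb{P}^2$ with $G|_G=\mathcal{O}_{\mathbb{P}^2}(-1)$, and which satisfies $A_X(G)=3$ since $X$ is smooth at $P$.

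Next I set up the intersection theory on $\widehat{X}$. Since $P\not\in E$ by Lemma~\ref{lemma:2-1-P-in-E-d-3} we have $\widehat{E}\cap G=\varnothing$, while $\mathrm{mult}_P(A)=3$ gives $\sigma^*A=\widehat{A}+3G$ and hence $\sigma^*(-K_X)-uG\sim_{\mathbb{R}}2\widehat{A}+\widehat{E}+(6-u)G$. The key geometric input is that $\widehat{A}$ is the minimal resolution $\mathbb{P}(\mathcal{O}_\mathcal{E}\oplus\mathcal{O}_\mathcal{E}(3))$ of the simple elliptic singularity, with exceptional $(-3)$-section $\mathcal{E}=\widehat{A}\cap G$ embedded in $G\cong\mathbb{P}^2$ as a smooth plane cubic, and with $\widehat{A}\cap\widehat{E}$ realized as the $(+3)$-section coming from the hyperplane section $\mathcal{C}\subset A$. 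Using a line $\mathbf{g}\subset G$, a ruling $\mathbf{l}\subset\widehat{A}$, and a $(\pi|_E)$-fiber $\mathbf{f}\subset\widehat{E}$ I assemble an intersection table (whose only subtle entries are $\widehat{A}\cdot\mathbf{l}=-3$ and $\widehat{E}\cdot\mathbf{l}=1$, forced by the preceding geometric description), from which the Zariski decomposition of $\sigma^*(-K_X)-uG$ breaks naturally into three ranges: the divisor is nef for $u\in[0,1]$; the negative part equals $\tfrac{u-1}{3}\widehat{A}$ on $[1,4]$; and $\tfrac{u-2}{2}\widehat{A}+\tfrac{u-4}{2}\widehat{E}$ on $[4,6]$, with pseudoeffective threshold $u=6$. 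Integrating the resulting volumes $12-u^3$, $14-3u$, $(6-u)^3/4$ yields $S_X(G)=\tfrac{43}{16}$.

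Since $A_X(G)/S_X(G)=\tfrac{48}{43}$ exceeds $1$ only slightly, I refine with a flag $(G,\ell)$ for an arbitrary line $\ell\subset G$ through a chosen point $Q\in G$. On the three subintervals above, $P(u)|_G$ is simply $u\ell$, $\ell$, and $\tfrac{6-u}{2}\ell$, so both integrals $S(V^G_{\bullet,\bullet};\ell)$ and $S(W^{G,\ell}_{\bullet,\bullet,\bullet};Q)$ collapse to the same elementary one-variable calculation and evaluate to $\tfrac{5}{16}$; because $\widehat{X}$ is smooth the boundary divisor appearing in \cite[Corollary~4.18]{Fujita2021} vanishes. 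Therefore $\delta_Q(G,V^G_{\bullet,\bullet})\geqslant\tfrac{16}{5}$ uniformly in $Q$, and \cite[Corollary~4.18]{Fujita2021} yields $\delta_P(X)\geqslant\min\{\tfrac{48}{43},\tfrac{16}{5}\}=\tfrac{48}{43}>1$, contradicting $\delta_P(X)\leqslant 1$. The main obstacle I anticipate is pinning down the geometry of $\widehat{A}$ inside $\widehat{X}$, in particular verifying that $\widehat{A}\cap\widehat{E}$ is the $(+3)$-section disjoint from $\mathcal{E}$ so that $\widehat{E}\cdot\mathbf{l}=1$; this is what makes the intersection table consistent with $\sigma^*(-K_X)\cdot\mathbf{l}=1$, which records the fact that $\sigma(\mathbf{l})$ is a ruling line of the cone $A$. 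Once this is in place, the Zariski decomposition and the subsequent volume integrals proceed essentially as in Lemma~\ref{lemma:2-1-d-2}.
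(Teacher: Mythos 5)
Your overall strategy is exactly the paper's: blow up $P$ to get $G\cong\mathbb{P}^2$ with $A_X(G)=3$, compute $S_X(G)=\tfrac{43}{16}$ via the Zariski decomposition with negative part $\tfrac{u-1}{3}\widehat{A}$ on $[1,4]$ and $\tfrac{u-2}{2}\widehat{A}+\tfrac{u-4}{2}\widehat{E}$ on $[4,6]$, and then apply Fujita's \cite[Corollary~4.18]{Fujita2021} to the flag $(G,\ell)$ with a general line through $Q$. Your intersection table, Zariski decomposition, volumes, and the value $S_X(G)=\tfrac{43}{16}$ are all correct.

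However, there is a genuine error in the step ``both integrals $S(V^G_{\bullet,\bullet};\ell)$ and $S(W^{G,\ell}_{\bullet,\bullet,\bullet};Q)$ collapse to the same calculation and evaluate to $\tfrac{5}{16}$.'' The first does, but the second does not. The formula of \cite[Theorem~4.17]{Fujita2021} for $S(W^{G,\ell}_{\bullet,\bullet,\bullet};Q)$ contains, in addition to the $\int\!\int \big((P(u)\vert_G-v\ell)\cdot\ell\big)^2$ piece you computed, a correction term
$$
F_Q=\frac{6}{12}\int\!\!\int\Big(\big(P(u)\big\vert_G-v\ell\big)\cdot\ell\Big)\,\mathrm{ord}_Q\Big(N(u)\big\vert_G\big\vert_\ell\Big)\,dv\,du
$$
coming from the restriction of the negative part of the Zariski decomposition of $\sigma^*(-K_X)-uG$. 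For $u>1$ this negative part contains $\widehat{A}$, and $\widehat{A}\vert_G=\mathscr{C}$ is precisely the smooth plane cubic $\mathcal{E}$ sitting inside $G\cong\mathbb{P}^2$. So if $Q\in\mathscr{C}$ then $F_Q\ne 0$; the paper bounds $F_Q\leqslant\tfrac{7}{12}$ (using that the general line through $Q$ meets $\mathscr{C}$ transversally), giving $S(W^{G,\ell}_{\bullet,\bullet,\bullet};Q)\leqslant\tfrac{5}{16}+\tfrac{7}{12}=\tfrac{43}{48}$. Your remark that ``because $\widehat{X}$ is smooth the boundary divisor vanishes'' is correct but addresses a different thing: it kills the $\Delta_\ell$ log-pair boundary, not the negative-part term $F_Q$; you have conflated the two. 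Consequently your claim $\delta_Q(G,V^G_{\bullet,\bullet})\geqslant\tfrac{16}{5}$ uniformly in $Q$ is false: one only gets $\delta_Q\geqslant\tfrac{48}{43}$ when $Q\in\mathscr{C}$. By coincidence, $\min\{\tfrac{48}{43},\tfrac{48}{43}\}=\tfrac{48}{43}$ equals your $\min\{\tfrac{48}{43},\tfrac{16}{5}\}$, so the final inequality $\delta_P(X)>1$ survives, but the argument as you wrote it has a real gap that would have been caught had $\tfrac{7}{12}$ been large enough to push $S(W^{G,\ell}_{\bullet,\bullet,\bullet};Q)$ past~$1$.
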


\begin{proof}
Suppose that $A$ is a~cone in $\mathbb{P}^3$ with vertex $P$. Take $u\in\mathbb{R}_{\geqslant 0}$. Then
$$
\sigma^*(-K_X)-vG\sim_{\mathbb{R}}2\widehat{A}+\widehat{E}+(6-u)G.
$$
Thus, the~divisor $\sigma^*(-K_X)-vG$ is pseudo-effective $\iff$ $u\in[0,6]$.
Moreover, if $u\in[0,6]$, then the~Zariski decomposition of the~divisor $\sigma^*(-K_X)-vG$ can be described as follows:
$$
P(u)=\left\{\aligned
&2\widehat{A}+E+(6-u)G\ \text{if $0\leqslant u\leqslant 1$}, \\
&\frac{7-u}{3}\widehat{A}+\widehat{E}+(6-u)G\ \text{if $1\leqslant u\leqslant 4$}, \\
&\frac{6-u}{2}(\widehat{A}+\widehat{E}+2G)\ \text{if $4\leqslant u\leqslant 6$}, \\
\endaligned
\right.
$$
and
$$
N(u)=\left\{\aligned
&0\ \text{if $0\leqslant u\leqslant 1$}, \\
&\frac{u-1}{3}\widehat{A}\ \text{if $1\leqslant u\leqslant 4$}, \\
&\frac{u-2}{2}\widehat{A}+\frac{u-4}{2}\widehat{E}\ \text{if $4\leqslant u\leqslant 6$}, \\
\endaligned
\right.
$$
where $P(u)$ and $N(u)$ are the~positive and the~negative parts of the~Zariski decomposition, respectively.
Using this, we compute
$$
S_{X}(G)=\frac{3}{12}\int_{0}^{1}u^{3}du+\frac{3}{12}\int_{1}^{4}udu+\frac{3}{12}\int_{4}^{6}\frac{u(6-u)^{2}}{4}du=\frac{43}{16}<3=A_X(G).
$$

Let us apply \cite[Theorem~4.8]{Fujita2021},  \mbox{\cite[Corollary~4.17]{Fujita2021}}, \cite[Corollary~4.18]{Fujita2021}
using notations introduced in \cite[\S~4]{Fujita2021}. To start with, we apply \cite[Corollary~4.18]{Fujita2021} to get
\begin{equation}
\label{equation:2-5-d-3-cone}
1\geqslant \frac{A_X(\mathbf{F})}{S_X(\mathbf{F})}\geqslant\min\left\{\frac{A_{X}(G)}{S_{X}(G)},
\inf_{Q\in G}\delta_{Q}\big(G,V^G_{\bullet,\bullet}\big)\right\}=\min\left\{\frac{48}{43},\inf_{Q\in G}\delta_{Q}\big(G,V^G_{\bullet,\bullet}\big)\right\},
\end{equation}
where $\delta_{Q}(G,V^G_{\bullet,\bullet})$ is defined in \cite[\S~4]{Fujita2021}.
Let $Q$ be an~arbitrary~point in the surface $G$, and let $\ell$ is a~general line in $G\cong\mathbb{P}^2$ that contains~$Q$.
Then  \cite[Corollary~4.18]{Fujita2021} gives
$$
\delta_{Q}\big(G,V^G_{\bullet,\bullet}\big)
\geqslant\min\left\{\frac{1}{S\big(V^G_{\bullet,\bullet};\ell\big)},\frac{1}{S\big(W^{G,\ell}_{\bullet,\bullet,\bullet};Q\big)}\right\},
$$
where $S(V^F_{\bullet,\bullet};\ell)$ and $S(W^{G,\ell}_{\bullet,\bullet,\bullet};Q)$ are defined in \cite[\S~4]{Fujita2021}.
Take $v\in\mathbb{R}_{\geqslant 0}$. Then
$$
P(u)\big\vert_{G}-v\ell\sim_{\mathbb{R}}\left\{\aligned
&(u-v)\ell\ \text{if $0\leqslant u\leqslant 1$}, \\
&(1-v)\ell\ \text{if $1\leqslant u\leqslant 4$}, \\
&\frac{6-u-2v}{2}\ell\ \text{if $4\leqslant u\leqslant 6$}.\\
\endaligned
\right.
$$
Let $\mathscr{C}=\widehat{A}\vert_{G}$. Then  $\mathscr{C}$ is a~smooth cubic curve in $G\cong\mathbb{P}^2$. Let
$$
N^\prime(u)=N(u)\big\vert_{G}=\left\{\aligned
&0\ \text{if $0\leqslant u\leqslant 1$}, \\
&\frac{u-1}{3}\mathscr{C}\ \text{if $1\leqslant u\leqslant 4$}, \\
&\frac{u-2}{2}\mathscr{C}\ \text{if $4\leqslant u\leqslant 6$}.
\endaligned
\right.
$$
Now, using \cite[Theorem~4.8]{Fujita2021}, we get
\begin{multline*}
S\big(W^{G}_{\bullet,\bullet};\ell\big)=\frac{3}{12}\int_0^6\int_0^\infty \mathrm{vol}\big(P(u)\big\vert_{G}-v\ell\big)dvdu=\frac{3}{12}\int_{0}^{1}\int_{0}^{u}(u-v)^{2}dvdu+\\
+\frac{3}{12}\int_{1}^{4}\int_{0}^{1}(1-v)^{2}dvdu+\frac{3}{12}\int_{4}^{6}\int_{0}^{\frac{6-u}{2}}\left(\frac{6-u-2v}{2}\right)^{2}dvdu=\frac{5}{16}.
\end{multline*}
Similarly, it follows from \cite[Theorem~4.17]{Fujita2021} that
\begin{multline*}
S\big(W_{\bullet,\bullet,\bullet}^{G,\ell};Q\big)=
\frac{3}{12}\int_{0}^{1}\int_{0}^{u}\Big(\big(P(u)\big\vert_{G}-v\ell\big)\cdot\ell\Big)^2dvdu+
\frac{3}{12}\int_{1}^{4}\int_{0}^{1}\Big(\big(P(u)\big\vert_{G}-v\ell\big)\cdot\ell\Big)^2dvdu+\\
+\frac{3}{12}\int_{4}^{6}\int_{0}^{\frac{6-u}{2}}\Big(\big(P(u)\big\vert_{G}-v\ell\big)\cdot\ell\Big)^2dvdu+F_Q=\frac{3}{12}\int_{0}^{1}\int_{0}^{u}(u-v)^{2}dvdu+\\
+\frac{3}{12}\int_{1}^{4}\int_{0}^{1}(1-v)^{2}dvdu+\frac{3}{12}\int_{4}^{6}\int_{0}^{\frac{6-u}{2}}\left(\frac{6-u-2v}{2}\right)^{2}dvdu+F_Q=\frac{5}{16}+F_Q,
\end{multline*}
where
\begin{multline*}
F_Q=\frac{6}{12}\int_{1}^{4}\int_{0}^{1}\Big(\big(P(u)\big\vert_{G}-v\ell\big)\cdot\ell\Big)\mathrm{ord}_Q\big(N^\prime(u)\big|_\ell\big)dvdu+\\
+\frac{6}{12}\int_{4}^{6} \int_{0}^{\frac{6-u}{2}}\Big(\big(P(u)\big\vert_{G}-v\ell\big)\cdot\ell\Big)\mathrm{ord}_Q\big(N^\prime(u)\big|_\ell\big)dvdu\leqslant\\
\leqslant\frac{6}{12}\int_{1}^{4} \int_{0}^{1}\frac{(1-v)(u-1)}{3}dvdu+
\frac{6}{12}\int_{4}^{6} \int_{0}^{\frac{6-u}{2}}\frac{(6-u-2v)(u-2)}{4}dvdu=\frac{7}{12}.
\end{multline*}
So, we have $S(W_{\bullet,\bullet,\bullet}^{G,\ell};Q)\leqslant\frac{43}{48}$.
Then  $\delta_{Q}(G,V^G_{\bullet,\bullet})>1$, which contradicts \eqref{equation:2-5-d-3-cone}.
\end{proof}

Thus, we see that $P$ is a Du Val singular point of the surface $A$ of type $\mathbb{D}_4$, $\mathbb{D}_5$, $\mathbb{E}_6$.
Now, arguing as in the proof of \cite[Lemma~9.11]{Xu}, we see that $\beta(\mathbf{F})>0$ if
\begin{enumerate}
\item the~inequality $\beta(G)>0$ holds,
\item and for every prime divisor $\mathbf{E}$ over $X$ such that $C_X(\mathbf{E})$ is a curve containing $P$, the following inequality holds:
$$
\frac{A_X(\mathbf{E})}{S_X(\mathbf{E})}\geqslant\frac{4}{3}.
$$
\end{enumerate}
Since $\beta(\mathbf{F})\leqslant 0$ by our assumption, we see that at least one of these conditions must fail.

\begin{lemma}
\label{lemma:2-5-beta-blow-up}
One has $\beta(G)\geqslant\frac{465}{2048}$.
\end{lemma}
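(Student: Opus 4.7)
The plan is to compute $S_X(G)$ via the Zariski decomposition of $D(u):=\sigma^*(-K_X)-uG$ on $\widehat{X}$, giving $\beta(G)=A_X(G)-S_X(G)=3-S_X(G)$. Since $P\in A\setminus E$ is a Du Val point of multiplicity $2$ with tangent cone a double line (a property shared by $\mathbb{D}_4$, $\mathbb{D}_5$, $\mathbb{E}_6$), one has $\sigma^*(A)=\widehat{A}+2G$ and $\sigma^*(E)=\widehat{E}$, so
$$D(u)\sim 2\widehat{A}+\widehat{E}+(4-u)G,$$
which is pseudoeffective for $u\in[0,4]$. Moreover $\widehat{A}|_G=2L$ for a line $L\subset G\cong\mathbb{P}^2$, and $G|_{\widehat{A}}=2L$ with $L^2=-\tfrac{1}{2}$ on $\widehat{A}$; since the blow-up is crepant, $\widehat{E}|_{\widehat{A}}\sim-K_{\widehat{A}}$ and $\widehat{A}|_{\widehat{A}}\equiv-4L$. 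One computes the triple intersections on $\widehat{X}$: $\widehat{A}^3=-8$, $\widehat{A}^2G=4$, $\widehat{A}G^2=-2$, $G^3=1$, $\widehat{A}\widehat{E}^2=3$, $\widehat{E}^3=-6$, with all other mixed triples (involving $\widehat{E}\cdot G$ or $\widehat{A}^2\widehat{E}$) vanishing.

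The key observation is that $D(u)|_{\widehat{A}}\equiv -K_{\widehat{A}}-2uL$, and this is nef iff for every $(-1)$-curve $C$ on the minimal resolution $\widetilde{A}\to\widehat{A}$, one has $-K_{\widetilde{A}}\cdot C\geq 2u\,\mu^*(L)\cdot C$. The lines on $A$ through $P$ yield such $(-1)$-curves with $\mu^*(L)\cdot C=\tfrac{1}{2}$ in each case (their tangent directions at $P$ correspond to terminal $(-2)$-curves of the $\mathbb{D}_4$, $\mathbb{D}_5$, or $\mathbb{E}_6$ configuration, whose pullback coefficient is $\tfrac{1}{2}$); hence the first break occurs at $u=1$. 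For $u\in[1,3]$ the decomposition is $P(u)=\tfrac{5-u}{2}\widehat{A}+\widehat{E}+(4-u)G$ with $N(u)=\tfrac{u-1}{2}\widehat{A}$, the coefficient of $\widehat{A}$ being forced by $P(u)\cdot\mu_*\ell=0$. At $u=3$ the fibers of $\widehat{E}\to\mathcal{C}$ push $\widehat{E}$ into the negative part, giving
$$P(u)=(4-u)\widehat{S},\qquad N(u)=(u-2)\widehat{A}+(u-3)\widehat{E}\qquad\text{for }u\in[3,4],$$
where $\widehat{S}\sim\widehat{A}+\widehat{E}+G$ is the proper transform of a smooth $|H|$-surface through $P$, with $\widehat{S}^3=2$.

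Using the triple intersections one obtains
$$P(u)^3=\begin{cases}12-u^3 & u\in[0,1],\\ (31-9u)/2 & u\in[1,3],\\ 2(4-u)^3 & u\in[3,4],\end{cases}$$
and the integrals $\int_0^1(12-u^3)\,du=\tfrac{47}{4}$, $\int_1^3(31-9u)/2\,du=13$, $\int_3^4 2(4-u)^3\,du=\tfrac{1}{2}$ sum to $\tfrac{101}{4}$; hence $S_X(G)=\tfrac{1}{12}\cdot\tfrac{101}{4}=\tfrac{101}{48}$ and $\beta(G)=3-\tfrac{101}{48}=\tfrac{43}{48}\geq\tfrac{465}{2048}$.

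The main obstacle is showing uniformly, across the three singularity types, that no curve on $\widehat{A}$ forces an earlier break than $u=1$. The subtle case is $\mathbb{E}_6$, where $\mu^*(L)=\widetilde{L}+\tfrac{1}{2}V_1+V_2+\tfrac{3}{2}V_3+V_4+\tfrac{1}{2}V_5$ has maximal coefficient $\tfrac{3}{2}$ on the central node $V_3$ of the $\mathbb{A}_5$ chain; one must verify that no $(-1)$-curve on $\widetilde{A}$ meets $V_3$, which reduces to checking that the unique line on the $\mathbb{E}_6$ cubic surface through $P$ has its proper transform meeting only a terminal node $V_1$ or $V_5$ of coefficient $\tfrac{1}{2}$.
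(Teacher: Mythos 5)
Your approach is genuinely different from the paper's and, unfortunately, contains a fundamental error that invalidates the conclusion. The paper does not attempt an exact Zariski decomposition; instead, for $u>\tfrac{3}{2}$ it uses the moving family of cuspidal hyperplane sections $\widehat{Z}$ to establish the lower bound $\sigma_{\widehat{A}}\bigl(\sigma^*(-K_X)-uG\bigr)\geqslant\tfrac{2u-3}{4}$, then bounds the resulting divisor from above by the nef class $\tfrac{11-2u}{4}H-\tfrac{7-2u}{4}E$. This never requires knowing the positive part exactly.

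The error in your proposal is the claim that $\mathrm{vol}\bigl(\sigma^*(-K_X)-uG\bigr)=P(u)^3$ for $u\in[1,3]$. Your $P(u)$ is indeed nef (and your sketch of why, via the coefficients of $\mu^*(L)$ on the Dynkin diagram, can in fact be completed for all of $\mathbb{D}_4,\mathbb{D}_5,\mathbb{E}_6$ --- the relevant $(-1)$-curves do meet terminal nodes with coefficient $\tfrac12$, so that gap is fillable). But $P(u)+N(u)$ is \emph{not} the Nakayama--Zariski decomposition. One computes $P(u)\bigl|_{\widehat{A}}=-K_{\widehat{A}}-2L$, hence
$$
P(u)^2\cdot\widehat{A}=\bigl(P(u)\bigl|_{\widehat{A}}\bigr)^2=K_{\widehat{A}}^2+4L^2=3-2=1>0,
$$
so $\widehat{A}\not\subset\mathrm{Null}(P(u))=\mathbf{B}_+(P(u))$ (Nakamaye). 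Therefore the restricted volume $\mathrm{vol}_{\widehat{X}|\widehat{A}}(P(u))=1>0$, and $\tfrac{d}{dt}\mathrm{vol}\bigl(P(u)+t\widehat{A}\bigr)\big|_{t=0^+}=3>0$, which forces $\mathrm{vol}\bigl(D(u)\bigr)=\mathrm{vol}\bigl(P(u)+\tfrac{u-1}{2}\widehat{A}\bigr)>P(u)^3$ for all $u\in(1,3)$. The same problem occurs on $[3,4]$, since $\widehat{S}^2\cdot\widehat{A}=1\neq 0$. A simple consistency check confirms this: at $u=1^+$ the true derivative is $\tfrac{d}{du}\mathrm{vol}(D(u))=-3\,\mathrm{vol}_{\widehat{X}|G}(D(1))=-3$, whereas $\tfrac{d}{du}\tfrac{31-9u}{2}=-\tfrac{9}{2}$.

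The consequence is decisive for what the lemma needs. Since $P(u)$ is nef and $N(u)$ is effective you only ever have $\mathrm{vol}(D(u))\geqslant P(u)^3$, so your integral $\tfrac{1}{12}\int_0^4 P(u)^3\,du=\tfrac{101}{48}$ is a \emph{lower} bound on $S_X(G)$, giving $\beta(G)\leqslant\tfrac{43}{48}$ --- an \emph{upper} bound on $\beta(G)$, which is the wrong direction. To repair the argument you would need a genuine upper bound on $\mathrm{vol}(D(u))$, for instance by exhibiting, as the paper does, a moving family of curves on $\widehat{A}$ forcing $\widehat{A}$ into the asymptotic base locus with a controlled multiplicity; the finitely many lines through $P$ do not move and cannot play this role. (As a side remark, the paper's displayed formulas $\tfrac{25-6u}{16}$ and $\tfrac{(11-2u)^3}{256}$ are each off by a factor of $12$ --- they should read $\tfrac{75-18u}{4}$ and $\tfrac{3(11-2u)^3}{64}$ --- but the final numeric value $\tfrac{5679}{2048}$ and hence $\beta(G)\geqslant\tfrac{465}{2048}$ is correct.)
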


\begin{proof}
Let $\widehat{A}$ and $\widehat{E}$ be the proper transforms on $\widehat{X}$ of the surfaces $A$ and $E$, respectively.
Take $u\in\mathbb{R}_{\geqslant 0}$. Then
$$
\sigma^*(-K_{X})-uG\sim\sigma^*(2H-E)-uG\sim\sigma^*(2A+E)-uG\sim 2\widehat{A}+\widehat{E}+(4-u)G,
$$
which easily implies that the divisor $-K_{\widehat{X}}-G$ is pseudoeffective $\iff$ $u\leqslant 4$,
because we can contract the surfaces $\widehat{A}$ and $\widehat{E}$ simultaneously after flops. Then
$$
\beta(G)=A_X(G)-S_X(G)=3-\frac{1}{12}\int_0^4\mathrm{vol}\big(\sigma^*(-K_{X})-uG\big)du.
$$

Note that $\sigma^*(-K_{X})-uG$ is nef for $u\in[0,1]$, because the divisor $-K_X$ is very ample.
Thus, if $u\in[0,1]$, then
$$
\mathrm{vol}\big(\sigma^*(-K_{X})-uG\big)=\big(\sigma^*(-K_{X})-uG\big)^3=12-u^3.
$$
Similarly, if $1\leqslant u\leqslant\frac{3}{2}$, then
$$
\mathrm{vol}\big(\sigma^*(-K_{X})-uG\big)\leqslant\mathrm{vol}\big(\sigma^*(-K_{X})-G\big)=\big(\sigma^*(-K_{X})-G\big)^3=11.
$$
Finally, let us estimate $\mathrm{vol}(\sigma^*(-K_{X})-uG)$ in the case when $4\geqslant u>\frac{3}{2}$.

Let $Z$ be a general hyperplane section of the cubic surface $A$ that passes through $P$,
and let $\widehat{Z}$ be its proper transform on the threefold $\widehat{X}$.
Then  $Z$ is an irreducible cuspidal cubic curve, and $\widehat{Z}\subset\widehat{A}$.
Observe that $(\sigma^*(-K_{X})-uG)\cdot\widehat{Z}=3-2u$ and $\widehat{A}\cdot\widehat{Z}=-4$,
so $\widehat{A}$ is contained in the asymptotic base locus of the divisor $\sigma^*(-K_{X})-uG$ for $u>\frac{3}{2}$.
Moreover, if $\sigma^*(-K_{X})-uG\sim_{\mathbb{R}}\widehat{D}+\lambda \widehat{A}$ for $\lambda\in\mathbb{R}_{\geqslant 0}$
and an effective $\mathbb{R}$-divisor $\widehat{D}$ whose support does not contain $\widehat{A}$,
then $\widehat{Z}\not\subset\widehat{D}$, which implies that
$$
0\leqslant \widehat{D}\cdot\widehat{Z}=\Big(\sigma^*(-K_{X})-uG-\lambda \widehat{A}\Big)\cdot\widehat{Z}=3-2u-4\lambda,
$$
so that $\lambda\geqslant\frac{3-2u}{4}$. Thus, if $4\geqslant u>\frac{3}{2}$, then
$$
\mathrm{vol}\big(\sigma^*(-K_{X})-uG\big)\leqslant\mathrm{vol}\Big(\sigma^*(2H-E)-uG-\frac{2u-3}{4}\widehat{A}\Big).
$$
Moreover, if $4\geqslant u>\frac{3}{2}$, then
$$
\sigma^*(2H-E)-uG-\frac{2u-3}{4}\widehat{A}\sim_{\mathbb{R}}\frac{11-2u}{4}\sigma^{*}(H)-\frac{7-2u}{2}\sigma^*(E)-\frac{3}{2}G.
$$
Therefore, if $4\geqslant u>\frac{3}{2}$, then
$$
\mathrm{vol}\big(\sigma^*(-K_{X})-uG\big)\leqslant\mathrm{vol}\Big(\frac{11-2u}{4}\sigma^{*}(H)-\frac{7-2u}{2}\sigma^*(E)\Big)=\mathrm{vol}\Big(\frac{11-2u}{4}H-\frac{7-2u}{2}E\Big).
$$
Furthermore, if $\frac{7}{2}\geqslant u>\frac{3}{2}$, then $\frac{11-2u}{4}H-\frac{7-2u}{2}E$ is nef, so that
$$
\mathrm{vol}\Big(\frac{11-2u}{4}H-\frac{7-2u}{2}E\Big)=\Big(\frac{11-2u}{4}H-\frac{7-2u}{2}E\Big)^3=\frac{25-6u}{16}.
$$
Similarly, if $4\geqslant u>\frac{7}{2}$, then
$$
\mathrm{vol}\Big(\frac{11-2u}{4}H-\frac{7-2u}{2}E\Big)=\Big(\frac{11-2u}{4}H\Big)^3=\frac{(11-2u)^3}{256}.
$$

Now, we can estimate $\beta(G)$ as follows
\begin{multline*}
\beta(G)=3-\frac{1}{12}\int_0^4\mathrm{vol}\big(\sigma^*(-K_{X})-uG\big)du\geqslant
3-\frac{1}{12}\int_0^{1}(12-u^3)du-\frac{1}{12}\int_1^{\frac{3}{2}}11du-\\
-\frac{1}{12}\int_{\frac{3}{2}}^{\frac{7}{2}}\frac{25-6u}{16}du-\frac{1}{12}\int_{\frac{7}{2}}^4\frac{(11-2u)^3}{256}du=3-\frac{5679}{2048}=\frac{465}{2048}\quad\quad\quad\quad\quad
\end{multline*}
as claimed.
\end{proof}

Therefore, there exists a prime divisor $\mathbf{E}$ over $X$ such that $C_X(\mathbf{E})$ is a curve, $P\in C_X(\mathbf{E})$, and
$A_X(\mathbf{E})<\frac{4}{3}S_X(\mathbf{E})$. Set $Z=C_X(\mathbf{E})$. Then  $\delta_O(X)<\frac{4}{3}$ for every point $O\in Z$.

\begin{lemma}
\label{lemma:2-5-Z-curve}
One has $Z\subset A$, and $Z$ is a line in the cubic surface $A$.
\end{lemma}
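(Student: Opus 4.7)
The proof proceeds in two steps: first establishing $Z\subset A$, then ruling out higher-degree curves in $A$ by a second application of Abban--Zhuang adjunction.

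\textbf{Step 1: $Z\subset A$.} Suppose for contradiction that $Z\not\subset A$. Since $P\in Z\cap A$, the restriction $\phi\vert_Z\colon Z\to\mathbb{P}^1$ is non-constant, so on a dense open subset $U\subset Z$ every point $O$ lies in a smooth cubic fiber $A_O=\phi^{-1}(\phi(O))$ (the del Pezzo fibration $\phi$ has only finitely many singular fibers). For a smooth del Pezzo surface of degree~$3$ one has $\delta(A_O)\geqslant 3/2$ by \cite[\S~2]{ACCFKMGSSV} — this is exactly what drives Corollary~\ref{corollary:2-1-2-3-2-5-A-singular}. Lemma~\ref{lemma:2-1-2-3-2-5-del-Pezzo-delta} then gives, for $O\in U$,
\[
\delta_O(X)\;\geqslant\;\min\!\Bigl\{\tfrac{16}{11},\,\tfrac{16}{15}\cdot\tfrac{3}{2}\Bigr\}\;=\;\tfrac{16}{11}\;>\;\tfrac{4}{3},
\]
and the bound is the same for $O\in E$, since $\tfrac{16\cdot(3/2)}{3/2+15}=\tfrac{16}{11}$. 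This contradicts $\delta_O(X)<\tfrac{4}{3}$ on $Z$, forcing $Z\subset A$.

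\textbf{Step 2: $Z$ is a line.} Now $Z$ is an irreducible curve on the normal cubic surface $A\subset\mathbb{P}^3$ with $\operatorname{Sing}(A)=\{P\}$. Choose a general $O\in Z\setminus\{P\}$, which is a smooth point of both $Z$ and $A$. Iterated Abban--Zhuang adjunction on the flag $X\supset A\supset Z\supset O$ (cf.\ \cite[Theorem~3.3]{AbbanZhuang}, \cite[Remark~1.7.30]{ACCFKMGSSV}) gives
\[
\delta_O(X)\;\geqslant\;\min\!\left\{\tfrac{16}{11},\,\tfrac{1}{S(W^A_{\bullet,\bullet};Z)},\,\tfrac{1}{S(W^{A,Z}_{\bullet,\bullet,\bullet};O)}\right\},
\]
and by the Zariski decompositions established in the proof of Lemma~\ref{lemma:2-1-2-3-2-5-del-Pezzo-delta},
\[
S\bigl(W^A_{\bullet,\bullet};Z\bigr)\;=\;\tfrac{1}{16}\operatorname{ord}_Z(E\vert_A)+\tfrac{15}{16}S_A(Z).
\]
The plan is to pass to the minimal resolution $\mu\colon\widetilde{A}\to A$, a weak del Pezzo surface of degree $3$, write $\widetilde{Z}=\mu^{-1}_{*}Z$ in the standard basis of $\operatorname{Pic}\widetilde{A}$, and compute $S_A(Z)$ and the point term $S(W^{A,Z}_{\bullet,\bullet,\bullet};O)$ by explicit Zariski decomposition on $\widetilde{A}$ using the $(-2)$-curve configuration at $P$ together with the $(-1)$-curves. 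One then checks case-by-case, indexed by $Z\cdot(-K_A)\in\{2,3,\ldots\}$ (so $Z$ is a conic, plane cubic, twisted cubic, etc.), that both $S(W^A_{\bullet,\bullet};Z)\leqslant\tfrac34$ and $S(W^{A,Z}_{\bullet,\bullet,\bullet};O)\leqslant\tfrac34$. Combined with $S_X(A)=\tfrac{11}{16}\leqslant\tfrac34$, this yields $\delta_O(X)\geqslant\tfrac43$, contradicting $\delta_O(X)<\tfrac43$. Hence only $Z\cdot(-K_A)=1$ survives, that is, $Z$ is a line in $A$.

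\textbf{Main obstacle.} The borderline case is $Z=(E\vert_A)_{\mathrm{red}}$, a smooth plane elliptic cubic with $Z\sim -K_A$. There $-K_A-uZ=(1-u)(-K_A)$ is nef on $[0,1]$ with pseudo-effective threshold $1$, giving $S_A(Z)=\tfrac13$ and hence $S(W^A_{\bullet,\bullet};Z)=\tfrac{1}{16}+\tfrac{15}{16}\cdot\tfrac13=\tfrac{3}{8}$, so $1/S>\tfrac43$: the middle term is \emph{not} decisive. In this case the contradiction must come purely from the point term $S(W^{A,Z}_{\bullet,\bullet,\bullet};O)$, computed at a general $O\in Z$ via the pencil $|-K_A|$ containing $Z$. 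Doing this uniformly across the three possible Du Val types $\mathbb{D}_4,\mathbb{D}_5,\mathbb{E}_6$ at $P$, and handling the exceptional configuration on $\widetilde{A}$ simultaneously with the other higher-degree classes, is the principal technical burden.
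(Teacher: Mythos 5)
Your Step 1 is correct and matches the paper's argument essentially verbatim: for $O\in Z$ general and $A_O$ the (smooth) fiber through $O$, the bound $\delta_O(A_O)\geqslant\tfrac32$ from \cite[\S~2]{ACCFKMGSSV} combined with Lemma~\ref{lemma:2-1-2-3-2-5-del-Pezzo-delta} forces $\delta_O(X)\geqslant\tfrac{16}{11}>\tfrac43$, contradicting $\delta_O(X)<\tfrac43$.

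Step 2 is where you diverge, and your route is both heavier and incomplete. The paper's Step 2 is a two-line argument: since the cubic surface $A$ contains only finitely many lines \cite{BruceWall}, if $Z$ is not a line then a general point $O\in Z$ is a smooth point of $A$ lying on no line of $A$; one then reruns the argument from the proof of \cite[Lemma~2.13]{ACCFKMGSSV} (the Abban--Zhuang estimate for quartic del Pezzo surfaces, adapted to the cubic case) to get $\delta_O(A)\geqslant\tfrac32$ directly, after which Lemma~\ref{lemma:2-1-2-3-2-5-del-Pezzo-delta} closes it exactly as in Step 1. You instead propose a case-by-case computation of $S(W^A_{\bullet,\bullet};Z)$ and $S(W^{A,Z}_{\bullet,\bullet,\bullet};O)$ indexed by the class of $Z$ on the minimal resolution $\widetilde A$, but you do not carry these computations out --- the heart of your Step 2 is a plan (\emph{``One then checks case-by-case\ldots''}, \emph{``the principal technical burden''}), so as written this is a gap, not a proof.

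One factual remark on your ``main obstacle'': the borderline case $Z\sim -K_A$ with $Z=(E\vert_A)_{\mathrm{red}}$ cannot occur. By construction $Z$ passes through $P$, and $P$ is a singular point of $A$; but $A$ is smooth along $E\vert_A$ (indeed $X$ is smooth and $E\vert_A\cong\mathcal{C}$ is a smooth Cartier curve on $A$, so $\operatorname{Sing}(A)\cap E=\varnothing$). Hence $P\notin E\vert_A$ and $Z\neq E\vert_A$, so the case you flag as the principal difficulty is vacuous. That said, even with this case removed you would still owe the remaining Zariski decompositions across the three Du Val types; the paper's reduction to the ``not on a line'' criterion sidesteps all of that.
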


\begin{proof}
Let $O$ be a general point in $Z$, and let $A_O$ be the fiber of $\phi$ that passes through $O$.
If $Z\not\subset A$, then $A_O$ is smooth, so that $\delta_O(A_O)\geqslant\frac{3}{2}$ by \cite[Lemma~2.13]{Book},
which gives
$$
\frac{4}{3}>\frac{A_X(\mathbf{E})}{S_X(\mathbf{E})}\geqslant\delta_O(X)\geqslant\mathrm{min}\Bigg\{\frac{16}{11},\frac{16\delta_O(A_O)}{\delta_O(A_O)+15}\Bigg\}\geqslant\mathrm{min}\Bigg\{\frac{16}{11},\frac{16\times\frac{3}{2}}{\frac{3}{2}+15}\Bigg\}=\frac{16}{11}>\frac{4}{3},
$$
by Lemma~\ref{lemma:2-1-2-3-2-5-del-Pezzo-delta}. This shows that $Z\subset A$ and $A_O=A$.

To complete the proof of the lemma, we have to show that $Z$ is a line in the surface~$A$.
Suppose that $Z$ is not a line. Then  the point $O$ is not contained in a line in the surface $A$, because $A$ contains finitely many lines \cite{BruceWall}.
Now, arguing as in the proof of \cite[Lemma 2.13]{Book},
we get  $\delta_O(A)\geqslant\frac{3}{2}$.
So, applying Lemma~\ref{lemma:2-1-2-3-2-5-del-Pezzo-delta} again, we get a contradiction as above.
\end{proof}

Now, our Auxiliary Theorem follows from the following lemma: 

\begin{lemma}
\label{lemma:2-5-D-4}
The surface $A$ does not have a singular point of type $\mathbb{D}_4$.
\end{lemma}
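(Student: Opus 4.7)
The plan is to obtain a contradiction by showing that $\delta_O(X) \geqslant \tfrac{4}{3}$ at a general point $O \in Z \setminus \{P\}$; this violates the upper bound $\delta_O(X) \leqslant A_X(\mathbf{E})/S_X(\mathbf{E}) < \tfrac{4}{3}$ coming from the divisor $\mathbf{E}$ centered on $Z$. Assume $P$ is a Du Val singularity of type $\mathbb{D}_4$ on $A$; recall from Lemma~\ref{lemma:2-5-Z-curve} that $Z$ is a line in the cubic surface $A$ passing through $P$. Applying the Abban--Zhuang inequality \cite[Theorem~1.7.30]{ACCFKMGSSV} to the flag $X \supset A \supset Z \ni O$, I have
$$
\delta_O(X) \geqslant \min\left\{\frac{1}{S_X(A)},\ \frac{1}{S(W^A_{\bullet,\bullet}; Z)},\ \frac{1}{S(W^{A,Z}_{\bullet,\bullet,\bullet}; O)}\right\}.
$$
By Lemma~\ref{lemma:2-1-2-3-2-5-del-Pezzo-delta}, $1/S_X(A) = 16/11 > 4/3$, so it will suffice to show that the other two $S$-invariants do not exceed $3/4$.

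To compute these $S$-invariants, I would pass to the minimal resolution $\mu : \widetilde{A} \to A$ of the $\mathbb{D}_4$ singularity. Here $\widetilde{A}$ is a weak del Pezzo surface of degree $3$ whose exceptional locus over $P$ consists of four $(-2)$-curves $F_0, F_1, F_2, F_3$ arranged in the $\mathbb{D}_4$ Dynkin configuration (with $F_0$ the central node meeting each of $F_1, F_2, F_3$). Since $A$ has canonical singularities, $\mu^*(-K_A) = -K_{\widetilde{A}}$. The proper transform $\widetilde{Z}$ is a smooth rational $(-1)$-curve; by computing its intersection with each $F_i$ using the tangent direction of the line $Z$ at $P$, and then inverting the $\mathbb{D}_4$ intersection matrix, one obtains the explicit expression $\mu^*(Z) = \widetilde{Z} + \sum_i \alpha_i F_i$ with rational $\alpha_i \in \{1,\tfrac12\}$ depending on how $Z$ approaches the singularity.

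Using the Zariski decomposition of $-K_X - uA$ from Lemma~\ref{lemma:2-1-2-3-2-5-del-Pezzo-delta}, which gives $P(u)|_A = -K_A$ for $u \in [0,1]$ and $P(u)|_A = (2-u)(-K_A)$ for $u \in [1,2]$, I would then determine the Zariski decomposition on $\widetilde{A}$ of $\mu^*(P(u)|_A) - v\,\mu^*(Z)$ as $v$ varies from $0$ to the pseudo-effective threshold. Integrating the volume and the squared intersection $(P(u,v) \cdot \widetilde{Z})^2$ via \cite[Corollary~1.7.26]{ACCFKMGSSV} and \cite[Theorem~1.7.30]{ACCFKMGSSV} should produce the needed estimates $S(W^A_{\bullet,\bullet}; Z) \leqslant 3/4$ and $S(W^{A,Z}_{\bullet,\bullet,\bullet}; O) \leqslant 3/4$, with contributions from the fractional support of $\mu^*(Z)$ along the $F_i$ absorbed into the negative part.

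The main obstacle is identifying all the regimes in the Zariski decomposition on $\widetilde{A}$: the negative part of $\mu^*(P(u)|_A) - v\,\mu^*(Z)$ can pick up contributions from several of the $(-2)$-curves $F_i$ and from proper transforms of other lines of $A$ through $P$ as $v$ grows, and pinning down precisely when each curve appears requires a careful analysis of the Mori cone of $\widetilde{A}$ interacting with the fractional coefficients in $\mu^*(Z)$. The specific geometry of a cubic surface with a $\mathbb{D}_4$ point (in particular the configuration of lines through $P$) is essential, and is exactly what distinguishes the $\mathbb{D}_4$ case (which can be defeated this way) from the $\mathbb{D}_5$ and $\mathbb{E}_6$ cases (excluded by the generality hypothesis, where the analogous estimate presumably fails).
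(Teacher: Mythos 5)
Your outline is aimed in the right direction, and the strategy is essentially the one the paper uses: exploit the fact that $Z$ is a line of $A$ through $P$ (Lemma~\ref{lemma:2-5-Z-curve}), restrict the Abban--Zhuang machinery to the flag through $A$ and $Z$, and force a contradiction with $\frac{A_X(\mathbf{E})}{S_X(\mathbf{E})}<\frac{4}{3}$. The value $1/S_X(A)=16/11$ is correct, and the target bound $S(W^A_{\bullet,\bullet};Z)\leqslant 3/4$ is exactly what the paper proves (the actual value is $35/48$). So the shape of the argument is sound.

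That said, you have left the heart of the proof undone, and you flag this yourself: \emph{``The main obstacle is identifying all the regimes in the Zariski decomposition on $\widetilde{A}$\dots''} That obstacle is precisely what the paper dissolves by a different choice of where to compute. Instead of pulling everything back to the minimal resolution $\widetilde{A}$ and wrestling with the four $(-2)$-curves, the paper uses the explicit Bruce--Wall normal forms for a $\mathbb{D}_4$ cubic to note that $-K_A\sim 2Z+L$ for a line $L\subset A$ with $P\notin L$, and then computes directly on the singular surface $A$ with $Z^2=0$, $Z\cdot L=1$, $L^2=-1$. With these three numbers the Zariski decomposition of $-K_A-vZ$ is immediate ($N=0$ for $v\in[0,1]$, $N=(v-1)L$ for $v\in[1,2]$) and the integral gives $S(W^A_{\bullet,\bullet};Z)=35/48<3/4$ in two lines. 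Your route via $\widetilde{A}$, $\mu^*(Z)=\widetilde{Z}+\sum\alpha_iF_i$, and the $\mathbb{D}_4$ Cartan matrix will reproduce exactly these numbers, but it is considerably longer, and as written you have not carried it out.

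A secondary, smaller point: you do more than is needed. Because $\mathbf{E}$ has center exactly the curve $Z$, the paper only needs the two-term inequality
$$
\frac{A_X(\mathbf{E})}{S_X(\mathbf{E})}\geqslant\min\left\{\frac{1}{S_X(A)},\frac{1}{S(W^A_{\bullet,\bullet};Z)}\right\},
$$
so the third-level invariant $S(W^{A,Z}_{\bullet,\bullet,\bullet};O)$ never enters. Your version, which passes to a general point $O\in Z\setminus\{P\}$ and uses the three-term inequality, would also work (a rough computation along the paper's lines gives $S(W^{A,Z}_{\bullet,\bullet,\bullet};O)\approx 5/12$ for general $O$), but it is an unnecessary extra integral. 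To turn your proposal into a proof, you should either adopt the paper's relation $-K_A\sim 2Z+L$ and compute on $A$ directly, or explicitly work out the full Zariski decomposition on $\widetilde{A}$, including the walls where each exceptional curve $F_i$ and the residual line enter the negative part, and verify the integral. Either way, the key geometric input you are missing is the presence of the disjoint line $L$ with $-K_A\sim 2Z+L$; that, not the resolution, is what makes the $\mathbb{D}_4$ case tractable.
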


\begin{proof}
Suppose $A$ has singularity of type $\mathbb{D}_4$.
Then  it follows from \cite{BruceWall} that,
for a suitable choice of coordinates $x$, $y$, $z$, $t$ on the~projective space $\mathbb{P}^3$,
one of the following cases hold:
\begin{itemize}
\item[(A)] $A=\{tx^2=y^3-z^3\}\subset \mathbb{P}^3$,
\item[(B)] $A=\{tx^2=y^3-z^3+xyz\}\subset \mathbb{P}^3$.
\end{itemize}
Note that $P=[0:0:0:1]$, and $A$ contains $6$ lines  \cite{BruceWall}.
In case (A), these lines are
\begin{align*}
L_1&=\{x=y-z=0\},\\
L_2&=\{x=y-\omega_3z=0\},\\
L_3&=\{x=y+\omega_3^2z=0\},\\
L_4&=\{t=y-z=0\},\\
L_5&=\{t=y+\omega_3z=0\},\\
L_6&=\{t=y+\omega_3^2z=0\},
\end{align*}
where $\omega_3$ is a primitive cube root of unity. In case (B), these lines are
\begin{align*}
L_1&=\{x=y-z=0\},\\
L_2&=\{x=y-\omega_3z=0\},\\
L_3&=\{x=y-\omega_3^2z=0\},\\
L_4&=\{x+3(y-z)=y-z-9t=0\},\\
L_5&=\{x+3\omega_3(y-\omega_3z)=\omega_3y-\omega_3^2z-9t=0\},\\
L_6&=\{x+3\omega_3^2(y-\omega_3^2z)=\omega_3^2y-\omega_3z-9t=0\}.
\end{align*}
Note that $P=L_1\cap L_3\cap L_3$, $P\not\in L_4\cup L_5\cup L_6$ and
$-K_A\sim 2L_1+L_4\sim 2L_2+L_5\sim 2L_3+L_6$.

By Lemma~\ref{lemma:2-5-Z-curve}, we may assume that $Z=L_1$.

Recall that $S_X(A)=\frac{11}{16}$, see the proof of Lemma~\ref{lemma:2-1-2-3-2-5-del-Pezzo-delta}.
Using \cite[Theorem~1.112]{Book}, we get
$$
\frac{4}{3}>\frac{A_X(\mathbf{E})}{S_X(\mathbf{E})}\geqslant \min\left\{\frac{1}{S_X(A)},\frac{1}{S(W_{\bullet,\bullet}^A;L_1)}\right\}=\min\left\{\frac{16}{11},\frac{1}{S(W_{\bullet,\bullet}^A;L_1)}\right\},
$$
where $S(W_{\bullet,\bullet}^A;L_1)$ is defined in \cite[\S~1.7]{Book}. Therefore, we conclude that $S(W_{\bullet,\bullet}^A;L_1)<\frac{4}{3}$.
Let us compute $S(W_{\bullet,\bullet}^A;L_1)$ using \cite[Corollary~1.109]{Book}.

To do this, we use notations introduced in the proof of Lemma~\ref{lemma:2-1-2-3-2-5-del-Pezzo-delta} applied to $O=P$.
Then  using \cite[Corollary~1.109]{Book} and computations from the proof of Lemma~\ref{lemma:2-1-2-3-2-5-del-Pezzo-delta}, we get
$$
S\big(W_{\bullet,\bullet}^A; L_1\big)=\frac{1}{4}\int_{0}^{1}\int_0^\infty \mathrm{vol}\big(-K_A-vL_1\big)dvdu+\frac{1}{4}\int_{1}^{2}\int_0^\infty \mathrm{vol}\big((2-u)(-K_A)-vL_1\big)dvdu,
$$
since $L_1\not\subset\mathrm{Supp}(N(u))$, since $L_1\not\subset E$. Let us compute $S(W_{\bullet,\bullet}^A; L_1)$. Take $v\in\mathbb{R}_{\geqslant 0}$.
Then
$$
-K_A-vL_1\sim_{\mathbb{R}}(2-v)L_1+L_4.
$$
Thus, the divisor $-K_A-vL_1$ is pseudoeffective $\iff$ $v\leqslant 2$, since $L_4^2=-1$. Fix $v\in[0,2]$.
Let $P(u,v)$ be the~positive part of the Zariski decomposition of the divisor $-K_A-vL_1$,
and let $N(u,v)$ be its negative part. Then
$$
P(u,v)=\left\{\aligned
&(2-v)L_1+L_4\ \text{if $0\leqslant v\leqslant 1$}, \\
&(2-v)(L_1+L_4)\ \text{if $1\leqslant v\leqslant 2$}, \\
\endaligned
\right.
$$
and
$$
N(u,v)=\left\{\aligned
&0\ \text{if $0\leqslant v\leqslant 1$}, \\
&(v-1)L_4\ \text{if $1\leqslant v\leqslant 2$}.
\endaligned
\right.
$$
Thus, if $0\leqslant v\leqslant 1$, then $\mathrm{vol}(-K_A-vL_1)=3-2v$, because $L_1^2=0$ and $L_1\cdot L_4=0$.
Similarly, if $1\leqslant v\leqslant 2$, then $\mathrm{vol}(-K_A-vL_1)=(v-2)^2$. This gives
$$
\frac{1}{4}\int_{0}^{1}\int_0^\infty \mathrm{vol}\big(-K_A-vL_1\big)dvdu=
\frac{1}{4}\int_{0}^{1}\int_0^1(3-2v)dvdu+\frac{1}{4}\int_{0}^{1}\int_1^2(v-2)^2dvdu=\frac{7}{12}
$$
and
\begin{multline*}
\frac{1}{4}\int_{1}^{2}\int_0^\infty \mathrm{vol}\big((2-u)(-K_A)-vL_1\big)dvdu=
\frac{1}{4}\int_{1}^{2}\int_0^\infty(2-u)^3\mathrm{vol}\big((-K_A)-vL_1\big)dvdu=\\
=\frac{1}{4}\int_{1}^{2}\int_0^1(2-u)^3(3-2v)dvdu+\frac{1}{4}\int_{1}^{2}\int_1^2(2-u)^3(v-2)^2dvdu=\frac{7}{48}.\quad\quad\quad
\end{multline*}
Combining, we get $S(W_{\bullet,\bullet}^A;L_1)=\frac{35}{48}<\frac{3}{4}$. This is a contradiction.
\end{proof}

\section{Family \textnumero 2.2.}
\label{section:2-2}

Let $R$ be a~smooth surface of degree $(2,4)$ in $\mathbb{P}^1\times\mathbb{P}^2$,
let $\pi\colon X\to \mathbb{P}^1\times\mathbb{P}^2$ be a~double cover ramified over the~surface $R$.
Then  $X$ is a~smooth Fano threefold in the~family~\textnumero~2.2.
Moreover, all smooth Fano threefolds in this family can be obtained this way.

Let $\mathrm{pr}_1\colon\mathbb{P}^1\times\mathbb{P}^2\to\mathbb{P}^1$
and $\mathrm{pr}_2\colon\mathbb{P}^1\times\mathbb{P}^2\to\mathbb{P}^2$ be the~projections
to the~first and the~second factors, respectively.
Set $p_1=\mathrm{pr}_1\circ\pi$ and $p_2=\mathrm{pr}_2\circ\pi$.
We have the~following commutative diagram:
$$\xymatrix{& X\ar[d]^{\pi}\ar@/^-1.7pc/[ddl]_{p_1}\ar@/^1.7pc/[ddr]^{p_2}&\\
&\mathbb{P}^1\times\mathbb{P}^2\ar[dl]_{\mathrm{pr}_1}\ar[dr]^{\mathrm{pr}_2}&\\
\mathbb{P}^1 && \mathbb{P}^2}
$$
where $p_1$ is a~fibration into del Pezzo surfaces of degree $2$,
and $p_2$ is a~conic bundle.

\begin{lemma}
\label{2-2-normal-surface}
Let $S$ be a~fiber of the~morphism $p_1$. Then  $S$ is irreducible and normal.
\end{lemma}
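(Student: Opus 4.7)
The plan is to reduce the claim to a statement about the branch divisor~$R$ and then exploit smoothness of~$R$ to rule out the bad cases.

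First, since $\pi$ is the double cover of $\mathbb{P}^1\times\mathbb{P}^2$ branched along the smooth surface~$R$, the fiber $S=p_1^{-1}(t)$ is the double cover of $\{t\}\times\mathbb{P}^2\cong\mathbb{P}^2$ branched along the plane quartic $R_t=R\cap(\{t\}\times\mathbb{P}^2)$. Locally $S$ has equation $w^2=f_t(x)$, where $f_t$ is the quartic polynomial cutting out~$R_t$. A short local analysis of such double covers shows that $S$ is irreducible and normal if and only if $f_t$ is square-free, equivalently $R_t$ is reduced: if $f_t=h^2k$ with $\deg h>0$, then either the cover splits (when $k$ is a square) or it is singular along the curve $\{w=0=h\}$ and hence non-normal. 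So it suffices to prove that $R_t$ is a reduced plane quartic for every $t\in\mathbb{P}^1$.

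Next I would write the defining equation of~$R$ in the form
$$f(t_0,t_1,x)=t_0^2g_0(x)+t_0t_1g_1(x)+t_1^2g_2(x)$$
for quartics $g_0,g_1,g_2$ on $\mathbb{P}^2$, and argue by contradiction: suppose $R_t$ is non-reduced for some~$t$, which, after changing coordinates on $\mathbb{P}^1$, may be taken to be $t=[0:1]$. Then $g_2=h^2k$ for some non-constant homogeneous polynomial~$h$ and some homogeneous cofactor~$k$; set $C=\{h=0\}\subset\mathbb{P}^2$, a curve of positive degree.

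The final and crucial step is to locate a singular point of~$R$ on the curve $C\times\{[0:1]\}$. At any point $([0:1],x_0)$ with $x_0\in C$, the three $x_i$-partials of~$f$ reduce to $\partial g_2/\partial x_i(x_0)$, which vanishes along~$C$ because $g_2=h^2k$; similarly $\partial f/\partial t_1$ at this point equals $2g_2(x_0)=0$; so the only partial that could be non-zero is $\partial f/\partial t_0(0,1,x_0)=g_1(x_0)$. By Bezout in $\mathbb{P}^2$, either $g_1\equiv 0$ (in which case~$R$ is singular along the whole curve $C\times\{[0:1]\}$) or the quartic $\{g_1=0\}$ meets~$C$ in at least one point, yielding an $x_0\in C$ with $g_1(x_0)=0$ at which~$R$ is singular. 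Either outcome contradicts the smoothness of~$R$. The only remaining obstacle is the bookkeeping of the possible shapes of~$h$ (a line, a double line, an irreducible conic, or a product of two lines), but the Bezout step applies uniformly in each case.
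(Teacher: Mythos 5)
The paper's proof is only \emph{``Left to the reader,''} so there is no argument to compare against, but your proposal is correct and complete. The reduction is the right one: for a double plane $w^{2}=f_t$ with $\deg f_t=4$, irreducibility and normality together are equivalent to $f_t$ being square-free. (A small phrasing quibble: if $f_t=h^{2}k$ with $\deg h>0$, the surface is singular along the codimension-one locus $\{w=h=0\}$ in \emph{both} cases, not just when $k$ fails to be a square; when $k$ is a square the surface additionally splits. Either way irreducibility or normality fails, so the conclusion stands.) Your computation of the partials of $f=t_0^{2}g_0+t_0t_1g_1+t_1^{2}g_2$ at $([0:1],x_0)$ with $x_0\in C=\{h=0\}$ is also right: $h^{2}\mid g_2$ forces $g_2$ and all $\partial g_2/\partial x_i$ to vanish along $C$, and $\partial f/\partial t_1=2g_2$ vanishes too, leaving $\partial f/\partial t_0=g_1$ as the only possibly non-zero partial; since $g_2$ vanishes on $C$ the points $([0:1],x_0)$ do lie on $R$. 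Bezout then gives $x_0\in C\cap\{g_1=0\}$ (or $g_1\equiv 0$), producing a singular point of $R$, which is the desired contradiction.

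The ``remaining obstacle'' you flag at the end is not actually there: Bezout only uses that $\{h=0\}$ is a plane curve of positive degree, so no case analysis on whether $h$ is a line, a conic, or a product of lines is needed. If you like, you can also normalize by taking $h$ to be a single reduced irreducible factor of $g_2$ that occurs to multiplicity at least two, which makes $C$ an irreducible curve and makes the last step read more cleanly, but this is purely cosmetic.
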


\begin{proof}
Since $p_1\colon X\to\mathbb{P}^1$ is a Mori fiber space, any fiber of $p_1$ is
irreducible and reduced. Moreover, any fiber of $\mathrm{pr}_1|_R$ is reduced
by local computations. Thus, the assertion follows.
\end{proof}

\begin{lemma}
\label{2-2-dP-conic-bundle}
Let $S$ be a~fiber of the~morphism $p_1$, let $C$ be a~fiber of the~morphism $p_2$,
and let $P$ be a~point in $S\cap C$.
Then  $S$ or $C$ is smooth at $P$.
\end{lemma}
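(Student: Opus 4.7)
The plan is to work in local analytic coordinates near $P$ and reduce the statement to the assumed smoothness of the ramification surface $R$. Set $\pi(P)=(t_0,q_0)$. If $\pi(P)\notin R$, then $\pi$ is \'etale at $P$, and near $P$ the restriction of $\pi$ identifies $S$ (respectively, $C$) \'etale-locally with $\{t_0\}\times\mathbb{P}^2$ (respectively, with $\mathbb{P}^1\times\{q_0\}$), so both are smooth at $P$. Hence from now on we may assume $\pi(P)\in R$.

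Choose affine coordinates $t$ on $\mathbb{P}^1$ centred at $t_0$ and $x_1,x_2$ on $\mathbb{P}^2$ centred at $q_0$, so that $R$ is locally cut out by one equation $f(t,x_1,x_2)=0$ and $X$ is given, \'etale-locally on $\mathbb{P}^1\times\mathbb{P}^2$, by the equation $w^2=f(t,x_1,x_2)$, with $P$ corresponding to the origin. The essential input is that smoothness of $R$ at the origin is equivalent to the non-vanishing of at least one of the partial derivatives $f_t(0)$, $f_{x_1}(0)$, $f_{x_2}(0)$.

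The main step is then a Jacobian-criterion computation. The fiber $S$ is locally cut out by $t=0$ and $w^2=f(0,x_1,x_2)$, and its Jacobian matrix at $P$ vanishes precisely when $f_{x_1}(0)=f_{x_2}(0)=0$; similarly, $C$ is locally cut out by $x_1=x_2=0$ and $w^2=f(t,0,0)$, and it is singular at $P$ exactly when $f_t(0)=0$. If both $S$ and $C$ were singular at $P$, then all three partial derivatives of $f$ would vanish at the origin, contradicting the smoothness of $R$.

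There is no serious obstacle here: the only thing one has to be careful about is separating the unramified case $\pi(P)\notin R$ from the ramified case, since it is only in the latter that the fibers of $p_1$ and $p_2$ can pick up singularities. All the content sits in the final Jacobian calculation together with the observation that the three conditions ``$R\cap(\{t_0\}\times\mathbb{P}^2)$ is singular at $\pi(P)$'', ``$R\cap(\mathbb{P}^1\times\{q_0\})$ is singular at $\pi(P)$'' and ``$R$ is smooth at $\pi(P)$'' are mutually incompatible.
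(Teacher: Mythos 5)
Your argument is correct and is precisely the local computation the authors leave to the reader: writing $X$ étale-locally as $w^2=f(t,x_1,x_2)$ with $R=\{f=0\}$, the Jacobian criterion shows that $S$ singular at $P$ forces $f_{x_1}(0)=f_{x_2}(0)=0$, that $C$ singular at $P$ forces $f_t(0)=0$, and having all three together would contradict the smoothness of $R$ at $\pi(P)$. You also handle the unramified case cleanly.
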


\begin{proof}
Local computations.
\end{proof}

Now, we are ready to prove that $X$ is K-stable.
Recall from \cite{CheltsovShramovPrzyjalkowski} that $\mathrm{Aut}(X)$ is finite.
Thus, the~threefold $X$ is K-stable if and only if it is K-polystable \cite{Xu}.

Let $\tau$ be the~Galois involution of the~double cover $\pi\colon X\to\mathbb{P}^1\times\mathbb{P}^2$, and let $G=\langle\tau\rangle$.
Suppose that $X$ is not $K$-polystable.
Then  it follows from \cite[Corollary~4.14]{Zhuang} that there exists a~$G$-invariant prime divisor $\mathbf{F}$ over $X$ such that
$$
\beta(\mathbf{F})=A_X(\mathbf{F})-S_X(\mathbf{F})\leqslant 0.
$$
Let $Z$ be the~center of this~divisor on $X$. Then  $Z$ is not  a~surface by \cite[Theorem~3.17]{Book}.
Hence, we see that either $Z$ is a~$G$-invariant irreducible curve, or $Z$ is a~$G$-fixed point.
Let us seek for a~contradiction.

Let $P$ be a~general point in $Z$, and let $S$ be the~fiber of $p_1$ that passes through $P$.

\begin{lemma}
\label{lemma:2-2-S-singular}
The surface $S$ is singular at $P$.
\end{lemma}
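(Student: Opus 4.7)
\medskip
\noindent\textbf{Plan of the proof.} The plan is to suppose, for contradiction, that $S$ is smooth at $P$ and to apply the Abban--Zhuang refinement with $S$ as a flag, following the template of Lemma~\ref{lemma:2-1-2-3-2-5-del-Pezzo-delta}.

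First I would compute $S_X(S)$. The ramification formula for the double cover $\pi\colon X\to\mathbb{P}^1\times\mathbb{P}^2$ branched over a divisor of bi-degree $(2,4)$ gives $-K_X\sim\pi^{\ast}(H_1+H_2)$ and $(-K_X)^3=6$, where $H_1, H_2$ denote the hyperplane classes of the two factors. Since $S\sim\pi^{\ast}H_1$, the divisor $-K_X-uS\sim_{\mathbb R}(1-u)\pi^{\ast}H_1+\pi^{\ast}H_2$ is nef (and pseudoeffective) precisely for $u\in[0,1]$, with volume $6(1-u)$; integration then yields $S_X(S)=\tfrac12$.

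Second, by adjunction $-K_S=\pi^{\ast}H_2|_{S}$, and since $\pi^{\ast}H_1|_{S}=0$, we have $P(u)|_S=-K_S$ for all $u\in[0,1]$. Thus for any prime divisor $F$ over $S$ with $P\in C_S(F)$,
$$
S\big(W_{\bullet,\bullet}^{S};F\big)=\frac{3}{(-K_X)^3}\int_{0}^{1}\int_{0}^{\infty}\mathrm{vol}\big(-K_S-vF\big)\,dv\,du=S_S(F),
$$
and \cite[Corollary~1.7.30]{ACCFKMGSSV} yields $\delta_P(X)\geq\min\{2,\delta_P(S)\}$. Since $\delta_P(X)\leq 1$ by our choice of $\mathbf{F}$, this forces $\delta_P(S)\leq 1$.

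The main obstacle is then to contradict $\delta_P(S)\leq 1$ under the assumption that $S$ is smooth at $P$. Since $S$ is normal (Lemma~\ref{2-2-normal-surface}) with $-K_S$ ample and $(-K_S)^2=2$, it is a del Pezzo surface of degree $2$ with smooth germ at $P$. I would iterate Abban--Zhuang on $S$ along an appropriate curve through $P$---for instance, a general member of $|-K_S|$ passing through $P$, or the restriction $S\cap C$ of a fibre $C$ of the conic bundle $p_2$ (whose geometry is constrained by Lemma~\ref{2-2-dP-conic-bundle})---and bound the resulting $S$-coefficients strictly below $1$. The delicate point will be to exclude the case where $P$ lies on a $(-1)$-curve of $S$, where $\delta_P$ of a smooth degree-$2$ del Pezzo can degenerate to $1$; here the $G$-invariance of $Z$ and the genericity of $P$ in $Z$ should be used to rule out such special configurations.
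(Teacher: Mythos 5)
Your opening steps agree with the paper: $S_X(S)=\tfrac12<1$, $(-K_X-uS)|_S=-K_S$ for all $u\in[0,1]$ because $\pi^*H_1|_S\sim 0$, and hence the Zariski decomposition of $-K_X-uS$ is trivial. The difficulty is in the second half.

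First, the detour through $\delta_P(S)$ is unnecessary and is what manufactures the ``obstacle'' you then worry about. Having chosen a general $C\in|-K_S|$ through $P$ (which is exactly the flag the paper uses), you should apply \cite[Theorem~1.7.30]{ACCFKMGSSV} directly and compute the two flag invariants. Since $(-K_X-uS)|_S-vC\sim_{\mathbb R}(1-v)(-K_S)$, which is nef for $v\in[0,1]$ and has no negative part, one gets
$$
S\big(W^S_{\bullet,\bullet};C\big)=\frac{1}{2}\int_0^1\int_0^1 2(1-v)^2\,dv\,du=\frac{1}{3},
\qquad
S\big(W^{S,C}_{\bullet,\bullet,\bullet};P\big)=\frac{1}{2}\int_0^1\int_0^1\big(2(1-v)\big)^2\,dv\,du=\frac{2}{3},
$$
so the minimum in the Abban--Zhuang estimate is $\min\{2,3,3/2\}>1$, a contradiction. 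In particular the case where $P$ lies on a $(-1)$-curve of $S$ is not a genuine problem: the Zariski decomposition in the flag computation is trivial regardless of whether $P$ sits on a $(-1)$-curve, and no $G$-invariance of $Z$ or genericity of $P$ in $Z$ is needed. (Indeed the same computation shows $\delta_P(S)\geqslant 3/2$ for any smooth point $P$ of a degree-two del Pezzo surface, so your concern that $\delta_P$ can drop to $1$ at a $(-1)$-curve is factually off.)

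Second, there is a real subtlety the paper flags that you do not address: \cite[Theorem~1.7.30]{ACCFKMGSSV} is stated under the assumption that $S$ has Du~Val singularities, but Lemma~\ref{2-2-normal-surface} only guarantees normality, so $S$ could in principle have worse singularities elsewhere. The paper justifies applying the theorem nonetheless by observing that $S$ is smooth along the chosen curve $C$ (a general anticanonical curve through a smooth point avoids $\mathrm{Sing}(S)$), so the proof of the theorem goes through. If you route through $\delta_P(S)>1$ instead, you would need an analogous justification for applying the surface-level machinery to a potentially non-Du-Val del Pezzo, which your sketch does not supply. In short: the paper's direct flag computation is both shorter and avoids the spurious $(-1)$-curve issue; your proposal has the right ingredients but stops short of the decisive computation and introduces a false difficulty on the way.
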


\begin{proof}
Suppose that $S$ is smooth at $P$.
Let $B$ be a~general curve in $|-K_S|$ that contains~$P$.
Then  $B$ is a smooth curve in $S$.
Applying \cite[Theorem~1.112]{Book}, we~get
$$
1\geqslant \frac{A_X(\mathbf{F})}{S_X(\mathbf{F})}\geqslant \min\left\{\frac{1}{S_X(S)},\frac{1}{S(W_{\bullet,\bullet}^S;B)},\frac{1}{S(W_{\bullet, \bullet,\bullet}^{S,B};P)}\right\}.
$$
Since~\mbox{$S_X(S)<1$} by \cite[Theorem~3.17]{Book},
we see that $S(W_{\bullet,\bullet}^S;B)\geqslant 1$ or $S(W_{\bullet, \bullet,\bullet}^{S,B};P)\geqslant 1$.
We refer the~reader to \cite[\S~1.7]{Book} for definitions of $S_X(S)$, $S(W_{\bullet,\bullet}^S;B)$, $S(W_{\bullet, \bullet,\bullet}^{S,B};P)$.

Note that \cite[Theorem~1.112]{Book} requires $S$ to have Du Val singularities, but $S$ may have non-Du Val singularities.
Nevertheless, we still can apply \cite[Theorem~1.112]{Book} here, since the~proof of \cite[Theorem 1.112]{Book}
remains valid in our case, because $S$ is smooth along~$B$.

Let us compute $S(W_{\bullet,\bullet}^S;B)$ and $S(W_{\bullet, \bullet,\bullet}^{S,B};P)$.
Take $u\in\mathbb{R}_{\geqslant 0}$ and $v\in\mathbb{R}_{\geqslant 0}$.
Then
\begin{center}
$-K_X-uS$ is nef $\iff$ $-K_X-uS$ is pseudoeffective $\iff$ $u\leqslant 1$,
\end{center}
Similarly, if $u\in[0,1]$, then $(-K_X-uS)\vert_{S}-vB\sim_{\mathbb{R}}(1-v)(-K_S)$, so
\begin{center}
$(-K_X-uS)\vert_{S}-vB$ is nef $\iff$ $(-K_X-uS)\vert_{S}-vB$ is pseudoeffective $\iff$ $v\leqslant 1$.
\end{center}
Now, applying \cite[Corollary~1.109]{Book}, we get
$$
S(W_{\bullet,\bullet}^S;B)=\frac{3}{6}\int_0^1\int_0^1\big((1-v)(-K_S)\big)^2dvdu=\frac{1}{2}\int_0^1\int_0^12(1-v)^2dv=\frac{1}{3}<1.
$$
Similarly, using \cite[Theorem~1.112]{Book}, we get
$$
S(W_{\bullet,\bullet,\bullet}^{S,B},P)=\frac{3}{6}\int_0^1\int_0^1\big((1-v)(-K_S)\cdot B\big)^2dvdu=\frac{2}{3}<1.
$$
But we already know that $S(W_{\bullet,\bullet}^S;B)\geqslant 1$ or $S(W_{\bullet, \bullet,\bullet}^{S,B};P)\geqslant 1$.
This is a~contradiction.
\end{proof}

If $Z$ is a~curve, then $S$ is smooth at $P$ by Lemma \ref{2-2-normal-surface},
because $P$ is a~general point in~$Z$.
Hence, we conclude that $Z=P$, because $S$ is singular at the~point $P$ by Lemma~\ref{lemma:2-2-S-singular}.
Recall that $Z$ is $G$-invariant. This implies that $\tau(P)\in R$.

Let $C$ be the~fiber of $p_2$ that passes through $P$.
Then  $C$ is smooth at $P$ by Lemma~\ref{2-2-dP-conic-bundle}, because $S$ is singular at $P$.
Since $\tau(P)\in R$, we see that $C$ is irreducible and smooth.

Let $T$ be a~sufficiently general surface in linear system $|p_2^*(\mathcal{O}_{\mathbb{P}^2}(1))|$ that contains $C$.
Since $C$ is smooth, it follows from Bertini's theorem that the~surface $T$ is smooth.

As in the~proof of Lemma~\ref{lemma:2-2-S-singular},
it follows from \cite[Theorem~1.112]{Book} that
$$
1\geqslant \frac{A_X(\mathbf{F})}{S_X(\mathbf{F})}\geqslant\min\left\{\frac{1}{S_X(T)},\frac{1}{S(W_{\bullet,\bullet}^T;C)},\frac{1}{S(W_{\bullet, \bullet,\bullet}^{T,C};P)}\right\}.
$$
Moreover, it follows from \cite[Theorem~3.17]{Book} that \mbox{$S_X(T)<1$}.
Thus, we conclude that
$$
\max\big\{S(W_{\bullet,\bullet}^T;C),S(W_{\bullet, \bullet,\bullet}^{T,C};P)\big\}\geqslant 1.
$$
In fact, since $P$ is the~center of the~divisor $\mathbf{F}$ on $X$,
\cite[Theorem~3.17]{Book} gives
\begin{equation}
\label{equation:2-2-strict}
\max\big\{S(W_{\bullet,\bullet}^T;C),S(W_{\bullet, \bullet,\bullet}^{T,C};P)\big\}>1.
\end{equation}
Now, let us compute $S(W_{\bullet,\bullet}^T;C)$ and $S(W_{\bullet, \bullet,\bullet}^{T,C};P)$ using the~results obtained in \cite[\S~1.7]{Book}.

Take $u\in\mathbb{R}_{\geqslant 0}$ and $v\in\mathbb{R}_{\geqslant 0}$. Then
\begin{center}
$-K_X-uT$ is nef $\iff$ $-K_X-uT$ is pseudoeffective $\iff$ $u\leqslant 1$,
\end{center}
Similarly, if $u\in[0,1]$, then
\begin{center}
$(-K_X-uT)\vert_{T}-vC$ is nef $\iff$ $(-K_X-uT)\vert_{T}-vC$ is pseudoeffective $\iff$ $v\leqslant 1-u$,
\end{center}
because  $(-K_X-uT)\vert_{T}-vC\sim_{\mathbb{R}}S|_T+(1-u-v)C$.
So, using \cite[Corollary~1.109]{Book},~we~get
$$
S(W_{\bullet,\bullet}^T;C)=\frac{3}{6}\int_0^1\int_0^{1-u}\big(S|_T+(1-u-v)C\big)^2dvdu=\frac{1}{2}\int_0^1\int_0^{1-u}4(1-u-v)dvdu=\frac{1}{3}<1.
$$
Hence, it follows from \eqref{equation:2-2-strict} that $S(W_{\bullet,\bullet,\bullet}^{T,C},P)>1$.
Now, using \cite[Theorem~1.112]{Book}, we get
$$
S(W_{\bullet,\bullet,\bullet}^{T,C},P)=\frac{3}{6}\int_0^1\int_0^{1-u}\Big(\big(S|_T+(1-u-v)C\big)\cdot C\Big)^2dvdu=\frac{3}{6}\int_0^1\int_0^{1-u}4\,dvdu=1,
$$
which is a~contradiction. This shows that $X$ is K-stable.

\begin{corollary}
\label{corollary:2-2-final}
All smooth Fano threefolds in the~family \textnumero 2.2 are K-stable.
\end{corollary}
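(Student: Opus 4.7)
The plan is to collect the pieces proved throughout Section~\ref{section:2-2}. Since $\mathrm{Aut}(X)$ is finite by \cite{CheltsovShramovPrzyjalkowski}, K-stability is equivalent to K-polystability, so it suffices to rule out K-polystability. Letting $\tau$ be the Galois involution of the double cover $\pi$ and $G=\langle\tau\rangle$, I would invoke \cite[Corollary~4.14]{Zhuang} to reduce to a $G$-equivariant statement: if $X$ were not K-polystable, then some $G$-invariant prime divisor $\mathbf{F}$ over $X$ would satisfy $\beta(\mathbf{F})\leqslant 0$. By \cite[Theorem~3.7.1]{ACCFKMGSSV} the center $Z$ of $\mathbf{F}$ cannot be a surface, so $Z$ is either a $G$-invariant irreducible curve or a $G$-fixed point. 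The whole task is to derive a contradiction in both cases.

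To rule out both cases simultaneously, I would pick a general point $P\in Z$ and consider the fiber $S$ of $p_1$ through $P$. Using Lemma~\ref{2-2-normal-surface} (normality of fibers of $p_1$), together with the flag argument $P\in C\subset S$ of Lemma~\ref{lemma:2-2-S-singular}, one obtains that $S$ must be singular at $P$. Combined with normality this forces $Z$ to be a point. Hence $Z=\{P\}$ is $G$-fixed, which means $\tau(P)\in R$ so that $P$ lies in the ramification locus.

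Next I would exploit the conic bundle structure $p_2$. Let $C$ be the fiber of $p_2$ through $P$. By Lemma~\ref{2-2-dP-conic-bundle}, because $S$ is singular at $P$, the fiber $C$ is smooth at $P$; and since $\tau(P)\in R$ and $C$ is $\tau$-invariant, $C$ is actually smooth. Choose a general surface $T\in|\pi_2^{*}\mathcal{O}_{\mathbb{P}^2}(1)|$ through $C$: by Bertini $T$ is smooth. Apply the Abban--Zhuang flag method to $P\in C\subset T$ exactly as in Lemma~\ref{lemma:2-2-S-singular}, using that $S_X(T)<1$ from \cite[Theorem~3.7.1]{ACCFKMGSSV}. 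One obtains the inequality
$$
\max\big\{S(W_{\bullet,\bullet}^{T};C),\,S(W_{\bullet,\bullet,\bullet}^{T,C};P)\big\}\geqslant 1,
$$
and in fact strict inequality since $P$ is the true center of $\mathbf{F}$.

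Then a direct volume computation, using $-K_X-uT\sim_{\mathbb{R}}S+(1-u)T$ with nef/pseudoeffective threshold $u=1$ and $(-K_X-uT)|_T-vC\sim_{\mathbb{R}}S|_T+(1-u-v)C$ with threshold $v=1-u$, yields $S(W_{\bullet,\bullet}^{T};C)=\tfrac{1}{3}$ and $S(W_{\bullet,\bullet,\bullet}^{T,C};P)=1$, contradicting the strict inequality above. The main obstacle is precisely this last point: the flag inequality saturates at exactly $1$, so the proof hinges on the strict form of \cite[Theorem~3.7.1]{ACCFKMGSSV} (applicable because $P$ itself is the center), not just the weak inequality. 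The conclusion is that no such $\mathbf{F}$ exists, so $X$ is K-polystable, hence K-stable.
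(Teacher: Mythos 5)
Your proposal is essentially identical to the paper's proof: it invokes finiteness of $\mathrm{Aut}(X)$ and \cite[Corollary~4.14]{Zhuang} to get a $G$-invariant destabilizing divisor, uses Lemmas~\ref{2-2-normal-surface}, \ref{lemma:2-2-S-singular}, and \ref{2-2-dP-conic-bundle} to force the center to be a $\tau$-fixed ramification point $P$ with smooth $p_2$-fiber $C$, and then runs the Abban--Zhuang flag $P\in C\subset T$ to get $S(W_{\bullet,\bullet}^T;C)=\tfrac13$ and $S(W_{\bullet,\bullet,\bullet}^{T,C};P)=1$, contradicting the strict version of \cite[Theorem~3.7.1]{ACCFKMGSSV}. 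You have correctly identified that the argument hinges on the strict inequality (since $P$ is the genuine center), which is exactly the subtle point the paper leans on.
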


\section{Family \textnumero 2.4.}
\label{section:2-4}

Let $\mathscr{S}$ and $\mathscr{S}^\prime$ be smooth cubic surfaces in $\mathbb{P}^3$ such that their~intersection is a~smooth curve of genus $10$.
Set~$\mathscr{C}=\mathscr{S}\cap \mathscr{S}^\prime$, and let $\pi\colon X\to \mathbb{P}^3$ be the~blow up of the~curve $\mathscr{C}$.
Then  $X$ is a~smooth Fano threefold in the~family~\textnumero 2.4,
and every smooth Fano threefold in this family can be obtained in this way.
Moreover, there exists a~commutative diagram
$$
\xymatrix{
&X\ar@{->}[ld]_{\pi}\ar@{->}[rd]^{\phi}&\\%
\mathbb{P}^3\ar@{-->}[rr]&&\mathbb{P}^1}
$$
where $\mathbb{P}^3\dasharrow\mathbb{P}^1$ is a~map that is given by the~pencil generated by the~surfaces $\mathscr{S}$~and~$\mathscr{S}^\prime$,
and $\phi$ is a~fibration into cubic surfaces. Note that $-K_X^3=10$ and $\mathrm{Aut}(X)$ is finite \cite{CheltsovShramovPrzyjalkowski}.

Let $H=\pi^*(\mathcal{O}_{\mathbb{P}^3}(1))$, and let $E$ be the~$\pi$-exceptional surface.
Then  $-K_X\sim 4H-E$, the~morphism $\phi$ is given by the~linear system $|3H-E|$, and $E\cong\mathscr{S}\times\mathbb{P}^1$.

The goal of this section is to prove that $X$ is K-stable.
Suppose that $X$ is not K-stable. Let us seek for a~contradiction.
First, using the~valuative criterion for K-stability \cite{Fujita2019Crelle,Li},
we see that there~exists a~prime divisor $\mathbf{F}$ over $X$ such that
$$
\beta(\mathbf{F})=A_X(\mathbf{F})-S_X(\mathbf{F})\leqslant 0.
$$
Let $Z$ be the~center of the~divisor $\mathbf{F}$ on $X$.
Then $Z$ is not a~surface~by~\mbox{\cite[Theorem~3.17]{Book}}.
Therefore, either $Z$ is an~irreducible curve or $Z$ is a~point.
Fix a~point $P\in Z$.

Let $A$ be the~surface in $|3H-E|$ that contains $P$.
Fix $u\in\mathbb{R}_{\geqslant 0}$.
Let $\mathscr{P}(u)$ be the~positive part of the~Zariski decomposition of $-K_X-uA$,
and let $\mathscr{N}(u)$ be its negative part. Then
$$
-K_X-uA\sim_{\mathbb{R}}(4-3u)H-(1-u)E\sim_{\mathbb{R}}\Big(\frac{4}{3}-u\Big)A+\frac{1}{3}E.
$$
This implies that $-K_X-uA$ is pseudoeffective $\iff$ $u\leqslant\frac{4}{3}$.
Moreover, we have
$$
\mathscr{P}(u)=\left\{\aligned
&(4-3u)H-(1-u)E\ \text{if $0\leqslant u\leqslant 1$}, \\
&(4-3u)H\ \text{if $1\leqslant v\leqslant \frac{4}{3}$}, \\
\endaligned
\right.
$$
and
$$
\mathscr{N}(u)=\left\{\aligned
&0\ \text{if $0\leqslant u\leqslant 1$}, \\
&(u-1)E\ \text{if $1\leqslant u\leqslant \frac{4}{3}$}. \\
\endaligned
\right.
$$
Integrating, we obtain $S_X(A)=\frac{67}{120}<1$, which also follows from \mbox{\cite[Theorem~3.17]{Book}}.

Note that $\pi(A)$ is a~normal cubic surface in $\mathbb{P}^3$, and $\pi(A)$ is smooth along the~curve~$\mathscr{C}$.
In particular, we see that $A\cong\pi(A)$, and $A$ is smooth along the~intersection $E\cap A$.

\begin{lemma}
\label{lemma:2-4-A-singular-at-P}
The surface $A$ is singular at the~point $P$.
\end{lemma}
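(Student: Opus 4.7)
The plan is to argue by contradiction and apply the Abban--Zhuang flag inequality to a carefully chosen flag on $A$. Assuming $A$ is smooth at $P$, I would take $C$ to be a general smooth curve in $|-K_A|$ through $P$ (which exists by Bertini, since $|-K_A|$ is the linear system of hyperplane sections of the cubic $\pi(A)$ through $\pi(P)$) and apply \cite[Theorem~1.7.30]{ACCFKMGSSV} to the flag $P\in C\subset A$, obtaining
$$
1\geqslant \frac{A_X(\mathbf{F})}{S_X(\mathbf{F})}\geqslant \min\left\{\frac{1}{S_X(A)},\frac{1}{S(W^A_{\bullet,\bullet};C)},\frac{1}{S(W^{A,C}_{\bullet,\bullet,\bullet};P)}\right\}.
$$
Since $S_X(A)=\frac{67}{120}$ is already in hand, the contradiction will follow once I establish $S(W^A_{\bullet,\bullet};C)<1$ and $S(W^{A,C}_{\bullet,\bullet,\bullet};P)<1$.

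The key preparatory observation is that $\pi|_A\colon A\to\pi(A)$ is an isomorphism (as $\pi(A)$ is smooth along $\mathscr{C}$), so $H|_A=-K_A$ and $E|_A=\mathscr{C}$ lies in $|-3K_A|$, since $\mathscr{C}$ is cut out on $\pi(A)$ by the other cubic $\mathscr{S}'$ of the pencil. Substituting into the formulas for $\mathscr{P}(u)$ and $\mathscr{N}(u)$, I find $\mathscr{P}(u)|_A\equiv -K_A$ with $\mathscr{N}(u)|_A=0$ for $u\in[0,1]$, and $\mathscr{P}(u)|_A\equiv(4-3u)(-K_A)$ with $\mathscr{N}(u)|_A=(u-1)\mathscr{C}$ for $u\in[1,\tfrac{4}{3}]$. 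The computation of $S(W^A_{\bullet,\bullet};C)$ is then routine: since $C\neq\mathscr{C}$, the $\mathrm{ord}_C(\mathscr{N}(u)|_A)$ term vanishes, and the volume integrals of the ample class $(a-v)(-K_A)$ on the smooth cubic surface give $S(W^A_{\bullet,\bullet};C)=\frac{13}{40}$.

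The leading term of $S(W^{A,C}_{\bullet,\bullet,\bullet};P)$, namely $\tfrac{3}{10}\int\!\!\int (P(u,v)\cdot C)^2\,dv\,du$ with $P(u,v)\cdot C=3(a-v)$, integrates to $\tfrac{39}{40}$. The main obstacle is the correction term $F_P$ from $\mathscr{N}(u)|_A$ along $C$ at $P$, which is nontrivial precisely when $P\in\mathscr{C}$ (equivalently, when $P\in E$). In that case the genericity of $C$ through $P$ forces transverse intersection, so $\mathrm{ord}_P(\mathscr{C}|_C)=1$ and $\mathrm{ord}_P(\mathscr{N}(u)|_A|_C)=u-1$ for $u\in[1,\tfrac{4}{3}]$, giving
$$
F_P=\frac{6}{10}\int_1^{4/3}\int_0^{4-3u}3(4-3u-v)(u-1)\,dv\,du=\frac{1}{120}.
$$
Hence $S(W^{A,C}_{\bullet,\bullet,\bullet};P)\leqslant \frac{39}{40}+\frac{1}{120}=\frac{59}{60}<1$ in all cases. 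The three reciprocals $\frac{120}{67}$, $\frac{40}{13}$, and $\frac{60}{59}$ all strictly exceed $1$, contradicting the Abban--Zhuang bound and producing the desired conclusion.
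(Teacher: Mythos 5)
Your proof is correct and follows essentially the same route as the paper's: argue by contradiction with a general smooth anticanonical curve $C\subset A$ through $P$, compute $S(W^A_{\bullet,\bullet};C)=\tfrac{13}{40}$ and bound $S(W^{A,C}_{\bullet,\bullet,\bullet};P)\leqslant\tfrac{39}{40}+\tfrac{1}{120}=\tfrac{59}{60}$ (the correction term appearing only if $P\in E$), then conclude via the Abban--Zhuang flag inequality since all three reciprocals exceed $1$. The only difference is cosmetic: you spell out $E|_A=\mathscr{C}\in|-3K_A|$ and the transversality argument explicitly, whereas the paper uses them implicitly.
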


\begin{proof}
Suppose that $A$ is smooth at $P$.
Let $C$ be a~general curve in $|-K_A|$ that passes through the~point $P$.
Then  $C$ is a~smooth irreducible elliptic curve. Take $v\in\mathbb{R}_{\geqslant 0}$.
Then
$$
\mathscr{P}(u)\vert_{S}-vC\sim_{\mathbb{R}}\left\{\aligned
&(1-v)C\ \text{if $0\leqslant u\leqslant 1$}, \\
&(4-3u-v)C\ \text{if $1\leqslant u\leqslant \frac{4}{3}$}.
\endaligned
\right.
$$
Therefore, using \cite[Corollary~1.109]{Book}, we obtain
$$
S\big(W_{\bullet,\bullet}^A;C\big)=\frac{3}{10}\int_0^1\int_0^13(1-v)^2dvdu+\frac{3}{10}\int_1^{\frac{4}{3}}\int_0^{4-3u}3(4-3u-v)^2dvdu=\frac{13}{40}.
$$
Similarly, using \cite[Theorem~1.112]{Book}, we obtain
\begin{multline*}
S\big(W_{\bullet,\bullet,\bullet}^{A,C};P\big)\leqslant
\frac{3}{10}\int_0^1\int_0^1\big(3(1-v)\big)^2dvdu+\frac{3}{10}\int_1^{\frac{4}{3}}\int_0^{4-3u}\big(3(4-3u-v)\big)^2dvdu+\\
+\underbrace{\frac{6}{10}\int_1^{\frac{4}{3}}\int_0^{4-3u}3(4-3u-v)(u-1)dvdu}_{\text{if } P\in E}=\frac{39}{40}+\frac{1}{120}=\frac{59}{60}.
\end{multline*}
Therefore, it follows from \cite[Theorem~1.112]{Book} that
$$
1\geqslant \frac{A_X(\mathbf{F})}{S_X(\mathbf{F})}\geqslant\min\left\{\frac{1}{S_X(A)},\frac{1}{S(W_{\bullet,\bullet}^A;C)},\frac{1}{S(W_{\bullet, \bullet,\bullet}^{A,C};P)}\right\}
\geqslant\min\left\{\frac{120}{67},\frac{40}{13},\frac{60}{59}\right\}>1,
$$
which is absurd.
\end{proof}

\begin{corollary}
\label{corollary:2-4-A-singular-at-P}
The point $P$ is not contained in the~surface $E$.
\end{corollary}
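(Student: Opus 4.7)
The plan is to derive Corollary~\ref{corollary:2-4-A-singular-at-P} immediately from Lemma~\ref{lemma:2-4-A-singular-at-P} together with the geometric observation made just before that lemma. The essential input is that every surface in the pencil generated by $\mathscr{S}$ and $\mathscr{S}^\prime$ is smooth along the base locus $\mathscr{C}$: at any point $q\in\mathscr{C}$, the differentials of $\mathscr{S}$ and $\mathscr{S}^\prime$ at $q$ are linearly independent (since $T_q\mathscr{S}\neq T_q\mathscr{S}^\prime$), so no nontrivial linear combination vanishes. Consequently $\pi(A)$ is smooth along $\mathscr{C}$, the restriction $\pi\vert_A\colon A\to\pi(A)$ is an isomorphism, and $A$ is smooth along the intersection curve $E\cap A$ (which is the preimage of $\mathscr{C}$). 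This is precisely the remark placed immediately before Lemma~\ref{lemma:2-4-A-singular-at-P} in the preceding discussion.

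From this the argument is a single line. Suppose for contradiction that $P\in E$. Since $P\in A$ by the very definition of $A$ as the fiber of $\phi$ through $P$, we would then have $P\in E\cap A$. But $A$ is smooth at every point of $E\cap A$ by the observation above, contradicting Lemma~\ref{lemma:2-4-A-singular-at-P}, which asserts that $A$ is singular at $P$. Therefore $P\not\in E$.

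There is no genuine obstacle: the only nontrivial content is the smoothness of members of the pencil along $\mathscr{C}$, which is the local tangent-space computation noted above and is already implicit in the remark preceding Lemma~\ref{lemma:2-4-A-singular-at-P}. I would present this corollary as a one-line deduction, identical in spirit to how Corollary~\ref{corollary:2-1-2-3-2-5-A-singular} is deduced in Section~\ref{section:2-1}.
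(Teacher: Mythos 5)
Your proposal is correct and takes exactly the approach the paper intends: the paper states this as an immediate corollary, and the deduction is precisely the one you give -- $A$ is smooth along $E\cap A$ (as noted just before Lemma~\ref{lemma:2-4-A-singular-at-P}), while $A$ is singular at $P$ by that lemma, so $P\notin E$. Your justification of the smoothness of pencil members along $\mathscr{C}$ via linear independence of the differentials is the right local argument and matches the paper's implicit reasoning.
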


Since $A\cong\pi(A)$, we may consider $A$ as a~cubic surface in $\mathbb{P}^3$.
Then
\begin{itemize}
\item either $\mathrm{mult}_P(A)=2$ and $A$ has Du Val singularities.
\item or $\mathrm{mult}_P(A)=3$ and $A$ is a~cone over a~plane smooth cubic curve with vertex $P$.
\end{itemize}

\begin{lemma}
\label{lemma:2-4-cone}
One has $\mathrm{mult}_P(A)\ne 3$.
\end{lemma}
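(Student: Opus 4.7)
The plan is to follow the blueprint of Lemma~\ref{lemma:2-1-d-3}: assume for contradiction that $\mathrm{mult}_P(A)=3$, so that $A$ is a cone in $\mathbb{P}^3$ over a smooth plane cubic with vertex $P$. Let $\sigma\colon\widehat{X}\to X$ be the blow up of $P$, with exceptional divisor $G\cong\mathbb{P}^2$, so that $A_X(G)=3$. Let $\widehat{A}$, $\widehat{E}$ denote the proper transforms of $A$ and $E$. By Corollary~\ref{corollary:2-4-A-singular-at-P}, $P\notin E$, hence $\widehat{E}=\sigma^*E$; and because the tangent cone of $A$ at $P$ is the affine cone on a smooth plane cubic, the intersection $\mathscr{C}:=\widehat{A}\vert_G$ is a smooth plane cubic in $G\cong\mathbb{P}^2$.

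The first step is to compute the Zariski decomposition of $\sigma^*(-K_X)-uG$. Using $-K_X\sim_{\mathbb{R}}\frac{4}{3}A+\frac{1}{3}E$ together with $\sigma^*A=\widehat{A}+3G$ and $\sigma^*E=\widehat{E}$, one obtains
\begin{equation*}
\sigma^*(-K_X)-uG\sim_{\mathbb{R}}\tfrac{4}{3}\widehat{A}+\tfrac{1}{3}\widehat{E}+(4-u)G,
\end{equation*}
so the pseudoeffective range is $u\in[0,4]$. I expect two internal thresholds, located by the vanishing of the intersection of $\mathscr{P}(u)$ with a ruling of the cone $\widehat{A}$ (first threshold) and with a fiber of the $\mathbb{P}^1$-bundle $\widehat{E}\to\mathscr{C}$ (second threshold); these split $[0,4]$ into three intervals on which $\mathscr{P}(u)$ and $\mathscr{N}(u)$ have explicit piecewise descriptions analogous to those in Lemma~\ref{lemma:2-1-d-3}. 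Integrating $\mathscr{P}(u)^3$ over these three intervals produces a closed value of $S_X(G)$ which one then checks is strictly less than $3=A_X(G)$.

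Next, applying \cite[Corollary~4.18]{Fujita2021} once reduces to
\begin{equation*}
1\geqslant\frac{A_X(\mathbf{F})}{S_X(\mathbf{F})}\geqslant\min\left\{\frac{A_X(G)}{S_X(G)},\inf_{Q\in G}\delta_Q\big(G,V^G_{\bullet,\bullet}\big)\right\},
\end{equation*}
and, taking for each $Q\in G$ a general line $\ell\subset G$ through $Q$, a second application gives
\begin{equation*}
\delta_Q\big(G,V^G_{\bullet,\bullet}\big)\geqslant\min\left\{\frac{1}{S(V^G_{\bullet,\bullet};\ell)},\frac{1}{S(W^{G,\ell}_{\bullet,\bullet,\bullet};Q)}\right\}.
\end{equation*}
I would then compute $S(V^G_{\bullet,\bullet};\ell)$ by direct piecewise integration of $\mathrm{vol}(\mathscr{P}(u)\vert_G-v\ell)$, and compute $S(W^{G,\ell}_{\bullet,\bullet,\bullet};Q)$ in two sub-cases: when $Q\notin\mathscr{C}$ the negative-part term $\mathrm{ord}_Q(\mathscr{N}(u)\vert_\ell)$ vanishes identically, while for $Q\in\mathscr{C}$ it is bounded above by $\mathscr{N}(u)\cdot\ell$ exactly as in the final paragraph of Lemma~\ref{lemma:2-1-d-3}. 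In both sub-cases the resulting upper bounds should be small enough to force $\delta_Q(G,V^G_{\bullet,\bullet})>1$, yielding the desired contradiction.

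The main obstacle is the piecewise bookkeeping: pinpointing the two thresholds in the Zariski decomposition precisely and carrying out the volume and intersection integrations without arithmetic slips. The qualitative structure is identical to Lemma~\ref{lemma:2-1-d-3}; the numerical differences stem from $(-K_X)^3=10$ (rather than $12$) and from the coefficients $\tfrac{4}{3}$, $\tfrac{1}{3}$ on $\widehat{A}$, $\widehat{E}$ (rather than $2$, $1$), which will shift the thresholds and the final values of the $S$-invariants but should leave all the key strict inequalities intact.
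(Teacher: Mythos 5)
Your proposal matches the paper's proof of this lemma essentially point for point: blow up $P$, establish $S_X(G)<3=A_X(G)$, apply \cite[Corollary~4.18]{Fujita2021} twice along the flag $Q\in\ell\subset G\cong\mathbb{P}^2$, and split into the sub-cases $Q\in\mathscr{C}$ and $Q\notin\mathscr{C}$ to bound $S(W^{G,\ell}_{\bullet,\bullet,\bullet};Q)$. The only detail you misguessed is the shape of the Zariski decomposition of $\sigma^*(-K_X)-uG$: here there is a single threshold at $u=1$ (two pieces), not two thresholds, because the coefficient of $\widehat{E}$ in the positive part stays $\tfrac{1}{3}$ for all $u\in[0,4]$ and only $\widehat{A}$ moves into the negative part; this would surface immediately in the actual computation and does not affect the argument.
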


\begin{proof}
Let $\sigma\colon\widehat{X}\to X$ be a~blow up of the~point $P$, and let $G$ be the~$\sigma$-exceptional surface.
Denote by $\widehat{A}$ and $\widehat{E}$ the~proper transforms on $\widehat{X}$ of the~surfaces $A$ and $E$, respectively.
Suppose that $\mathrm{mult}_P(A)=3$.  Take $u\in\mathbb{R}_{\geqslant 0}$. Then
$$
\sigma^*(-K_X)-vG\sim_{\mathbb{R}}\frac{1}{3}\widehat{E}+\frac{4}{3}\widehat{A}+(4-u)G.
$$
Thus, the~divisor $\sigma^*(-K_X)-vG$ is pseudo-effective $\iff$ $u\in[0,4]$.
Moreover, if $u\in[0,4]$, then the~Zariski decomposition of the~divisor $\sigma^*(-K_X)-vG$ can be described as follows:
$$
P(u)=\left\{\aligned
&\frac{1}{3}\widehat{E}+\frac{4}{3}\widehat{A}+(4-u)G\ \text{if $0\leqslant u\leqslant 1$}, \\
&\frac{1}{3}\widehat{E}+\frac{5-u}{3}\widehat{A}+(4-u)G\ \text{if $1\leqslant u\leqslant 4$}, \\
\endaligned
\right.
$$
and
$$
N(u)=\left\{\aligned
&0\ \text{if $0\leqslant u\leqslant 1$}, \\
&\frac{u-1}{3}\widehat{A}\ \text{if $1\leqslant u\leqslant 4$}, \\
\endaligned
\right.
$$
where $P(u)$ and $N(u)$ are the~positive and the~negative parts of the~Zariski decomposition, respectively.
Using this, we compute
$$
S_{X}(G)=\frac{3}{10}\int_{0}^{1}u^{3},du+\frac{3}{10}\int_{1}^{4}udu=\frac{93}{40}<3=A_{X}(G).
$$

As in the~proof of Lemma~\ref{lemma:2-1-d-3},
let us use results from \cite[\S~4]{Fujita2021} to get a~contradiction.
Namely, applying \cite[Corollary~4.18]{Fujita2021}, we get
$$
1\geqslant \frac{A_X(\mathbf{F})}{S_X(\mathbf{F})}\geqslant\min\left\{\frac{A_{X}(G)}{S_{X}(G)},
\inf_{Q\in G}\delta_{Q}\big(G,V^G_{\bullet,\bullet}\big)\right\}=
\min\left\{\frac{40}{31},\inf_{Q\in G}\delta_{Q}\big(G,V^G_{\bullet,\bullet}\big)\right\},
$$
where $\delta_{Q}(G,V^G_{\bullet,\bullet})$ is defined in \cite[\S~4]{Fujita2021}.
So, there is $Q\in G$ such that $\delta_{Q}(G,V^G_{\bullet,\bullet})<\frac{40}{31}$.

Let $\ell$ is a~general line in $G\cong\mathbb{P}^2$ that contains~$Q$.
Then  \cite[Corollary~4.18]{Fujita2021} gives
$$
\delta_{Q}\big(G,V^G_{\bullet,\bullet}\big)
\geqslant\min\left\{\frac{1}{S\big(V^F_{\bullet,\bullet};\ell\big)},\frac{1}{S\big(W^{G,\ell}_{\bullet,\bullet,\bullet};Q\big)}\right\}.
$$
Let us compute $S(V^G_{\bullet,\bullet};\ell)$ and $S(W^{G,\ell}_{\bullet,\bullet,\bullet};Q)$.
Take $v\in\mathbb{R}_{\geqslant 0}$. Then
$$
P(u)\big\vert_{G}-v\ell\sim_{\mathbb{R}}\left\{\aligned
&(u-v)\ell\ \text{if $0\leqslant u\leqslant 1$}, \\
&(1-v)\ell\ \text{if $1\leqslant u\leqslant 4$}.
\endaligned
\right.
$$
Let $\widehat{\mathscr{C}}=\widehat{A}\vert_{G}$. Then  $\widehat{\mathscr{C}}$
is a~smooth cubic curve in $G\cong\mathbb{P}^2$. Let
$$
N^\prime(u)=N(u)\big\vert_{G}=\left\{\aligned
&0\ \text{if $0\leqslant u\leqslant 1$}, \\
&\frac{u-1}{3}\widehat{\mathscr{C}}\ \text{if $1\leqslant u\leqslant 4$}.
\endaligned
\right.
$$
Now, using \cite[Theorem~4.8]{Fujita2021}, we get
\begin{multline*}
S\big(W^{G}_{\bullet,\bullet};\ell\big)=\frac{3}{10}\int_0^4\int_0^\infty \mathrm{vol}\big(P(u)\big\vert_{G}-v\ell\big)dvdu=\\
=\frac{3}{10}\int_{0}^{1}\int_{0}^{u}(u-v)^{2}dvdu+\frac{3}{10}\int_{1}^{4}\int_{0}^{1}(1-v)^{2}dvdu=\frac{13}{40}.
\end{multline*}
Similarly, it follows from \cite[Theorem~4.17]{Fujita2021} that $S(W_{\bullet,\bullet,\bullet}^{G,\ell};Q)$ can be computes as follows:
\begin{multline*}
\frac{3}{10}\int_{0}^{1}\int_{0}^{u}\Big(\big(P(u)\big\vert_{G}-v\ell\big)\cdot\ell\Big)^2dvdu+\frac{3}{10}\int_{1}^{4}\int_{0}^{1}\Big(\big(P(u)\big\vert_{G}-v\ell\big)\cdot\ell\Big)^2dvdu+F_Q=\\
=\frac{3}{12}\int_{0}^{1}\int_{0}^{u}(u-v)^{2}dvdu+\frac{3}{10}\int_{1}^{4}\int_{0}^{1}(1-v)^{2}dvdu+F_Q=\frac{13}{40}+F_Q,
\end{multline*}
where $F_Q=0$ if $Q\not\in\widehat{A}\vert_{G}$, and
$$
F_Q=\frac{6}{10} \int_{1}^{4} \int_{0}^{1}\frac{(1-v)(u-1)}{3}dvdu=\frac{9}{20}
$$
otherwise. This gives $S(W_{\bullet,\bullet,\bullet}^{G,\ell};Q)\leqslant\frac{31}{40}$.
Combining the~estimates, we get \mbox{$\delta_{Q}(G,V^G_{\bullet,\bullet})\geqslant\frac{40}{31}$}, which is a contradiction.
This completes the~proof of the~lemma.
\end{proof}

Hence, we see that the~surface $A$ has Du Val singularities.
Let $S$ be a~general surface in the~linear system $|H|$ that contains $P$. Then  $S$ is smooth, and $-K_X-uS\sim_{\mathbb{R}} (4-u)H-E$.
Hence, the divisor $-K_X-uS$ is pseudoeffective $\iff$ it is nef $\iff$ $u\leqslant 1$. Then
$$
S_X(S)=\frac{3}{10}\int_{0}^{1}(-K_X-uS)^3du=\frac{3}{10}\int_{0}^{1}u(1-u)(7-u)du=\frac{13}{40}<1.
$$

Let $C=A\vert_{S}$.
Then  $C$ is a~reduced curve in $|-K_S|$ that is singular at $P$, and $C\cong\pi(C)$.
Moreover, the~curve $\pi(C)$ is a~general hyperplane section of the~cubic surface $\pi(A)\subset\mathbb{P}^3$ that passes through the~point~$\pi(P)$.
Therefore, since $\pi(A)$ is not a cone by Lemma~\ref{lemma:2-4-cone}, we conclude that the~curve $C$ is irreducible.
Hence, one of the~following two cases holds:
\begin{enumerate}
\item the~curve $C$ has an ordinary node at $P$,
\item the~curve $C$ has an~ordinary cusp at $P$.
\end{enumerate}

Let $\Pi=\pi(S)$.
Then  $\Pi$ is a~plane in $\mathbb{P}^3$ such that $\pi(P)\in\Pi$ and $\Pi\cap\pi(A)=\pi(C)$,
and the~morphism $\pi\vert_{S}\colon S\to\Pi$ is a~composition of blow ups of $9$ intersection points $\Pi\cap\mathscr{C}$,
which we denote by $O_1,\ldots,O_9$.
Note that $\pi(C)$ is a~reduced plane cubic curve that passes through these nine points,
and $\pi(C)$ is smooth away from $\pi(P)$.

\begin{lemma}
\label{lemma:2-4-C-nodal}
The curve $C$ cannot have an ordinary double point at the~point $P$.
\end{lemma}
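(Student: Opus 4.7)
The plan is to reproduce the blow-up strategy used in Lemma~\ref{lemma:2-1-S-C-reducible}. Suppose for contradiction that $C$ has an ordinary node at $P$. Let $\sigma\colon\widetilde{S}\to S$ be the blow up of $P$ with exceptional curve $\mathbf{f}$, and let $\widetilde{C}$ denote the proper transform of $C$. Since the node is ordinary, $\widetilde{C}$ is smooth and meets $\mathbf{f}$ transversally at two distinct points $Q_1,Q_2$, and $\widetilde{C}^2=-4$. Because $S_X(S)=\frac{13}{40}<1$, the inequality of \cite[Remark~1.7.32]{ACCFKMGSSV} applied to the flag $O\in\mathbf{f}\subset\widetilde{S}\subset X$ reduces the proof to showing both $S(V^S_{\bullet,\bullet};\mathbf{f})<2$ and $S(W^{\widetilde{S},\mathbf{f}}_{\bullet,\bullet,\bullet};O)<1$ for every $O\in\mathbf{f}$.

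To set up the computations, recall that $\pi|_S\colon S\to\Pi\cong\mathbb{P}^2$ is the blow up of the nine points $O_1,\dots,O_9=\Pi\cap\mathscr{C}$, with exceptional curves $\mathbf{e}_1,\dots,\mathbf{e}_9$. Setting $L=(\pi|_S)^*\mathcal{O}_\Pi(1)=H|_S$, one has $C\in|-K_S|=|3L-\sum\mathbf{e}_i|$, $\widetilde{C}\cdot\mathbf{f}=2$ and $\widetilde{C}\cdot L_{\widetilde{S}}=3$. Using $(-K_X-uS)|_S\sim C+(1-u)L$ together with $\sigma^*C\sim\widetilde{C}+2\mathbf{f}$ yields
\[
\sigma^*\bigl((-K_X-uS)|_S\bigr)-v\mathbf{f}\;\sim_{\mathbb{R}}\;\widetilde{C}+(2-v)\mathbf{f}+(1-u)L_{\widetilde{S}},
\]
which is nef for $v\in[0,\frac{3-3u}{2}]$; beyond that, I expect the Zariski decomposition to take the form $N(u,v)=\lambda(u,v)\widetilde{C}$ with $\lambda(u,v)=\frac{2v+3u-3}{4}$ and $P(u,v)=(1-\lambda)\widetilde{C}+(2-v)\mathbf{f}+(1-u)L_{\widetilde{S}}$, up to the relevant pseudoeffective threshold for $v$.

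The remaining steps follow the template of Lemma~\ref{lemma:2-1-S-C-reducible}: first integrate $P(u,v)^2$ to bound $S(V^S_{\bullet,\bullet};\mathbf{f})$ strictly below~$2$; then, for each $O\in\mathbf{f}$, integrate $(P(u,v)\cdot\mathbf{f})^2$ and, when $O\in\mathrm{Supp}(N(u,v)|_\mathbf{f})$, the additional contribution $2(P\cdot\mathbf{f})\,\mathrm{ord}_O(N|_\mathbf{f})$, to bound $S(W^{\widetilde{S},\mathbf{f}}_{\bullet,\bullet,\bullet};O)$. The decisive point is the assumption that the node is \emph{ordinary}: transversality of $\widetilde{C}$ to $\mathbf{f}$ at $Q_i$ gives $\mathrm{ord}_{Q_i}(\widetilde{C}|_\mathbf{f})=1$, so $\mathrm{ord}_{Q_i}(N(u,v)|_\mathbf{f})=\lambda(u,v)$ --- half of what would arise in the cuspidal case. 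This factor of two halves the correction integral and drives the final bound strictly below~$1$, delivering the desired contradiction.

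The main technical obstacle is tracking the Zariski decomposition near the pseudoeffective threshold. Because $\widetilde{C}^2=-4$ is rather negative, other negative curves on $\widetilde{S}$ --- most notably proper transforms of lines through $\pi(P)$ and one of the nine base points $O_i$ --- could a priori enter $N(u,v)$ for large $v$ and complicate the formulas. I would circumvent this by taking $P$ to be a \emph{general} point of the curve $Z$, so that $P$ avoids the finitely many $(-1)$-curves on $S$; the clean two-region formula for $(P(u,v),N(u,v))$ then remains valid in the range needed, and the integrations can be carried out in direct analogy with Lemma~\ref{lemma:2-1-S-C-reducible}.
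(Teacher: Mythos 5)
Your strategy is the same one the paper uses: blow up $P$, work with the flag $O\in\mathbf{f}\subset\widetilde{S}$, and bound $S(V^{S}_{\bullet,\bullet};\mathbf{f})$ and $S(W^{\widetilde{S},\mathbf{f}}_{\bullet,\bullet,\bullet};O)$. Your class formula $\sigma^*((-K_X-uS)|_S)-v\mathbf{f}\sim_{\mathbb{R}}\widetilde{C}+(2-v)\mathbf{f}+(1-u)L_{\widetilde{S}}$ is correct, and the first two regions of the Zariski decomposition you propose agree with the paper's (in the paper's basis of $\widehat{C}$, $\sum\widehat{L}_i$, $\mathbf{f}$ they look different, but they are the same divisors).

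The gap is in the claim that a ``clean two-region formula'' persists up to the pseudoeffective threshold and that this can be arranged by taking $P$ general. That is not so, for two independent reasons. First, the curves you flag as a potential nuisance --- the proper transforms $\widehat{L}_i$ of the nine lines through $\pi(P)$ and $O_i$ --- are $(-1)$-curves on $\widetilde{S}$ meeting $\mathbf{f}$, and they are present for \emph{every} choice of $P$ not equal to one of the $O_i$; generality of $P$ cannot remove them. A direct computation with your own formula for the second region gives $P(u,v)\cdot\widehat{L}_i=(2-v)+(1-u)=3-u-v$, which is negative for $v>3-u$. Hence a third region in the Zariski decomposition, in which $(v+u-3)\sum\widehat{L}_i$ is added to $N(u,v)$, is unavoidable; the true pseudoeffective threshold is $v=\tfrac{19-7u}{6}$, not the larger value $\tfrac{7-3u}{2}$ that your two-region decomposition would suggest. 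Second, ``take $P$ a general point of $Z$'' is not available: by Lemma~\ref{lemma:2-4-A-singular-at-P} and Lemma~\ref{lemma:2-4-cone}, $P$ is forced to be a Du Val singular point of the cubic surface $\pi(A)$, of which there are only finitely many, and the paper's generality is in the choice of the plane $\Pi=\pi(S)$ through $\pi(P)$, not in $P$ itself. Since the third region materially changes both $\bigl(P(u,v)\bigr)^2$ and $P(u,v)\cdot\mathbf{f}$ over the interval $3-u\leqslant v\leqslant\tfrac{19-7u}{6}$, and you never actually evaluate the resulting integrals, the proposal does not establish the stated bounds $S(V^S_{\bullet,\bullet};\mathbf{f})<2$ and $S(W^{\widetilde{S},\mathbf{f}}_{\bullet,\bullet,\bullet};O)<1$; you would need to carry the three-region decomposition through, exactly as the paper does, to obtain $S(W^{\widetilde{S}}_{\bullet,\bullet};\mathbf{f})=\tfrac{767}{480}$ and $S(W^{\widetilde{S},\mathbf{f}}_{\bullet,\bullet,\bullet};Q)\leqslant\tfrac{305}{384}$.
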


\begin{proof}
For~each $i\in\{1,\ldots,9\}$, let $L_i$ be the~proper transform on $S$ of the~line in $\Pi$ that passes through  $P$ and $O_i$.
Then  $L_i\ne L_j$ for $i\ne j$, since $\pi(C)$ is irreducible. We have
$$
(-K_X-uS)\big\vert_{S}\sim_{\mathbb{R}}\frac{1-u}{6}\sum_{i=1}^9L_i
$$

Let $\sigma\colon\widehat{S}\to S$ be the~blow up of $S$ at the~point $P$,
let $\mathbf{f}$ be the~$\sigma$-exceptional curve,
let~$\widehat{C},\widehat{L}_1,\ldots,\widehat{L}_{9}$ be  the~proper transforms on $\widehat{S}$ of the~curves $C,L_1,\ldots,L_9$, respectively.
Then  $\widehat{L}_1,\ldots,\widehat{L}_{9}$ are disjoint.
On the surface $\widehat{S}$, we have
$$
\widehat{C}^2=-4, \mathbf{f}^2=\widehat{L}_1^2=\cdots=\widehat{L}_9^2=-1, \widehat{L}_1\cdot\mathbf{f}=\cdots=\widehat{L}_9\cdot\mathbf{f}=1, \widehat{C}\cdot\mathbf{f}=2,
$$
and $\widehat{C}\cdot \widehat{L}_1=\cdots=\widehat{C}\cdot \widehat{L}_9=0$.

Fix $u\in[0,1]$. Let $v$ be a~non-negative real number. Then
\begin{equation}
\label{equation:2-4-blow-up-nodal}
\sigma^*\big((-K_X-uS)\big\vert_S\big)-v\mathbf{f}\sim_{\mathbb{R}}\frac{5+u}{6}\widehat{C}+\frac{1-u}{6}\sum_i^9\widehat{L}_i+\frac{19-7u-6v}{6}\mathbf{f}.
\end{equation}
So, the divisor $\sigma^*((-K_X-uS)|_S)-v\mathbf{f}$ is pseudoeffective $\iff$ it is nef $\iff$ $v\leqslant\frac{19-7u}{6}$.
Let $P(u,v)$ and $N(u,v)$ be the~positive and the~negative parts of its Zariski decomposition.
Then, using \eqref{equation:2-4-blow-up-nodal}, we compute
$$
P(u,v)=\left\{\aligned
&\frac{5+u}{6}\widehat{C}+\frac{1-u}{6}\sum_i^9\widehat{L}_i+\frac{19-7u-6v}{6}\mathbf{f}\ \text{if $0\leqslant v\leqslant \frac{3-3u}{2}$}, \\
&\frac{19-7u-6v}{12}\widehat{C}+\frac{1-u}{6}\sum_i^9\widehat{L}_i+\frac{19-7u-6v}{6}\mathbf{f}\ \text{if $\frac{3-3u}{2}\leqslant v\leqslant 3-u$}, \\
&\frac{19-7u-6v}{12}\big(\widehat{C}+2\sum_i^9\widehat{L}_i+2\mathbf{f}\big)\ \text{if $3-u\leqslant v\leqslant \frac{19-7u}{6}$},
\endaligned
\right.
$$
$$
N(u,v)=\left\{\aligned
&0\ \text{if $0\leqslant v\leqslant \frac{3-3u}{2}$}, \\
&\frac{-3+3u+2v}{4}\widehat{C}\ \text{if $\frac{3-3u}{2}\leqslant v\leqslant 3-u$}, \\
&\frac{2v+3u-3}{4}\widehat{C}+(v+u-3)\sum_i^9\widehat{L}_i\ \text{if $3-u\leqslant v\leqslant \frac{19-7u}{6}$},
\endaligned
\right.
$$
$$
\mathrm{vol}\Big(\sigma^*\big((-K_X-uS)\big|_S\big)-v\mathbf{f}\Big)=\left\{\aligned
&(1-u)(7-u)-v^2\ \text{if $0\leqslant v\leqslant \frac{3-3u}{2}$}, \\
&\frac{(1-u)(37-13u-12v)}{4}\ \text{if $\frac{3-3u}{2}\leqslant v\leqslant 3-u$}, \\
&\frac{(19-7u-6v)^2}{4}\ \text{if $3-u\leqslant v\leqslant \frac{19-7u}{6}$},
\endaligned
\right.
$$
$$
P(u,v)\cdot\mathbf{f}=\left\{\aligned
&v \ \text{if $0\leqslant v\leqslant \frac{3-3u}{2}$}, \\
&\frac{3(1-u)}{2} \ \text{if $\frac{3-3u}{2}\leqslant v\leqslant 3-u$}, \\
&\frac{3(19-7u-6v)}{2}\ \text{if $3-u\leqslant v\leqslant \frac{19-7u}{6}$}.
\endaligned
\right.
$$
Indeed, the divisor $P(u,v)$ above is effective, and it has a non-negative intersection with every irreducible component of $\operatorname{Supp}\left(P(u,v)\right)$,
which implies that it is nef in every case.
Now, using \cite[Corollary~1.109]{Book}, we get
\begin{multline*}
S\big(W_{\bullet,\bullet}^{\widehat{S}};\mathbf{f}\big)=
\frac{3}{10}\int_0^1\int_{0}^{\frac{19-7u}{6}}\mathrm{vol}\Big(\sigma^*\big((-K_X-uS)\big|_S\big)-v\mathbf{f}\Big)dudv=\\
=\frac{3}{10}\int_0^1\int_0^{\frac{3-3u}{2}}\big((1-u)(7-u)-v^2\big)dv+\frac{3}{10}\int_0^1\int_{\frac{3-3u}{2}}^{3-u}\frac{(1-u)(37-13u-12v)}{4}dv+\\
+\frac{3}{10}\int_0^1\int_{3-u}^{\frac{19-7u}{6}}\frac{(19-7u-6v)^2}{4}dvdu=\frac{767}{480}<2=A_S(\mathbf{f}).
\end{multline*}
Moreover, if $Q$ is a~point in $\mathbf{f}$, then \cite[Remark~1.113]{Book} gives
\begin{multline*}
S\big(W_{\bullet,\bullet,\bullet}^{\widehat{S},\mathbf{f}};Q\big)=
F_{Q}+\frac{3}{10}\int_{0}^{1}\int_{0}^{\frac{19-7u}{6}}\big(P(u,v)\cdot\mathbf{f}\big)^2dvdu=F_Q+\frac{3}{10}\int_0^1\int_0^{\frac{3-3u}{2}}v^2dv+\\
+\frac{3}{10}\int_0^1\int_{\frac{3-3u}{2}}^{3-u}\Bigg(\frac{3(1-u)}{2}\Bigg)^2dv+\frac{3}{10}\int_0^1\int_{3-u}^{\frac{19-7u}{6}}\Bigg(\frac{3(19-7u-6v)}{2}\Bigg)^2dvdu=F_Q+\frac{147}{320},
\end{multline*}
where
$$
F_Q=\frac{6}{10}\int_{0}^{1}\int_{0}^{\frac{19-7u}{6}}\big(P(u,v)\cdot\mathbf{f}\big)\mathrm{ord}_Q\Big(N(u,v)\big\vert_{\mathbf{f}}\Big)dvdu,
$$
which implies the~following assertions:
\begin{itemize}
\item if $Q\not\in\widehat{C}\cup\widehat{L}_1\cup\cdots\cup\widehat{L}_{9}$, then $F_Q=0$;
\item if $Q\in\widehat{L}_1\cup\cdots\cup\widehat{L}_{9}$, then
$$
F_Q=\frac{6}{10}\int_0^1\int_{3-u}^{\frac{19-7u}{6}}\frac{3(19-7u-6v)(v+u-3)}{2}dvdu=\frac{1}{960};
$$
\item if $Q\in\widehat{C}$ and $\widehat{C}$ intersects $\mathbf{f}$ transversally at $P$, then
\begin{multline*}
\quad \quad \quad \quad F_Q=\frac{6}{10}\int_0^1\int_{\frac{3-3u}{2}}^{3-u}\frac{3(1-u)(2v+3u-3)}{8}dvdu+\\
\frac{6}{10}\int_0^1\int_{3-u}^{\frac{19-7u}{6}}\frac{3(19-7u-6v)(2v+3u-3)}{8}dvdu=\frac{643}{1920};\quad \quad \quad \quad \quad
\end{multline*}
\item if $Q\in\widehat{C}$ and $\widehat{C}$ is tangent to $\mathbf{f}$ at the point $P$, then $F_Q=\frac{643}{960}$.
\end{itemize}
Thus, if $C$ has a node at $P$, then $S(W_{\bullet,\bullet,\bullet}^{\widehat{S},\mathbf{f}};Q)\leqslant\frac{305}{384}$,
so \cite[Remark~1.113]{Book} gives
$$
1\geqslant \frac{A_X(\mathbf{F})}{S_X(\mathbf{F})}\geqslant\min\left\{\inf_{Q\in\mathbf{f}}\frac{1}{S(W_{\bullet,\bullet,\bullet}^{\widehat{S},\mathbf{f}};Q)},
\frac{2}{S(V_{\bullet,\bullet}^{S};\mathbf{f})},\frac{1}{S_X(S)}\right\}
\geqslant\min\left\{\frac{384}{305},\frac{960}{767},\frac{40}{13}\right\}=\frac{960}{767}>1,
$$
which is a~contradiction.
\end{proof}

\begin{lemma}
\label{lemma:2-4-C-cuspidal}
The curve $C$ cannot have an ordinary cusp at the~point $P$.
\end{lemma}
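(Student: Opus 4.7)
The plan is to follow the cuspidal argument of Lemma~\ref{lemma:2-1-S-C-reducible}: assume $C$ has a cusp at $P$ and perform the $(2,3)$-weighted blow up $\upsilon\colon\mathcal{S}\to S$ at $P$, realized as the chain of three ordinary blow ups followed by two contractions used there. The exceptional curve $\mathscr{F}$ will carry two cyclic quotient singularities $Q_2$ of type $\frac{1}{2}(1,1)$ and $Q_3$ of type $\frac{1}{3}(1,1)$, satisfy $\mathscr{F}^2=-\frac{1}{6}$ and $A_S(\mathscr{F})=5$, and the proper transform $\mathscr{C}$ of $C$ will be smooth with $\mathscr{C}^2=-6$, $\mathscr{C}\cdot\mathscr{F}=1$, $\mathscr{C}\cap\{Q_2,Q_3\}=\varnothing$, and $\upsilon^*(C)=\mathscr{C}+6\mathscr{F}$. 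As the nine points $O_1,\ldots,O_9$ are in general position, none of the lines $PO_i$ is tangent to the cusp of $C$ at $P$, so each proper transform $\mathscr{L}_i$ meets $\mathscr{F}$ transversally at the weight-$3$ orbifold point $Q_3$.

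Setting $\Delta_{\mathscr{F}}=\tfrac{1}{2}Q_2+\tfrac{2}{3}Q_3$, \cite[Remark~1.7.32]{ACCFKMGSSV} gives
$$
1\geqslant\frac{A_X(\mathbf{F})}{S_X(\mathbf{F})}\geqslant\min\left\{\inf_{Q\in\mathscr{F}}\frac{1-\mathrm{ord}_Q(\Delta_{\mathscr{F}})}{S(W^{\mathcal{S},\mathscr{F}}_{\bullet,\bullet,\bullet};Q)},\ \frac{5}{S(V^S_{\bullet,\bullet};\mathscr{F})},\ \frac{1}{S_X(S)}\right\}.
$$
Since $S_X(S)=\frac{13}{40}<1$, I would rewrite the identity implicit in the proof of Lemma~\ref{lemma:2-4-C-nodal} as $(-K_X-uS)\vert_S\sim_{\mathbb{R}}\tfrac{5+u}{6}C+\tfrac{1-u}{6}\sum_{i=1}^9 L_i$, and use $v_{\mathscr{F}}(C)=6$, $v_{\mathscr{F}}(L_i)=2$ to obtain
$$
\upsilon^*\bigl((-K_X-uS)\vert_S\bigr)-v\mathscr{F}\sim_{\mathbb{R}}\tfrac{5+u}{6}\mathscr{C}+\tfrac{1-u}{6}\sum_{i=1}^9\mathscr{L}_i+(8-2u-v)\mathscr{F}.
$$
I would then work out the Zariski decomposition of this divisor chamber by chamber in the variable $v$ (the first break being $v=3(1-u)$, where $\mathscr{C}$ enters the negative part) and integrate via \cite[Corollary~1.7.26]{ACCFKMGSSV}. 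This should yield an explicit rational value of $S(V^S_{\bullet,\bullet};\mathscr{F})$ strictly less than $5$, forcing the existence of $Q\in\mathscr{F}$ with $S(W^{\mathcal{S},\mathscr{F}}_{\bullet,\bullet,\bullet};Q)\geqslant 1-\mathrm{ord}_Q(\Delta_{\mathscr{F}})$.

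Finally, one evaluates $S(W^{\mathcal{S},\mathscr{F}}_{\bullet,\bullet,\bullet};Q)$ at each candidate point of $\mathscr{F}$. For a generic $Q$ and for $Q=Q_2$, the $\mathrm{ord}_Q(N(u,v)\vert_{\mathscr{F}})$-weighted integral vanishes and only the $(P(u,v)\cdot\mathscr{F})^2$-integral contributes, which is easily bounded below $1-\mathrm{ord}_Q(\Delta_{\mathscr{F}})$; the point $Q_0:=\mathscr{C}\cap\mathscr{F}$ is handled in parallel with the final step of Lemma~\ref{lemma:2-1-S-C-reducible}. The main technical obstacle is the case $Q=Q_3$: all nine curves $\mathscr{L}_i$ meet $\mathscr{F}$ at that point, and because the $9\times 9$ intersection matrix of the $\mathscr{L}_i$ has eigenvalues $-1$ and $+2$ (so is not negative definite), the chamber structure of the Zariski decomposition past the threshold $v=8-2u$ is more intricate than in Lemma~\ref{lemma:2-1-S-C-reducible}. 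Controlling the weighted-order contributions $\mathrm{ord}_{Q_3}(\mathscr{L}_i\vert_{\mathscr{F}})=\tfrac{1}{3}$ via the nef identity $\mathscr{C}+6\mathscr{F}=\upsilon^*(C)$ (which keeps the volume under control past this threshold) should yield $S(W^{\mathcal{S},\mathscr{F}}_{\bullet,\bullet,\bullet};Q_3)<\tfrac{1}{3}=1-\mathrm{ord}_{Q_3}(\Delta_{\mathscr{F}})$, producing the desired contradiction.
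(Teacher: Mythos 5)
Your high-level strategy is right — assume the cusp, perform the $(2,3)$-weighted blow up $\upsilon\colon\mathcal{S}\to S$ at $P$, and apply \cite[Remark~1.7.32]{ACCFKMGSSV} to the flag $\mathscr{F}\subset\mathcal{S}$ — and the numbers you quote for the weighted blow up ($\mathscr{F}^2=-\frac{1}{6}$, $A_S(\mathscr{F})=5$, $\mathscr{C}\cdot\mathscr{F}=1$, $\mathscr{C}^2=-6$) are correct, as is the rewriting $(-K_X-uS)\vert_S\sim_{\mathbb{R}}\frac{5+u}{6}C+\frac{1-u}{6}\sum_{i=1}^9L_i$. But the execution has a genuine gap: you are carrying over the nine lines $L_1,\ldots,L_9$ from the nodal case, and these are simply the wrong auxiliary curves here. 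The obstacle you flag — that the $9\times 9$ intersection matrix of the $\mathscr{L}_i$ on $\mathcal{S}$ is not negative definite — is not an obstacle to be "controlled": it is a proof that the curves $\mathscr{L}_i$ \emph{cannot} all appear in the negative part of any Zariski decomposition, and in fact they never do. One checks that $P(u,v)\cdot\mathscr{L}_i=\frac{9-3u-v}{3}>0$ for every $v$ strictly below the pseudoeffective threshold $9-3u$, so the $\mathscr{L}_i$ contribute nothing to $N(u,v)$, hence nothing to $\mathrm{ord}_{Q_3}(N(u,v)\vert_{\mathscr{F}})$; there are no "weighted-order contributions" at $Q_3$ to control, and the intricate chamber structure you anticipate past $v=8-2u$ does not materialize in the form you expect.

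What you are missing is the single curve that \emph{does} enter the negative part at $v=8-2u$: the proper transform $\widehat{L}$ of the cusp-tangent line $L$, i.e.\ the unique irreducible curve in $S$ such that $\pi(L)$ is a line in $\Pi$ with $\pi(L)\cap\pi(C)=\{\pi(P)\}$. The clean identity to use is $(-K_X-uS)\vert_S\sim_{\mathbb{R}}(1-u)L+C$; since $v_{\mathscr{F}}(L)=3$, it pulls back to $(1-u)\widehat{L}+\widehat{C}+(9-3u-v)\mathscr{F}$ (so the pseudoeffective threshold is $9-3u$, visible at once). On $\mathcal{S}$ one has $\widehat{L}^2=-\frac{1}{2}$, $\widehat{L}\cdot\widehat{C}=0$, and $\widehat{L}\cap\mathscr{F}=Q_2$, so the Zariski chambers are $N=0$ for $v\leqslant 3-3u$, $N=\frac{v-3+3u}{6}\widehat{C}$ for $3-3u\leqslant v\leqslant 8-2u$, and $N=\frac{v-3+3u}{6}\widehat{C}+(v+2u-8)\widehat{L}$ for $8-2u\leqslant v\leqslant 9-3u$ — a negative-definite, two-component negative part, exactly as in the paper. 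The relevant boundary points for $S(W^{\mathcal{S},\mathscr{F}}_{\bullet,\bullet,\bullet};Q)$ are then $Q=\widehat{C}\cap\mathscr{F}$ (a smooth point, giving the largest value $\frac{67}{120}$), $Q=Q_2=\widehat{L}\cap\mathscr{F}$ (giving $\frac{27}{80}$, to be compared with $\frac{1}{2}$), and $Q=Q_3$ (generic, giving $\frac{5}{32}$, to be compared with $\frac{1}{3}$); the last of these — the case you declared "the main technical obstacle" — is in fact the easiest. Without the curve $L$ your chamber analysis stalls, and the final weighted-order estimate at $Q_3$ you gesture toward does not arise.
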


\begin{proof}
Suppose $C$ has a~cusp. Let $L$ be an irreducible curve in $S$ such that $\pi(L)$ is a~line and $\pi(L)\cap\pi(C)=\pi(P)$.
Then  $(-K_X-uS)\vert_{S}\sim_{\mathbb{R}}(1-u)L+C$.

Now, we consider the~following commutative diagram:
$$
\xymatrix{
S_1\ar@{->}[d]_{\sigma_1}&&S_2\ar@{->}[ll]_{\sigma_2}&&S_3\ar@{->}[ll]_{\sigma_3}\ar@{->}[d]^{\upsilon}\\%
S &&&& \widehat{S}\ar@{->}[llll]_{\sigma}}
$$
where $\sigma_1$ is the~blow up of $P$,
$\sigma_2$ is the~blow up of the~point in the~$\sigma_1$-exceptional curve contained in the~proper transform of $C$,
$\sigma_3$ is the~blow up of the~point in the~$\sigma_2$-exceptional curve contained in the~proper transform of $C$,
$\upsilon$ is the~birational contraction of the~proper transforms of~$\sigma_1\circ\sigma_2$-exceptional curves,
and $\sigma$ is the~birational contraction of the~proper transform of the~$\sigma_3$-exceptional curve.
Then  $\widehat{S}$ has two singular points:
\begin{enumerate}
\item a~cyclic quotient singularity of type $\frac{1}{2}(1,1)$, which we denote by $Q_2$;
\item a~cyclic quotient singularity of~type~$\frac{1}{3}(1,1)$, which we denote by $Q_3$.
\end{enumerate}

Let $\mathbf{f}$ be the~$\sigma$-exceptional curve,
let $\widehat{C}$ be the~proper transform on $\widehat{S}$ of the~curve $C$,
and let $\widehat{L}$ be the~proper transform of the~curve $L$.
Then  the~curves $\mathbf{f}$, $\widehat{C}$, $\widehat{L}$ are smooth.
Moreover, it is not very difficult to check that $Q_2\in\mathbf{f}\ni Q_3$, $Q_2\not\in\widehat{C}\not\ni Q_3$, $Q_2\in\widehat{L}\not\ni Q_3$.
Further, we have $A_S(\mathbf{f})=5$, $\sigma^*(C)\sim\widehat{C}+6\mathbf{f}$, $\sigma^*(L)\sim\widehat{L}+3\mathbf{f}$.
On the surface $\widehat{S}$, we have
$$
\widehat{L}^2=-\frac{1}{2}, \widehat{L}\cdot\widehat{C}=0, \widehat{C}\cdot \mathbf{f}=\frac{1}{2}, \widehat{C}^2=-6, \widehat{C}\cdot \mathbf{f}=1, \mathbf{f}^2=-\frac{1}{6}.
$$
Note that  $Q_2=\mathbf{f}\cap\widehat{L}$, and $\widehat{C}$ intersects $\mathbf{f}$ transversally by one point.

Fix $u\in[0,1]$. Let $v$ be a~non-negative real number. Then
\begin{equation}
\label{equation:2-4-blow-up-cuspidal}
\sigma^*\big((-K_X-uS)\big\vert_{S}\big)-v\mathbf{f}\sim_{\mathbb{R}}(1-u)\widehat{L}+\widehat{C}+(9-3u-v)\mathbf{f}.
\end{equation}
Thus, the divisor $\sigma^*((-K_X-uS)|_S)-v\mathbf{f}$ is pseudoeffective $\iff$ it is nef $\iff$ $v\leqslant 9-3u$.
Let $P(u,v)$ be the~positive part of the Zariski decomposition of $\sigma^*((-K_X-uS)|_S)-v\mathbf{f}$, and let  $N(u,v)$ be the negative part.
Then, using \eqref{equation:2-4-blow-up-cuspidal}, we compute
$$
P(u,v)=\left\{\aligned
&(1-u)\widehat{L}+\widehat{C}+(9-3u-v)\mathbf{f}\ \text{if $0\leqslant v\leqslant 3-3u$}, \\
&(1-u)\widehat{L}+\frac{9-3u-v}{6}\widehat{C}+(9-3u-v)\mathbf{f}\ \text{if $3-3u\leqslant v\leqslant 8-2u$}, \\
&\frac{9-3u-v}{6}\big(6\widehat{L}+\widehat{C}+6\mathbf{f}\big)\ \text{if $8-2u\leqslant v\leqslant 9-3u$},
\endaligned
\right.
$$
and
$$
N(u,v)=\left\{\aligned
&0\ \text{if $0\leqslant v\leqslant 3-3u$}, \\
&\frac{v+3u-3}{6}\widehat{C}\ \text{if $3-3u\leqslant v\leqslant 8-2u$}, \\
&\frac{v+3u-3}{6}\widehat{C}+(v+2u-8)\widehat{L}\ \text{if $8-2u\leqslant v\leqslant 9-3u$}.
\endaligned
\right.
$$
This gives
$$
\mathrm{vol}\Big(\sigma^*\big((-K_X-uS)\big|_S\big)-v\mathbf{f}\Big)=\left\{\aligned
&(1-u)(7-u)-\frac{v^2}{6}\ \text{if $0\leqslant v\leqslant 3-3u$}, \\
&\frac{(1-u)(17-5u-2v)}{2}\ \text{if $3-3u\leqslant v\leqslant 8-2u$}, \\
&\frac{(9-3u-v)^2}{2}\ \text{if $8-2u\leqslant v\leqslant 9-3u$},
\endaligned
\right.
$$
and
$$
P(u,v)\cdot\mathbf{f}=\left\{\aligned
&\frac{v}{6}\ \text{if $0\leqslant v\leqslant 3-3u$}, \\
&\frac{1-u}{2}\ \text{if $3-3u\leqslant v\leqslant 8-2u$}, \\
&\frac{9-3u-v}{2}\ \text{if $8-2u\leqslant v\leqslant 9-3u$}.
\endaligned
\right.
$$
Now, we use \cite[Corollary~1.109]{Book} to get
\begin{multline*}
S\big(W_{\bullet,\bullet}^{\widehat{S}};\mathbf{f}\big)=\frac{3}{10}\int_0^1\int_0^{3-3u}\Big((1-u)(7-u)-\frac{v^2}{6}\Big)dv+\\
+\frac{3}{10}\int_0^1\int_{3-3u}^{8-2u}\frac{(1-u)(17-5u-2v)}{2}dv+\frac{3}{10}\int_0^1\int_{8-2u}^{9-3u}\frac{(9-3u-v)^2}{2}dvdu=\frac{173}{40}.
\end{multline*}
Similarly, if $Q$ is a~point in $\mathbf{f}$, then \cite[Remark~1.113]{Book} gives
\begin{multline*}
S\big(W_{\bullet,\bullet,\bullet}^{\widehat{S},\mathbf{f}};Q\big)=F_{Q}+\frac{3}{10}\int_{0}^{1}\int_{0}^{9-3u}\big(P(u,v)\cdot\mathbf{f}\big)^2dvdu=\\
=F_Q+\frac{3}{10}\int_0^1\int_0^{3-3u}\Big(\frac{v}{6}\Big)^2dvdu+\frac{3}{10}\int_0^1\int_{3-3u}^{8-2u}\Big(\frac{1-u}{2}\Big)^2dvdu+\\
+\frac{3}{10}\int_0^1\int_{8-2u}^{9-3u}\Big(\frac{9-3u-v}{2}\Big)^2dvdu=F_Q+\frac{5}{32},
\end{multline*}
where $F_Q$ can be computes as follows:
\begin{itemize}
\item if $Q\ne\widehat{C}\cap\mathbf{f}$ and $Q\ne\widehat{L}\cap\mathbf{f}$, then $F_Q=0$;
\item if $Q=\widehat{L}\cap\mathbf{f}$, then
$$
F_Q=\frac{6}{10}\int_0^1\int_{8-2u}^{9-3u}\frac{(9-3u-v)(v+2u-8)}{2}dvdu=\frac{1}{80};
$$
\item if $Q=\widehat{C}\cap\mathbf{f}$ , then
\begin{multline*}
\quad \quad \quad \quad
F_Q=\frac{6}{10}\int_0^1\int_{3-3u}^{8-2u}\frac{(1-u)(v+3u-3)}{12}dv+\\
+\frac{6}{10}\int_0^1\int_{8-2u}^{9-3u}\frac{(9-3u-v)(v+3u-3)}{12}dudv=\frac{193}{480}.\quad \quad \quad \quad\quad
\end{multline*}
\end{itemize}
Therefore, we conclude that
$$
S\big(W_{\bullet,\bullet,\bullet}^{\widehat{S},\mathbf{f}};Q\big)=
\left\{\aligned
&\frac{5}{32}\ \text{if $Q\not\in\widehat{C}\cup\widehat{L}$}, \\
&\frac{27}{80} \ \text{if $Q=\widehat{L}\cap\mathbf{f}$}, \\
&\frac{67}{120}\ \text{if $Q=\widehat{C}\cap\mathbf{f}$}.
\endaligned
\right.
$$

Let $\Delta_\mathbf{f}=\frac{1}{2}Q_2+\frac{2}{3}Q_3$. Then, using \cite[Remark~1.113]{Book} and our computations, we get
$$
1\geqslant \frac{A_X(\mathbf{F})}{S_X(\mathbf{F})}\geqslant\min\left\{\inf_{Q\in\mathbf{f}}\frac{1-\mathrm{ord}_Q\big(\Delta_\mathbf{f}\big)}{S(W_{\bullet,\bullet,\bullet}^{\widehat{S},\mathbf{f}};Q)},
\frac{5}{S(V_{\bullet,\bullet}^{S};\mathbf{f})},\frac{1}{S_X(S)}\right\}
\geqslant \min \left\{\frac{40}{13}, \frac{200}{173}, \frac{120}{67}\right\}=\frac{200}{173}>1,
$$
which is a~contradiction.
\end{proof}

Combining Lemmas~\ref{lemma:2-4-C-nodal} and \ref{lemma:2-4-C-cuspidal}, we obtain a contradiction.

\begin{corollary}
\label{corollary:2-4-final}
All smooth Fano threefolds in the~family \textnumero 2.4 are K-stable.
\end{corollary}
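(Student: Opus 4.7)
The plan is to assemble the contradiction from the sequence of reductions established in this section. I assume, for contradiction, that some smooth Fano threefold $X$ in family \textnumero~2.4 is not K-stable. By the valuative criterion of Fujita and Li, there is a prime divisor $\mathbf{F}$ over $X$ with $\beta(\mathbf{F})\leqslant 0$; let $Z$ be its center on $X$. By \cite[Theorem~3.7.1]{ACCFKMGSSV}, $Z$ is not a divisor, so $Z$ is either a curve or a point. Pick any $P\in Z$ and let $A\in|3H-E|$ be the unique cubic fibration fiber through $P$.

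The next step, already carried out in Lemma~\ref{lemma:2-4-A-singular-at-P}, is to force $A$ to be singular at $P$ via an Abban--Zhuang flag $P\in C\subset A$ with $C\in|{-K_A}|$ general, and then Corollary~\ref{corollary:2-4-A-singular-at-P} ensures $P\notin E$. Since $\pi$ is an isomorphism on $A$, viewing $A\subset\mathbb{P}^3$ as a normal cubic surface singular at $P$, we have either $\mathrm{mult}_P(A)=2$ (so that $A$ has Du Val singularities) or $\mathrm{mult}_P(A)=3$ (so that $A$ is a cone over a smooth plane cubic). Lemma~\ref{lemma:2-4-cone} rules out the cone case via a blow-up of $P$ and the surface-flag tool of \cite[\S4]{Fujita2021}. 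Hence $A$ has Du Val singularities at $P$.

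Next I pick a general surface $S\in|H|$ through $P$; its proper transform is smooth, and $C:=A|_S$ lies in $|{-K_S}|$ and is singular at $P$ with $C\cong\pi(C)$. Since $\pi(A)$ is not a cone, $C$ is irreducible, and the only possible singularities at $P$ are an ordinary node or an ordinary cusp. These two cases are precisely what Lemmas~\ref{lemma:2-4-C-nodal} and~\ref{lemma:2-4-C-cuspidal} exclude, by computing $S(W_{\bullet,\bullet}^{\widehat{S}};\mathbf{f})$ and $S(W_{\bullet,\bullet,\bullet}^{\widehat{S},\mathbf{f}};Q)$ on an appropriate iterated blow-up $\widehat{S}\to S$ over $P$ (a single blow-up for the nodal case and a weighted $(1,3,6)$-type resolution producing $\tfrac{1}{2}(1,1)$ and $\tfrac{1}{3}(1,1)$ singularities for the cuspidal case), and invoking \cite[Remark~1.7.32]{ACCFKMGSSV}.

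Since both possibilities for the singularity of $C$ at $P$ are ruled out, while $A$ being Du Val and $\pi(A)$ not being a cone forces one of them to occur, we reach a contradiction. Therefore $\beta(\mathbf{F})>0$ for every prime divisor $\mathbf{F}$ over $X$, and $X$ is K-stable. The only genuinely delicate part is the plan already executed in the previous two lemmas: identifying the correct resolution of the singularity of $C$ at $P$ and carrying out the volume and restricted-volume integrals carefully enough to get the numerical contradiction; at the level of the corollary, it suffices to cite those two lemmas and observe that together they exhaust the possible singularity types of $C$ at $P$.
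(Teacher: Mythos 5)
Your proposal is correct and follows essentially the same route as the paper: the Corollary is just the observation that the chain of reductions in Lemmas~\ref{lemma:2-4-A-singular-at-P}, \ref{lemma:2-4-cone}, \ref{lemma:2-4-C-nodal}, \ref{lemma:2-4-C-cuspidal} exhausts all possibilities for the singularity of $C=A|_S$ at $P$, giving a contradiction. (One small wording slip: $S$ is a general member of $|H|$ on $X$ and is already smooth, so there is no ``proper transform'' to invoke at that point, but this does not affect the argument.)
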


\section{Family \textnumero 2.6 (Verra threefolds)}
\label{section:2-6}

Smooth Fano threefolds in the~family \textnumero 2.6 can be described as follows:
\begin{itemize}
\item[(a)] smooth divisors of degree $(2,2)$ in $\mathbb{P}^2\times \mathbb{P}^2$, which are known as Verra threefolds,

\item[(b)] double covers of the~(unique) smooth divisor in $\mathbb{P}^2\times \mathbb{P}^2$ of degree $(1,1)$ branched over smooth anticanonical K3 surfaces.
\end{itemize}
Note that every double cover of a~smooth divisor in $\mathbb{P}^2\times \mathbb{P}^2$ of degree $(1,1)$ branched over a~smooth anticanonical surface is K-stable \cite[Example~4.4]{Dervan2}.
In fact, this also implies that general Verra threefold is K-stable \cite[Example~3.14]{Book}.

The goal of this section is to prove that all smooth Verra threefolds are K-stable.

Let $X$ be a~smooth divisor of degree $(2,2)$ in $\mathbb{P}^2\times \mathbb{P}^2$,
let $\pi_1\colon X\to \mathbb{P}^2$ be the~projection to the~first factor of $\mathbb{P}^2\times \mathbb{P}^2$,
and let $\pi_2\colon X\to \mathbb{P}^2$ be the~projection to the~second factor.
Then  $\pi_1$ and $\pi_2$ are conic bundles \cite{Prokhorov}.
Set $H_1=\pi_1^*(\mathcal{O}_{\mathbb{P}^2}(1))$ and $H_2=\pi_2^*(\mathcal{O}_{\mathbb{P}^2}(1))$.
Then
$$
-K_X\sim H_1+H_2,
$$
and the~group $\mathrm{Pic}(X)$ is generated by $H_1$ and $H_2$.
Note that $\mathrm{Aut}(X)$ is finite~\cite{CheltsovShramovPrzyjalkowski}.
Thus, the~threefold $X$ is K-stable $\iff$ it is K-polystable. See \cite{Xu} for details.

\begin{lemma}
\label{lemma:2-6-two-fibers}
Fix a~point $P\in X$. Let $C_1$ be the~fiber of the~conic bundle $\pi_1$ that contains~$P$,
and let $C_2$ be the~fiber of $\pi_2$ that contains $P$.
Then  $C_1$ or $C_2$ is smooth at~$P$.
\end{lemma}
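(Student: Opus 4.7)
The plan is a short local computation by contradiction. I will assume both $C_1$ and $C_2$ are singular at $P$ and show that this forces $X$ itself to be singular at $P$, contradicting the assumed smoothness of $X$.

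First I choose homogeneous coordinates on $\mathbb{P}^2\times\mathbb{P}^2$ so that $P=([1{:}0{:}0],[1{:}0{:}0])$, and write the defining equation of $X$ as $F(u_0,u_1,u_2;v_0,v_1,v_2)$, a form of bidegree $(2,2)$. I pass to the affine chart $u_0=v_0=1$ and put
$f(x_1,x_2,y_1,y_2):=F(1,x_1,x_2,1,y_1,y_2)$,
so that $P$ corresponds to the origin and $X$ is smooth at $P$ precisely when $\nabla f(0)\neq 0$.

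Next I identify the local equations of the two fibers. The conic $C_1=\pi_1^{-1}(\pi_1(P))$ is cut out in the affine chart by $f(0,0,y_1,y_2)=0$, viewed as a plane curve in the $(y_1,y_2)$-chart of the second $\mathbb{P}^2$; thus $C_1$ is singular at $P$ if and only if $\partial_{y_1}f(0)=\partial_{y_2}f(0)=0$. Symmetrically, $C_2$ is singular at $P$ if and only if $\partial_{x_1}f(0)=\partial_{x_2}f(0)=0$. If both happened at once, all four partials of $f$ would vanish at the origin, so $\nabla f(0)=0$, which together with $f(0)=0$ would force $X$ to be singular at $P$ by the Jacobian criterion, a contradiction.

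There is essentially no obstacle here; this is just the Jacobian criterion applied once in a well-chosen affine chart, which is why the paper dispatches the lemma with ``Local computations.'' The one point to sanity-check is that the affine singularity test really captures singularity of the projective conic $C_j$ at $P$, which is immediate because $P$ lies in the affine chart on the relevant $\mathbb{P}^2$-factor and the conic equation restricts compatibly.
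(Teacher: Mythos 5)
Your argument is correct and is exactly the ``local computation'' the paper has in mind: passing to an affine chart and applying the Jacobian criterion for the hypersurface $X$ shows that if both fiber-conics were singular at $P$ then all four partials of the local equation would vanish there, contradicting the smoothness of $X$. Nothing to add.
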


\begin{proof}
Local computations.
\end{proof}

Let $\Delta_1$ and $\Delta_2$ be the~discriminant curves of the~conic bundles $\pi_1$ and~$\pi_2$, respectively.
Then  $\Delta_1$ and $\Delta_2$ are reduced curves of degree $6$
that have at most ordinary double points as singularities.
For basic properties of the~discriminant curves  $\Delta_1$ and $\Delta_2$, see~\mbox{\cite[\S~3.8]{Prokhorov}}.
In particular, we know that no line in $\mathbb{P}^2$ can be an irreducible component of these curves.

\begin{lemma}
\label{lemma:2-6-del-Pezzo-degree-two}
Fix a~point $P\in X$. Let $C_2$ be the~fiber of the~conic bundle $\pi_2$ that contains~$P$,
let $S$ be a~general surface in $|H_2|$ that contains $C_2$.
Then  one of the~following cases holds:
\begin{enumerate}
\item $C_2$ is smooth, $S$ is smooth, the~divisor $-K_S$ is ample;

\item $C_2$ is smooth, $S$ is smooth, the~divisor $-K_S$ is nef, the~surface $S$~has exactly four irreducible curves that
have trivial intersection with the~divisor $-K_S$, these curves are disjoint and none of them passes through $P$,
and $C_2\subset\mathrm{Sing}(\pi_1^{-1}(\Delta_1))$;

\item $C_2$ is singular and reduced, $S$ is smooth, the~divisor $-K_S$ is ample;

\item $C_2$ is not reduced, $P\not\in\mathrm{Sing}(S)\subset\mathrm{Supp}(C_2)$,
and $\mathrm{Sing}(S)$ consists of two~points, which are ordinary double points of the~surface $S$,
and $-K_S$ is ample.
\end{enumerate}
\end{lemma}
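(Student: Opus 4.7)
The plan is to exploit the fact that $S = \pi_2^{-1}(\ell)$ for a line $\ell \subset \mathbb{P}^2$ through $p := \pi_2(C_2)$, and to study $S$ via the induced conic bundle $\pi_2|_S \colon S \to \ell \cong \mathbb{P}^1$. First, by adjunction $-K_S \sim H_1|_S$, so $(-K_S)^2 = H_1^2 \cdot H_2 \cdot (2H_1+2H_2) = 2$, meaning $S$ is a Gorenstein surface with $K_S^2 = 2$, whose minimal resolution is a weak del Pezzo of degree $2$ (possibly degenerated). The singular fibers of $\pi_2|_S$ sit over $\ell \cap \Delta_2$, and since no line is contained in $\Delta_2$, Bertini applied to the pencil of lines through $p$ lets us assume $\ell \cap (\Delta_2 \setminus \{p\})$ is transverse and avoids $\mathrm{Sing}(\Delta_2)$.

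I would then split into cases based on the position of $p$ with respect to $\Delta_2$. If $p \notin \Delta_2$, then $C_2$ is a smooth conic and each remaining intersection $\ell \cap \Delta_2$ is transverse at a smooth point of $\Delta_2$; the corresponding singular fibers of $\pi_2|_S$ are then pairs of distinct $(-1)$-curves, and a local computation on $X$ shows $S$ is smooth. If $p$ is a smooth point of $\Delta_2$ and $C_2$ is reduced (two distinct lines meeting at a point), then $S$ is again smooth along $C_2$ by direct local analysis, yielding case (3). If $C_2$ is not reduced, then $C_2$ is a double line; the local equation of $X$ along $C_2$ produces exactly two $A_1$-singularities on $S$ lying on $\mathrm{Supp}(C_2)$, away from $P$ by Lemma~\ref{lemma:2-6-two-fibers}, giving case (4). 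The case $p \in \mathrm{Sing}(\Delta_2)$ is similarly reduced to case (4) by generality of $\ell$.

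To distinguish cases (1) and (2), I would use the second fibration: for general $\ell$ the restriction $\pi_1|_S \colon S \to \mathbb{P}^2$ is a finite double cover branched over a quartic $B$, and $-K_S$ fails to be ample precisely when $B$ acquires singularities, each singularity contributing a $(-2)$-curve on $S$. I would then verify that a smooth fiber $C_2$ leads to the weak-del-Pezzo case exactly when $C_2 \subset \mathrm{Sing}(\pi_1^{-1}(\Delta_1))$: namely, the nodes of $B$ arise from rulings of two-line fibers of $\pi_1$ that meet $C_2$, and one shows this produces exactly four nodes of $B$, giving four disjoint $(-2)$-curves on $S$. Finally, a direct check using the explicit coordinates shows that these four curves project under $\pi_2|_S$ to points of $\ell$ distinct from $p$, hence they miss $P \in C_2$.

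The main obstacle is precisely case (2): justifying that the condition $C_2 \subset \mathrm{Sing}(\pi_1^{-1}(\Delta_1))$ is equivalent to $-K_S$ failing to be ample, and that it yields exactly four pairwise disjoint $(-2)$-curves, none of which passes through $P$. The other cases reduce to local fiber-by-fiber computations in $\mathbb{P}^2 \times \mathbb{P}^2$ analogous to Lemmas \ref{2-2-normal-surface} and \ref{2-2-dP-conic-bundle}, and can plausibly be left to the reader; but case (2) requires a genuine geometric argument matching the two conic bundle structures against each other.
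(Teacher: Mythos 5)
Your plan follows the same overall strategy as the paper: view $\pi_1|_S\colon S\to\mathbb{P}^2$ as the anticanonical morphism of the weak del Pezzo surface $S$ of degree $2$, and relate curves $\ell\subset S$ with $-K_S\cdot\ell=0$ to singular fibers of the conic bundle $\pi_1$ lying over the discriminant $\Delta_1$. You also correctly identify case~(2) as where the content lies. But there are real gaps in the proposal.

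First, your approach never addresses the ampleness of $-K_S$ in cases~(3) and~(4). The paper handles this in one stroke, by showing that \emph{any} curve $\ell$ with $-K_S\cdot\ell=0$ forces $\pi_1(C_2)\subset\Delta_1$ and hence $C_2$ irreducible and reduced (since $\Delta_1$ has no line components and $\pi_1|_{C_2}$ has degree $1$); the contrapositive then gives $-K_S$ ample whenever $C_2$ is reducible or non-reduced. This argument is not present in your outline, and without it cases~(3) and~(4) are not established.

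Second, your justification that the two ordinary double points of $S$ in case~(4) avoid $P$ ``by Lemma~\ref{2-2-dP-conic-bundle}'' (I assume you mean Lemma~\ref{lemma:2-6-two-fibers}) does not work. That lemma only says $C_1$ is smooth at $P$; it does not preclude $S$ from being singular at $P$. Indeed, at a point $Q\in\mathrm{Supp}(C_2)$ where $\operatorname{rank}d\pi_2|_{T_QX}=1$, the surface $S=\pi_2^{-1}(\ell_2)$ is singular at $Q$ precisely when the image of $d\pi_2|_{T_QX}$ is $T_p\ell_2$, and $C_1$ is automatically smooth there regardless. The actual reason $P\notin\mathrm{Sing}(S)$ is the genericity of $\ell_2=\pi_2(S)$ among lines through $p$: the two singular points of $S$ on $\mathrm{Supp}(C_2)$ depend on $\ell_2$ and move as $\ell_2$ varies, so for general $\ell_2$ they miss $P$.

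Third, in case~(2) the paper counts the four curves by a short intersection computation: writing $S\cdot\pi_1^{-1}(\pi_1(C_2))=2C_2+\ell_1+\cdots+\ell_k$ (Bertini gives distinctness) and intersecting with $H_2$ yields $k=2H_2^2\cdot H_1=4$. Your proposal to count nodes of the branch quartic $B$ of $\pi_1|_S$ is a viable alternative — in this situation $B$ is the union of the two conics $\pi_1(C_2)$ and another conic, whose four intersection points give the nodes — but you would still need to establish that $B$ does split into two conics, that the corresponding four $(-2)$-curves are disjoint, and that none passes through $P$ (for which, again, the generality of $\ell_2$ is what matters). These are not mere routine checks and constitute the substance of the lemma.
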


\begin{proof}
The assertion about the~singularities of the~surface $S$ are local and well-known.

We have $-K_S\sim H_1\vert_S$ and $K_S^2=2$, so  $S$ is a~weak del Pezzo surface of degree $2$,
and the~restriction $\pi_1\vert_{S}\colon S\to\mathbb{P}^2$ is the~anticanonical morphism.
Let $\ell$ be an irreducible curve in the~surface $S$ such that
$$
-K_S\cdot \ell=0.
$$
Then  $\ell$ is an irreducible component of the~fiber $\pi_1^{-1}(\pi_1(\ell))$.
But $\pi_2(\ell)$ is the~line $\pi_2(S)$, which implies that the~scheme fiber $\pi_1^{-1}(\pi_1(\ell))$ is singular,
which implies that $\pi_1(\ell)\in\Delta_1$.
Since $\ell\cap C_2\ne\varnothing$ and $\pi_2(S)$ is a~general line in $\mathbb{P}^2$ that passes through the~point $\pi_2(P)$,
we see that $\pi_1(C_2)\subset\Delta_1$. This implies that $C_2$ is irreducible and reduced.

Let $R=\pi_1^{-1}(\pi_1(C_2))$. Then  the~surface $R$ is singular along a~curve that is isomorphic to the~conic $\pi_1(C_2)\cong C_2$.
Let $f\colon\widetilde{R}\to R$ be the~blow up of this curve. Then  $\widetilde{R}$ is smooth,
and the~composition morphism $\pi_1\circ f$ induces a~$\mathbb{P}^1$-bundle $\widetilde{R}\to\widetilde{C}_2$,
where $\widetilde{C}_2$ is double cover of the~conic $\pi_1(C_2)$ that is branched over the~eight points $\pi_1(C_2)\cap(\Delta_1-\pi_1(C_2))$.
In particular, we see that $\widetilde{C}_2$ is an irrational curve, which implies that
$C_2=\mathrm{Sing}(R)$.

Vice versa, if the~fiber $C_2$ is a~smooth conic, and the~conic $\pi_1(C_2)$ is an irreducible component of the~discriminant curve $\Delta_1$,
then it follows from the~Bertini theorem that
$$
S\cdot\pi_1^{-1}(\pi_1(C_2))=2C_2+\ell_1+\ell_2+\cdots+\ell_k
$$
where $\ell_1,\ell_2,\ldots,\ell_k$ are $k$ distinct irreducible reduced curves in $X$ that are irreducible components of the~fibers
of the~natural projection  $\pi_1^{-1}(\pi_1(C_2))\to \pi_1(C_2)$.
Since
$$
4=2H_2^2\cdot H_1=H_2\cdot S\cdot\pi_1^{-1}(\pi_1(C_2))=H_2\cdot\Big(2C_2+\sum_{i=1}^k\ell_i\Big)=k,
$$
we see that $S$ contains exactly $4$ irreducible curves that intersects trivially with $-K_S$.
Now, the~generality in the~choice of $S$ implies that none of these curves contains $P$.
\end{proof}

\begin{example}
\label{example:2-6}
Actually, the case (2) in Lemma~\ref{lemma:2-6-del-Pezzo-degree-two} can happen.
Indeed, in the~assumption and notations of Lemma~\ref{lemma:2-6-del-Pezzo-degree-two}, let $P=([0:0:1],[0:0:1])$,
and suppose that $X$ is given~by
$$
(u^2+2uw+v^2+2w^2)x^2+(uv-w^2)xy+(uw-2uv+3v^2+w^2)y^2+(uw+v^2)z^2=0,
$$
where $([u:v:w],[x:y:z])$ are coordinates on $\mathbb{P}^2\times\mathbb{P}^2$.
Then  the~threefold $X$ is smooth.
For instance, this can be checked using the~following Magma script:
\begin{verbatim}
Q:=RationalField();
PxP<x,y,z,u,v,w>:=ProductProjectiveSpace(Q,[2,2]);
X:=Scheme(PxP,[(u^2+2*u*w+v^2+2*w^2)*x^2+(u*v-w^2)*x*y+
               ((-2*v+w)*u+3*v^2+w^2)*y^2+(u*w+v^2)*z^2]);
IsNonsingular(X);
\end{verbatim}
Observe that the~fiber $C_1$ is a~singular reduced curve given by $u=v=2x^2-xy+y^2=0$,
the fiber $C_2$ is a~smooth curve that is given by $x=y=uw+v^2=0$,
and the~discriminant curve $\Delta_1$ is a union of the~conic $\pi_1(C_2)$ and the~irreducible quartic plane curve given by
\begin{multline*}
8u^3v-4u^3w-11u^2v^2+16u^2vw-12u^2w^2+8uv^3-\\
-28uv^2w+14uvw^2-16uw^3-12v^4-28v^2w^2-7w^4=0.
\end{multline*}
As in case (2) in Lemma~\ref{lemma:2-6-del-Pezzo-degree-two}, we have $C_2=\mathrm{Sing}(\pi_1^{-1}(\pi_1(C_2)))$.
\end{example}

Let us prove that $X$ is K-stable. Suppose it is not.
Using the~valuative criterion \cite{Fujita2019Crelle,Li},
we see that there exists a~prime divisor $\mathbf{F}$ over $X$ such that
$$
\beta(\mathbf{F})=A_X(\mathbf{F})-S_X(\mathbf{F})\leqslant 0.
$$
Let $Z$ be the~center of the~divisor $\mathbf{F}$ on $X$.
Then  $Z$ is not a~surface by \cite[Theorem~3.17]{Book}.

Let $P$ be any point in $Z$,
let $C_1$ be the~fiber of the~conic bundle $\pi_1$ that contains~$P$,
and let~$C_2$ be the~fiber of the~conic bundle $\pi_2$ that contains $P$.
By Lemma~\ref{lemma:2-6-two-fibers}, at least one curve among $C_1$ or $C_2$ is smooth at~$P$.
We may assume that $C_2$ is smooth at $P$.

Let $S$ be a~general surface in $|H_2|$ that contains $C_2$.
Then  $S$ is smooth by~Lemma~\ref{lemma:2-6-del-Pezzo-degree-two}.
Moreover, one of the~following three cases holds:
\begin{enumerate}
\item $C_2$ is smooth, the~divisor $-K_S$ is ample;

\item $C_2$ is smooth, $\pi_1(C_2)\subset\Delta_1$, the~divisor $-K_S$ is nef, the~surface $S$~has exactly four irreducible curves that
have trivial intersection with the~divisor $-K_S$, these curves are disjoint and none of them passes through $P$;

\item $C_2$ is singular and reduced, the~divisor $-K_S$ is ample.
\end{enumerate}
Let $C$ be the~curve in $X$ that is defined as follows:
\begin{itemize}
\item if $C_2$ is smooth and irreducible, we let $C=C_2$.
\item if $C_2$ is reducible, we let $C$ be its irreducible component that contains $P$.
\end{itemize}
Note that $-K_S\sim H_1\vert_{S}$ and $K_S^2=2$.
Let $\eta\colon S\to\mathbb{P}^2$ be the~restriction morphism~$\pi_1\vert_{S}$.
Then  $\eta$ is an anticanonical morphism of the~surface $S$, which is generically two-to-one.
Hence, the morphism $\eta$ induces an involution $\tau\in\mathrm{Aut}(S)$. We let $C^\prime=\tau(C)$.

Now, let $u$ be a non-negative real number. Then  we have $-K_X-uS\sim_{\mathbb{R}} H_1+(1-u)H_2$,
so $-K_X-uS$ is nef $\iff$ $-K_X-uS$ is pseudoeffective $\iff$ $u\leqslant 1$. Then  $S_X(S)=\frac{5}{12}$.
Now, let us use notations introduced in \cite[\S~1.7]{Book}.
Using \cite[Theorem~1.112]{Book}, we get
$$
1\geqslant \frac{A_X(\mathbf{F})}{S_X(\mathbf{F})}\geqslant \min\left\{\frac{1}{S_X(S)},\frac{1}{S(W_{\bullet,\bullet}^S;C)},\frac{1}{S(W_{\bullet, \bullet,\bullet}^{S,C};P)}\right\},
$$
where $S(W_{\bullet,\bullet}^S;C)$ and $S(W_{\bullet, \bullet,\bullet}^{S,C};P)$ are defined in \cite[\S~1.7]{Book}.
Hence, since $S_X(S)<1$, we~get
\begin{equation}
\label{equation:2-6-main-inequality}
\max\left\{S(W_{\bullet,\bullet}^S;C),S(W_{\bullet, \bullet,\bullet}^{S,C};P)\right\}\geqslant 1.
\end{equation}
Moreover, if $Z=P$, then it also follows from \cite[Theorem~1.112]{Book}~that
\begin{equation}
\label{equation:2-6-main-inequality-refined}
\max\left\{S(W_{\bullet,\bullet}^S;C),S(W_{\bullet, \bullet,\bullet}^{S,C};P)\right\}>1.
\end{equation}
Let us estimate  $S(W_{\bullet,\bullet}^S;C)$ and $S(W_{\bullet, \bullet,\bullet}^{S,C};P)$ using results obtained in \cite[\S~1.7]{Book}.

Let $P(u)=-K_X-uS$. Then  \cite[Corollary~1.109]{Book} gives
$$
S\big(W^S_{\bullet,\bullet};C\big)=\frac{1}{4}\int_0^1\int_0^\infty \mathrm{vol}\big(P(u)\big\vert_{S}-vC\big)dvdu,
$$
Let $P(u,v)$ be the~positive part of the~Zariski decomposition of the~divisor $P(u)\vert_{S}-vC$,
and let $N(u,v)$ be its~negative part,  where $u\in[0,1]$ and $v\in\mathbb{R}_{\geqslant 0}$.
Then
$$
S\big(W_{\bullet,\bullet,\bullet}^{S,C};P\big)=F_{P}+\frac{1}{4}\int_{0}^{1}\int_{0}^{\infty}\big(P(u,v)\cdot C\big)^2dvdu
$$
by \cite[Theorem~1.112]{Book}, where
$$
F_P=\frac{1}{2}\int_{0}^{1}\int_{0}^{\infty}\big(P(u,v)\cdot C\big)\mathrm{ord}_P\Big(N(u,v)\big\vert_{C}\Big)dvdu.
$$

\begin{lemma}
\label{lemma:2-6-smooth-conic-del-Pezzo}
Suppose that $C_2$ is smooth, and $-K_S$ is ample. Then
\begin{align*}
S(W_{\bullet,\bullet}^S;C)&=\frac{13}{24},\\
S(W_{\bullet, \bullet,\bullet}^{S,C};P)&=1.
\end{align*}
\end{lemma}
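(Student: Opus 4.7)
My plan is to carry out the Zariski decomposition of $P(u)|_S-vC$ on the smooth del Pezzo surface $S$ of degree $2$ and then plug the result into the integral formulas from \cite{ACCFKMGSSV}.

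First I would record the intersection theory on $S$. Since $S=\pi_2^{-1}(\ell)$ for a general line $\ell\ni\pi_2(P)$, we have $H_2|_S\sim C$ (the fibre class of the conic bundle $\pi_2|_S\colon S\to\mathbb{P}^1$) and $H_1|_S\sim -K_S$, so that
\[
P(u)|_S-vC\sim_{\mathbb{R}} -K_S+(1-u-v)C,
\]
with $C^2=0$, $(-K_S)\cdot C=2$, $K_S^2=2$.

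The key step is to pin down the pseudoeffective threshold in $v$. For this I would use the Galois involution $\tau$ of the anticanonical double cover $\eta=\pi_1|_S\colon S\to\mathbb{P}^2$: taking $C'=\tau(C)$ and using that $\eta|_C$ is birational onto a plane conic gives $\eta^*(\eta(C))=C+C'$, hence $C+C'\sim -2K_S$, so $(C')^2=0$, $C\cdot C'=4$, $(-K_S)\cdot C'=2$. The rewriting
\[
-K_S-tC\sim_{\mathbb{R}}\tfrac{1-2t}{2}\,C+\tfrac{1}{2}\,C'
\]
shows that $-K_S-tC$ is nef for $t\in[0,\tfrac{1}{2}]$ (a non-negative combination of the two nef fibre classes $C$ and $C'$), while for $t>\tfrac{1}{2}$ the intersection $(-K_S-tC)\cdot C'=2-4t<0$ against a nef class rules out pseudoeffectivity. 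Applying this with $t=u+v-1$, I conclude that $P(u)|_S-vC$ is nef throughout its pseudoeffective range $v\in[0,\tfrac{3}{2}-u]$ with trivial negative part; hence $P(u,v)=-K_S+(1-u-v)C$, $N(u,v)=0$, $P(u,v)^2=6-4u-4v$, and $P(u,v)\cdot C=2$.

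The numerical integrations are then routine. By \cite[Corollary~1.7.26]{ACCFKMGSSV},
\[
S(W^{S}_{\bullet,\bullet};C)=\tfrac{1}{4}\int_{0}^{1}\!\!\int_{0}^{3/2-u}(6-4u-4v)\,dv\,du=\tfrac{1}{2}\int_{0}^{1}\bigl(\tfrac{3}{2}-u\bigr)^{2}du=\tfrac{13}{24},
\]
and since $N(u,v)\equiv 0$ the flag correction $F_P$ vanishes, so \cite[Theorem~1.7.30]{ACCFKMGSSV} gives
\[
S(W^{S,C}_{\bullet,\bullet,\bullet};P)=\tfrac{1}{4}\int_{0}^{1}\!\!\int_{0}^{3/2-u}4\,dv\,du=\int_{0}^{1}\bigl(\tfrac{3}{2}-u\bigr)du=1.
\]
The one subtle point is the nefness claim throughout the pseudoeffective interval: the threshold $v=\tfrac{3}{2}-u$ is governed by the companion conic $C'$ with $(C')^2=0$, rather than by a $(-1)$-curve, so the usual ``drop a negative curve from $P$'' mechanism produces no negative part and one must verify nefness up to the boundary by hand.
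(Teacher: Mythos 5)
Your proposal is correct and takes essentially the same approach as the paper: both identify the companion conic $C'=\tau(C)$ with $C+C'\sim -2K_S$, $(C')^2=0$, $C\cdot C'=4$, observe that the Zariski decomposition of $P(u)|_S-vC$ has trivial negative part throughout $v\in[0,\tfrac{3}{2}-u]$, and then compute the same integrals. The only difference is expository: you spell out explicitly why nefness and pseudoeffectivity coincide on that range (non-negative combination of the two nef classes $C$, $C'$; negative intersection against the nef class $C'$ beyond the threshold), whereas the paper states this equivalence without elaboration.
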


\begin{proof}
We have $C\cdot C^\prime=4$ and $(C^\prime)^2=C^2=0$. Fix $u\in[0,1]$ and $v\in\mathbb{R}_{\geqslant 0}$. Then
$$
P(u)\vert_{S}-vC\sim_{\mathbb{Q}}\Big(\frac{3}{2}-u-v\Big)C+\frac{1}{2}C^\prime.
$$
which implies that $P(u)\vert_{S}-vC$ is pseudoeffective $\iff$ $P(u)\vert_{S}-vC$ is nef $\iff$ $v\leqslant \frac{3}{2}-u$.
If $0\leqslant u\leqslant 1$ and $0\leqslant v\leqslant \frac{3}{2}-u$, then $P(u,v)=(\frac{3}{2}-u-v)C+\frac{1}{2}C^\prime$ and $N(u,v)=0$, so
$$
S\big(W^S_{\bullet,\bullet};C\big)=\frac{1}{4}\int_0^1\int_0^{\frac{3}{2}-u}\Bigg(\Big(\frac{3}{2}-u-v)C+\frac{1}{2}C^\prime\Bigg)^2dvdu=
\frac{1}{4}\int_0^1\int_0^{\frac{3}{2}-u}6-4u-4v\,dvdu=\frac{13}{24}.
$$
Similarly, we see that $F_P=0$ and $P(u,v)\cdot C=2$, which gives $S(W_{\bullet,\bullet,\bullet}^{S,C};P)=1$.
\end{proof}

\begin{lemma}
\label{lemma:2-6-smooth-conic-weak-del-Pezzo}
Suppose that $C_2$ is smooth, $-K_S$ is not ample. Then
\begin{align*}
S(W_{\bullet,\bullet}^S;C)&=\frac{7}{12},\\
S(W_{\bullet, \bullet,\bullet}^{S,C};P)&=\frac{5}{6}.
\end{align*}
\end{lemma}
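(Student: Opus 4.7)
The plan is to compute the Zariski decomposition of $P(u)|_S-vC$ on the weak del Pezzo surface $S$ and then plug into the integral formulas for $S(W_{\bullet,\bullet}^S;C)$ and $S(W_{\bullet,\bullet,\bullet}^{S,C};P)$ recalled above. As in Lemma~\ref{lemma:2-6-smooth-conic-del-Pezzo}, one has $-K_S\sim H_1|_S$ and $C\sim H_2|_S$ with $C^2=0$ and $-K_S\cdot C=2$, hence
$$
P(u)|_S-vC\sim_{\mathbb{R}}-K_S+(1-u-v)C.
$$

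The new geometric input comes from Lemma~\ref{lemma:2-6-del-Pezzo-degree-two}(2): the surface $S$ carries exactly four pairwise disjoint $(-2)$-curves $\ell_1,\ldots,\ell_4$, none of which passes through $P$, and $\pi_1^{-1}(\pi_1(C))\cdot S=2C+\ell_1+\cdots+\ell_4$. I would first extract from this the linear equivalence
$$
-K_S\sim C+\tfrac{1}{2}(\ell_1+\cdots+\ell_4),
$$
and then, by intersecting with each $\ell_j$ and using $\ell_i\cdot\ell_j=-2\delta_{ij}$, the intersection numbers $C\cdot\ell_j=1$. These are precisely the ingredients that make the Zariski decomposition behave differently from the ample case of Lemma~\ref{lemma:2-6-smooth-conic-del-Pezzo}: the four $(-2)$-curves play the role of the negative part once the divisor fails to be nef.

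Next, I would split the pseudo-effective range $v\in[0,2-u]$ into two pieces. On $[0,1-u]$ I would claim that $-K_S+(1-u-v)C$ is already nef---its intersections with $C$, with each $\ell_i$, and with any other irreducible curve are all nonnegative once one uses that the only irreducible curves trivial against $-K_S$ are the $\ell_i$---so $P(u,v)=-K_S+(1-u-v)C$ and $N(u,v)=0$. On $[1-u,2-u]$ the intersection with $\ell_i$ becomes negative; I would subtract $\tfrac{u+v-1}{2}\ell_i$ from each $\ell_i$, and using $-K_S\sim C+\tfrac{1}{2}\sum\ell_i$ observe that the remaining part simplifies to $(2-u-v)(-K_S)$. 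This is the Zariski decomposition because $(2-u-v)(-K_S)$ is nef, orthogonal to every $\ell_i$, and the intersection matrix of the $\ell_i$ is negative definite. To identify the pseudo-effective threshold $v=2-u$ I would use the standard argument: any effective representative must have multiplicity at least $(u+v-1)/2$ along each $\ell_i$, and after subtracting these parts one is left with a class numerically equivalent to $(2-u-v)(-K_S)$, which is not pseudoeffective for $v>2-u$ since $-K_S$ is big.

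With the decomposition in hand, the remaining work is routine integration. I would compute $P(u,v)^2=2+4(1-u-v)$, $P(u,v)\cdot C=2$ on the first range and $P(u,v)^2=2(2-u-v)^2$, $P(u,v)\cdot C=2(2-u-v)$ on the second, and integrate to obtain $S(W_{\bullet,\bullet}^S;C)=\tfrac{7}{12}$. For $S(W_{\bullet,\bullet,\bullet}^{S,C};P)$ I would observe that the boundary correction $F_P$ vanishes: since each $\ell_i$ avoids $P$ by Lemma~\ref{lemma:2-6-del-Pezzo-degree-two}(2), one has $\mathrm{ord}_P(N(u,v)|_C)=0$ throughout, and the main integral then gives $\tfrac{5}{6}$. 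The principal obstacle is the second range of the Zariski decomposition, where one must correctly identify the positive part as $(2-u-v)(-K_S)$ rather than some expression involving $C$ itself; this step rests entirely on the linear equivalence $2C+\sum\ell_i\sim-2K_S$ and the disjointness of the $(-2)$-curves $\ell_i$, and without these inputs the final answer would take a completely different shape, as in Lemma~\ref{lemma:2-6-smooth-conic-del-Pezzo}.
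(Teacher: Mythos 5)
Your proposal is correct and follows essentially the same route as the paper: you identify the four disjoint $(-2)$-curves $\ell_1,\ldots,\ell_4$ from Lemma~\ref{lemma:2-6-del-Pezzo-degree-two}(2), derive the key linear equivalence $-K_S\sim C+\tfrac12\sum\ell_i$ and the intersection numbers $C\cdot\ell_i=1$, and compute the Zariski decomposition of $P(u)\vert_S-vC$ with the breakpoint at $v=1-u$, after which the positive part collapses to $(2-u-v)(-K_S)$. The paper frames the same data via the contraction $\phi\colon S\to\overline S$ of the four $(-2)$-curves to a nodal del~Pezzo surface and the double cover $\overline S\to\mathbb P^2$ branched over two conics, but the resulting Zariski decomposition, volumes, and integrals are identical to yours, including the observation that $F_P=0$ because the $\ell_i$ miss $P$.
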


\begin{proof}
In this case, we have the~following commutative diagram:
$$
\xymatrix{
&S\ar@{->}[ld]_{\phi}\ar@{->}[rd]^{\eta}&\\%
\overline{S}\ar@{->}[rr]^{\pi}&&\mathbb{P}^2}
$$
where $\phi$ is a~birational map that contracts four disjoint $(-2)$-curves,
and $\overline{S}$ is a~del Pezzo surface of degree $2$ that has $4$ isolated ordinary double points,
and $\pi$ is a~double cover that is ramified in a~reducible quartic curve that is a~union of two irreducible conics
such that $\eta(C)$ is one of these two conics.
In particular, we have $C=\tau(C)$.

Let $E_1$, $E_2$, $E_3$, $E_4$ be the~$\phi$-exceptional curves.
Fix $u\in[0,1]$ and $v\in\mathbb{R}_{\geqslant 0}$. Then
$$
P(u)\vert_{S}-vC\sim_{\mathbb{Q}}(2-u-v)C+\frac{1}{2}(E_1+E_2+E_3+E_4)
$$
so the divisor $P(u)\vert_{S}-vC$ is pseudoeffective $\iff$ $v\leqslant 2-u$.
Moreover, we have
$$
P(u,v)=\left\{\aligned
&(2-u-v)C+\frac{1}{2}(E_1+E_2+E_3+E_4)\ \text{if $0\leqslant v\leqslant 1-u$}, \\
&(2-u-v)\Big(C+\frac{1}{2}(E_1+E_2+E_3+E_4)\Big)\ \text{if $1-u\leqslant v\leqslant 2-u$}, \\
\endaligned
\right.
$$
$$
N(u,v)=\left\{\aligned
&0\ \text{if $0\leqslant v\leqslant 1-u$}, \\
&\frac{u+v-1}{2}(E_1+E_2+E_3+E_4)\ \text{if $1-u\leqslant v\leqslant 2-u$}, \\
\endaligned
\right.
$$
$$
\mathrm{vol}\big(P(u)\vert_{S}-vC\big)=
\left\{\aligned
&(6-4u)-4\ \text{if $0\leqslant v\leqslant 1-u$}, \\
&2(2-u-v)^2\ \text{if $1-u\leqslant v\leqslant 2-u$}. \\
\endaligned
\right.
$$
Now, integrating $\mathrm{vol}(P(u)\vert_{S}-vC)$, we obtain $S(W_{\bullet,\bullet}^S;C)=\frac{7}{12}$.

To compute $S(W_{\bullet,\bullet,\bullet}^{S,C};P)$, we first observe that $F_{P}=0$,
because $P\not\in E_1\cup E_2\cup E_3\cup E_4$, since $S$ is a~general surface in $|H_2|$ that contains  $C_2$.
On the~other hand, we have
$$
P(u,v)\cdot C=\left\{\aligned
&2\ \text{if $0\leqslant v\leqslant 1-u$}, \\
&4-2u-2v\ \text{if $1-u\leqslant v\leqslant 2-u$}. \\
\endaligned
\right.
$$
Hence, integrating $(P(u,v)\cdot C)^2$, we get $S(W_{\bullet,\bullet,\bullet}^{S,C};P)=\frac{5}{6}$ as required.
\end{proof}

\begin{lemma}
\label{lemma:2-6-reducible-conic}
Suppose that $C_2$ is singular. Then
\begin{align*}
S(W_{\bullet,\bullet}^S;C)&=\frac{3}{4},\\
S(W_{\bullet, \bullet,\bullet}^{S,C};P)&\leqslant \frac{11}{12}.
\end{align*}
\end{lemma}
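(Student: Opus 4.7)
The plan is to compute $S(W_{\bullet,\bullet}^{S};C)$ and bound $S(W_{\bullet,\bullet,\bullet}^{S,C};P)$ by explicitly determining the~Zariski decomposition of $D(u,v):=P(u)\vert_{S}-vC$ on the~smooth del Pezzo surface $S$ of degree two. Here $C$ and $C'$ are distinct $(-1)$-curves meeting transversally at the~node of $C_{2}$, with $-K_{S}\cdot C=-K_{S}\cdot C'=1$ and $C\cdot C'=1$, so $D(u,v)=-K_{S}+(1-u-v)C+(1-u)C'$ has $D(u,v)\cdot C=1+v$ and $D(u,v)\cdot C'=1-v$, turning non-nef along $C'$ at $v=1$.

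The crucial second curve is $\widetilde{C}:=\tau(C)$, where $\tau$ is the~Galois involution of the~anticanonical double cover $\eta=\pi_{1}\vert_{S}\colon S\to\mathbb{P}^{2}$. Since $\eta^{*}(\eta(C))=C+\widetilde{C}$ belongs to $\vert -K_{S}\vert$, one has $\widetilde{C}\equiv -K_{S}-C$, whence $\widetilde{C}\cdot C=2$, $\widetilde{C}\cdot C'=(-K_{S})\cdot C'-C\cdot C'=0$, and $D(u,v)\cdot\widetilde{C}=3-2u-2v$, turning non-nef at $v=3/2-u$. No further $(-1)$-curve intervenes before the~pseudoeffective threshold $v=2-u$, because on a~del Pezzo surface of degree two any two $(-1)$-curves meet in at most two points. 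Thus, according as $u\leqslant 1/2$ or $u\geqslant 1/2$, the~thresholds $1$ and $3/2-u$ interchange, producing in both cases three Zariski regions for $v\in[0,2-u]$; past both thresholds, the~negative part is $(v-1)C'+(2v+2u-3)\widetilde{C}$ and the~positive part equals $(4-2u-2v)(-K_{S})+(u+v-2)C+(2-u-v)C'$, vanishing precisely at $v=2-u$.

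Substituting the~resulting volume pieces ($6-4u-2v-v^{2}$, $7-4u-4v$, and $(4-2u-2v)^{2}$, together with the~corresponding expression valid in the~intermediate region when $u\geqslant 1/2$) into the~formula of~\cite[Corollary~1.7.26]{ACCFKMGSSV} and integrating over $u\in[0,1]$ produces $S(W_{\bullet,\bullet}^{S};C)=3/4$. For the~second quantity I would apply~\cite[Theorem~1.7.30]{ACCFKMGSSV}: compute $P(u,v)\cdot C$ in each region (the~values are $1+v$, $2$, $7-3v-4u$, and $8-4u-4v$) and integrate its square for the~leading contribution, then add the~boundary term $F_{P}$. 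Since $C'\cap\widetilde{C}=\varnothing$, the~point $P\in C$ lies on at most one of these two curves, and $\mathrm{ord}_{P}(N(u,v)\vert_{C})$ is then controlled by the~coefficient of that component in $N(u,v)$ times its local intersection with $C$ at $P$. The~main obstacle is identifying $\widetilde{C}$ as the~correct second critical curve and keeping the~bookkeeping of the~three regions straight; once this is done the~remaining integrations are routine and establish $S(W_{\bullet,\bullet,\bullet}^{S,C};P)\leqslant 11/12$ with some slack.
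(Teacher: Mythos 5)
Your proposal is correct and follows essentially the same route as the paper's proof: identify the three relevant curves (the component $C$ of $C_2$ through $P$, the other component of $C_2$, and the Geiser conjugate $\tau(C)$), work out the three-region Zariski decomposition of $P(u)\vert_S-vC$ depending on whether $u\lessgtr\tfrac12$, and integrate. A small refinement the paper makes which you could add: since $C_2$ is smooth at $P$, one has $P\notin C'$ (your notation, the paper's $L$), so only the Geiser conjugate $\tau(C)$ contributes to $F_P$, and the extremal value $\tfrac{11}{12}$ occurs precisely when $\tau(C)$ is tangent to $C$ at $P$ — this doesn't affect your upper bound, but it pins down where equality is attained.
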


\begin{proof}
The curve $C_2$ consists of two irreducible components: the~curve $C$ and another curve, which we denote by $L$.
The curves $C$ and $L$ are smooth and intersects transversally at one point.
Note that $P\ne C\cap L$, since $C_2$ is smooth at the~point $P$ by assumption.

The intersections of the~curves $C$, $L$ and $C^\prime=\tau(C)$ on $S$ are given in the~table below.
\begin{center}
\renewcommand\arraystretch{1.4}
\begin{tabular}{|c||c|c|c|}
\hline
$\bullet$  & $C$ & $L$ & $C^\prime$ \\
\hline\hline
$C$        & $-1$ & $1$ & $2$ \\
\hline
$L$        & $1$ & $-1$ & $0$\\
\hline
$C^\prime$ & $2$ & $0$ & $-1$ \\
\hline
\end{tabular}
\end{center}

Fix $u\in[0,1]$ and $v\in\mathbb{R}_{\geqslant 0}$. Since $C+C^\prime\sim -K_S$, we have
$$
P(u)\vert_{S}-vC\sim_{\mathbb{Q}}(2-u-v)C+(1-u)L+C^\prime,
$$
so $P(u)\vert_{S}-vC$ is pseudoeffective $\iff$ $v\leqslant 2-u$.
Moreover, if $0\leqslant u\leqslant\frac{1}{2}$, then
$$
P(u,v)=\left\{\aligned
&(2-u-v)C+(1-u)L+C^\prime\ \text{if $0\leqslant v\leqslant 1$}, \\
&(2-u-v)(C+L)+C^\prime\ \text{if $1\leqslant v\leqslant \frac{3}{2}-u$}, \\
&(2-u-v)(C+L+C^\prime)\ \text{if $\frac{3}{2}-u\leqslant v\leqslant 2-u$},
\endaligned
\right.
$$
$$
N(u,v)=\left\{\aligned
&0\ \text{if $0\leqslant v\leqslant 1$}, \\
&(v-1)L\ \text{if $1\leqslant v\leqslant \frac{3}{2}-u$}, \\
&(v-1)L+(2v+2u-3)C^\prime\ \text{if $\frac{3}{2}-u\leqslant v\leqslant 2-u$},
\endaligned
\right.
$$
$$
\mathrm{vol}\big(P(u)\vert_{S}-vC\big)=
\left\{\aligned
&6-v^2-4u-2v\ \text{if $0\leqslant v\leqslant 1$}, \\
&7-4u-4v \ \text{if $1\leqslant v\leqslant \frac{3}{2}-u$}, \\
&4(u+v-2)^2\ \text{if $\frac{3}{2}-u\leqslant v\leqslant 2-u$}.
\endaligned
\right.
$$
Similarly, if $\frac{1}{2}\leqslant u\leqslant 1$, then
$$
P(u,v)=\left\{\aligned
&(2-u-v)C+(1-u)L+C^\prime\ \text{if $0\leqslant v\leqslant \frac{3}{2}-u$}, \\
&(2-u-v)(C+2C^\prime)+(1-u)L\ \text{if $\frac{3}{2}-u\leqslant v\leqslant 1$}, \\
&(2-u-v)(C+L+C^\prime)\ \text{if $1\leqslant v\leqslant 2-u$},
\endaligned
\right.
$$
$$
N(u,v)=\left\{\aligned
&0\ \text{if $0\leqslant v\leqslant \frac{3}{2}-u$}, \\
&(2v+2u-3)C^\prime\ \text{if $\frac{3}{2}-u\leqslant v\leqslant 1$}, \\
&(v-1)L+(2v+2u-3)C^\prime\ \text{if $1\leqslant v\leqslant 2-u$},
\endaligned
\right.
$$
$$
\mathrm{vol}\big(P(u)\vert_{S}-vC\big)=
\left\{\aligned
&6-v^2-4u-2v\ \text{if $0\leqslant v\leqslant \frac{3}{2}-u$}, \\
&(5-2u-3v)(3-2u-v)\ \text{if $\frac{3}{2}-u\leqslant v\leqslant 1$}, \\
&4(u+v-2)^2\ \text{if $1\leqslant v\leqslant 2-u$}.
\endaligned
\right.
$$
Hence, integrating $\mathrm{vol}(P(u)\vert_{S}-vC)$, we get $S(W_{\bullet,\bullet}^S;C)=\frac{3}{4}$.

Now, let us compute $S(W_{\bullet, \bullet,\bullet}^{S,C};P)$.
If $0\leqslant u\leqslant \frac{1}{2}$, then
$$
P(u,v)\cdot C=\left\{\aligned
&1+v\ \text{if $0\leqslant v\leqslant 1$}, \\
&2\ \text{if $1\leqslant v\leqslant \frac{3}{2}-u$}, \\
&8-4u-4v\ \text{if $\frac{3}{2}-u\leqslant v\leqslant 2-u$}.
\endaligned
\right.
$$
Similarly, if $\frac{1}{2}\leqslant u\leqslant 1$, then
$$
P(u,v)\cdot C=\left\{\aligned
&1+v\ \text{if $0\leqslant v\leqslant \frac{3}{2}-u$}, \\
&7-4u-3v\ \text{if $\frac{3}{2}-u\leqslant v\leqslant 1$}, \\
&8-4u-4v\ \text{if $1\leqslant v\leqslant 2-u$}.
\endaligned
\right.
$$
Then  integrating, we get $S(W_{\bullet,\bullet,\bullet}^{S,C};P)=\frac{145}{192}+F_{P}$.
To compute $F_{P}$, let us recall that~$P\not\in L$.
Hence, if $P\not\in C^\prime$, then $F_{P}=0$,
which implies that $S(W_{\bullet,\bullet,\bullet}^{S,C};P)=\frac{145}{192}<\frac{11}{12}$ as required.

We may assume that $P\in C\cap C^\prime$.
If $C^\prime$ intersects $C$ transversally at $P$, then
$$
\mathrm{ord}_P\Big(N(u,v)\big\vert_{C}\Big)=\left\{\aligned
&0\ \text{if $0\leqslant v\leqslant\frac{3}{2}-u$}, \\
&2u+2v-3\ \text{if $\frac{3}{2}-u\leqslant v\leqslant 2-u$},
\endaligned
\right.
$$
which gives
\begin{multline*}
\quad \quad \quad \quad F_P=\frac{1}{2}\int_{0}^{\frac{1}{2}}\int_{\frac{3}{2}-u}^{2-u}(8-4u-4v)(2u+2v-3)dvdu+\\
+\frac{1}{2}\int_{\frac{1}{2}}^{1}\int_{1}^{\frac{3}{2}-u}(7-4u-3v)(2u+2v-3)dvdu+\frac{1}{2}\int_{\frac{1}{2}}^{1}\int_{\frac{3}{2}-u}^{2-u}(8-4u-4v)(2u+2v-3)dvdu=\frac{31}{384},
\end{multline*}
so $S(W_{\bullet,\bullet,\bullet}^{S,C};P)=\frac{145}{192}+\frac{31}{384}=\frac{107}{128}<\frac{11}{12}$.
If $C^\prime$ is tangent to $C$ at the~point $P$, then
$$
\mathrm{ord}_P\Big(N(u,v)\big\vert_{C}\Big)=\left\{\aligned
&0\ \text{if $0\leqslant v\leqslant\frac{3}{2}-u$}, \\
&2(2u+2v-3)\ \text{if $\frac{3}{2}-u\leqslant v\leqslant 2-u$},\\
\endaligned
\right.
$$
which gives $F_P=\frac{31}{192}$, so $S(W_{\bullet,\bullet}^{S,C};P)=\frac{145}{192}+\frac{31}{192}=\frac{11}{12}$.
\end{proof}

Now, using \eqref{equation:2-6-main-inequality-refined} and Lemmas~\ref{lemma:2-6-smooth-conic-del-Pezzo}, \ref{lemma:2-6-smooth-conic-weak-del-Pezzo}, \ref{lemma:2-6-reducible-conic},
we see that $Z$ is a~curve.

\begin{lemma}
\label{lemma:2-6-curve-H1-H2-intersection}
One has $H_1\cdot Z\geqslant 1$ and $H_2\cdot Z\geqslant 1$.
\end{lemma}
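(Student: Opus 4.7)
The plan is to derive a contradiction by assuming $H_2\cdot Z=0$ (the case $H_1\cdot Z=0$ follows by the symmetry between $\pi_1$ and $\pi_2$). The key observation is that when the center of $\mathbf{F}$ is a \emph{curve} contained in the flag-curve $C$, we may stop the Abban--Zhuang iteration one step earlier than in the point case, and the numbers $S(W^S_{\bullet,\bullet};C)$ already computed in Lemmas~\ref{lemma:2-6-smooth-conic-del-Pezzo}, \ref{lemma:2-6-smooth-conic-weak-del-Pezzo}, \ref{lemma:2-6-reducible-conic} are far enough below~$1$ to force $A_X(\mathbf{F})/S_X(\mathbf{F})>1$.

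First I would observe that $H_2\cdot Z=0$ forces $Z$ to be contracted by $\pi_2$, so $Z$ is an irreducible component of the fiber $C_2$ of $\pi_2$ through any general point $P\in Z$. With the conventions introduced before Lemma~\ref{lemma:2-6-smooth-conic-del-Pezzo}, this means the distinguished curve $C$ (the irreducible component of $C_2$ containing $P$) equals $Z$ itself. Pick a general $S\in |H_2|$ containing $C_2$; then $Z\subset C\subset S$, and by Lemma~\ref{lemma:2-6-del-Pezzo-degree-two} exactly one of the cases (1)--(4) occurs. In every one of these cases the component $C$ is a smooth rational curve on $S$ (a smooth conic in cases (1)--(2), a line component of a reducible conic in case (3), the reduced line underlying the double conic in case (4)), so $A_S(C)=1$.

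Next I would apply the two-flag Abban--Zhuang bound \cite[Theorem~1.7.30]{ACCFKMGSSV}, truncated at the level of $C$ (which is legitimate because the center $Z$ of $\mathbf{F}$ is contained in $C$, so there is no need to descend to a point):
\begin{equation*}
1\;\geqslant\;\frac{A_X(\mathbf{F})}{S_X(\mathbf{F})}\;\geqslant\;\min\left\{\frac{A_X(S)}{S_X(S)},\ \frac{A_S(C)}{S(W^S_{\bullet,\bullet};C)}\right\}.
\end{equation*}
We have $S_X(S)=\tfrac{5}{12}$, so the first term equals $\tfrac{12}{5}$. In Lemma~\ref{lemma:2-6-smooth-conic-del-Pezzo} we have $S(W^S_{\bullet,\bullet};C)=\tfrac{13}{24}$, in Lemma~\ref{lemma:2-6-smooth-conic-weak-del-Pezzo} we have $\tfrac{7}{12}$, and in Lemma~\ref{lemma:2-6-reducible-conic} we have $\tfrac{3}{4}$; the largest of these is $\tfrac{3}{4}$, so $A_S(C)/S(W^S_{\bullet,\bullet};C)\geqslant\tfrac{4}{3}$ in every case. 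Together,
\begin{equation*}
\frac{A_X(\mathbf{F})}{S_X(\mathbf{F})}\;\geqslant\;\min\Big\{\tfrac{12}{5},\tfrac{4}{3}\Big\}=\tfrac{4}{3}>1,
\end{equation*}
contradicting $\beta(\mathbf{F})\leqslant 0$. Hence $H_2\cdot Z\geqslant 1$. The inequality $H_1\cdot Z\geqslant 1$ is proved by the same argument after interchanging the roles of $\pi_1$ and $\pi_2$ and invoking the evident symmetric analogues of Lemmas~\ref{lemma:2-6-two-fibers}, \ref{lemma:2-6-del-Pezzo-degree-two}, \ref{lemma:2-6-smooth-conic-del-Pezzo}, \ref{lemma:2-6-smooth-conic-weak-del-Pezzo}, \ref{lemma:2-6-reducible-conic}, which follow by exchanging $H_1\leftrightarrow H_2$ throughout.

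The only mildly subtle point is the truncated Abban--Zhuang bound in case~(4), where the ambient surface $S$ has two ordinary double points lying on $\mathrm{Supp}(C_2)$; however, $P\notin\mathrm{Sing}(S)$ and the flag $C\subset S$ is smooth along $C$, so the computations of Lemma~\ref{lemma:2-6-reducible-conic} (with the involution $\tau$ adapted to the weak del Pezzo setting) give the same upper bound $S(W^S_{\bullet,\bullet};C)\leqslant\tfrac{3}{4}$. This is the only place where some care is needed; everything else is a direct reading of the computations already done in the three preceding lemmas.
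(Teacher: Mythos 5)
The first half of your argument (ruling out $H_2\cdot Z=0$) is correct and agrees with the paper: if $Z=C$, the truncated Abban--Zhuang estimate together with $S(W^S_{\bullet,\bullet};C)\leqslant\tfrac{3}{4}$ in all relevant cases forces $A_X(\mathbf{F})/S_X(\mathbf{F})\geqslant\tfrac43>1$. One inaccuracy: case~(4) of Lemma~\ref{lemma:2-6-del-Pezzo-degree-two} cannot arise here, since $C_2$ has been arranged to be smooth at $P$ while a non-reduced conic is singular along its whole support; the paper therefore correctly lists only cases (1)--(3) in the paragraph preceding Lemma~\ref{lemma:2-6-curve-H1-H2-intersection}, and your closing remark about case~(4) is aimed at the wrong place.

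The second half has a genuine gap. To show $H_1\cdot Z\geqslant 1$ you propose to interchange $\pi_1\leftrightarrow\pi_2$ and rerun the argument, which requires the $\pi_1$-fiber $C_1$ through a general $P\in Z$ to be smooth at $P$, so that the symmetric flag (a component of $C_1$ inside a general surface $S'\in|H_1|$) is available. But if $H_1\cdot Z=0$ then $Z$ is a component of a \emph{single fixed} fiber $C_1=\pi_1^{-1}(\pi_1(Z))$, and since $\Delta_1$ is a reduced sextic that may have ordinary double points, $C_1$ can be a double line $2L$. In that case $Z=L$, the scheme $C_1$ is nowhere smooth along $Z$, Lemma~\ref{lemma:2-6-two-fibers} forces $C_2$ to be smooth at \emph{every} $P\in Z$, and the roles of $\pi_1$ and $\pi_2$ simply cannot be swapped. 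Moreover the would-be symmetric surface $S'\supset 2L$ is the nodal weak del Pezzo of Lemma~\ref{lemma:2-6-del-Pezzo-degree-two}(4) with its two nodes lying on $L=Z$, so the required quantity $S(W^{S'}_{\bullet,\bullet};L)$ is not among those computed in Lemmas~\ref{lemma:2-6-smooth-conic-del-Pezzo}--\ref{lemma:2-6-reducible-conic}. The paper closes this case without changing the flag: since $\pi_2(Z)$ is a curve, one may choose $P\in Z$ with $\pi_2(P)\in\pi_2(Z)\cap\Delta_2$; then $C_2$ is a singular conic smooth at $P$, hence a reduced union of two lines, and Lemma~\ref{lemma:2-6-reducible-conic} gives $S(W_{\bullet,\bullet}^S;C)=\tfrac34$ and $S(W_{\bullet,\bullet,\bullet}^{S,C};P)\leqslant\tfrac{11}{12}$, contradicting~\eqref{equation:2-6-main-inequality}. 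You need this (or an equivalent) extra step to handle non-reduced $C_1$.
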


\begin{proof}
If $H_2\cdot Z=0$, then $Z=C$, which is impossible by \eqref{equation:2-6-main-inequality} and Lemmas~\ref{lemma:2-6-smooth-conic-del-Pezzo}, \ref{lemma:2-6-smooth-conic-weak-del-Pezzo},~\ref{lemma:2-6-reducible-conic}.
Hence, we see that $H_2\cdot Z\geqslant 1$ and $\pi_2(Z)$ is a~curve.
Let us show that $H_1\cdot Z\geqslant 1$.

Suppose that $H_1\cdot Z=0$. Then  $Z$ must be an irreducible component of the~curve $C_1$.
If~$C_1$ is reduced, then arguing exactly as in the~proofs of Lemmas~\ref{lemma:2-6-smooth-conic-del-Pezzo}, \ref{lemma:2-6-smooth-conic-weak-del-Pezzo}, \ref{lemma:2-6-reducible-conic},
we obtain a~contradiction with \cite[Corollary~1.109]{Book}. Thus, we see that $C_1$ is not reduced.

So far, the~point $P$ was a~point in $Z$. Let us choose $P\in Z$ such that $\pi_2(P)\in\pi_2(Z)\cap\Delta_2$.
Then  $C_2$ is singular. But it is smooth at $P$ by Lemma~\ref{lemma:2-6-two-fibers},
which fits our assumption~above.
Then  $S(W_{\bullet,\bullet}^S;C)=\frac{3}{4}$ and $S(W_{\bullet, \bullet,\bullet}^{S,C};P)\leqslant \frac{11}{12}$ by Lemma~\ref{lemma:2-6-reducible-conic}, which contradicts \eqref{equation:2-6-main-inequality}.
\end{proof}

Both $\pi_1(Z)$ and $\pi_2(Z)$ are curves.
Similar to what we did in the~proof of Lemma~\ref{lemma:2-6-curve-H1-H2-intersection}, let us choose the~point $P\in Z$ such that $\pi_1(P)\in\Delta_1$.
Then  $C_1$ is singular at $P$, which implies that $C_2$ is smooth at $P$ by Lemma~\ref{lemma:2-6-two-fibers}.
Now, using \eqref{equation:2-6-main-inequality} and Lemmas~\ref{lemma:2-6-smooth-conic-weak-del-Pezzo} and~\ref{lemma:2-6-reducible-conic}, we see that
$C=C_2$, the~curve $C_2$ is smooth, the~divisor $-K_S$ is ample.

We see that $S$ is a~del Pezzo surface, and $\eta\colon S\to\mathbb{P}^2$ is a~double cover ramified in a~smooth quartic curve,
so we are almost in the~same position as in the~proof of Lemma~\ref{lemma:2-6-smooth-conic-del-Pezzo}.
But now, we have one small advantage: the~point $\eta(P)$ is contained in the~ramification divisor of the~double cover $\eta$,
because $C_1$ is singular at $P$.
Then  $|-K_S|$ contains a~unique curve that is singular at~$P$.
Denote this curve by~$R$. We have the~following possibilities:
\begin{enumerate}
\item[(a)] $R$ is an irreducible curve that has a~nodal singularity at $P$;

\item[(b)] $R$ is an irreducible curve that has a~cuspidal singularity at $P$;

\item[(c)] $R=R_1+R_2$ for two $(-1)$-curves in $S$ that intersect transversally at $P$;

\item[(d)] $R=R_1+R_2$ for two $(-1)$-curves in $S$ that are tangent at $P$.
\end{enumerate}

Let $f\colon\widetilde{S}\to S$ be the~blow up of the~point $P$.
Denote by $\widetilde{R}$ and $\widetilde{C}$ the~proper transforms on the~surface $\widetilde{S}$ of the~curves $R$ and $C$, respectively.
Fix $u\in[0,1]$ and $v\in\mathbb{R}_{\geqslant 0}$. Then
$$
f^*\big(P(u)\vert_{S}\big)-vE\sim_{\mathbb{Q}} \widetilde{R}+(1-u)\widetilde{C}+(3-u-v)E.
$$
Let $\widetilde{P}(u,v)$ be the~positive part of the~Zariski decomposition of $\widetilde{R}+(1-u)\widetilde{C}+(3-u-v)E$,
and let $\widetilde{N}(u,v)$ its negative part.
Then  it follows from \cite[Remark~1.113]{Book} that
\begin{equation}
\label{equation:2-6-final}
1\geqslant \frac{A_X(\mathbf{F})}{S_X(\mathbf{F})}\geqslant\min\left\{\frac{1}{S_X(S)},
\frac{2}{S(W_{\bullet,\bullet}^{\widetilde{S}};E)},\inf_{O\in E}\frac{1}{S(W_{\bullet, \bullet,\bullet}^{\widetilde{S},E};O)}\right\},
\end{equation}
where $S(W_{\bullet,\bullet}^{\widetilde{S}};E)$ and $S(W_{\bullet, \bullet,\bullet}^{\widetilde{S},E};O)$
are defined  in \cite{Book} similar to $S(W_{\bullet,\bullet}^S;C)$ and $S(W_{\bullet, \bullet,\bullet}^{S,C};P)$.
These two numbers can be computed using   \cite[Remark~1.113]{Book}. Namely, we have
$$
S\big(W_{\bullet,\bullet}^{\widetilde{S}};E\big)=\frac{1}{4}\int_0^1\int_0^{\infty}\big(\widetilde{P}(u,v)\big)^2dvdu
$$
and
$$
S\big(W_{\bullet, \bullet,\bullet}^{\widetilde{S},E};O\big)=\frac{1}{4}\int_0^1\int_0^{\infty}\Big(\big(\widetilde{P}(u,v)\cdot E\big)\Big)^2dvdu+
F_O,
$$
where $O$ is a~point in $E$ and
$$
F_O=\frac{1}{2}\int_0^1\int_0^{\infty}\big(\widetilde{P}(u,v)\cdot E\big)\mathrm{ord}_O\Big(\widetilde{N}(u,v)\big|_{E}\Big)dvdu.
$$
Let us use these formulas to estimate $S(W_{\bullet,\bullet}^{\widetilde{S}};E)$ and $S(W_{\bullet, \bullet,\bullet}^{\widetilde{S},E};O)$.

If the curve $R$ is irreducible, the~intersections of the~curves $\widetilde{C}$, $\widetilde{R}$, $E$ can be computed as follows:
$\widetilde{C}^2=-1$, $\widetilde{C}\cdot\widetilde{R}=0$, $\widetilde{C}\cdot E=1$, $\widetilde{R}^2=-2$, $\widetilde{R}\cdot E=2$, $E^2=-1$.
If $R$ is reducible, then $\widetilde{R}=\widetilde{R}_1+\widetilde{R}_2$ for two smooth irreducible curves $\widetilde{R}_1+\widetilde{R}_2$
such that the~intersection form of the~curves $\widetilde{C}$, $\widetilde{R}_1$,  $\widetilde{R}_1$ and $E$ is given in the~following table:
\begin{center}
\renewcommand\arraystretch{1.4}
\begin{tabular}{|c||c|c|c|c|}
\hline
$\bullet$  & $\widetilde{C}$ & $\widetilde{R}_1$ & $\widetilde{R}_1$ & $E$ \\
\hline\hline
$\widetilde{C}$        & $-1$ & $0$ & $0$ & $1$ \\
\hline
$\widetilde{R}_1$        & $0$ & $-2$ & $1$ & $1$\\
\hline
$\widetilde{R}_1$        & $0$ & $1$ & $-2$ & $1$\\
\hline
$E$ & $1$ & $1$ & $1$ & $-1$ \\
\hline
\end{tabular}
\end{center}
Then  $\widetilde{R}+(1-u)\widetilde{C}+(3-u-v)E$ is pseudoeffective $\iff$ $v\leqslant 3-u$.
Moreover, we have
$$
\widetilde{P}(u,v)=\left\{\aligned
&\widetilde{R}+(1-u)\widetilde{C}+(3-u-v)E\ \text{if $0\leqslant v\leqslant 2-u$}, \\
&(1-u)\widetilde{C}+(3-u-v)(E+\widetilde{R})\ \text{if $2-u\leqslant v\leqslant 2$}, \\
&(3-u-v)(E+\widetilde{R}+\widetilde{C})\ \text{if $2\leqslant v\leqslant 3-u$}, \\
\endaligned
\right.
$$
$$
\widetilde{N}(u,v)=\left\{\aligned
&0\ \text{if $0\leqslant v\leqslant 2-u$}, \\
&(v+u-2)\widetilde{R}\ \text{if $2-u\leqslant v\leqslant 2$}, \\
&(v+u-2)\widetilde{R}+(v-2)\widetilde{C}\ \text{if $2\leqslant v\leqslant 3-u$},\\
\endaligned
\right.
$$
$$
\big(\widetilde{P}(u,v)\big)^2=
\left\{\aligned
&6-v^2-4u\ \text{if $0\leqslant v\leqslant 2-u$}, \\
&14+2u^2+4uv+v^2-12u-8v\ \text{if $2-u\leqslant v\leqslant 2$}, \\
&2(3-u-v)^2\ \text{if $2\leqslant v\leqslant 3-u$}, \\
\endaligned
\right.
$$
Now, integrating, we obtain $S(W_{\bullet,\bullet}^{\widetilde{S}};E)=\frac{17}{12}$.

Fix a~point $O\in E$. To estimate $S(W_{\bullet, \bullet,\bullet}^{\widetilde{S},E};O)$, first we observe that
$$
\widetilde{P}(u,v)\cdot E=
\left\{\aligned
&v\ \text{if $0\leqslant v\leqslant 2-u$}, \\
&4-2u-v\ \text{if $2-u\leqslant v\leqslant 2$}, \\
&6-2u-2v\ \text{if $2\leqslant v\leqslant 3-u$}.\\
\endaligned
\right.
$$
Therefore, integrating $(\widetilde{P}(u,v)\cdot E)^2$, we obtain
$S(W_{\bullet, \bullet,\bullet}^{Y,\widetilde{S}};O)=\frac{13}{24}+F_O$.

If $O\not\in\widetilde{C}\cup\widetilde{R}$, then $F_O=0$.
Similarly, if $O\in\widetilde{C}$, then $O\not\in\widetilde{R}$, which gives
$$
F_O=\frac{1}{2}\int_0^1\int_2^{3-u}(6-2u-2v)(v-2)dvdu=\frac{1}{24}.
$$
Finally, if $O\in\widetilde{R}$, then $O\not\in\widetilde{C}$, which gives
$$
F_O\leqslant\frac{1}{2}\int_0^1\int_{2-u}^{2}2(4-2u-v)(v+u-2)dvdu+\frac{1}{2}\int_0^1\int_{2}^{3-u}2(6-2u-2v)(v+u-2)dvdu=\frac{7}{24}.
$$
Summarizing, we get $S(W_{\bullet, \bullet,\bullet}^{\widetilde{S},E};O)\leqslant\frac{5}{6}$ for every point $O\in E$,
which contradicts \eqref{equation:2-6-final}, because $S_X(S)=\frac{5}{12}$ and $S(W_{\bullet,\bullet}^{\widetilde{S}};E)=\frac{17}{12}<2$.
This shows that $X$ is K-stable.

\begin{corollary}
\label{corollary:2-6-final}
All smooth Fano threefolds in the~family \textnumero 2.6 are K-stable.
\end{corollary}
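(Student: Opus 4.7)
The plan is to prove the corollary by contradiction via the valuative criterion for K-stability \cite{Fujita2019Crelle,Li}, organizing the whole section as one argument. Suppose some smooth Verra threefold $X$ is not K-stable; since $\mathrm{Aut}(X)$ is finite, it is also not K-polystable, so there exists a prime divisor $\mathbf{F}$ over $X$ with $\beta(\mathbf{F}) \le 0$ and center $Z \subset X$ that is not a surface by \cite[Theorem~3.7.1]{ACCFKMGSSV}. The overall strategy is to apply the Abban--Zhuang theory to a carefully chosen flag $P \in C \subset S$, where $S$ is a general surface in $|H_2|$ containing the fiber $C_2$ of $\pi_2$ through a chosen point $P \in Z$, and $C$ is the component of $C_2$ through $P$.

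By Lemma~\ref{lemma:2-6-two-fibers} one may assume $C_2$ is smooth at $P$, and Lemma~\ref{lemma:2-6-del-Pezzo-degree-two} classifies the resulting $S$ as a (possibly weak) del Pezzo of degree $2$ falling into three explicit geometric types. After a direct computation giving $S_X(S) = 5/12 < 1$, an application of \cite[Theorem~1.7.30]{ACCFKMGSSV} reduces matters to bounding the invariants $S(W_{\bullet,\bullet}^S;C)$ and $S(W_{\bullet,\bullet,\bullet}^{S,C};P)$. The three case computations encoded in Lemmas~\ref{lemma:2-6-smooth-conic-del-Pezzo}, \ref{lemma:2-6-smooth-conic-weak-del-Pezzo}, and \ref{lemma:2-6-reducible-conic} show that both invariants are strictly below $1$ in every scenario. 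Because the inequality from \cite[Theorem~1.7.30]{ACCFKMGSSV} is strict whenever $Z = P$, this already rules out $Z$ being a point, so $Z$ must be a curve. Lemma~\ref{lemma:2-6-curve-H1-H2-intersection} then ensures $Z$ surjects onto curves under both conic bundle projections: $H_1 \cdot Z \ge 1$ and $H_2 \cdot Z \ge 1$.

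The main obstacle, and the concluding maneuver, is to specialize the point $P \in Z$ so that $\pi_1(P) \in \Delta_1$: then $C_1$ is singular at $P$, so by Lemma~\ref{lemma:2-6-two-fibers} $C_2$ is smooth at $P$, and the earlier $S$-invariant inequalities force $S$ to be a genuine del Pezzo of degree $2$ with $\eta(P)$ on the ramification locus of the anticanonical double cover $\eta \colon S \to \mathbb{P}^2$. Consequently $|-K_S|$ contains a unique curve $R$ singular at $P$, falling into one of four types: irreducible nodal, irreducible cuspidal, or a reducible sum of two $(-1)$-curves meeting transversally or tangentially. I would then blow up $P$ to obtain $f \colon \widetilde{S} \to S$ with exceptional divisor $E$, compute the Zariski decomposition of $f^*(P(u)|_S) - vE$ uniformly in the four cases, and apply the refined inequality of \cite[Remark~1.7.32]{ACCFKMGSSV} to the flag $O \in E \subset \widetilde{S} \subset X$.

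The hard part is uniformity: one must verify $S(W_{\bullet,\bullet}^{\widetilde{S}};E) < 2$ and, more delicately, $S(W_{\bullet,\bullet,\bullet}^{\widetilde{S},E};O) < 1$ for \emph{every} $O \in E$, regardless of whether $O$ lies on $\widetilde{C}$ or on (a component of) $\widetilde{R}$, and regardless of the type of $R$. The intersection data of $\widetilde{C}$, $\widetilde{R}$, $E$ turns out to be the same (or harmless) in the four cases, so a single Zariski decomposition suffices; the residual term $F_O$ from the negative part must be bounded uniformly in the position of $O$, and the irreducible/reducible split in $R$ is absorbed by the tangency case giving the worst coefficient. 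Once $S(W_{\bullet,\bullet,\bullet}^{\widetilde{S},E};O) \le 5/6$ is established uniformly, the resulting contradiction with the valuative criterion completes the proof.
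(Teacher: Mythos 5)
Your proposal reproduces the paper's argument step for step: the reduction via Lemmas~\ref{lemma:2-6-two-fibers}--\ref{lemma:2-6-reducible-conic} to show $Z$ is a curve, Lemma~\ref{lemma:2-6-curve-H1-H2-intersection} to control both projections, the specialization of $P$ to lie over $\Delta_1$ so that $S$ becomes a genuine del Pezzo surface of degree $2$ with $\eta(P)$ on the branch locus, the case analysis on the unique singular anticanonical curve $R$, the blow up of $P$, and the uniform bounds $S(W_{\bullet,\bullet}^{\widetilde{S}};E)<2$ and $S(W_{\bullet,\bullet,\bullet}^{\widetilde{S},E};O)\leqslant 5/6$ via \cite[Remark~1.7.32]{ACCFKMGSSV}. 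This is essentially the same proof as in the paper.
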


\section{Family \textnumero 2.7}
\label{section:2-7}

Now, let us fix three smooth quadric hypersurfaces  $\mathscr{Q}$, $\mathscr{Q}^\prime$ and $\mathscr{Q}^{\prime\prime}$ in $\mathbb{P}^4$
such that their~intersection is a~smooth curve of degree $8$ and genus $5$.
We set~$\mathscr{C}=\mathscr{Q}\cap\mathscr{Q}^\prime\cap\mathscr{Q}^{\prime\prime}$.
Let $\pi\colon X\to \mathscr{Q}$ be the~blow up of the~smooth curve $\mathscr{C}$.
Then  $X$ is a~smooth Fano~threefold, which is contained in the family~\textnumero 2.7.
Moreover, all smooth Fano threefolds in this family can be obtained in this way.
Note that $-K_X^3=14$ and $\mathrm{Aut}(X)$ is finite \cite{CheltsovShramovPrzyjalkowski}.

The~pencil generated by the~surfaces $\mathscr{Q}^\prime\vert_{\mathscr{Q}}$~and~$\mathscr{Q}^{\prime\prime}\vert_{\mathscr{Q}}$
gives a rational map $\mathscr{Q}\dasharrow\mathbb{P}^1$, which fits the following commutative diagram:
$$
\xymatrix{
&X\ar@{->}[ld]_{\pi}\ar@{->}[rd]^{\phi}&\\%
\mathscr{Q}\ar@{-->}[rr]&&\mathbb{P}^1}
$$
where $\phi$ is a~fibration into quartic del Pezzo surfaces.
Let $E$ be the~$\pi$-exceptional surface,
and let $H=\pi^*(\mathcal{O}_{\mathbb{P}^4}(1)\vert_{\mathscr{Q}})$.
Then  $-K_X\sim 3H-E$, the~morphism $\phi$ is given by the~linear system $|2H-E|$, and $E\cong\mathscr{C}\times\mathbb{P}^1$.

\begin{lemma}
\label{lemma:2-7-Du-Val-surfaces}
Let $S$ be a fiber of the morphism $\phi$. Then  $S$ has at most Du Val singularities.
\end{lemma}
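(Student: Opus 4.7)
The plan is to identify the fiber $S$ with the surface $\pi(S)=\mathscr{Q}\cap\mathscr{Q}_t\subset\mathbb{P}^4$ (where $\mathscr{Q}_t\in\langle\mathscr{Q}',\mathscr{Q}''\rangle$ is the quadric corresponding to $t=\phi(S)$), then to show that $\pi(S)$ is an irreducible reduced non-conical complete intersection of two quadrics, and finally to conclude via the classical classification of such surfaces.

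First, I would verify that $\pi\vert_S\colon S\to\pi(S)$ is an isomorphism. For this it suffices to show that $\pi(S)$ is smooth along $\mathscr{C}$, since then $\mathscr{C}$ is a Cartier divisor on $\pi(S)$ and blowing it up changes nothing. At any point $p\in\mathscr{C}$, smoothness of the complete intersection $\mathscr{C}=\mathscr{Q}\cap\mathscr{Q}'\cap\mathscr{Q}''$ gives the linear independence of the gradients $\nabla\mathscr{Q}(p),\nabla\mathscr{Q}'(p),\nabla\mathscr{Q}''(p)$, hence also the linear independence of $\nabla\mathscr{Q}(p)$ and $\nabla\mathscr{Q}_t(p)=\lambda\nabla\mathscr{Q}'(p)+\mu\nabla\mathscr{Q}''(p)$, so $\pi(S)$ is smooth at $p$.

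Second, I would show that $\pi(S)$ is irreducible, reduced, and not a cone. The curve $\mathscr{C}$ is nondegenerate in $\mathbb{P}^4$, being a smooth complete intersection of three linearly independent quadrics, so $\mathscr{C}$ lies in no hyperplane. Therefore $\mathscr{Q}_t$ has rank $\geq 3$ (else $\mathscr{Q}_t$ is a product of hyperplanes, one of which would have to contain $\mathscr{C}$) and is hence irreducible. A reducible decomposition $\pi(S)=(\mathscr{Q}\cap H_1)\cup(\mathscr{Q}\cap H_2)$, which is the only possibility because $\mathrm{Pic}(\mathscr{Q})=\mathbb{Z}\langle H\vert_{\mathscr{Q}}\rangle$ and $\pi(S)\sim 2H\vert_{\mathscr{Q}}$, would force $\mathscr{Q}_t\equiv H_1H_2\pmod{\mathscr{Q}}$ and hence $\mathscr{C}\subset H_1\cup H_2$, contradicting nondegeneracy. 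Reducedness then follows from Cohen--Macaulayness of the complete intersection combined with generic reducedness. Finally, if $\pi(S)$ were a cone with vertex $v\in\mathscr{Q}$, its defining ideal $(\mathscr{Q},\mathscr{Q}_t)$ would be homogeneous in affine coordinates centered at $v$; but smoothness of $\mathscr{Q}$ at $v$ makes the linear part of $\mathscr{Q}$ nonzero, whereas any ideal generated by homogeneous elements of degree $\geq 2$ contains no nonzero linear form.

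Finally, I would invoke the classical classification of normal Gorenstein del Pezzo surfaces of degree~$4$: an irreducible reduced non-conical complete intersection of two quadrics in $\mathbb{P}^4$ is a normal Gorenstein surface with ample anticanonical class of degree~$4$, and its singularities are all Du Val, corresponding to sub--root-systems of~$D_5$. The main obstacle is the rigorous verification of normality, equivalently that the singular locus of $\pi(S)$ is zero-dimensional; this is where the non-cone property is essential, since any one-dimensional singular stratum on an irreducible reduced $\mathscr{Q}\cap\mathscr{Q}_t$ with $\mathscr{Q}$ smooth would force $\pi(S)$ to be either reducible or a cone, both of which have been excluded.
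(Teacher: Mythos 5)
Your proposal follows the same skeleton as the paper's proof: identify $S$ with $\pi(S)$ via smoothness along $\mathscr{C}$, show $\pi(S)$ is a normal non-conical Gorenstein del Pezzo quartic, and invoke the classification (the paper uses Brenton's theorem; you invoke the classification of $(2,2)$-intersections directly, which is equivalent once normality is known). The non-cone argument via the tangent hyperplane (or, in your phrasing, the homogeneous-ideal/linear-part argument) is correct modulo a small logical tidy-up --- one should say: if the affine ideal at $v$ were homogeneous, it would contain the nonzero linear part $L$ of $Q(v+x)$, forcing $\pi(S)\subset\{L=0\}$, contradicting the nondegeneracy inherited from $\mathscr{C}$.

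The genuine gap is in the normality step. You claim that a one-dimensional singular stratum on an irreducible reduced $\mathscr{Q}\cap\mathscr{Q}_t$ with $\mathscr{Q}$ smooth would force $\pi(S)$ to be reducible or a cone. This is false. Take $\mathscr{Q}=\{x_0x_3+x_1x_4+x_2^2=0\}$, which is smooth, and $\mathscr{Q}_t=\{x_0^2+x_1^2+x_2^2=0\}$, of rank $3$. Then $\mathscr{Q}\cap\mathscr{Q}_t$ is irreducible and reduced (no member of the pencil has rank $\leqslant 2$), it contains the line $\ell=\{x_0=x_1=x_2=0\}$, and it is singular along all of $\ell$; yet it is not a cone, by exactly the degree argument you would use (a degree-$4$ cone with vertex $v\in\mathscr{Q}$ would have to lie inside the degree-$2$ surface $T_v\mathscr{Q}\cap\mathscr{Q}$). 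So non-cone plus irreducible plus reduced does not yield normality. The correct argument --- and the one the paper uses --- exploits the datum you already established in your first step but then did not use: $\pi(S)$ is smooth along $\mathscr{C}$, and $\mathscr{C}\sim-2K_{\pi(S)}$ is an ample Cartier divisor, so the affine complement $\pi(S)\setminus\mathscr{C}$ contains no complete curves; a one-dimensional singular locus would therefore have to meet $\mathscr{C}$, contradicting smoothness there. (This also explains why the example above cannot occur under the lemma's hypotheses: if $\mathscr{Q}_t$ lies in a pencil cutting out a smooth $\mathscr{C}$, then the singular line $\ell$ would have to meet $\mathscr{C}$, which is impossible.) Replace the normality paragraph by this ample-curve argument and the rest of your proof goes through.
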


\begin{proof}
Since $E\vert_{S}$ is a smooth curve, the surface $S$ is smooth along $E\vert_{S}$,
which implies that it has at most isolated singularities. Hence, we conclude that $S$ is normal and irreducible.

Note that $S\cong\pi(S)$. Suppose that the singularities of the surface $\pi(S)$ are not Du~Val.
Then  it follows from \cite[Theorem~1]{Brenton} that $\pi(S)$ is a cone in $\mathbb{P}^4$ over a quartic elliptic curve.
Let $P$ be the vertex of the cone $\pi(S)$, and let $T_P$ be the hyperplane section of the quadric hypersurface $\mathscr{Q}$ that is singular at~$P$.
Then  $T_P$ contains all lines in $\mathscr{Q}$ that pass through~$P$,
which implies that $T_P$ contains $\pi(S)$.
This is impossible, since $T_P$ is a quadric cone.
\end{proof}

The goal of this section is to prove that $X$ is K-stable.
To do this, we fix a point $P\in X$.
By \cite{Fujita2019Crelle,Li}, to prove that $X$ is K-stable, it is enough to show that $\delta_P(X)>1$.

Let $S$ be the fiber of the morphism $\phi$ containing $P$.
Then  $S$ is a quartic del Pezzo~surface, and $S$ has at most Du Val singularity at $P$ by Lemma~\ref{lemma:2-7-Du-Val-surfaces}.
Moreover, if $S$ is singular at~$P$,
then  $P$ is a singular point of the surface $S$
of type $\mathbb{A}_1$, $\mathbb{A}_2$, $\mathbb{A}_3$, $\mathbb{A}_4$, $\mathbb{D}_4$ or $\mathbb{D}_5$, see \cite{Coray1988}.

\begin{lemma}
\label{lemma:2-7-quartic-del-Pezzo-delta}
If $\delta_P(S)>\frac{54}{55}$ or $P\in\mathrm{Sing}(S)$ and $\delta_P(S)>\frac{27}{28}$, then $\delta_P(X)>1$.
\end{lemma}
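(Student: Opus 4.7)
The plan is to apply the Abban--Zhuang estimate to the flag $P\in S\subset X$, in the spirit of Lemma~\ref{lemma:2-1-2-3-2-5-del-Pezzo-delta}. From \cite[Theorem~3.3]{AbbanZhuang} one has
$$\delta_P(X)\geqslant\min\Bigg\{\frac{1}{S_X(S)},\ \inf_{\substack{F/S\\ P\in C_S(F)}}\frac{A_S(F)}{S(W^S_{\bullet,\bullet};F)}\Bigg\},$$
so under each of the two hypotheses it suffices to show that both quantities on the right strictly exceed $1$.

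First I would compute $S_X(S)$. Writing $-K_X-uS\sim(3-2u)H-(1-u)E$, which is pseudoeffective iff $u\leqslant\tfrac{3}{2}$, the divisor is nef on $[0,1]$ (a positive combination of the ample classes $-K_X$ and $H$), while on $[1,\tfrac{3}{2}]$ its Zariski decomposition is $P(u)=(3-2u)H$, $N(u)=(u-1)E$. Integrating $\mathrm{vol}(-K_X-uS)$ and dividing by $(-K_X)^3=14$ yields $S_X(S)=\tfrac{33}{56}$, so $1/S_X(S)=\tfrac{56}{33}>1$; this factor is never the binding constraint.

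The key simplification, in exact parallel with Section~\ref{section:2-1}, is that since $S$ is a fiber of $\phi$ we have $S|_S\equiv 0$ numerically, and hence $E|_S\equiv 2H|_S=-2K_S$. Consequently $P(u)|_S\equiv -K_S$ for $u\in[0,1]$ and $P(u)|_S=(3-2u)(-K_S)$ for $u\in[1,\tfrac{3}{2}]$, exactly as in Lemma~\ref{lemma:2-1-2-3-2-5-del-Pezzo-delta}. Applying \cite[Corollary~1.7.24]{ACCFKMGSSV} and performing the routine integrals (in particular $\int_1^{3/2}(3-2u)^2(u-1)\,du=\tfrac{1}{48}$ and $\int_0^\infty\mathrm{vol}(-K_S-vF)\,dv=4\,S_S(F)$) I expect to arrive at the clean closed form
$$S\big(W^S_{\bullet,\bullet};F\big)=\frac{\mathrm{ord}_F(E|_S)}{56}+\frac{27}{28}S_S(F).$$

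To conclude I would split into two cases. If $P\in E$, then $P\in E|_S$; since $E|_S$ is smooth and $S$ is smooth along $E|_S$ (as observed in the proof of Lemma~\ref{lemma:2-7-Du-Val-surfaces}), the pair $(S,E|_S)$ is log smooth at $P$, and so $\mathrm{ord}_F(E|_S)\leqslant A_S(F)$ for every prime divisor $F$ over $S$ with $P\in C_S(F)$. Combined with $S_S(F)\leqslant A_S(F)/\delta_P(S)$ this gives $S(W^S_{\bullet,\bullet};F)\leqslant\frac{\delta_P(S)+54}{56\delta_P(S)}A_S(F)$, and the inequality $A_S(F)/S(W^S_{\bullet,\bullet};F)>1$ holds precisely when $\delta_P(S)>\tfrac{54}{55}$. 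If instead $P\notin E$ --- which is automatic when $P\in\mathrm{Sing}(S)$, again because $S$ is smooth along $E|_S$ --- then $\mathrm{ord}_F(E|_S)=0$ for every relevant $F$, so only the weaker threshold $\delta_P(S)>\tfrac{27}{28}$ is required. The one step that is not entirely routine is recognising $E|_S\equiv -2K_S$ from $S|_S\equiv 0$; once that is in hand the proof reduces to a direct variant of the Lemma~\ref{lemma:2-1-2-3-2-5-del-Pezzo-delta} calculation, and the numerology $\tfrac{54}{55}$ versus $\tfrac{27}{28}$ falls out automatically from the arithmetic.
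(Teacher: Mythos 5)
Your proposal is correct and follows essentially the same route as the paper: the same Zariski decomposition of $-K_X-uS$, the same computation $S_X(S)=\tfrac{33}{56}$, the same closed form $S(W^S_{\bullet,\bullet};F)=\tfrac{1}{56}\mathrm{ord}_F(E|_S)+\tfrac{27}{28}S_S(F)$ via \cite[Corollary~1.7.24]{ACCFKMGSSV}, and the same dichotomy using that $(S,E|_S)$ is log canonical (you say log smooth, which is a fine sufficient reason) when $P\in E$ and $\mathrm{ord}_F(E|_S)=0$ when $P\notin E$. The only cosmetic difference is that you organise the case split by whether $P\in E$ rather than directly by the two stated hypotheses, but since $\mathrm{Sing}(S)\cap E=\varnothing$ the two organisations coincide.
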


\begin{proof}
Take $u\in\mathbb{R}_{\geqslant 0}$. Since $S\sim 2H-E$, we have
$$
-K_X-uS\sim_{\mathbb{R}} (3-2u)H-(u-1)E\sim_{\mathbb{R}}\Big(\frac{3}{2}-u\Big)S+\frac{1}{2}E.
$$
Using this, we conclude that the divisor $-K_X-uS$ is pseudoeffective if and only if $u\leqslant \frac{3}{2}$.
For $u\leqslant \frac{3}{2}$, let $P(u)$ be the positive part of Zariski decomposition of the divisor $-K_X-uS$,
and let $N(u)$  be the negative part of Zariski decomposition of the divisor $-K_X-uS$.
Then
$$
P(u)=\left\{\aligned
&-K_X-uS\ \text{if $0\leqslant u\leqslant 1$}, \\
&(3-2u)H\ \text{if $1\leqslant u\leqslant \frac{3}{2}$},
\endaligned
\right.
$$
and
$$
N(u)=\left\{\aligned
&0\ \text{if $0\leqslant u\leqslant 1$}, \\
&(u-1)E\ \text{if $1\leqslant u\leqslant \frac{3}{2}$}.
\endaligned
\right.
$$
This gives
$$
S_{X}(S)=\frac{1}{14}\int_{0}^{\frac{3}{2}}\big(P(u)\big)^3du=\frac{1}{14}\int_{0}^{1}(14-12u)du+\frac{1}{14}\int_{1}^{\frac{3}{2}} 2(3-2u)^{3}du=\frac{33}{56}<1.
$$
Now, using \cite[Theorem~3.3]{AbbanZhuang} and \cite[Corollary 1.102]{Book}, we get
\begin{equation}
\label{equation:2-7-S-delta}
\delta_P(X)\geqslant \min\left\{\frac{1}{S_X(S)},\inf_{\substack{F/S\\ P\in C_S(F)}}\frac{A_S(F)}{S(W_{\bullet,\bullet}^S;F)}\right\},
\end{equation}
where the infimum is taken by all prime divisors $F$ over the~surface $S$ such that $P\in C_S(F)$,
and $S(W_{\bullet,\bullet}^S;F)$ can be computed using \cite[Corollary~1.108]{Book} as follows:
$$
S\big(W_{\bullet,\bullet}^S; F\big)=\frac{3}{14}\int_0^{\frac{3}{2}}\big(P(u)\big\vert_{S}\big)^2\mathrm{ord}_{F}\big(N(u)\big\vert_{S}\big)du+\frac{3}{14}\int_{0}^{\frac{3}{2}}\int_0^\infty \mathrm{vol}\big(P(u)\big\vert_{S}-vF\big)dvdu.
$$

Let $E_S=E|_S$. Then  $E_S\in |-2K_S|$ and $E_S\cong\mathscr{C}$. Moreover, the surface $S$ is smooth in a neighborhood of the curve $E_S$.
Furthermore, we have
$$
P(u)\big\vert_{S}=\left\{\aligned
&-K_S\ \text{if $0\leqslant u\leqslant 1$}, \\
&(3-2u)(-K_S)\ \text{if $1\leqslant u\leqslant \frac{3}{2}$},
\endaligned
\right.
$$
and
$$
N(u)\big\vert_{S}=\left\{\aligned
&0\ \text{if $0\leqslant u\leqslant 1$}, \\
&(u-1)E_S\ \text{if $1\leqslant u\leqslant \frac{3}{2}$}.
\endaligned
\right.
$$
Let $F$ be any prime divisor over $S$ such that $P\in C_S(F)$. Then
\begin{multline*}
\quad S\big(W_{\bullet,\bullet}^S; F\big)=\frac{3}{14}\int_{1}^{\frac{3}{2}}4(u-1)(3-2u)^{2}\mathrm{ord}_{F}\big(E_{S}\big)du+\\
+\frac{3}{14}\int_{0}^{1} \int_{0}^{\infty}\mathrm{vol}\big(-K_S-vF\big)dvdu+\\
\quad\quad\quad\quad\quad\quad\quad\quad\quad\quad+\frac{3}{14}\int_{1}^{\frac{3}{2}}\int_{0}^{\infty}\mathrm{vol}\big((3-2u)(-K_S)-vF\big)dvdu=\\
=\frac{\mathrm{ord}_{F}\big(E_{S}\big)}{56}+\frac{3}{14}\int_{0}^{\infty}\mathrm{vol}\big(-K_S-vF\big)dv+\frac{3}{14}\int_{1}^{\frac{3}{2}}(3-2u)^3\int_{0}^{\infty}\mathrm{vol}\big(-K_S-vF\big)dvdu=\\
=\frac{\mathrm{ord}_{F}\big(E_{S}\big)}{56}+\frac{3}{14}\int_{0}^{\infty}\mathrm{vol}\big(-K_S-vF\big)dv+\frac{3}{112}\int_{0}^{\infty}\mathrm{vol}\big(-K_S-vF\big)dv=\\
=\frac{\mathrm{ord}_{F}\big(E_{S}\big)}{56}+\frac{27}{112}\int_{0}^{\infty}\mathrm{vol}\big(-K_S-vF\big)dv=\frac{\mathrm{ord}_{F}\big(E_{S}\big)}{56}+\frac{27}{28}S_S(F)\leqslant\frac{A_{S}(F)}{56}+\frac{27A_{S}(F)}{28\delta_{P}(S)},
\end{multline*}
because log pair $(S,E_S)$ is log canonical. Therefore, if  $\delta_P(S)>\frac{54}{55}$, \eqref{equation:2-7-S-delta} gives $\delta_P(X)>1$.
Similarly, if $P\in\mathrm{Sing}(S)$, $P\not\in E_S$, so $\mathrm{ord}_{F}(E_{S})=0$, which implies that
$$
S\big(W_{\bullet,\bullet}^S; F\big)=\frac{27}{28}S_S(F)\leqslant\frac{27A_{S}(F)}{28\delta_{P}(S)}.
$$
Hence, in this case, it follows from \eqref{equation:2-7-S-delta} that $\delta_P(X)>1$ provided that $\delta_P(S)>\frac{27}{28}$.
\end{proof}

\begin{corollary}
\label{corollary:2-7-quartic-del-Pezzo-delta}
If $S$ is smooth, then $\delta_P(X)>1$.
\end{corollary}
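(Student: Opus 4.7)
The plan is to reduce immediately to the first alternative in Lemma~\ref{lemma:2-7-quartic-del-Pezzo-delta}. Since $S$ is the fiber of $\phi$ through $P$ and belongs to $|2H-E|$, with $K_S^2=4$ and $-K_S\sim(-K_X)|_S$ ample, the assumption that $S$ is smooth means that $S$ is a smooth del Pezzo surface of degree four, embedded in $\mathbb{P}^4$ as an intersection of two quadrics.

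For every such surface, the stability threshold admits a universal lower bound strictly greater than $\tfrac{54}{55}$. More precisely, the computations of $\delta$-invariants of smooth del Pezzo surfaces collected in \cite[\S 2]{ACCFKMGSSV} yield $\delta(S)\geqslant\tfrac{4}{3}$ for every smooth quartic del Pezzo surface. In particular,
$$
\delta_P(S)\geqslant\delta(S)\geqslant\frac{4}{3}>\frac{54}{55}
$$
at every point $P\in S$.

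Combining this inequality with Lemma~\ref{lemma:2-7-quartic-del-Pezzo-delta} gives $\delta_P(X)>1$, as required. There is no substantive obstacle in this argument: the only point to verify is the numerical reference for the quartic del Pezzo bound, and any constant strictly greater than $\tfrac{54}{55}$ suffices, since Lemma~\ref{lemma:2-7-quartic-del-Pezzo-delta} is applied with a strict inequality against this threshold.
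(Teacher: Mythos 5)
Your argument is correct and matches the paper's proof: the paper likewise invokes \cite[Lemma~2.12]{ACCFKMGSSV} to get $\delta(S)=\frac{4}{3}$ for a smooth quartic del Pezzo surface and then applies Lemma~\ref{lemma:2-7-quartic-del-Pezzo-delta}. Your slightly weaker phrasing ($\delta(S)\geqslant\frac{4}{3}$) is all that is needed, since the threshold $\frac{54}{55}$ is well below $\frac{4}{3}$.
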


\begin{proof}
If $S$ is smooth, then $\delta(S)=\frac{4}{3}$ by \cite[Lemma~2.12]{Book}. Now apply Lemma~\ref{lemma:2-7-quartic-del-Pezzo-delta}.
\end{proof}

\begin{corollary}
\label{corollary:2-7-quartic-del-Pezzo-delta-singular}
If $P$ is not contained in a line in $S$, then $\delta_P(X)>1$.
\end{corollary}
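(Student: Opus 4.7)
The plan is to combine Lemma~\ref{lemma:2-7-quartic-del-Pezzo-delta} with a uniform lower bound on $\delta_P(S)$ when $P$ does not lie on any line of $S$. Lemma~\ref{lemma:2-7-quartic-del-Pezzo-delta} reduces the inequality $\delta_P(X)>1$ to the local estimate $\delta_P(S)>\frac{54}{55}$ at a smooth point $P$ of $S$ and $\delta_P(S)>\frac{27}{28}$ at a singular point. When $S$ itself is smooth, this is immediate from $\delta(S)=\frac{4}{3}$ as in Corollary~\ref{corollary:2-7-quartic-del-Pezzo-delta}. Hence we may assume $S$ has Du Val singularities of some type among $\mathbb{A}_1,\ldots,\mathbb{A}_4,\mathbb{D}_4,\mathbb{D}_5$, and the goal becomes to prove $\delta_P(S)\geqslant\frac{4}{3}$ at any such $P$ avoiding all the (finitely many) lines of $S$, since $\frac{4}{3}$ comfortably exceeds both thresholds.

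The main tool is the Abban--Zhuang inequality \cite[Theorem~3.3]{AbbanZhuang} applied to a flag $P\in C\subset S$, taken after passage to the minimal resolution $\widetilde S\to S$ if needed. If $P\notin\mathrm{Sing}(S)$, I would take $C$ to be a smooth curve through $P$ in one of the five conic-bundle pencils on the resolution $\widetilde S$; the hypothesis that no line of $S$ passes through $P$ forces every $(-1)$- or $(-2)$-curve appearing in the negative part of the Zariski decomposition of $-K_S-vC$ to avoid $P$, which keeps the local contribution $S(W^{S,C}_{\bullet,\bullet,\bullet};P)$ small enough to recover $\delta_P(S)\geqslant\frac{4}{3}$. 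If $P\in\mathrm{Sing}(S)$, I would instead work on $\widetilde S$ with $C$ chosen to be one of the exceptional $(-2)$-curves over $P$ (or the strict transform of a general anticanonical curve through $P$), and compensate by the different $\Delta_C$ as in \cite[Remark~1.7.32]{ACCFKMGSSV}, following the template already used in Lemmas~\ref{lemma:2-4-C-cuspidal} and~\ref{lemma:2-5-D-4}.

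The principal obstacle is the case analysis at singular $P$: the geometry of the minimal resolution, the Dynkin diagram of the singularity, and the configuration of lines of $S$ clustering near $P$ all affect the Zariski decomposition on $\widetilde S$, so the computation of $S_S(C)$ and $S(W^{S,C}_{\bullet,\bullet,\bullet};P)$ has to be carried out type by type. In particular, the $\mathbb{D}_5$ case is the most combinatorially involved (largest dual graph, finest splitting of the integration intervals), and there the flag may need one or two further blow-ups to separate the exceptional $(-2)$-curves from the strict transform of the auxiliary curve through $P$. Once all Du Val types are disposed of, the bound $\delta_P(S)\geqslant\frac{4}{3}$ is established and the corollary follows at once from Lemma~\ref{lemma:2-7-quartic-del-Pezzo-delta}.
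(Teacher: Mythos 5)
The key fact your proposal overlooks is that on a normal quartic del Pezzo surface with Du Val singularities, \emph{every} singular point lies on a line of $S$. Consequently, the hypothesis that $P$ is not on any line forces $P$ to be a \emph{smooth} point of $S$ — the entire singular-$P$ branch you describe (working on the minimal resolution, type-by-type analysis over $\mathbb{A}_1,\ldots,\mathbb{D}_5$, the ``$\mathbb{D}_5$ is the most combinatorially involved'' case) simply does not arise. This observation is what makes the paper's proof a one-liner; without it, you are setting up to do a large amount of unnecessary work. Even in the smooth case, the paper does not run an Abban--Zhuang computation with a conic-pencil flag on $S$ directly, as you propose; instead it blows up $S$ at the smooth point $P$ not lying on a line, recognizes the result as a (possibly singular) cubic surface, and cites the argument at the end of \cite[Lemma~2.12]{ACCFKMGSSV} to get $\delta_P(S)\geqslant\frac{3}{2}$, which clears the threshold $\frac{54}{55}$ from Lemma~\ref{lemma:2-7-quartic-del-Pezzo-delta}. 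Your high-level reduction via Lemma~\ref{lemma:2-7-quartic-del-Pezzo-delta} is right, and the threshold $\frac{4}{3}$ you aim for would suffice, but as written the proposal is incomplete: you would need to actually supply the singular-point analysis you sketch, and you would need to verify (not merely assert) that avoiding lines keeps the negative part of the relevant Zariski decompositions away from $P$, whereas the paper's route avoids both issues entirely by reducing to the smooth-point case and to an already-established computation on cubic surfaces.
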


\begin{proof}
If $P$ is not contained in a line in $S$, then $P$ is a smooth point of the surface $S$,
and the blow up of the surface $S$ at this point is a (possibly singular) cubic surface in $\mathbb{P}^3$.
Thus, arguing exactly as in the end of the proof of \cite[Lemma~2.12]{Book}, we obtain $\delta_P(S)\geqslant \frac{3}{2}$,
which implies that $\delta_P(X)>1$ by Lemma~\ref{lemma:2-7-quartic-del-Pezzo-delta}.
\end{proof}

Now, let $T$ be a surface in the linear system $|H|$ such that $P\in T$, and let $\mathcal{Q}=\pi(T)$.
Then  $\mathcal{Q}$ is a hyperplane section of the hypersurface $\mathscr{Q}$, so both $\mathcal{Q}$ and $T$ are irreducible.
In~the following, we will choose $T$ such that the surface $\mathcal{Q}$ is smooth, so that $\mathcal{Q}\cong\mathbb{P}^1\times\mathbb{P}^1$.

\begin{lemma}
\label{lemma:2-7-S-singular-at-P-irreducible}
Suppose that $T$ is a general surface in the linear system $|H|$ such that $P\in T$.
Then  the (scheme) intersection $S\cap T$ is an irreducible reduced curve.
\end{lemma}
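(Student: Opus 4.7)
The plan is to identify the scheme $S\cap T$ with a hyperplane section of the quartic del Pezzo $\pi(S)\subset\mathbb{P}^4$ passing through $\pi(P)$, and then to apply Bertini's irreducibility theorem on $\pi(S)$.

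First I would recall, as in the proof of Lemma~\ref{lemma:2-7-Du-Val-surfaces}, that because the Jacobian $(\nabla\mathscr{Q},\nabla\mathscr{Q}',\nabla\mathscr{Q}'')$ has rank $3$ along $\mathscr{C}$, the surface $\pi(S)$ is smooth in a neighborhood of $\mathscr{C}$ and the restriction $\pi|_S\colon S\to\pi(S)$ is an isomorphism. Writing $\Pi\subset\mathbb{P}^4$ for the hyperplane with $\mathcal{Q}=\Pi\cap\mathscr{Q}$, this isomorphism identifies the scheme-theoretic intersection $S\cap T$ with the hyperplane section $\pi(S)\cap\Pi$. Adjunction, using $-K_X\sim 3H-E$ and $S\sim 2H-E$, gives $-K_S=H|_S$, so $S\cap T$ corresponds to an anticanonical divisor on $\pi(S)$, and by Lemma~\ref{lemma:2-7-Du-Val-surfaces} the linear system $|-K_{\pi(S)}|$ is very ample, embedding $\pi(S)$ into $\mathbb{P}^4$ as a quartic del Pezzo with at worst Du Val singularities.

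As $T$ ranges over general surfaces of $|H|$ containing $P$, the hyperplane $\Pi$ becomes a general element of the three-dimensional family of hyperplanes in $\mathbb{P}^4$ through $\pi(P)$. These cut out on $\pi(S)$ a three-dimensional linear subsystem $\mathfrak{d}\subset|-K_{\pi(S)}|$ with base locus exactly $\{\pi(P)\}$, and the associated rational map $\phi_{\mathfrak{d}}\colon\pi(S)\dashrightarrow\mathbb{P}^3$ is the linear projection from $\pi(P)$. By Lemma~\ref{lemma:2-7-Du-Val-surfaces}, $\pi(S)$ is not a cone (the vertex of a cone over an elliptic quartic would be a non-Du-Val singularity), so a general line through $\pi(P)$ meets $\pi(S)$ in finitely many points; hence $\phi_{\mathfrak{d}}$ is generically finite onto a two-dimensional image in $\mathbb{P}^3$. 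Bertini's irreducibility theorem then implies that a general member of $\mathfrak{d}$ is irreducible, and reducedness follows because such a member is the pullback under the generically finite (separable) map $\phi_{\mathfrak{d}}$ of a general hyperplane in $\mathbb{P}^3$, the base locus being zero-dimensional on the normal surface $\pi(S)$.

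The main technical delicacy is the case $\pi(P)\in\mathrm{Sing}(\pi(S))$, where $\phi_{\mathfrak{d}}$ is not defined on the tangent cone at $\pi(P)$. I would handle this by passing to the minimal resolution $\mu\colon\widetilde{\pi(S)}\to\pi(S)$ of the Du Val singularity at $\pi(P)$: the strict transform $\widetilde{\mathfrak{d}}$ of $\mathfrak{d}$ still defines a rational map to a two-dimensional image in $\mathbb{P}^3$ (since $\pi(S)$ is not a cone), Bertini applies cleanly on the smooth surface $\widetilde{\pi(S)}$, and the irreducibility and reducedness of a general member descend birationally via $\mu$ to $\pi(S)$.
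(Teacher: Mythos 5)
Your argument is correct and follows essentially the same route as the paper: both identify $S\cap T$ with an anticanonical (hyperplane) section of the quartic del Pezzo surface through $\pi(P)$, and both derive irreducibility and reducedness from Bertini's theorem applied to the linear projection from $\pi(P)$. The only real difference is in how the two-dimensionality of the projected image is established --- the paper blows up $P$ on $S$ and observes $Z^2 = 4 - \mathrm{mult}_P(S) \geqslant 2 > 0$ so that $\eta\colon\widetilde{S}\to\mathbb{P}^3$ has a quadric or cubic surface as image, whereas you invoke Lemma~\ref{lemma:2-7-Du-Val-surfaces} to rule out $\pi(S)$ being a cone with vertex $\pi(P)$; both suffice.
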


\begin{proof}
Let $\rho\colon\widetilde{S}\to S$ be a blow up of the quartic del Pezzo surface $S$ at the point $P$,
and let $Z$ be the proper transform of the curve $T\vert_{S}$ on the surface $\widetilde{S}$.
Then  $|Z|$ has no base points and gives the morphism $\eta\colon\widetilde{S}\to\mathbb{P}^3$
that fits the following commutative diagram:
$$
\xymatrix{
&\widetilde{S}\ar@{->}[ld]_{\rho}\ar@{->}[rd]^{\eta}&\\%
S\ar@{-->}[rr]&&\mathbb{P}^3}
$$
where $S\dasharrow\mathbb{P}^3$ is a projection from $P$. Moreover, if $P$ is a smooth point of the surface~$S$,
then $Z^2=3$, and the image of the morphism $\eta$ is an irreducible cubic surface in $\mathbb{P}^3$.
Similarly, if $P$ is a singular point of the surface $S$, then we have \mbox{$Z^2=4-\mathrm{mult}_P(S)=2$},
and the image of the morphism $\eta$ is an irreducible quadric surface.
Therefore, we conclude that the curve $Z$ must be irreducible and reduced (by Bertini theorem),
which implies that the~intersection $S\cap T$ is also irreducible and reduced.
\end{proof}

\begin{remark}
\label{remark:2-7-S-T-irreducible}
Suppose that $S$ is singular at $P$, and $T$ is a general surface in $|H|$ that passes through the~point $P$.
Then, choosing appropriate coordinates $[x:y:z:t:w]$ on $\mathbb{P}^4$,
we~may assume that $\pi(P)=[0:0:0:0:1]$, and the surface $\pi(S)$ is given in $\mathbb{P}^4$ by
$$
\left\{\aligned
&at^2+btx+f_2(x,y,z)=0, \\
&wt=g_2(x,y,z), \\
\endaligned
\right.
$$
where $a$ and $b$ are complex numbers, $f_2(x,y,z)$ and $g_2(x,y,z)$ are non-zero quadratic homogeneous polynomials.
In the~chart $w\ne 0$, the surface $\pi(S)$ is given~by
$$
\left\{\aligned
&at^2+btx+f_2(x,y,z)=0, \\
&t=g_2(x,y,z), \\
\endaligned
\right.
$$
where now we consider $x$, $y$, $z$, $t$ as affine coordinates on $\mathbb{C}^4$.
Then $\pi(S)\cap\mathcal{Q}$ is cut out on the~surface $\pi(S)$~by $c_1x+c_2y+c_3z+c_4t=0$,
where $c_1$, $c_2$, $c_3$, $c_4$ are general numbers.
The affine part of the surface $\pi(S)$ is isomorphic to the hypersurface in $\mathbb{C}^3$ given by
$$
ag_2^2(x,y,z)+bxg_2(x,y,z)+f_2(x,y,z)=0,
$$
and the affine part of the curve $\pi(S)\cap\mathcal{Q}$ is cut out by \mbox{$c_1x+c_2y+c_3z+c_4g_2(x,y,z)=0$}.
If $P$ is a singular point of the surface $S$ of type $\mathbb{D}_4$ or $\mathbb{D}_5$,
then $S\cap T$ has an ordinary cusp at the point $P$, which easily implies that the~intersection $S\cap T$ is reduced and irreducible.
Similarly, if $P$ is a Du Val singular point of the surface $S$ of type $\mathbb{A}_1$, $\mathbb{A}_2$, $\mathbb{A}_3$ or $\mathbb{A}_4$,
then  the~intersection $S\cap T$ has an isolated ordinary double singularity at $P$.
\end{remark}

Observe that the morphism $\pi\colon X\to\mathscr{Q}$ induces a birational morphism $\varpi\colon T\to \mathcal{Q}$,
and the morphism $\phi\colon X\to\mathbb{P}^1$ induces a fibration $\varphi\colon T\to\mathbb{P}^1$ that both fit the~following commutative diagram:
$$
\xymatrix{
&T\ar@{->}[ld]_{\varpi}\ar@{->}[rd]^{\varphi}&\\%
\mathcal{Q}\ar@{-->}[rr]&&\mathbb{P}^1}
$$
where $\mathcal{Q}\dasharrow\mathbb{P}^1$ is a map given by the pencil generated by the curves
$\mathscr{Q}^\prime\vert_{\mathcal{Q}}$~and~$\mathscr{Q}^{\prime\prime}\vert_{\mathcal{Q}}$.
In~the~following, we will always choose $T\in|H|$ such that the quadric surface $\mathcal{Q}$ is smooth,
and $T$ is either smooth or has one isolated ordinary singularity,
which would imply that a~general fiber of the induced fibration $\varphi\colon T\to\mathbb{P}^1$ is a smooth elliptic curve.
Let $\mathcal{C}=S\vert_{T}$.
Then $\mathcal{C}$ is the fiber of the (elliptic) fibration $\varphi$ that contains the~point $P$.

Let $u$ be a non-negative real number.
Then $-K_X-uT\sim_{\mathbb{R}}(3-u)H-E\sim_{\mathbb{R}}(1-u)H+S$,
which implies that $-K_X-uT$ is nef $\iff$ $-K_X-uT$ is pseudoeffective $\iff$ $u\in[0,1]$.
Integrating, we get $S_{X}(T)=\frac{9}{28}<1$. For simplicity, we let $P(u)=-K_X-uT$.

\begin{lemma}
\label{lemma:2-7-S-singular-at-P}
Suppose that $S$ is singular at $P$. Then $\delta_P(X)>1$.
\end{lemma}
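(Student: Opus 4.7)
The plan is to apply Lemma~\ref{lemma:2-7-quartic-del-Pezzo-delta}, which reduces the problem to showing $\delta_P(S) > \tfrac{27}{28}$, since $P \in \mathrm{Sing}(S)$. Combining with Corollary~\ref{corollary:2-7-quartic-del-Pezzo-delta-singular}, one may moreover assume that $P$ lies on a line of $S$, since otherwise $\delta_P(X) > 1$ is already proved. So I would restrict attention to singular quartic del Pezzo surfaces $S$ with a Du Val singularity at $P$ of one of the types $\mathbb{A}_1$, $\mathbb{A}_2$, $\mathbb{A}_3$, $\mathbb{A}_4$, $\mathbb{D}_4$, $\mathbb{D}_5$ (recorded in the discussion preceding Lemma~\ref{lemma:2-7-quartic-del-Pezzo-delta}), and analyze each case separately using the classification of such surfaces, see \cite{Coray1988}.

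For each singularity type, the configuration of lines of $S$ through $P$ is explicit, and there is at least one (and usually several) line $L \subset S$ with $P \in L$. I would apply the Abban--Zhuang inequality on the singular surface $S$ with $L$ as the first step of the flag:
\[
\delta_P(S) \geq \min\Bigg\{\frac{A_S(L)}{S_S(L)},\ \inf_{\substack{F/S \\ P \in C_S(F)}} \frac{A_S(F) - \mathrm{ord}_P(\Delta_L)}{S\big(W^{S,L}_{\bullet,\bullet};F\big)}\Bigg\},
\]
where $\Delta_L$ is the different on $L$ arising from the Du Val singularity at $P$. The computations of $S_S(L)$ and $S(W^{S,L}_{\bullet,\bullet};F)$ are carried out on the minimal resolution $\widetilde{S} \to S$ via an explicit Zariski decomposition of the divisor $-K_S - tL$; the relevant divisors $F$ to test are the exceptional curves of $\widetilde{S} \to S$ lying over $P$, together with the exceptional curve of the blow up of $\widetilde{S}$ along the proper transform of $L$ above $P$.

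The main obstacle is the tightness of the bound $\tfrac{27}{28}$: in the $\mathbb{D}_5$ case the exceptional chain over $P$ is long, and the contribution of $\mathrm{ord}_P(\Delta_L)$ is substantial, so I expect to need an iterated blow up to separate exceptional components before extracting a sharp estimate (in the spirit of the chain of blow ups constructed in Lemma~\ref{lemma:2-1-S-C-reducible}). The $\mathbb{A}_k$ and $\mathbb{D}_4$ cases should be comparatively clean, since through those singular points there pass several lines and the symmetric configuration yields a comfortable margin over $\tfrac{27}{28}$. Once each case gives $\delta_P(S) > \tfrac{27}{28}$, Lemma~\ref{lemma:2-7-quartic-del-Pezzo-delta} concludes $\delta_P(X) > 1$.
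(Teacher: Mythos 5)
Your proposal takes a genuinely different route from the paper. The paper does not try to bound $\delta_P(S)$ at a singular point of the quartic del Pezzo fiber $S$ at all; instead it chooses a smooth hyperplane section $T\in|H|$ through $P$, uses Lemma~\ref{lemma:2-7-S-singular-at-P-irreducible} and Remark~\ref{remark:2-7-S-T-irreducible} to see that $\mathcal{C}=S\cap T$ is an irreducible curve with a node or cusp at $P$, and then runs Abban--Zhuang on (weighted) blow ups of $T$ at $P$. Your plan is to stay on the singular surface $S$ and feed $\delta_P(S)>\tfrac{27}{28}$ into Lemma~\ref{lemma:2-7-quartic-del-Pezzo-delta}.

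The difficulty is not merely that your route is more computational: the needed inequality $\delta_P(S)>\tfrac{27}{28}$ is \emph{false} for the deeper Du Val types, so the reduction via Lemma~\ref{lemma:2-7-quartic-del-Pezzo-delta} simply cannot close. Take $S$ with a single $\mathbb{D}_5$ point at $P$ and let $\mu\colon\widetilde S\to S$ be the minimal resolution, with $(-2)$-curves $e_1,\dots,e_5$ arranged so that $e_3$ is the branching node (neighbours $e_1,e_2,e_4$), $e_4-e_5$ is the tail, and the unique $(-1)$-curve $L$ meets $e_5$. Since $\mu$ is crepant, $A_S(e_3)=1$ and $\mu^*(-K_S)=-K_{\widetilde S}$. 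Computing the Zariski decomposition of $-K_{\widetilde S}-te_3$ one finds
$$
\mathrm{vol}\big(-K_S-te_3\big)=
\left\{\aligned
&4-\tfrac{t^2}{3}\ \text{if $0\leqslant t\leqslant 3$},\\
&7-2t\ \text{if $3\leqslant t\leqslant \tfrac{7}{2}$},
\endaligned\right.
$$
with negative part $\tfrac{t}{2}(e_1+e_2)+\tfrac{2t}{3}e_4+\tfrac{t}{3}e_5$ for $t\leqslant 3$ (and $L$ entering for $t>3$). Integrating gives
$$
S_S(e_3)=\frac{1}{4}\Big(\int_0^3\big(4-\tfrac{t^2}{3}\big)dt+\int_3^{7/2}(7-2t)\,dt\Big)=\frac{1}{4}\Big(9+\tfrac14\Big)=\frac{37}{16},
$$
so $\delta_P(S)\leqslant A_S(e_3)/S_S(e_3)=\tfrac{16}{37}\approx 0.43\ll \tfrac{27}{28}$. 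A similar (shorter) computation at the central node of a $\mathbb{D}_4$ chain already gives $S_S(e_4)\geqslant \tfrac{1}{4}\int_0^2(4-\tfrac{t^2}{2})\,dt=\tfrac{5}{3}$, again far above $\tfrac{28}{27}$; and the $\mathbb{A}_4$ case is likewise bad. So the sufficient condition of Lemma~\ref{lemma:2-7-quartic-del-Pezzo-delta} is not satisfied in precisely the hard cases, and no amount of iterated blow ups on $\widetilde S$ can repair this, since the obstruction is a single $\beta$-negative crepant divisor on $S$ itself. (Of course $\delta_P(S)\leqslant \tfrac{27}{28}$ does not imply $\delta_P(X)\leqslant 1$ --- the lemma is one-sided --- but it does mean the reduction you propose gives no information.) This is exactly why the paper passes to the auxiliary smooth surface $T$: the flag $P\in\mathcal C\subset T$ ``sees'' only the local behaviour of $\mathcal C$ at $P$ (a node or a cusp), and the corresponding $S(W^{\widetilde T}_{\bullet,\bullet};\mathbf{e})$ and $S(W^{\widetilde T,\mathbf{e}}_{\bullet,\bullet,\bullet};O)$ come out comfortably below the required thresholds, uniformly in the type of the Du Val singularity.

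A small secondary remark: your displayed Abban--Zhuang inequality is not in the standard form; the second term should compare $1-\mathrm{ord}_P(\Delta_L)$ (the log discrepancy of $P$ on the pair $(L,\Delta_L)$) against $S(W^{S,L}_{\bullet,\bullet,\bullet};P)$, not $A_S(F)-\mathrm{ord}_P(\Delta_L)$ against $S(W^{S,L}_{\bullet,\bullet};F)$. But this is cosmetic next to the failure of the $\delta_P(S)>\tfrac{27}{28}$ bound.
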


\begin{proof}
Now, let us choose $T\in|H|$ such that $T$ is a general surface in $|H|$ that contains~$P$.
Then $T$ and $\mathcal{Q}$ are smooth, and $\varpi$ is a blow up of the~eight intersection points~$\mathscr{C}\cap \mathcal{Q}$.
Moreover, by Lemma~\ref{lemma:2-7-S-singular-at-P-irreducible}, the curve $\mathcal{C}$ is an irreducible singular curve of arithmetic genus~$1$.
Thus, we have $P=\mathrm{Sing}(\mathcal{C})$. Furthermore, using Remark~\ref{remark:2-7-S-T-irreducible}, we see that
\begin{itemize}
\item either $\mathcal{C}$ has an isolated ordinary double singularity at $P$,
\item or the curve $\mathcal{C}$ has an ordinary cusp at the point $P$.
\end{itemize}

Recall that $\mathcal{Q}$ is a smooth quadric surface, so that it contains exactly two lines that pass though $\pi(P)$.
Since $T$ is chosen to be general, these lines are disjoint from   $\mathscr{C}\cap \mathcal{Q}$.
Denote by $L_1$ and $L_2$ the proper transforms of these lines on $T$. Then
$$
P(u)\big\vert_{T}\sim_{\mathbb{R}}\big((1-u)H+S\big)\big\vert_{T}\sim_{\mathbb{R}}(1-u)(L_1+L_2)+\mathcal{C}.
$$

Now, we let $\sigma\colon\widetilde{T}\to T$ be the blow up of the~point $P$,
we let $\mathbf{e}$ be the $\sigma$-exceptional~curve,
and we denote by $\widetilde{\mathcal{C}}$, $\widetilde{L}_1$, $\widetilde{L}_2$ the~proper transforms on $\widetilde{T}$ of the curves  $\mathcal{C}$, $L_1$, $L_2$,
respectively. Take a non-negative real number $v$. Then
\begin{equation}
\label{equation:2-7-C-singular-blow-up}
\sigma^*\big(P(u)\big\vert_{T}\big)-v\mathbf{e}\sim_{\mathbb{R}}\widetilde{\mathcal{C}}+(1-u)(\widetilde{L}_{1}+\widetilde{L}_{2})+(4-2u-v)\mathbf{e}.
\end{equation}
Note that the curves $\widetilde{\mathcal{C}}$, $\widetilde{L}_1$, $\widetilde{L}_2$ are disjoint.
Moreover, we have $\widetilde{L}_{1}^2=\widetilde{L}_{2}^2=-1$ and $\widetilde{\mathcal{C}}^2=-4$.
Thus, using~\eqref{equation:2-7-C-singular-blow-up}, we see that $\sigma^*(P(u)\vert_{T})-v\mathbf{e}$ is pseudoeffective $\iff$ $v\leqslant 4-2u$.

Let $\widetilde{P}(u,v)$ be the positive part of Zariski decomposition of the divisor $\sigma^*(P(u)\vert_{T})-v\mathbf{e}$,
and let $\widetilde{N}(u,v)$ be its negative part. Then it follows from \cite[Remark~1.113]{Book} that
\begin{equation}
\label{equation:2-7-C-singular-node}
\delta_P(X)\geqslant\min\left\{\frac{1}{S_X(T)},
\frac{2}{S(W_{\bullet,\bullet}^{\widetilde{T}};\mathbf{e})},\inf_{O\in \mathbf{e}}\frac{1}{S(W_{\bullet, \bullet,\bullet}^{\widetilde{T},\mathbf{e}};O)}\right\},
\end{equation}
where $S(W_{\bullet,\bullet}^{\widetilde{T}};\mathbf{e})$ and $S(W_{\bullet, \bullet,\bullet}^{\widetilde{T},\mathbf{e}};O)$ are defined  in \cite[\S~1.7]{Book},
and these two numbers can be computed using formulas described in \cite[Remark~1.113]{Book}.
Namely, we have
$$
S\big(W_{\bullet,\bullet}^{\widetilde{T}};\mathbf{e}\big)=\frac{3}{14}\int_0^1\int_0^{4-2u}\big(\widetilde{P}(u,v)\big)^2dvdu
$$
and
$$
S\big(W_{\bullet, \bullet,\bullet}^{\widetilde{T},\mathbf{e}};O\big)=\frac{3}{14}\int_0^1\int_0^{4-2u}\Big(\big(\widetilde{P}(u,v)\cdot\mathbf{e}\big)\Big)^2dvdu+\frac{3}{7}\int_0^1\int_0^{4-2u}\big(\widetilde{P}(u,v)\cdot\mathbf{e}\big)\mathrm{ord}_O\Big(\widetilde{N}(u,v)\big|_{\mathbf{e}}\Big)dvdu,
$$
where $O$ is a~point in $\mathbf{e}$.
Moreover, using~\eqref{equation:2-7-C-singular-blow-up}, we compute $\widetilde{P}(u,v)$ and $\widetilde{N}(u,v)$ as follows:
$$
\widetilde{P}(u,v)=\left\{\aligned
&\widetilde{\mathcal{C}}+(1-u)(\widetilde{L}_{1}+\widetilde{L}_{2})+(4-2u-v)\mathbf{e}\ \text{if $0\leqslant v\leqslant 2-2u$}, \\
&\frac{4-2u-v}{2}\widetilde{\mathcal{C}}+(1-u)(\widetilde{L}_{1}+\widetilde{L}_{2})+(4-2u-v)\mathbf{e}\ \text{if $2-2u\leqslant v\leqslant 3-u$}, \\
&\frac{4-2u-v}{2}\big(\widetilde{\mathcal{C}}+2(\widetilde{L}_{1}+\widetilde{L}_{2})+2\mathbf{e}\big)\ \text{if $3-u\leqslant v\leqslant 4-2u$}, \\
\endaligned
\right.
$$
and
$$
\widetilde{N}(u,v)=\left\{\aligned
&0\ \text{if $0\leqslant v\leqslant 2-2u$}, \\
&\frac{v-2+2u}{2}\widetilde{\mathcal{C}}\ \text{if $2-2u\leqslant v\leqslant 3-u$}, \\
&\frac{v-2+2u}{2}\widetilde{\mathcal{C}}+(v-3+u)(\widetilde{L}_{1}+\widetilde{L}_{2}) \ \text{if $3-u\leqslant v\leqslant 4-2u$}.\\
\endaligned
\right.
$$
Thus, we have
$$
\big(\widetilde{P}(u,v)\big)^2=\left\{\aligned
&2(1-u)(5-u)-v^{2}\ \text{if $0\leqslant v\leqslant 2-2u$}, \\
&2(1-u)(7-3u-2v)\ \text{if $2-2u\leqslant v\leqslant 3-u$}, \\
&2(4-2 u-v)^{2}\ \text{if $3-u\leqslant v\leqslant 4-2u$}, \\
\endaligned
\right.
$$
and
$$
\widetilde{P}(u,v)\cdot\mathbf{e}=\left\{\aligned
&v\ \text{if $0\leqslant v\leqslant 2-2u$}, \\
&2(1-u)\ \text{if $2-2u\leqslant v\leqslant 3-u$}, \\
&2(4-2u-v)\ \text{if $3-u\leqslant v\leqslant 4-2u$}. \\
\endaligned
\right.
$$
Now, integrating, we get $S(W_{\bullet,\bullet}^{\widetilde{T}};\mathbf{e})=\frac{51}{28}<2$.

Let $O$ be an arbitrary point in $\mathbf{e}$.
If $O\not\in\widetilde{L}_1\cup\widetilde{L}_2\cup\widetilde{\mathcal{C}}$, then we compute $S(W_{\bullet, \bullet,\bullet}^{\widetilde{T},\mathbf{e}};O)=\frac{4}{7}$.
Similarly, if $O\in\widetilde{L}_1\cup\widetilde{L}_2$, then $S(W_{\bullet, \bullet,\bullet}^{\widetilde{T},\mathbf{e}};O)=\frac{17}{28}$.
Finally, if $O\in\widetilde{\mathcal{C}}$, then
\begin{multline*}
S\big(W_{\bullet, \bullet,\bullet}^{\widetilde{T},\mathbf{e}};O\big)=\frac{4}{7}+\frac{3}{7}\int_{0}^{1}\int_{2-2u}^{3-u} 2(1-u)\frac{v-2+2 u}{2}\mathrm{ord}_O\big(\widetilde{\mathcal{C}}\big\vert_{\mathbf{e}}\big)dvdu+\\
+\frac{3}{7}\int_{0}^{1}\int_{3-u}^{4-2u} 2(4-2u-v)\frac{v-2+2 u}{2}\mathrm{ord}_O\big(\widetilde{\mathcal{C}}\big\vert_{\mathbf{e}}\big)dvdu=\frac{4}{7}+\frac{17}{56}\mathrm{ord}_O\big(\widetilde{\mathcal{C}}\big\vert_{\mathbf{e}}\big).
\end{multline*}
Hence, if $\widetilde{\mathcal{C}}$ intersects $\mathbf{e}$ transversally, then $S(W_{\bullet, \bullet,\bullet}^{\widetilde{T},\mathbf{e}};O)<1$, so that $\delta_P(X)>1$ by \eqref{equation:2-7-C-singular-node}.

Therefore, to complete the proof of the lemma, we may assume that $\widetilde{\mathcal{C}}$ is tangent to $\mathbf{e}$.
This means that $\mathcal{C}$ has a cusp at $P$, and the intersection $\widetilde{\mathcal{C}}\cap\mathbf{e}$ consists of a single point.

Now, as in the proof of Lemma~\ref{lemma:2-4-C-cuspidal}, we consider the~following commutative diagram:
$$
\xymatrix{
\widetilde{T}\ar@{->}[d]_{\sigma}&&\overline{T}\ar@{->}[ll]_{\gamma}\ar@{->}[d]^{\upsilon}\\%
T && \widehat{T}\ar@{->}[ll]_{\varsigma}}
$$
where $\gamma$ is a~composition of the blow up of the point $\widetilde{\mathcal{C}}\cap\mathbf{e}$
with the blow up of the unique intersection point of the~proper transforms of the curves $\widetilde{\mathcal{C}}$ and $\mathbf{e}$,
$\upsilon$ is the~birational contraction of all~$(\sigma\circ\gamma)$-exceptional curves that are not $(-1)$-curve,
and $\varsigma$ is the~birational contraction of the~proper transform of the~unique $\gamma$-exceptional curve that is $(-1)$-curve.
Then $\widehat{T}$ has two singular points:
\begin{enumerate}
\item a~cyclic quotient singularity of type $\frac{1}{2}(1,1)$, which we denote by $O_2$;
\item a~cyclic quotient singularity of~type~$\frac{1}{3}(1,1)$, which we denote by $O_3$.
\end{enumerate}

Let $\mathbf{f}$ be the~$\varsigma$-exceptional curve,
let $\widehat{\mathcal{C}}$ be the~proper transform on $\widehat{T}$ of the~curve $\mathcal{C}$,
and let $\widehat{L}_1$ and $\widehat{L}_2$ be the~proper transforms on $\widehat{T}$ of the~curves $L_1$ and $L_2$, respectively.
Then~the~curves $\mathbf{f}$, $\widehat{\mathcal{C}}$, $\widehat{L}_1$, $\widehat{L}_2$ are smooth,
and the~curve $\mathbf{f}$ contains both points $O_2$~and~$O_3$, which are not contained in  $\widehat{\mathcal{C}}$.
Moreover, we have
$$
\widehat{L}_1\cap\widehat{L}_2=\widehat{L}_1\cap\mathbf{f}=\widehat{L}_2\cap\mathbf{f}=O_3.
$$
Furthermore, we have $A_T(\mathbf{f})=5$, $\varsigma^*(\mathcal{C})\sim\widehat{\mathcal{C}}+6\mathbf{f}$,
$\varsigma^*(L_1)\sim\widehat{L}_1+2\mathbf{f}$, $\varsigma^*(L_2)\sim\widehat{L}_2+2\mathbf{f}$,
and the~intersection form of the~curves $\mathbf{f}$, $\widehat{\mathcal{C}}$, $\widehat{L}_1$ and $\widehat{L}_2$ is given in the~following table:
\begin{center}
\renewcommand\arraystretch{1.4}
\begin{tabular}{|c||c|c|c|c|}
 \hline
  & $\mathbf{f}$ & $\widehat{\mathcal{C}}$ & $\widehat{L}_1$ & $\widehat{L}_2$ \\
  \hline
  \hline
$\mathbf{f}$             & $-\frac{1}{6}$ & $1$ &  $\frac{1}{3}$ & $\frac{1}{3}$ \\
  \hline
$\widehat{\mathcal{C}}$  & $1$            &  $-6$ & $0$ & $0$\\
  \hline
$\widehat{L}_1$          & $\frac{1}{3}$  &  $0$ & $-\frac{2}{3}$ & $\frac{1}{3}$ \\
  \hline
$\widehat{L}_2$          & $\frac{1}{3}$  &  $0$ & $\frac{1}{3}$ & $-\frac{2}{3}$\\
 \hline
\end{tabular}
\end{center}

For a non-negative real number $v$, we have
$$
\varsigma^*\big(P(u)\big\vert_{T}\big)-v\mathbf{f}\sim_{\mathbb{R}}\widetilde{\mathcal{C}}+(1-u)(\widetilde{L}_{1}+\widetilde{L}_{2})+(10-4u-v)\mathbf{f},
$$
which implies that the divisor $\varsigma^*(P(u)\vert_{T})-v\mathbf{f}$ is pseudoeffective if and only if $v\leqslant 10-4u$,
because the~intersection form of the~curves  $\widehat{\mathcal{C}}$, $\widehat{L}_1$, $\widehat{L}_2$ is negative definite.

Let $\widehat{P}(u,v)$ be the positive part of Zariski decomposition of the divisor $\varsigma^*(P(u)\vert_{T})-v\mathbf{f}$,
and let $\widehat{N}(u,v)$ be its negative part.
Set $\Delta_\mathbf{f}=\frac{1}{2}O_2+\frac{2}{3}O_3$.
By \cite[Remark~1.113]{Book}, we~get
\begin{equation}
\label{equation:2-7-C-singular-cusp}
\delta_P(X)\geqslant\min\left\{\frac{1}{S_X(T)},
\frac{5}{S(W_{\bullet,\bullet}^{\widehat{T}};\mathbf{f})},\inf_{O\in \mathbf{f}}\frac{1-\mathrm{ord}_O\big(\Delta_\mathbf{f}\big)}{S(W_{\bullet, \bullet,\bullet}^{\widehat{T},\mathbf{f}};O)}\right\},
\end{equation}
where $S(W_{\bullet,\bullet}^{\widehat{T}};\mathbf{f})$ and $S(W_{\bullet, \bullet,\bullet}^{\widehat{T},\mathbf{f}};O)$ are defined as $S(W_{\bullet,\bullet}^{\widetilde{T}};\mathbf{e})$ and $S(W_{\bullet, \bullet,\bullet}^{\widetilde{T},\mathbf{e}};O)$ used earlier.
Moreover, it follows from \cite[Remark~1.113]{Book} that
$$
S\big(W_{\bullet,\bullet}^{\widehat{T}};\mathbf{f}\big)=\frac{3}{14}\int_0^1\int_0^{10-4u}\big(\widehat{P}(u,v)\big)^2dvdu
$$
and
$$
S\big(W_{\bullet, \bullet,\bullet}^{\widehat{T},\mathbf{f}};O\big)=\frac{3}{14}\int_0^1\int_0^{10-4u}\Big(\big(\widehat{P}(u,v)\cdot\mathbf{f}\big)\Big)^2dvdu+\frac{3}{7}\int_0^1\int_0^{10-4u}\big(\widehat{P}(u,v)\cdot\mathbf{f}\big)\mathrm{ord}_O\Big(\widehat{N}(u,v)\big|_{\mathbf{f}}\Big)dvdu.
$$
Moreover, we compute $\widehat{P}(u,v)$ and $\widehat{N}(u,v)$ as follows:
$$
\widehat{P}(u,v)=\left\{\aligned
&\widetilde{\mathcal{C}}+(1-u)(\widetilde{L}_{1}+\widetilde{L}_{2})+(10-4u-v)\mathbf{f}\ \text{if $0\leqslant v\leqslant 4-4u$}, \\
&\frac{10-4u-v}{6}\widetilde{\mathcal{C}}+(1-u)(\widetilde{L}_{1}+\widetilde{L}_{2})+(10-4u-v)\mathbf{f}\ \text{if $4-4u\leqslant v\leqslant 9-3u$}, \\
&\frac{10-4u-v}{6}\big(\widehat{\mathcal{C}}+6(\widehat{L}_{1}+\widehat{L}_{2})+6\mathbf{f}\big)\ \text{if $9-3u\leqslant v\leqslant 10-4u$}, \\
\endaligned
\right.
$$
and
$$
\widehat{N}(u,v)=\left\{\aligned
&0\ \text{if $0\leqslant v\leqslant 4-4u$}, \\
&\frac{v-4+4u}{6}\widehat{\mathcal{C}}\ \text{if $4-4u\leqslant v\leqslant 9-3u$}, \\
&\frac{v-4+4u}{6}\widehat{\mathcal{C}}+(v-9+3u)(\widetilde{L}_{1}+\widetilde{L}_{2})\ \text{if $9-3u\leqslant v\leqslant 10-4u$}. \\
\endaligned
\right.
$$
This gives
$$
\big(\widehat{P}(u,v)\big)^2=\left\{\aligned
&2(1-u)(5-u)-\frac{v^{2}}{6}\ \text{if $0\leqslant v\leqslant 4-4u$}, \\
&\frac{2(1-u)(19-7u-2v)}{3}\ \text{if $4-4u\leqslant v\leqslant 9-3u$}, \\
&\frac{2(10-4u-v)^{2}}{3}\ \text{if $9-3u\leqslant v\leqslant 10-4u$}, \\
\endaligned
\right.
$$
and
$$
\widehat{P}(u,v)\cdot\mathbf{f}=\left\{\aligned
&\frac{v}{6}\ \text{if $0\leqslant v\leqslant 4-4u$}, \\
&\frac{2(1-u)}{3}\ \text{if $4-4u\leqslant v\leqslant 9-3u$}, \\
&\frac{2(10-4 u-v)}{3}\ \text{if $9-3u\leqslant v\leqslant 10-4u$}. \\
\endaligned
\right.
$$
Now, integrating, we get $S(W_{\bullet,\bullet}^{\widehat{T}};\mathbf{f})=\frac{135}{28}<A_{T}(\mathbf{f})=5$.

Let $O$ be a point in $\mathbf{f}$.
If $O\not\in\widehat{L}_1\cup\widehat{L}_2\cup\widehat{\mathcal{C}}$, then
\begin{multline*}
\quad\quad\quad S\big(W_{\bullet, \bullet,\bullet}^{\widehat{T},\mathbf{f}};O\big)=\frac{3}{14}\int_{0}^{1}\int_{0}^{4-4u}\left(\frac{v}{6}\right)^{2}dvdu+\frac{3}{14}\int_{0}^{1}\int_{4-4u}^{9-3u}\left(\frac{2(1-u)}{3}\right)^{2}dvdu+\\
+\frac{3}{14}\int_{0}^{1}\int_{9-3u}^{10-4 u}\left(\frac{2(10-4 u-v)}{3}\right)^{2}dvdu=\frac{13}{63}.\quad\quad\quad
\end{multline*}
Similarly, if $O=\mathbf{f}\cap\widetilde{\mathcal{C}}$, then
\begin{multline*}
\quad\quad\quad S\big(W_{\bullet, \bullet,\bullet}^{\widehat{T},\mathbf{f}};O\big)=\frac{13}{63}+
\frac{3}{7}\int_{0}^{1}\int_{4-4u}^{9-3u}\frac{2(1-u)}{3}\times\frac{v-4+4 u}{6}dvdu+\\
+\frac{3}{7}\int_{0}^{1}\int_{9-3u}^{10-4u}\frac{2(10-4 u-v)}{3}\times\frac{v-4+4u}{6}dvdu=\frac{13}{63}+\frac{193}{504}=\frac{33}{56}.
\end{multline*}
Likewise, if $O=O_3$, we compute $S(W_{\bullet, \bullet,\bullet}^{\widehat{T},\mathbf{f}};O)=\frac{3}{14}$.
So, using \eqref{equation:2-7-C-singular-cusp}, we get $\delta_P(X)>1$.
\end{proof}

Thus, to prove that $\delta_P(X)>1$, we may assume that $S$ is singular, but $P\not\in\mathrm{Sing}(S)$.

\begin{lemma}
\label{lemma:2-7-S-smooth-at-P-not-in-E}
Suppose that $P\not\in E$. Then $\delta_P(X)>1$.
\end{lemma}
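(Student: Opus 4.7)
The plan is to reduce the inequality $\delta_P(X)>1$ to a two-dimensional estimate on $S$ and then apply the Abban--Zhuang machinery on $S$ with a flag $P\in L\subset S$, where $L$ is a line on $S$ passing through $P$.

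First, since $P\not\in E$ we have $P\not\in E_S:=E|_S$, so $\mathrm{ord}_F(E_S)=0$ for every prime divisor $F$ over $S$ with $P\in C_S(F)$. Revisiting the proof of Lemma~\ref{lemma:2-7-quartic-del-Pezzo-delta}, the conclusion drawn there under the hypothesis $P\in\mathrm{Sing}(S)$ used only the vanishing $P\not\in E_S$, so the same argument yields the stronger bound $S(W^S_{\bullet,\bullet};F)\leqslant\frac{27A_S(F)}{28\delta_P(S)}$ in our setting. It therefore suffices to show $\delta_P(S)>\frac{27}{28}$. By Corollary~\ref{corollary:2-7-quartic-del-Pezzo-delta} we may assume that $S$ is singular; by Lemma~\ref{lemma:2-7-S-singular-at-P} we may assume that $P$ is a smooth point of $S$; and by Corollary~\ref{corollary:2-7-quartic-del-Pezzo-delta-singular} we may further assume that $P$ lies on a line $L\subset S$.

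Next, I would apply \cite[Theorem~1.7.30]{ACCFKMGSSV} to the flag $P\in L\subset S$ to obtain
$$
\delta_P(S)\geqslant\min\!\left\{\frac{1}{S_S(L)},\,\frac{1}{S(W^{S,L}_{\bullet,\bullet};P)}\right\},
$$
using $A_S(L)=1$ (since $L$ is not a component of the exceptional divisor of the minimal resolution of $S$) and $A_L(P)=1$. To compute the two $S$-invariants I would carry out the Zariski decomposition of $-K_S-vL$ for $v\in\mathbb{R}_{\geqslant 0}$. Since $-K_S\cdot L=1$ (because $L$ is a line in $\mathbb{P}^4$ and $-K_S$ is a hyperplane section of $S$), the self-intersection $L^2$ and the collection of $(-2)$-curves in the minimal resolution of $S$ meeting the proper transform of $L$ dictate how the negative part of the decomposition is built up by successively peeling off negative curves. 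The resulting integrals for $S_S(L)$ and $S(W^{S,L}_{\bullet,\bullet};P)$ then reduce to polynomial integrations in $u$ and $v$, completely analogous to those performed in Lemmas~\ref{lemma:2-7-S-singular-at-P} and~\ref{lemma:2-1-d-2}.

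The main obstacle is the case analysis. A singular quartic del Pezzo surface $S$ with Du~Val singularities (only of types $\mathbb{A}_1,\ldots,\mathbb{A}_4,\mathbb{D}_4,\mathbb{D}_5$ by \cite{Coray1988}) contains only finitely many lines, and their configurations relative to the singular locus are classically understood. For each configuration with a smooth point $P\in L\subset S$, I would perform the Zariski decomposition above and verify the two inequalities $S_S(L)<\frac{28}{27}$ and $S(W^{S,L}_{\bullet,\bullet};P)<\frac{28}{27}$. Each individual computation is mechanical, but the breadth of cases (combinations of singularity types on $S$, plus the possible incidences of $L$ with these singularities) is the source of length and bookkeeping. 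Once all cases yield the required bound, Lemma~\ref{lemma:2-7-quartic-del-Pezzo-delta} and the reduction in the first paragraph deliver $\delta_P(X)>1$.
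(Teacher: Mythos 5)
Your reduction to the two-dimensional statement $\delta_P(S)>\frac{27}{28}$ is correct: you are right that the second branch of Lemma~\ref{lemma:2-7-quartic-del-Pezzo-delta} only uses $P\not\in E_S$, not the literal hypothesis $P\in\mathrm{Sing}(S)$, and the preliminary reductions via Corollary~\ref{corollary:2-7-quartic-del-Pezzo-delta}, Lemma~\ref{lemma:2-7-S-singular-at-P} and Corollary~\ref{corollary:2-7-quartic-del-Pezzo-delta-singular} are sound. However, the remainder of your plan departs from the paper and leaves a genuine gap.

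The gap is the unverified assertion that $S_S(L)<\frac{28}{27}$ and $S(W^{S,L}_{\bullet,\bullet};P)<\frac{28}{27}$ hold for every Du~Val quartic del Pezzo $S$ arising as a fiber of $\phi$ and every line $L\subset S$ through a smooth point $P$. This is not a routine check: the Zariski decomposition of $-K_S-vL$ must be carried out on the minimal resolution of $S$, where the negative part peels off chains of $(-2)$-curves in a manner that depends both on the singularity type of $S$ (any of $\mathbb{A}_1,\ldots,\mathbb{A}_4,\mathbb{D}_4,\mathbb{D}_5$) and on the incidence of $L$ with $\mathrm{Sing}(S)$. The pseudoeffective threshold and the exceptional contribution to $S(W^{S,L}_{\bullet,\bullet};P)$ all change case by case, and for the more degenerate types ($\mathbb{A}_4$, $\mathbb{D}_5$) the surface $S$ can fail to be K-semistable, so the bound $\delta_P(S)>\frac{27}{28}$ is certainly not automatic. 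It is telling that the paper does not even attempt to verify $\delta_P(S)>\frac{27}{28}$ in the singular-point case handled by Lemma~\ref{lemma:2-7-S-singular-at-P}, where Lemma~\ref{lemma:2-7-quartic-del-Pezzo-delta} would nominally offer the same shortcut; this strongly suggests that either the bound fails or its verification is no simpler than the paper's own route.

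The paper's actual proof uses a different flag: it applies \cite[Theorem~1.7.30]{ACCFKMGSSV} on $X$ to $P\in L\subset T$, where $T$ is a general member of $|H|$ containing $L$ rather than the del Pezzo fiber $S$. The decisive advantage is that $T$ is always a blow-up of the smooth quadric surface $\mathcal{Q}\cong\mathbb{P}^1\times\mathbb{P}^1$ at eight points (at worst picking up one node when $\pi(L)$ is tangent to $\mathscr{C}$), and this structure is completely independent of how singular the fiber $S$ is. Consequently the configuration of negative curves ($L$, $Z$, $\mathbf{e}_i$, $\mathbf{g}_i$) and the Zariski decomposition of $P(u)|_T-vL$ are computed once, yielding the uniform values $S(W^T_{\bullet,\bullet};L)=\frac{423}{448}$ and $S(W^{T,L}_{\bullet,\bullet,\bullet};P)=\frac{79}{84}$, both less than $1$. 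That single computation replaces your case-by-case analysis on the singular $S$. If you want to pursue your route you must actually carry out the integrals for each singularity type and each line configuration and confirm the required inequalities; otherwise you should switch to the paper's choice of surface $T$.
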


\begin{proof}
Recall that $E$ is the $\pi$-exceptional surface.
Using~Corollary~\ref{corollary:2-7-quartic-del-Pezzo-delta-singular}, we may assume that $S$ contains a line $L$ that passes through $P$.
Then $\pi(L)$ is a line in $\mathscr{Q}$, $\pi(L)\cap\mathscr{C}\ne\varnothing$, and one of the following cases holds:
\begin{itemize}
\item[Case 1:] the line $\pi(L)$ intersects the curve $\mathscr{C}$ transversally at $2$ points,
\item[Case 2:] the line $\pi(L)$ is tangent to the curve $\mathscr{C}$ at their single intersection point.
\end{itemize}

Now, let us choose $T$ to be a sufficiently general surface in $|H|$ that passes through $L$.
If~the~intersection $\pi(L)\cap\mathscr{C}$ consists of two points, then $\varpi\colon T\to \mathcal{Q}$ is a blow up of eight distinct points of the~transversal intersection $\mathcal{Q}\cap\mathscr{C}$,
which implies that  $T$ is smooth.
On~the other hand, if $L\cap\mathscr{C}$ consists of one point, then $T$ has one ordinary double point, which is not contained in the curve $L$.
We~have $\mathcal{C}=S\vert_{T}=L+Z$,
where $Z$ is a smooth rational irreducible curve such that $\pi(Z)$ is a smooth twisted cubic in $\mathscr{Q}$. The twisted cubic curve $\pi(Z)$ in $\mathscr{Q}$
intersects the curve $\mathscr{C}$
transversally by six distinct points, which we denote by $Q_3,Q_4,Q_5,Q_6,Q_7,Q_8$.
Moreover, if  $\pi(L)\cap\mathscr{C}$ consists of two distinct points, we denote these points by $Q_1$ and~$Q_2$.
Likewise, if $\pi(L)\cap\mathscr{C}$ consists of one point, we let $Q_1=Q_2=\pi(Z)\cap \mathscr{C}$. Then
\begin{itemize}
\item[Case 1:] the morphism $\varpi\colon T\to\mathcal{Q}$ is the blow up of the points $Q_1,Q_2,\ldots,Q_8$,
\item[Case 2:]  the morphism $\varpi\colon T\to\mathcal{Q}$  is a composition of the blow up of the points $Q_3,\ldots,Q_8$
with a weighted blow up with weights $(1,2)$ of the point $Q_1=Q_2$.
\end{itemize}

Since $T$ is a general surface in $|H|$ that contains the line $L$, we may assume that $P\not\in Z$.
Likewise, we may assume further that $Z$ is contained in the smooth locus of the surface~$T$.
Moreover, we may also assume that the~quadric surface $\mathcal{Q}$ does not contain lines that pass through one point in the~set $\{Q_1,Q_2,\pi(P)\}$
and one point in $\{Q_3,Q_4,Q_5,Q_6,Q_7,Q_8\}$.
Indeed, let $\mathcal{Q}^\prime$, $\mathcal{Q}^{\prime\prime}$, $\mathcal{Q}^{\prime\prime\prime}$ be the~hyperplane sections of the quadric $\mathscr{Q}$
that are singular at the points  $Q_1$, $Q_2$, $\pi(P)$, respectively.
Then $\mathcal{Q}^\prime$, $\mathcal{Q}^{\prime\prime}$, $\mathcal{Q}^{\prime\prime\prime}$ are cones,
$\pi(L)\subset\mathcal{Q}^\prime\cap\mathcal{Q}^{\prime\prime}\cap\mathcal{Q}^{\prime\prime\prime}$,
and every line in $\mathscr{Q}$ containing a point in $\{Q_1,Q_2,\pi(P)\}$ is a ruling of one of these cones.
On the other hand, we have $\mathscr{C}\not\subset\mathcal{Q}^\prime\cup\mathcal{Q}^{\prime\prime}\cup\mathcal{Q}^{\prime\prime\prime}$,
because $\mathscr{C}$ is not contained in a hyperplane.
This implies that the quadric threefold $\mathscr{Q}$ contains at most finitely many lines that pass through a~point in $\{Q_1,Q_2,\pi(P)\}$ and a~point in $\mathscr{C}\setminus\{Q_1,Q_2\}$.
Therefore, we can choose the~surface $T\in|H|$ such that $L\subset T$, but $\mathcal{Q}=\pi(T)$ does not contain any of these lines.

Let us identify $\mathcal{Q}=\mathbb{P}^1\times\mathbb{P}^1$ such that the~line $\pi(L)$ is a curve in $\mathcal{Q}$ of degree $(0,1)$.
Denote by $\mathbf{e}_1,\ldots,\mathbf{e}_8$ the $\varpi$-exceptional curves such~that $\varpi(\mathbf{e}_1)=Q_1,\ldots,\varpi(\mathbf{e}_8)=Q_8$.
Let $\mathbf{g}_3,\ldots,\mathbf{g}_8$ be the~strict transforms on $T$ of the curves in $\mathcal{Q}$ of degree $(1,0)$
that~pass through the points $Q_3,\ldots,Q_8$, respectively.
Then $L,Z,\mathbf{e}_1,\ldots,\mathbf{e}_8,\mathbf{g}_3,\ldots,\mathbf{g}_8$ are smooth irreducible rational curves.
In Case 1, their intersections are~given in the following table:
\begin{center}
\renewcommand\arraystretch{1.3}
\begin{tabular}{|c||c|c|c|c|c|c|c|c|c|c|c|c|c|c|c|c|c|}
 \hline
  & $L$  &  $Z$  &  $\mathbf{e}_1$  & $\mathbf{e}_2$   & $\mathbf{e}_3$   & $\mathbf{e}_4$   & $\mathbf{e}_5$  & $\mathbf{e}_6$  &  $\mathbf{e}_7$  &  $\mathbf{e}_8$  &
  $\mathbf{g}_3$  & $\mathbf{g}_4$  & $\mathbf{g}_5$  & $\mathbf{g}_6$  & $\mathbf{g}_7$  & $\mathbf{g}_8$ \\
  \hline
$L$              & $-2$ & $2$ & $1$ & $1$ & $0$ & $0$ & $0$ & $0$ & $0$ & $0$ & $1$ & $1$ & $1$ & $1$ & $1$ & $1$ \\
  \hline
$Z$              & $2$ & $-2$ & $0$ & $0$ & $1$& $1$& $1$& $1$& $1$& $1$ & $0$& $0$& $0$& $0$& $0$& $0$\\
  \hline
 $\mathbf{e}_1$  & $1$ & $0$& $-1$ & $0$& $0$& $0$& $0$& $0$& $0$& $0$& $0$& $0$& $0$& $0$& $0$& $0$\\
  \hline
 $\mathbf{e}_2$  & $1$ & $0$& $0$ & $-1$& $0$& $0$& $0$& $0$& $0$& $0$& $0$& $0$& $0$& $0$& $0$& $0$\\
  \hline
$\mathbf{e}_3$   & $0$ & $1$& $0$ & $0$& $-1$ & $0$& $0$& $0$& $0$& $0$& $1$& $0$& $0$& $0$& $0$& $0$\\
  \hline
$\mathbf{e}_4$   & $0$& $1$&  $0$& $0$ & $0$& $-1$& $0$& $0$& $0$& $0$& $0$& $1$& $0$& $0$& $0$& $0$\\
  \hline
$\mathbf{e}_5$   & $0$& $1$& $0$& $0$ & $0$& $0$& $-1$& $0$& $0$& $0$& $0$& $0$& $1$& $0$& $0$& $0$\\
  \hline
$\mathbf{e}_6$   & $0$& $1$&$0$ & $0$& $0$& $0$& $0$& $-1$& $0$& $0$& $0$& $0$& $0$& $1$& $0$& $0$\\
  \hline
$\mathbf{e}_7$   & $0$& $1$& $0$& $0$& $0$& $0$& $0$& $0$& $-1$& $0$& $0$& $0$& $0$& $0$& $1$& $0$\\
  \hline
$\mathbf{e}_8$   & $0$& $1$& $0$&$0$ & $0$& $0$& $0$& $0$& $0$& $-1$& $0$& $0$& $0$& $0$& $0$& $1$\\
  \hline
$\mathbf{g}_3$  & $1$& $0$& $0$&$0$ & $1$&$0$ & $0$& $0$& $0$& $0$& $-1$& $0$&$0$ & $0$& $0$ &$0$ \\
  \hline
$\mathbf{g}_4$  & $1$& $0$&$0$ &$0$ & $0$& $1$& $0$& $0$&$0$ & $0$& $0$& $-1$& $0$& $0$& $0$& $0$\\
  \hline
$\mathbf{g}_5$  & $1$& $0$& $0$& $0$& $0$& $0$& $1$&$0$ & $0$& $0$& $0$& $0$& $-1$& $0$& $0$&$0$ \\
  \hline
$\mathbf{g}_6$  & $1$& $0$& $0$&$0$ & $0$& $0$& $0$& $1$& $0$&$0$ & $0$& $0$& $0$& $-1$& $0$& $0$\\
  \hline
$\mathbf{g}_7$  & $1$& $0$&$0$ & $0$& $0$& $0$& $0$& $0$& $1$& $0$& $0$& $0$& $0$& $0$& $-1$& $0$\\
  \hline
$\mathbf{g}_8$  & $1$& $0$& $0$& $0$& $0$& $0$& $0$&$0$ & $0$&$1$ & $0$& $0$& $0$& $0$& $0$& $-1$\\
 \hline
\end{tabular}
\end{center}
In Case 2, we have $\mathbf{e}_1=\mathbf{e}_2$, and  $\mathbf{e}_1$ contains the singular point of $T$, so that $\mathbf{e}_1^2=-\frac{1}{2}$.
The remaining intersection numbers are exactly the same as in Case 1.

Observe that $P\not\in Z\cup\mathbf{g}_3\cup\mathbf{g}_4\cup\mathbf{g}_5\cup\mathbf{g}_6\cup\mathbf{g}_7\cup\mathbf{g}_8\cup\mathbf{e}_1\cup\mathbf{e}_2$, since $P\not\in E$ by assumption.

Recall that $P(u)=-K_X-uT$  is nef $\iff$ $P(u)$ is pseudoeffective $\iff$ $u\in[0,1]$.
Let $v$ be a non-negative real number. Then, in both Cases 1 and 2, we have
\begin{equation}
\label{equation:2-7-S-smooth-at-P-not-in-E-equivalence}
P(u)\big\vert_{T}-vL\sim_{\mathbb{R}}\frac{9-5u-4v}{4}L+\frac{3+u}{4}Z+\frac{5-5u}{4}\big(\mathbf{e}_1+\mathbf{e}_2\big)+\frac{1-u}{4}\sum_{i=3}^8\mathbf{g}_i,
\end{equation}
which implies that the divisor $P(u)\vert_{T}-vL$ is pseudoeffective $\iff$ $v\leqslant \frac{9-5u}{4}$.

Let $P(u,v)$ be the positive part of Zariski decomposition of the divisor $P(u)\vert_{T}-vL$,
and let $N(u,v)$ be its negative part. Then it follows from \cite[Theorem 1.112]{Book} that
\begin{equation}
\label{equation:S-smooth-at-P-not-in-E-inequality}
\delta_P(X)\geqslant\min\left\{\frac{1}{S_X(T)},\frac{1}{S(W_{\bullet,\bullet}^{T};L)},\frac{1}{S(W_{\bullet, \bullet,\bullet}^{T,L};P)}\right\},
\end{equation}
where
$$
S\big(W_{\bullet,\bullet}^{T};L\big)=\frac{3}{14}\int_0^1\int_0^{\frac{9-5u}{4}}\big(P(u,v)\big)^2dvdu
$$
and
$$
S\big(W_{\bullet, \bullet,\bullet}^{T,L};P\big)=\frac{3}{14}\int_0^1\int_0^{\frac{9-5u}{4}}\Big(\big(P(u,v)\cdot L\big)\Big)^2dvdu+\frac{3}{7}\int_0^1\int_0^{\frac{9-5u}{4}}\big(P(u,v)\cdot L\big)\mathrm{ord}_P\Big(N(u,v)\big|_{L}\Big)dvdu.
$$

Let us compute $S(W_{\bullet,\bullet}^{T};L)$ and $S(W_{\bullet, \bullet,\bullet}^{T,L};P)$.
If $0\leqslant u\leqslant \frac{1}{3}$, then, using~\eqref{equation:2-7-S-smooth-at-P-not-in-E-equivalence}, we get
$$
P(u,v)=\left\{\aligned
&\frac{9-5u-4v}{4}L+\frac{3+u}{4}Z+\frac{5-5u}{4}\big(\mathbf{e}_1+\mathbf{e}_2\big)+\frac{1-u}{4}\sum_{i=3}^8\mathbf{g}_i\ \text{if $0\leqslant v\leqslant 1$}, \\
&\frac{9-5u-4v}{4}\big(L+\mathbf{e}_1+\mathbf{e}_2\big)+\frac{3+u}{4}Z+\frac{1-u}{4}\sum_{i=3}^8\mathbf{g}_i\ \text{if $1\leqslant v\leqslant \frac{3-3u}{2}$}, \\
&\frac{9-5u-4v}{4}\big(L+Z+\mathbf{e}_1+\mathbf{e}_2\big)+\frac{1-u}{4}\sum_{i=3}^8\mathbf{g}_i\ \text{if $\frac{3-3u}{2}\leqslant v\leqslant 2-u$}, \\
&\frac{9-5 u-4 v}{4}\Big(L+Z+\mathbf{e}_1+\mathbf{e}_2+\sum_{i=3}^8\mathbf{g}_i\Big)\ \text{if $2-u\leqslant v\leqslant \frac{9-5u}{4}$}, \\
\endaligned
\right.
$$
$$
N(u,v)=\left\{\aligned
&0\ \text{if $0\leqslant v\leqslant 1$}, \\
&(v-1)\big(\mathbf{e}_1+\mathbf{e}_2\big) \ \text{if $1\leqslant v\leqslant \frac{3-3u}{2}$}, \\
&(v-1)\big(\mathbf{e}_1+\mathbf{e}_2\big)+\frac{2v+3u-3}{2}Z\ \text{if $\frac{3-3u}{2}\leqslant v\leqslant 2-u$}, \\
&(v-1)\big(\mathbf{e}_1+\mathbf{e}_2\big)+\frac{2v+3u-3}{2}Z+(v-2+u)\sum_{i=3}^{8}\mathbf{g}_{i}\ \text{if $2-u\leqslant v\leqslant \frac{9-5u}{4}$}, \\
\endaligned
\right.
$$
$$
\big(P(u,v)\big)^2=\left\{\aligned
&2u^2+(2v-12)u-2v^2-2v+10 \ \text{if $0\leqslant v\leqslant 1$}, \\
&2u^2+(2v-12)u-6v+12\ \text{if $1\leqslant v\leqslant \frac{3-3u}{2}$}, \\
&\frac{13u^2+16uv+4v^2-42u-24v+33}{2}\ \text{if $\frac{3-3u}{2}\leqslant v\leqslant 2-u$}, \\
&\frac{(9-5u-4v)^{2}}{2}\ \text{if $2-u\leqslant v\leqslant \frac{9-5u}{4}$}, \\
\endaligned
\right.
$$
and
$$
P(u,v)\cdot L=\left\{\aligned
&1-u+2v\ \text{if $0\leqslant v\leqslant 1$}, \\
&3-u\ \text{if $1\leqslant v\leqslant \frac{3-3u}{2}$}, \\
&6-4u-2v\ \text{if $\frac{3-3u}{2}\leqslant v\leqslant 2-u$}, \\
&2(9-5u-4v)\ \text{if $2-u\leqslant v\leqslant \frac{9-5u}{4}$}, \\
\endaligned
\right.
$$
Similarly, if $\frac{1}{3}\leqslant u\leqslant 1$, then, using~\eqref{equation:2-7-S-smooth-at-P-not-in-E-equivalence}, we get
$$
P(u,v)=\left\{\aligned
&\frac{9-5u-4v}{4}L+\frac{3+u}{4}Z+\frac{5-5u}{4}\big(\mathbf{e}_1+\mathbf{e}_2\big)+\frac{1-u}{4}\sum_{i=3}^8\mathbf{g}_i\ \text{if $0\leqslant v\leqslant \frac{3-3u}{2}$}, \\
&\frac{9-5u-4v}{4}(L+Z)+\frac{5-5u}{4}\big(\mathbf{e}_1+\mathbf{e}_2\big)+\frac{1-u}{4}\sum_{i=3}^8\mathbf{g}_i\ \text{if $\frac{3-3u}{2}\leqslant v\leqslant 1$}, \\
&\frac{9-5u-4v}{4}\big(L+Z+\mathbf{e}_1+\mathbf{e}_2\big)+\frac{1-u}{4}\sum_{i=3}^8\mathbf{g}_i\ \text{if $1\leqslant v\leqslant 2-u$}, \\
&\frac{9-5 u-4 v}{4}\Big(L+Z+\mathbf{e}_1+\mathbf{e}_2+\sum_{i=3}^8\mathbf{g}_i\Big)\ \text{if $2-u\leqslant v\leqslant \frac{9-5u}{4}$}, \\
\endaligned
\right.
$$
$$
N(u,v)=\left\{\aligned
&0\ \text{if $0\leqslant v\leqslant \frac{3-3u}{2}$}, \\
&\frac{2v+3u-3}{2}Z\ \text{if $\frac{3-3u}{2}\leqslant v\leqslant 1$}, \\
&(v-1)\big(\mathbf{e}_1+\mathbf{e}_2\big)+\frac{2v+3u-3}{2}Z\ \text{if $1\leqslant v\leqslant 2-u$}, \\
&(v-1)\big(\mathbf{e}_1+\mathbf{e}_2\big)+\frac{2v+3u-3}{2}Z+(v-2+u)\sum_{i=3}^{8}\mathbf{g}_{i}\ \text{if $2-u\leqslant v\leqslant \frac{9-5u}{4}$}, \\
\endaligned
\right.
$$
$$
\big(P(u,v)\big)^2=\left\{\aligned
&2u^2+(2v-12)u-2v^2-2v+10\ \text{if $0\leqslant v\leqslant \frac{3-3u}{2}$}, \\
&\frac{(1-u)(29-13u-16v)}{2}\ \text{if $\frac{3-3u}{2}\leqslant v\leqslant 1$}, \\
&\frac{13u^2+16uv+4v^2-42u-24v+33}{2}\ \text{if $1\leqslant v\leqslant 2-u$}, \\
&\frac{(9-5u-4v)^{2}}{2}\ \text{if $2-u\leqslant v\leqslant \frac{9-5u}{4}$}, \\
\endaligned
\right.
$$
and
$$
P(u,v)\cdot L=\left\{\aligned
&1-u+2v\ \text{if $0\leqslant v\leqslant \frac{3-3u}{2}$}, \\
&4-4u\ \text{if $\frac{3-3u}{2}\leqslant v\leqslant 1$}, \\
&6-4u-2v\ \text{if $1\leqslant v\leqslant 2-u$}, \\
&2(9-5u-4v)\ \text{if $2-u\leqslant v\leqslant \frac{9-5u}{4}$}, \\
\endaligned
\right.
$$
Therefore, we have $P\not\in\mathrm{Supp}(N(u,v))$, because $P\not\in Z\cup\mathbf{g}_3\cup\mathbf{g}_4\cup\mathbf{g}_5\cup\mathbf{g}_6\cup\mathbf{g}_7\cup\mathbf{g}_8\cup\mathbf{e}_1\cup\mathbf{e}_2$.
So, integrating $(P(u,v))^2$ and $(P(u,v)\cdot L)^2$, we get $S(W_{\bullet,\bullet}^{T};L)=\frac{423}{448}$
and $S(W_{\bullet, \bullet,\bullet}^{T,L};P)=\frac{79}{84}$, which implies that $\delta_P(X)>1$ by \eqref{equation:S-smooth-at-P-not-in-E-inequality}.
\end{proof}

By Lemma~\ref{lemma:2-7-S-smooth-at-P-not-in-E}, to show that $\delta_P(X)>1$, we may assume that $P\in E$. Then $\pi(P)\in\mathscr{C}$.
Now, let us choose $T$ to be a sufficiently general surface in $|H|$ that contains the point~$P$,
so~that $\mathcal{Q}$ is a sufficiently general hyperplane section of the quadric $\mathscr{Q}$ that contains $\pi(P)$.
Then $T$ is smooth, and $\varpi\colon T\to \mathcal{Q}$ is a blow up of eight  points of the~intersection $\mathcal{Q}\cap\mathscr{C}$.

Let $Q_1=\pi(P)$, let $Q_2,\ldots,Q_8$ be the remaining seven points of the~intersection~$\mathcal{Q}\cap\mathscr{C}$,
and let $\mathbf{e}_1,\ldots,\mathbf{e}_8$ be the~$\varpi$-exceptional curves such that $\varpi(\mathbf{e}_1)=Q_1,\ldots,\varpi(\mathbf{e}_8)=Q_8$.
For every $u\in [0,1]$, set
$$
t(u)=\inf\Big\{v\in \mathbb R_{\geqslant 0} \ \big|\ \text{the divisor $P(u)\big|_T-v\mathbf{e}_1$ is pseudo-effective}\Big\},
$$
and fix a~real number $v\in [0, t(u)]$.
Let $P(u,v)$ and $N(u,v)$ be the~positive and the~negative parts~of the~Zariski decomposition~of~the~$\mathbb{R}$-divisor \mbox{$P(u)|_T-v\mathbf{e}_1$},
respectively.
Then
\begin{equation}
\label{equation:2-7-P-in-E-inequality}
\delta_P(X)\geqslant\min\left\{\frac{1}{S_X(T)},\frac{1}{S(W_{\bullet,\bullet}^{T};\mathbf{e}_1)},\frac{1}{S(W_{\bullet, \bullet,\bullet}^{T,\mathbf{e}_1};P)}\right\}
\end{equation}
by \cite[Theorem 1.112]{Book},  where
$$
S\big(W_{\bullet,\bullet}^{T};\mathbf{e}_1\big)=\frac{3}{14}\int_0^1\int_0^{t(u)}\big(P(u,v)\big)^2dvdu
$$
and
\begin{multline*}
\quad \quad \quad \quad S\big(W_{\bullet, \bullet,\bullet}^{T,\mathbf{e}_1};P\big)=\frac{3}{14}\int_0^1\int_0^{t(u)}\Big(\big(P(u,v)\cdot\mathbf{e}_1\big)\Big)^2dvdu+\\
+\frac{3}{7}\int_0^1\int_0^{t(u)}\big(P(u,v)\cdot \mathbf{e}_1\big)\mathrm{ord}_P\Big(N(u,v)\big|_{\mathbf{e}_1}\Big)dvdu.\quad \quad \quad
\end{multline*}
Let us compute $S(W_{\bullet,\bullet}^{T};\mathbf{e}_1)$ and $S(W_{\bullet, \bullet,\bullet}^{T,\mathbf{e}_1};P)$.

Recall that $\varphi\colon T\to\mathbb{P}^1$ is an elliptic fibration, which is given by the linear system~$|-K_T|$.
As in the proof of Lemma~\ref{lemma:2-7-S-smooth-at-P-not-in-E}, let us identify $\mathcal{Q}=\mathbb{P}^1\times\mathbb{P}^1$.

\begin{lemma}
\label{lemma:2-7-elliptic-fibration}
Let $F$ be a curve in $|-K_T|$. Then $F$ is irreducible and reduced.
\end{lemma}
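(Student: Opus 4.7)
The plan is to identify $|-K_T|$ with the pencil defining the elliptic fibration $\varphi=\phi|_T\colon T\to\mathbb{P}^1$, so that a curve $F\in|-K_T|$ is exactly $S\cap T$ for a unique fiber $S$ of $\phi$, and then rule out reducible or non-reduced members of this pencil via a direct enumeration. First I would record that $T$ is a smooth rational surface with $K_T^2=0$ obtained as the blow up $\varpi\colon T\to\mathcal{Q}=\mathbb{P}^1\times\mathbb{P}^1$ of the eight intersection points $Q_1=\pi(P),Q_2,\ldots,Q_8$ of $\mathcal{Q}$ with $\mathscr{C}$, so that
$$
-K_T\sim 2H_1+2H_2-\sum_{i=1}^{8}\mathbf{e}_i,
$$
where $H_1,H_2$ are the pullbacks of the rulings of $\mathcal{Q}$. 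Each exceptional curve $\mathbf{e}_i$ satisfies $\mathbf{e}_i\cdot(-K_T)=1$, so $\mathbf{e}_i$ is a section of $\varphi$; hence $\varphi$ is Jacobian, rational elliptic surface theory forbids multiple fibers, and it suffices to rule out reducible $F$.

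Suppose for contradiction that some $F\in|-K_T|$ is reducible. Then every irreducible component $C$ of $F$ satisfies $F\cdot C=0$, so $K_T\cdot C=0$, and by adjunction $C$ is a smooth rational $(-2)$-curve. Writing $C=aH_1+bH_2-\sum m_i\mathbf{e}_i$ with nonnegative integers $a,b,m_i$, the conditions $C^2=-2$ and $K_T\cdot C=0$ become
$$
2ab-\sum_{i=1}^{8}m_i^2=-2,\qquad \sum_{i=1}^{8}m_i=2(a+b).
$$
Enumerating the nonnegative integer solutions (up to swapping $H_1$ and $H_2$) gives a short finite list; the typical entries are $(a,b)=(1,0)$ with two $m_i=1$; $(1,1)$ with four $m_i=1$; $(2,1)$ with six $m_i=1$; $(2,2)$ with one $m_j=2$ and six other $m_i=1$; $(3,1)$ with all eight $m_i=1$; plus a handful of further $(3,2)$ and $(3,3)$ classes of the same flavour. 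Each such class records an explicit incidence between the eight points $Q_i$ and a curve on the smooth quadric $\mathcal{Q}$: a ruling through two of the $Q_i$, a $(1,1)$-curve through four of them, a smooth $(2,1)$-twisted cubic through six, a nodal $(2,2)$-curve through seven and nodal at an eighth, a $(3,1)$-curve through all eight, and so on.

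The third step would be to show that each such incidence cuts out a proper Zariski-closed subvariety of the $3$-dimensional linear system of hyperplanes $\mathcal{Q}\subset\mathbb{P}^4$ passing through $\pi(P)$, and hence is avoided for sufficiently general $T$. For this I would use that $\mathscr{C}\subset\mathscr{Q}\subset\mathbb{P}^4$ is a smooth non-degenerate curve of degree $8$ and genus $5$: its secant and higher-secant varieties in $\mathbb{P}^4$ are proper, and the Fano scheme of lines in $\mathscr{Q}$ meeting $\mathscr{C}$ in two points is a curve. Consequently, the set of hyperplanes through $\pi(P)$ that contain a chord of $\mathscr{C}$ lying in $\mathscr{Q}$ is a proper subvariety of the hyperplanes through $\pi(P)$, and analogous dimension counts (using the classification of rational curves of low bidegree on $\mathcal{Q}$) handle the higher entries of the list. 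Taking $T$ generic kills every case, so $T$ carries no $(-2)$-curve, and every $F\in|-K_T|$ is irreducible and reduced.

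The main obstacle will be the $(a,b)=(2,2)$ and $(3,1)$ entries, where the required curve on $\mathcal{Q}$ has degree $4$ and must meet $\mathscr{C}$ in essentially all eight points $Q_i$ (with an extra tangency in the $(2,2)$ case). There a naive secant-variety count is not sharp, and one needs to argue that the irreducible quartic rational curves on $\mathscr{Q}$ meeting $\mathscr{C}$ with this many conditions form a family whose hyperplane spans do not sweep out the whole locus of hyperplanes through $\pi(P)$; this reduces to verifying a nontrivial non-degeneracy of $\mathscr{C}$ inside $\mathscr{Q}$, which is where the bulk of the routine but delicate computation will lie.
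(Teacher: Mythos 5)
Your overall strategy — interpret $|-K_T|$ as the fibers of $\varphi$, invoke that the eight exceptional curves $\mathbf{e}_i$ are sections to rule out non-reduced fibers, then classify the possible $(-2)$-curves on $T$ and eliminate each class by a genericity argument — is the same as the paper's, but your execution goes off track in two substantive ways.

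First, your enumeration of $(-2)$-classes is overinclusive, and this distorts where you see the difficulty. The paper starts from the observation that $\pi(F)=\varpi_*(F)$ is a $(2,2)$-curve on $\mathcal{Q}$ passing through $Q_1,\dots,Q_8$ simply (no $\mathbf{e}_j$ can be a fiber component, since $-K_T\cdot\mathbf{e}_j=1$), and then need only consider the smallest-degree irreducible component of $\pi(F)$, which is forced to be a ruling through exactly two of the $Q_i$ or a $(1,1)$-conic through exactly four of them. By contrast, your list includes classes such as $\varpi^*(2,2)-2\mathbf{e}_j-\sum_{i\in S}\mathbf{e}_i$ ($|S|=6$) and $\varpi^*(3,1)-\sum_i\mathbf{e}_i$, and you single these out as the ``main obstacle.'' But these classes cannot be represented by $(-2)$-curves on $T$ at all, for purely lattice-theoretic reasons: any irreducible curve $C$ with $-K_T\cdot C=0$ is a fiber component, so $-K_T-C$ must be effective, yet $-K_T-C$ equals $\mathbf{e}_j-\mathbf{e}_k$ in the first case (not effective) and $\varpi^*(-1,1)$ in the second (negative against the nef class $\varpi^*(1,0)$). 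So the cases you flag as requiring a ``delicate'' non-degeneracy argument are vacuous and need no dimension count whatsoever, whereas the $(1,1)$-conic case — which you file under ``routine but delicate computation'' — is exactly where the paper has to do genuine geometry.

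Second, you do not carry out that genuine geometric step. For the $(1,1)$-conic case, the paper projects $\mathscr{Q}\dasharrow\mathbb{P}^3$ from $Q_1=\pi(P)$; the conic becomes a line, $\mathscr{C}$ becomes a degree-$7$ curve (the projection is an isomorphism on $\mathscr{C}$ because $\mathscr{C}$ is cut out by quadrics), and the conic condition translates to $\rho(\mathcal{Q})$ containing a three-secant line of $\rho(\mathscr{C})$. They then cite that the three-secant lines of a space curve of this type form a one-dimensional family, so a general plane $\rho(\mathcal{Q})$ avoids them. Your proposal waves at ``a nontrivial non-degeneracy of $\mathscr{C}$ inside $\mathscr{Q}$'' for the wrong cases and does not supply the projection argument or any substitute for it; as written, the crucial case is left unresolved.
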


\begin{proof}
Suppose that $F$ is reducible or non-reduced. Then the~curve $\pi(F)$ is also reducible or non-reduced,
and every irreducible component of the curve $F$ is a smooth $(-2)$-curve.
But $\pi(F)$ is a curve in $\mathcal{Q}$ of degree $(2,2)$ that passes through the~points $Q_1,Q_2,\ldots,Q_8$.
Therefore, we have one of the following possibilities:
\begin{enumerate}
\item $\mathcal{Q}$ contains a line that passes through $Q_1$ and one point among  $Q_2,\ldots,Q_8$,
\item $\mathcal{Q}$ contains a line that passes through two points among  $Q_2,\ldots,Q_8$,
\item $\mathcal{Q}$ contains a conic that passes through $Q_1$ and three points among  $Q_2,\ldots,Q_8$.
\end{enumerate}

Recall that $\mathcal{Q}$ is a general hyperplane section of the quadric $\mathscr{Q}$ that contains $Q_1=\pi(P)$.
As we already mentioned in the proof of Lemma~\ref{lemma:2-7-S-smooth-at-P-not-in-E},
the quadric $\mathscr{Q}$ contains finitely many lines that pass through $Q_1$ and a~point in $\mathscr{C}\setminus Q_1$.
Thus, since $\mathcal{Q}$ is assumed to be general, the~quadric $\mathcal{Q}$ does not contain any of these lines,
so that $\mathcal{Q}$ does not contain a line that passes through $Q_1$ and a point among  $Q_2,\ldots,Q_8$.

Similarly, a parameter count implies that $\mathcal{Q}$ does not contain secant lines of the curve~$\mathscr{C}$,
so that $\mathcal{Q}$ does not contain a line that passes through two points among  $Q_2,\ldots,Q_8$,

Finally, we suppose that $\mathcal{Q}$ contains an irreducible conic $C$ that passes through $Q_1$ and three points among  $Q_2,\ldots,Q_8$.
Let $\rho\colon\mathscr{Q}\dasharrow\mathbb{P}^3$ be a linear projection from the point $Q_1$.
Then $\rho(\mathscr{C})$ is a curve of degree $7$, and the induced map $\mathscr{C}\dasharrow\rho(\mathscr{C})$ is an isomorphism,
because $\mathscr{C}$ is an intersection of quadrics.
Similarly, all points $\rho(Q_2),\ldots,\rho(Q_8)$ are distinct.
Then $\rho(C)$ is a three-secant line of the curve $\rho(\mathscr{C})$.
Note that $\rho(\mathscr{C})$ contains one-parameter family of three-secants  \cite[Appendix~A]{CheltsovShramov}.
But $\rho(\mathcal{Q})$ is a general plane in $\mathbb{P}^3$,
which implies that $\rho(\mathcal{Q})$ does not contain three-secant lines of the curve $\rho(\mathscr{C})$ --- a contradiction.
\end{proof}

\begin{corollary}
\label{corollary:2-7-elliptic-fibration}
Let $\gamma$ be a class in the~group $\mathrm{Pic}(T)$ such that $-K_T\cdot \gamma=1$ and $\gamma^2=-1$.
Then the linear system $|\gamma|$ consists of a unique $(-1)$-curve.
\end{corollary}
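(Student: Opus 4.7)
The plan is to first show that $|\gamma|$ is non-empty via Riemann--Roch, then use Lemma~\ref{lemma:2-7-elliptic-fibration} to decompose any effective representative with respect to the elliptic fibration $\varphi\colon T\to\mathbb{P}^1$, and finally conclude by a standard intersection argument.

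Since $T$ is obtained from $\mathbb{P}^1\times\mathbb{P}^1$ by blowing up eight points, we have $K_T^2=0$, so in particular $-K_T$ is nef and $(-K_T)^2=0$. First I would compute $\chi(\mathcal{O}_T(\gamma))=1+\tfrac{1}{2}\gamma\cdot(\gamma-K_T)=1+\tfrac{1}{2}(-1+1)=1$. For Serre duality, $(K_T-\gamma)\cdot(-K_T)=K_T\cdot\gamma=-1<0$, so $K_T-\gamma$ is not pseudo-effective and $h^2(\gamma)=h^0(K_T-\gamma)=0$. Hence $h^0(\gamma)\geqslant 1$ and $|\gamma|\neq\varnothing$.

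Next, fix any $D\in|\gamma|$ and write $D=D_0+D_{\mathrm{vert}}$, where $D_0$ collects those components of $D$ that are not contracted by $\varphi$ and $D_{\mathrm{vert}}$ collects the rest. By Lemma~\ref{lemma:2-7-elliptic-fibration} every fiber of $\varphi$ is irreducible, so the only effective vertical divisors on $T$ are non-negative multiples of the fiber class $F=-K_T$. Thus $D_{\mathrm{vert}}=nF$ for some integer $n\geqslant 0$. Intersecting with $F$, one gets $D_0\cdot F=\gamma\cdot F=-K_T\cdot\gamma=1$, so $D_0$ is effective, horizontal, and meets a general fiber in a single point, which forces $D_0$ to be irreducible and the induced map $D_0\to\mathbb{P}^1$ to be birational. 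In particular $D_0$ is a smooth rational curve (being a section of an elliptic fibration over $\mathbb{P}^1$), and adjunction gives $-2=D_0^2+K_T\cdot D_0=D_0^2-1$, i.e.\ $D_0^2=-1$. On the other hand $D_0^2=(D-nF)^2=D^2-2nD\cdot F=-1-2n$, which forces $n=0$. Therefore $D=D_0$ is itself a $(-1)$-curve.

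Finally, for uniqueness, suppose $D_1,D_2\in|\gamma|$ are two members. By the previous paragraph each is an irreducible $(-1)$-curve. If $D_1\neq D_2$ they share no component, so $D_1\cdot D_2\geqslant 0$; but linear equivalence gives $D_1\cdot D_2=D_1^2=-1$, a contradiction. Hence $|\gamma|$ consists of a single $(-1)$-curve, as claimed. The only slightly delicate point in the argument is the decomposition step: one must use exactly the irreducibility of every fiber (Lemma~\ref{lemma:2-7-elliptic-fibration}) to rule out the possibility that $D$ contains a $(-2)$-component obtained as a proper sub-divisor of a reducible fiber, which would otherwise leave room for additional effective representatives.
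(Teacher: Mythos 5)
Your proof is correct and fills in precisely the steps the paper's one-line proof indicates: Riemann--Roch and Serre duality to show $|\gamma|\neq\varnothing$, then Lemma~\ref{lemma:2-7-elliptic-fibration} to force the vertical part of any effective member to be a multiple of $-K_T$, after which adjunction and the intersection argument pin down $n=0$ and uniqueness. No gaps; this is the intended argument.
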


\begin{proof}
Apply the Riemann--Roch theorem, Serre duality and Lemma~\ref{lemma:2-7-elliptic-fibration}.
\end{proof}

Let us use Corollary~\ref{corollary:2-7-elliptic-fibration}, to describe infinitely many $(-1)$-curves in the~surface $T$.
Namely, let $\ell_1$ and $\ell_2$ be any curves in  $\mathcal{Q}=\mathbb{P}^1\times\mathbb{P}^1$ of degrees $(1,0)$ and $(0,1)$, respectively.
For $n\in\mathbb{Z}_{\geqslant 0}$ and  $i\in\{2,3,4,5,6,7,8\}$, let
$B_{n,1,1}$, $B_{n,1,2}$, $B_{n,2,i}$, $B_{n,3}$, $B_{n,4,i}$ be the~classes
$$
\varpi^*\big(a_1\ell_1+a_2\ell_2\big)-\sum_{i=1}^8b_i\mathbf{e}_i\in\mathrm{Pic}(T),
$$
where $a_1,a_2,b_1,\ldots,b_8$ are non-negative integers given in the following table:
\begin{center}
\renewcommand\arraystretch{1.4}
\begin{tabular}{|c||c|c|c|c|}
\hline & $a_1$ & $a_2$ & $b_1$ & $b_2,b_3,b_4,b_5,b_6,b_7,b_8$ \\
\hline
\hline
$B_{n,1,1}$ & $14n^{2}+7n+1$ & $14n^{2}+7n$ & $7n^{2}+7n+1$ & $\forall j$ $b_j=7n^{2}+3n$ \\
\hline
$B_{n,1,2}$ & $14n^{2}+7n$ & $14n^{2}+7n+1$ & $7n^{2}+7n+1$ & $\forall j$ $b_j=7n^{2}+3n$\\
\hline
\shortstack{{ }\\ $B_{n,2,i}$\\{ }} & \shortstack{{ }\\ $14n^{2}+13n+3$\\{ }} & \shortstack{{ }\\ $14n^{2}+13n+3$\\{ }} & \shortstack{{ }\\ $7n^{2}+10n+3$\\{ }} & \shortstack{{ }\\$b_i=7 n^{2}+6n+2$\\{$\forall j\ne i$ $b_j=7n^{2}+6n+1$}} \\
\hline
$B_{n,3}$ & $14n^{2}+21n+7$ & $14n^{2}+21 n+7$ & $7n^{2}+14 n+6$ & $\forall j$ $b_j=7 n^{2}+10n+3$\\
\hline
\shortstack{{ }\\ $B_{n,4,i}$\\{ }} & \shortstack{{ }\\ $14 n^{2}+29 n+15$\\{ }} & \shortstack{{ }\\ $14n^{2}+29 n+15$\\{ }} & \shortstack{{ }\\ $7n^{2}+18n+11$\\{ }} & \shortstack{{ }\\$b_i=7n^{2}+14n+6$\\{$\forall j\ne i$ $b_j=7n^{2}+14n+7$}} \\
\hline
\end{tabular}
\end{center}
By Corollary~\ref{corollary:2-7-elliptic-fibration}, each linear system
$|B_{n,1,1}|$, $|B_{n,1,2}|$, $|B_{n,2,i}|$, $|B_{n,3}|$, $|B_{n,4,i}|$ contains a~unique $(-1)$-curve.
Hence, we can identify the classes $B_{n,1,1}$, $B_{n,1,2}$, $B_{n,2,i}$, $B_{n,3}$, $B_{n,4,i}$ with $(-1)$-curves
in $|B_{n,1,1}|$, $|B_{n,1,2}|$, $|B_{n,2,i}|$, $|B_{n,3}|$, $|B_{n,4,i}|$, respectively.
Set
\begin{align*}
B_{n, 1}&=B_{n, 1, 1}+B_{n, 1, 2},\\
B_{n, 2}&=B_{n,2,2}+B_{n,2,3}+B_{n,2,4}+B_{n,2,5}+B_{n,2,6}+B_{n,2,7}+B_{n,2,8},\\
B_{n, 4}&=B_{n,4,2}+B_{n,4,3}+B_{n,4,4}+B_{n,4,5}+B_{n,4,6}+B_{n,4,7}+B_{n,4,8}.
\end{align*}
Note that irreducible components of each curve $B_{n,1}$, $B_{n,2}$, $B_{n,4}$ are disjoint $(-1)$-curves,
and $B_{n,1}\cap B_{n,2}=\varnothing$, $B_{n,2}\cap B_{n,3}=\varnothing$, $B_{n,3}\cap B_{n,4}=\varnothing$, $B_{n,4} \cap B_{n+1,1}=\varnothing$ for each $n\geqslant 0$.

Now, we let $I^\prime_{0,1}=[0,\frac{1}{3}]$ and $I^{\prime\prime}_{0,1}=[\frac{1}{3},\frac{3}{8}]$.
For every $n\in\mathbb{Z}_{> 0}$, we also let
\begin{align*}
I_{n, 1}^{\prime}&=\left[\frac{-1+4 n+14 n^{2}}{6 n+14 n^{2}}, \frac{1+13 n+21 n^{2}}{3+16 n+21 n^{2}}\right],\\
I_{n, 1}^{\prime \prime}&=\left[\frac{1+13n+21n^{2}}{3+16n+21n^{2}}, \frac{3+35n+49 n^{2}}{8+42 n+49 n^{2}}\right].
\end{align*}
For every $n\in\mathbb{Z}_{\geqslant 0}$, we let
\begin{align*}
I_{n,2}^{\prime}&=\left[\frac{3+35 n+49n^{2}}{8+42 n+49 n^{2}}, \frac{3+22 n+28 n^{2}}{6+26 n+28 n^{2}}\right],\\
I_{n,2}^{\prime\prime}&=\left[\frac{3+22n+28 n^{2}}{6+26 n+28 n^{2}}, \frac{2+7 n}{3+7 n}\right],\\
I_{n,3}^{\prime}&=\left[\frac{2+7 n}{3+7 n}, \frac{21+50 n+28 n^{2}}{26+54 n+28 n^{2}}\right],\\
I_{n, 3}^{\prime\prime}&=\left[\frac{21+50 n+28 n^{2}}{26+54 n+28 n^{2}}, \frac{39+91 n+49 n^{2}}{48+98 n+49 n^{2}}\right],\\
I_{n,4}^{\prime}&=\left[\frac{39+91 n+49 n^{2}}{48+98 n+49 n^{2}}, \frac{19+41 n+21 n^{2}}{23+44 n+21 n^{2}}\right],\\
I_{n,4}^{\prime\prime}&=\left[\frac{19+41 n+21 n^{2}}{23+44 n+21 n^{2}}, \frac{17+32 n+14 n^{2}}{20+34 n+14 n^{2}}\right].
\end{align*}
Set $I_{n,1}=I_{n,1}^\prime\cup I_{n,1}^{\prime\prime}$, $I_{n,2}=I_{n,2}^\prime\cup I_{n,2}^{\prime\prime}$, $I_{n,3}=I_{n,3}^\prime\cup I_{n,3}^{\prime\prime}$, $I_{n,4}=I_{n,4}^\prime\cup I_{n,4}^{\prime\prime}$.
Then
$$
[0,1)=\bigcup_{n \in \mathbb{Z}_{\geqslant 0}} \Big(I_{n, 1} \cup I_{n, 2} \cup I_{n, 3} \cup I_{n, 4}\Big),
$$
the intervals $I_{n,1}$, $I_{n,2}$, $I_{n,3}$, $I_{n,4}$ have positive volumes, and all their interiors are~disjoint.
Let us analyze $P(u,v)$ and $N(u,v)$ when $u$ is contained in one of these intervals.

First, we deal with $u\in I_{n,1}$.
If $u\in I_{n,1}^{\prime}$ and $v\in\left[0,\frac{2+14 n+28 n^{2}-u(1+14 n+28 n^{2})}{1+7 n+7 n^{2}}\right]$,
then
\begin{multline*}
P(u, v)=\frac{19+70 n+84 n^{2}-u\left(16+70 n+84 n^{2}\right)-v\left(8+28 n+21 n^{2}\right)}{8+28 n+21 n^{2}}\mathbf{e}_{1}+\\
+\frac{3+35n+49 n^{2}-u\left(8+42 n+49 n^{2}\right)}{8+28 n+21 n^{2}}B_{n, 1}+
\frac{1-4n-14n^{2}+u\left(6 n+14 n^{2}\right)}{8+28 n+21 n^{2}}B_{n, 2}
\end{multline*}
and $N(u,v)=0$. The same holds if $u\in I_{n,1}^{\prime\prime}$ and \mbox{$v\in\left[0,\frac{7+26 n+28n^{2}-u(6+26 n+28 n^{2})}{3+10 n+7 n^{2}}\right]$}.
Then
\begin{align*}
\big(P(u,v)\big)^{2}&=10-12u+2u^{2}-2v-v^{2},\\
P(u,v)\cdot\mathbf{e}_1&=1+v.
\end{align*}
Similarly, if $u\in I_{n,1}^\prime$ and $v\in\left[\frac{2+14 n+28 n^{2}-u\left(1+14 n+28 n^{2}\right)}{1+7 n+7 n^{2}}, \frac{7+26 n+28 n^{2}-u\left(6+26 n+28 n^{2}\right)}{3+10 n+7 n^{2}}\right]$, then
\begin{multline*}
P(u, v)=\frac{19+70n+84n^{2}-u(16+70 n+84 n^{2})-v(8+28 n+21 n^{2})}{8+28 n+21 n^{2}}\mathbf{e}_{1}+\\
+\frac{\big(19+70n+84n^{2}-u(16+70 n+84 n^{2})-v(8+28 n+21 n^{2})\big)(1+7n+7n^{2})}{8+28n+21n^{2}}B_{n,1}+\\
+\frac{1-4n-14n^{2}+u(6 n+14 n^{2})}{8+28 n+21 n^{2}}B_{n,2}\quad\quad\quad\quad\quad\quad
\end{multline*}
and
$$
N(u,v)=\left(u\left(1+14 n+28 n^{2}\right)+v\left(1+7n+7n^{2}\right)-2-14n-28n^{2}\right)B_{n,1}.
$$
Then
\begin{multline*}
\quad\quad \big(P(u,v)\big)^{2}=10-12 u+2 u^{2}-2 v-v^{2}+\\
+2\left(u\left(1+14 n+28 n^{2}\right)+v\left(1+7 n+7 n^{2}\right)-2-14n-28n^{2}\right)^{2}\quad\quad\quad\quad
\end{multline*}
and
\begin{multline*}
\quad\quad P(u,v)\cdot\mathbf{e}_1=5+56n+280n^2+588n^3+392n^4-\\
\quad\quad-2u(1+7n+7n^2)(1+14n+28n^2)-v(1+28n+126n^2+196n^3+98n^4).
\end{multline*}
Likewise, if $u\in I_{n,1}^{\prime\prime}$ and $v \in\left[\frac{7+26 n+28 n^{2}-u\left(6+26 n+28 n^{2}\right)}{3+10 n+7 n^{2}}, \frac{2+14 n+28 n^{2}-u\left(1+14 n+28 n^{2}\right)}{1+7 n+7 n^{2}}\right]$, then
\begin{multline*}
P(u, v)=\frac{(19+70 n+84 n^{2}-u(16+70 n+84 n^{2})-v(8+28 n+21 n^{2})}{8+28 n+21 n^{2}}\mathbf{e}_{1}+\\
+\frac{\big(19+70 n+84 n^{2}-u(16+70 n+84 n^{2})-v(8+28 n+21 n^{2})\big)(1+n)(3+7n)}{8+28 n+21 n^{2}}B_{n,2}+\\
+\frac{3+35 n+49 n^{2}-u\left(8+42 n+49 n^{2}\right)}{8+28 n+21 n^{2}}B_{n,1}\quad\quad\quad\quad
\end{multline*}
and
$$
N(u,v)=\left(u\left(6+26 n+28 n^{2}\right)+v\left(3+10 n+7 n^{2}\right)-7-26n-28n^{2}\right)B_{n,2}.
$$
Then
\begin{multline*}
\big(P(u,v)\big)^{2}=10-12 u+2 u^{2}-2 v-v^{2}+\\
+7\left(u\left(6+26 n+28 n^{2}\right)+v\left(3+10 n+7 n^{2}\right)-7-26n-28n^{2}\right)^{2}\quad\quad
\end{multline*}
and
\begin{multline*}
P(u,v)\cdot\mathbf{e}_1=148+1036 n+2751 n^{2}+3234 n^{3}+1372 n^{4}-\\
-14u(1+n)(1+2 n)(3+7 n)^{2}-v\left(62+420 n+994 n^{2}+980 n^{3}+343 n^{4}\right).\quad\quad
\end{multline*}
If $u \in I_{n, 1}^{\prime}$ and $v\in\left[\frac{7+26 n+28 n^{2}-u\left(6+26 n+28 n^{2}\right)}{3+10 n+7 n^{2}}, \frac{19+70 n+84 n^{2}-u\left(16+70 n+84 n^{2}\right)}{8+28 n+21 n^{2}}\right]$, then
\begin{multline*}
P(u,v)=\frac{19+70 n+84 n^{2}-u\left(16+70 n+84 n^{2}\right)-v\left(8+28 n+21 n^{2}\right)}{8+28 n+21 n^{2}}\mathbf{e}_1+\\
+\frac{\big(19+70 n+84 n^{2}-u\left(16+70 n+84 n^{2}\right)-v\left(8+28 n+21 n^{2}\right)\big)\left(1+7n+7 n^{2}\right)}{8+28 n+21 n^{2}}B_{n,1}+\\
+\frac{\big(19+70 n+84 n^{2}-u\left(16+70 n+84 n^{2}\right)-v\left(8+28 n+21 n^{2}\right)\big)(1+n)(3+7n)}{8+28 n+21 n^{2}}B_{n,2}
\end{multline*}
and
\begin{multline*}
N(u,v)=\left(u\left(1+14 n+28 n^{2}\right)+v\left(1+7 n+7 n^{2}\right)-2-14n-28n^{2}\right)B_{n,1}+ \\
+\left(u\left(6+26 n+28 n^{2}\right)+v\left(3+10 n+7 n^{2}\right)-7-26n-28n^{2}\right)B_{n,2}.
\end{multline*}
The same holds if $u\in I_{n,1}^{\prime\prime}$ and $v\in\left[\frac{2+14 n+28 n^{2}-u\left(1+14 n+28 n^{2}\right)}{1+7 n+7 n^{2}}, \frac{19+70 n+84 n^{2}-u\left(16+70 n+84 n^{2}\right)}{8+28 n+21 n^{2}}\right]$.
In both cases, we have
\begin{align*}
\big(P(u,v)\big)^{2}&=\left(19+70 n+84 n^{2}-u\left(16+70 n+84 n^{2}\right)-v\left(8+28 n+21 n^{2}\right)\right)^{2},\\
P(u,v)\cdot\mathbf{e}_1&=\left(8+28 n+21 n^{2}\right)\left(19+70 n+84 n^{2}-u\left(16+70 n+84 n^{2}\right)-v\left(8+28 n+21 n^{2}\right)\right).
\end{align*}
Hence, if $u\in I_{n,1}$, then
$$
t(u)=\frac{19+70 n+84 n^{2}-u\left(16+70 n+84 n^{2}\right)}{8+28 n+21 n^{2}}.
$$

Now, we deal with $u\in I_{n,2}$. If $u\in I_{n,2}^{\prime}$ and $v\in\left[0,\frac{7+26 n+28 n^{2}-u\left(6+26 n+28 n^{2}\right)}{3+10 n+7 n^{2}}\right]$, then
\begin{multline*}
P(u,v)=\frac{17+56 n+56 n^{2}-u\left(15+56 n+56 n^{2}\right)-7v(1+n)(1+2 n)}{7(1+n)(1+2n)}\mathbf{e}_{1}+ \\
+\frac{(1+n)(2+7 n)-u(1+n)(3+7n)}{7(1+n)(1+2n)} B_{n, 2}+\frac{u\left(8+42 n+49 n^{2}\right)-3-35n-49n^{2}}{7(1+n)(1+2n)}B_{n, 3}
\end{multline*}
and $N(u,v)=0$.
The same holds if $u\in I_{n,2}^{\prime\prime}$ and $v\in\left[0,\frac{15+42n+28n^{2}-u\left(14+42 n+28 n^{2}\right)}{6+14 n+7 n^{2}}\right]$.
Then
\begin{align*}
\big(P(u,v)\big)^{2}&=10-12u+2u^{2}-2v-v^{2},\\
P(u,v)\cdot\mathbf{e}_1&=v+1.
\end{align*}
If $u\in I_{n,2}^\prime$ and $v \in\left[\frac{7+26 n+28 n^{2}-u\left(6+26 n+28 n^{2}\right)}{3+10 n+7 n^{2}}, \frac{15+42 n+28 n^{2}-u\left(14+42 n+28 n^{2}\right)}{6+14 n+7 n^{2}}\right]$, then
\begin{multline*}
\quad\quad P(u,v)=\frac{17+56n+56 n^{2}-u(15+56 n+56 n^{2})-7v(1+n)(1+2n)}{7(1+n)(1+2n)}\mathbf{e}_{1}+\\
+\frac{(1+n)(3+7n)\big(17+56n+56 n^{2}-u(15+56 n+56 n^{2})-7v(1+n)(1+2n)\big)}{7(1+n)(1+2n)}B_{n, 2}+\\
+\frac{u\left(8+42 n+49 n^{2}\right)-3-35n-49n^{2}}{7(1+n)(1+2 n)}B_{n,3},\quad\quad\quad\quad\quad\quad
\end{multline*}
$$
N(u,v)=\big(u\left(6+26 n+28 n^{2}\right)+v\left(3+10 n+7n^{2}\right)-7-26n-28n^{2}\big)B_{n, 2},
$$
\begin{multline*}
\big(P(u,v)\big)^{2}=10-12 u+2 u^{2}-2 v-v^{2}+\\
+7\left(u\left(6+26 n+28 n^{2}\right)+v\left(3+10 n+7 n^{2}\right)-7-26n-28n^{2}\right)^{2},
\end{multline*}
\begin{multline*}
P(u,v)\cdot\mathbf{e}_1=148+1036 n+2751 n^{2}+3234 n^{3}+1372 n^{4}-\\
\quad\quad\quad-14u(1+n)(1+2 n)(3+7 n)^{2}-\\
-v\left(62+420 n+994 n^{2}+980 n^{3}+343 n^{4}\right).
\end{multline*}
Similarly, if $u\in I_{n,2}^{\prime\prime}$ and $v\in\left[\frac{15+42 n+28 n^{2}-u\left(14+42 n+28 n^{2}\right)}{6+14 n+7 n^{2}}, \frac{7+26 n+28 n^{3}-u\left(6+26 n+28 n^{2}\right)}{3+10 n+7 n^{2}}\right]$, then
\begin{multline*}
P(u,v)=\frac{17+56+56 n^{2}-u\left(15+56 n+56 n^{2}\right)-7v(1+n)(1+2 n)}{7(1+n)(1+2n)}\mathbf{e}_{1}+\\
+\frac{(6+14 n+7 n^{2})\big(17+56+56 n^{2}-u\left(15+56 n+56 n^{2}\right)-7v(1+n)(1+2 n)\big)}{7(1+n)(1+2 n)}B_{n, 3}+\\
+\frac{(1+n)(2+7n)-u(1+n)(3+7n)}{7(1+n)(1+2n)}B_{n,2},\quad\quad\quad\quad\quad\quad\quad\quad
\end{multline*}
$$
N(u,v)=\big(u\left(14+42n+28n^{2}\right)+v\left(6+14n+7n^{2}\right)-15-42n-28n^{2}\big)B_{n,3},
$$
\begin{multline*}
\big(P(u, v)\big)^{2}=10-12 u+2 u^{2}-2 v-v^{2}+\\
+\left(u\left(14+42 n+28 n^{2}\right)+v\left(6+14 n+7 n^{2}\right)-15-42n-28n^{2}\right)^{2},
\end{multline*}
\begin{multline*}
P(u,v)\cdot\mathbf{e}_1=7(1+n)(13+53 n+70n^{2}+28 n^{3})-\\
-14u(1+n)(1+2n)\left(6+14 n+7 n^{2}\right)-7v(1+n)^2\left(5+14 n+7 n^{2}\right).
\end{multline*}
Likewise, if $u\in I_{n,2}^{\prime}$ and $v\in\left[\frac{15+42 n+28 n^{2}-u\left(14+42 n+28 n^{2}\right)}{6+14 n+7 n^{2}}, \frac{17+56 n+56 n^{2}-u\left(15+56 n+56 n^{2}\right)}{7(1+n)(1+2 n)}\right]$, then
\begin{multline*}
P(u,v)=\frac{17+56n+56 n^{2}-u\left(15+56 n+56 n^{2}\right)-v(7(1+n)(1+2 n))}{7(1+n)(1+2n)}\mathbf{e}_{1}+\\
+\frac{(1+n)(3+7n)\big(17+56n+56 n^{2}-u\left(15+56 n+56 n^{2}\right)-7v(1+n)(1+2 n)\big)}{7(1+n)(1+2n)}B_{n, 2}+\\
+\frac{\left(6+14 n+7 n^{2}\right)\big(17+56n+56 n^{2}-u\left(15+56 n+56 n^{2}\right)-7v(1+n)(1+2n)\big)}{7(1+n)(1+2n)}B_{n,3}
\end{multline*}
and
\begin{multline*}
N(u,v)=\big(u(6+26 n+28 n^{2})+v(3+10 n+7 n^{2})-7-26n-28n^{2}\big) B_{n, 2}+\\
+\big(u\left(14+42 n+28 n^{2}\right)+v\left(6+14 n+7 n^{2}\right)-15-42n-28 n^{2}\big)B_{n,3}.
\end{multline*}
The same holds if $u\in I_{n,2}^{\prime \prime}$ and $v\in\left[\frac{7+26 n+28 n^{2}-u\left(6+26 n+28 n^{2}\right)}{3+10 n+7 n^{2}}, \frac{17+56 n+56 n^{2}-u\left(15+56 n+56 n^{2}\right)}{7(1+n)(1+2 n)}\right]$.
Moreover, in both cases, we have
$$
P(u,v)\cdot\mathbf{e}_1=14(1+n)(1+2 n)\left(17+56 n+56 n^{2}-u\left(15+56 n+56 n^{2}\right)-7v(1+n)(1+2 n)\right)
$$
and
$$
\big(P(u,v)\big)^{2}=2 \left(17+56 n+56 n^{2}-u\left(15+56 n+56 n^{2}\right)-7v(1+n)(1+2n)\right)^{2} .
$$
Thus, if $u\in I_{n,2}$, then
$$
t(u)=\frac{17+56n+56n^{2}-u\left(15+56n+56n^{2}\right)}{7(1+n)(1+2n)}.
$$

Now, we deal with $u\in I_{n,3}$.
If $u\in I_{n,3}^{\prime}$ and $v\in\left[0,\frac{15+42 n+28 n^{2}-u\left(14+42 n+28 n^{2}\right)}{6+14 n+7 n^{2}}\right]$, then
\begin{multline*}
P(u,v)=\frac{59+112 n+56 n^{2}-u\left(57+112 n+56n^{2}\right)-7v(1+n)(3+2n)}{7(1+n)(3+2n)}\mathbf{e}_{1}+ \\
+\frac{39+91 n+49 n^{2}-u\left(48+98 n+49 n^{2}\right)}{7(1+n)(3+2n)}B_{n, 3}+\frac{u(1+n)(3+7n)-(1+n)(2+7n)}{7(1+n)(3+2n)}B_{n,4}
\end{multline*}
and $N(u,v)=0$.
The same holds if $u\in I_{n,3}^{\prime\prime}$ and $v\in\left[0,\frac{31+58 n+28 n^{2}-u\left(30+58 n+28 n^{2}\right)}{11+18 n+7 n^{2}}\right]$. Then
\begin{align*}
\big(P(u,v)\big)^{2}&=10-12u+2u^{2}-2v-v^{2},\\
P(u,v)\cdot\mathbf{e}_1&=1+v.
\end{align*}
If $u\in I_{n,3}^\prime$ and $v \in\left[\frac{15+42 n+28 n^{2}-u\left(14+42 n+28 n^{2}\right)}{6+14 n+7 n^{2}}, \frac{31+58 n+28 n^{2}-u\left(30+58 n+28 n^{2}\right)}{11+18 n+7 n^{2}}\right]$, then
\begin{multline*}
P(u,v)=\frac{59+112n+56 n^{2}-u\left(57+112 n+56 n^{2}\right)-7v(1+n)(3+2n)}{7(1+n)(3+2n)}\mathbf{e}_{1}+\\
+\frac{\left(6+14n+7 n^{2}\right)\big(59+112n+56 n^{2}-u\left(57+112 n+56 n^{2}\right)-7v(1+n)(3+2n)\big)}{7(1+n)(3+2n)}B_{n, 3}+\\
+\frac{u(1+n)(3+7n)-(1+n)(2+7n)}{7(1+n)(3+2n)}B_{n,4}\quad\quad\quad\quad\quad\quad\quad\quad\quad\quad
\end{multline*}
$$
N(u,v)=\big(u\left(14+42 n+28 n^{2}\right)+v\left(6+14 n+7 n^{2}\right)-15-42n-28n^{2}\big)B_{n,3},
$$
\begin{multline*}
\big(P(u, v)\big)^{2}=10-12 u+2 u^{2}-2 v-v^{2}+\\
+\left(u\left(14+42 n+28 n^{2}\right)+v\left(6+14 n+7 n^{2}\right)-15-42n-28 n^{2}\right)^{2},
\end{multline*}
\begin{multline*}
P(u,v)\cdot\mathbf{e}_1=7(1+n)\big(13+53 n+70 n^{2}+28 n^{3}\big)-\\
-14u(1+n)(1+2n)\left(6+14 n+7 n^{2}\right)-7v(1+n)^2\left(5+14n+7n^{2}\right).
\end{multline*}
Similarly, if $u\in I_{n,3}^{\prime\prime}$ and $v \in\left[\frac{31+58 n+28 n^{2}-u\left(30+58 n+28 n^{2}\right)}{11+18 n+7 n^{2}}, \frac{15+42 n+28 n^{2}-u\left(14+42 n+28 n^{2}\right)}{6+14 n+7 n^{2}}\right]$, then
\begin{multline*}
P(u,v)=\frac{59+112n+56n^{2}-u(57+112n+56n^{2})-7v(1+n)(3+2n)}{7(1+n)(3+2n)}\mathbf{e}_{1}+\\
+\frac{(1+n)(11+7n)\big(59+112n+56n^{2}-u(57+112n+56n^{2})-7v(1+n)(3+2n)\big)}{7(1+n)(3+2n)}B_{n,4}+\\
+\frac{39+91n+49n^{2}-u\left(48+98n+49n^{2}\right)}{7(1+n)(3+2n)}B_{n,3},\quad\quad\quad\quad\quad
\end{multline*}
$$
N(u,v)=\big(u\left(30+58n+28n^{2}\right)+v\left(11+18n+7 n^{2}\right)-31-58n-28n^{2}\big)B_{n,4},
$$
\begin{multline*}
\big(P(u, v)\big)^{2}=10-12 u+2 u^{2}-2v-v^{2}+\\
+7\left(u\left(30+58n+28n^{2}\right)+v\left(11+18n+7n^{2}\right)-31-58n-28n^{2}\right)^{2},
\end{multline*}
\begin{multline*}
P(u,v)\cdot\mathbf{e}_1=2388+8372 n+10983 n^{2}+6370 n^{3}+1372 n^{4}-\\
-14u(1+n)^{2}(11+7n)(15+14n)-v\left(846+2772 n+3346 n^{2}+1764 n^{3}+343 n^{4}\right).
\end{multline*}
If $u\in I_{n,3}^{\prime}$ and $v\in\left[\frac{31+58 n+28 n^{2}-u\left(30+58 n+28 n^{2}\right)}{11+18 n+7 n^{2}}, \frac{59+112 n+56 n^{2}-u\left(57+112 n+56 n^{2}\right)}{7(1+n)(3+2 n)}\right]$, then
\begin{multline*}
P(u, v)=\frac{59+112 n+56 n^{2}-u\left(57+112 n+56 n^{2}\right)-7v(1+n)(3+2n)}{7(1+n)(3+2n)}\mathbf{e}_1+\\
+\frac{\left(6+14 n+7 n^{2}\right)\big(59+112 n+56 n^{2}-u\left(57+112 n+56 n^{2}\right)-7v(1+n)(3+2n)\big)}{7(1+n)(3+2n)}B_{n,3 }+\\
+\frac{(1+n)\left(11+7n\right)\big(59+112 n+56 n^{2}-u\left(57+112 n+56 n^{2}\right)-7v(1+n)(3+2n)\big)}{7(1+n)(3+2n)}B_{n, 4}
\end{multline*}
and
\begin{multline*}
N(u,v)=\left(u\left(14+42 n+28 n^{2}\right)+v\left(6+14 n+7 n^{2}\right)-15-42n-28n^{2}\right)B_{n,3}+\\
+\left(u\left(30+58n+28n^{2}\right)+v\left(11+18n+7n^{2}\right)-31-58n-28n^{2}\right)B_{n, 4}.
\end{multline*}
The same holds if $u\in I_{u,3}^{\prime\prime}$ and $v\in\left[\frac{15+42 n+28 n^{2}-u\left(14+42 n+28 n^{2}\right)}{6+14 n+7 n^{2}}, \frac{59+112 n+56 n^{2}-u\left(57+112 n+56 n^{2}\right)}{7(1+n)(3+2n)}\right]$.
In both cases, we have
$$
P(u,v)\cdot\mathbf{e}_1=14(1+n)(3+2n)\left(59+112n+56 n^{2}-u\left(57+112 n+56 n^{2}\right)-7v(1+n)(3+2 n)\right).
$$
and
$$
\big(P(u,v)\big)^{2}=2\left(59+112n+56n^{2}-u\left(57+112n+56n^{2}\right)-7v(1+n)(3+2n)\right)^{2}.
$$
Therefore, if  $u\in I_{n,3}$, then
$$
t(u)=\frac{59+112n+56n^{2}-u\left(57+112n+56n^{2}\right)}{7(1+n)(3+2n)}.
$$

Finally, we deal with $u\in I_{n,4}$.
If $u\in I_{n,4}^{\prime}$ and $v\in\left[0,\frac{31+58n+28n^{2}-u\left(30+58n+28n^{2}\right)}{11+18n+7n^{2}}\right]$, then
\begin{multline*}
P(u, v)=\frac{103+182 n+84 n^{2}-u\left(100+182 n+84 n^{2}\right)-v\left(36+56 n+21 n^{2}\right)}{36+56n+21 n^{2}}\mathbf{e}_{1}+\\
+\frac{17+32 n+14 n^{2}-u\left(20+34 n+14 n^{2}\right)}{36+56n+21n^{2}}B_{n,4}+\frac{u\left(48+98 n+49 n^{2}\right)-39-91n-49n^{2}}{36+56n+21 n^{2}}B_{n+1,1}
\end{multline*}
and $N(u,v)=0$. The same holds when $u\in I_{n,4}^{\prime\prime}$ and $v\in\left[0,\frac{44+70 n+28 n^{2}-u\left(43+70 n+28 n^{2}\right)}{15+21 n+7 n^{2}}\right]$.
In both cases, we compute
\begin{align*}
\big(P(u,v)\big)^{2}&=10-12u+2u^{2}-2 v-v^{2},\\
P(u,v)\cdot\mathbf{e}_1&=1+v.
\end{align*}
If $u\in I_{n,4}^\prime$ and $v \in\left[\frac{31+58 n+28 n^{2}-u\left(30+58 n+28 n^{2}\right)}{11+18 n+7 n^{2}}, \frac{44+70 n+28 n^{2}-u\left(43+70 n+28 n^{2}\right)}{15+21 n+7 n^{2}}\right]$, then
\begin{multline*}
P(u, v)=\frac{103+182n+84n^{2}-u(100+182n+84n^{2})-v(36+56n+21n^{2})}{36+56n+21n^{2}}\mathbf{e}_{1}+\\
+\frac{(1+n)(11+7n)\big(103+182n+84n^{2}-u(100+182n+84n^{2})-v(36+56n+21n^{2})\big)}{36+56n+21n^{2}}B_{n,4}+\\
+\frac{u(48+98n+49n^{2})-39-91n-49n^{2}}{36+56n+21n^{2}}B_{n+1,1},\quad\quad\quad\quad\quad\quad\quad
\end{multline*}
$$
N(u,v)=\left(u\left(30+58 n+28 n^{2}\right)+v\left(11+18 n+7 n^{2}\right)-31-58n-28n^{2}\right)B_{n,4},
$$
\begin{multline*}
\big(P(u,v)\big)^{2}=10-12 u+2 u^{2}-2 v-v^{2}+\\
+7\left(u\left(30+58 n+28 n^{2}\right)+v\left(11+18 n+7 n^{2}\right)-31-58n-28n^{2}\right)^{2},
\end{multline*}
\begin{multline*}
P(u,v)\cdot\mathbf{e}_1=2388+8372 n+10983 n^{2}+6370 n^{2}+1372 n^{4}-\\
-14u(1+n)^{2}(11+7n)(15+14 n)-\\
-v\left(846+2772 n+3346 n^{2}+1764 n^{3}+343 n^{4}\right).
\end{multline*}
If $u\in I_{n,4}^{\prime\prime}$ and $v \in\left[\frac{44+70 n+28 n^{2}-u\left(43+70 n+28 n^{2}\right)}{15+21 n+7 n^{2}}, \frac{31+58 n+28 n^{2}-u\left(30+58 n+28 n^{2}\right)}{11+18 n+7 n^{2}}\right]$, then
\begin{multline*}
P(u,v)=\frac{103+182n+84n^{2}-u(100+182n+84n^{2})-v(36+56n+21n^{2})}{36+56 n+21n^{2}}\mathbf{e}_{1}+\\
+\frac{(15+21n+7n^{2})\big(103+182n+84n^{2}-u(100+182n+84n^{2})-v(36+56n+21n^{2})\big)}{36+56 n+21n^{2}}B_{n+1,1}+\\
+\frac{17+32n+14n^{2}-u\left(20+34n+14n^{2}\right)}{36+56 n+21n^{2}}B_{n,4}\quad\quad\quad\quad\quad\quad\quad
\end{multline*}
and
$$
N(u,v)=\left(u\left(43+70 n+28 n^{2}\right)+v\left(15+21 n+7 n^{2}\right)-44-70n-28 n^{2}\right)B_{n+1,1}.
$$
Moreover, in this case, we have
\begin{multline*}
\big(P(u,v)\big)^{2}=10-12 u+2 u^{2}-2 v-v^{2}+\\
+2\left(u\left(43+70 n+28 n^{2}\right)+v\left(15+21 n+7 n^{2}\right)-44-70n-28n^{2}\right)^{2}\quad\quad
\end{multline*}
and
\begin{multline*}
P(u,v)\cdot\mathbf{e}_1=1321+3948 n+4396 n^{2}+2156 n^{3}+392 n^{4}-\\
-2u\left(15+21 n+7 n^{2}\right)\left(43+70 n+28 n^{2}\right)-\\
-v\left(449+1260 n+1302 n^{2}+588 n^{3}+98 n^{4}\right).
\end{multline*}
If $u\in I_{n,4}^{\prime}$ and $v\in\left[\frac{44+70 n+28 n^{2}-u\left(43+70 n+28 n^{2}\right)}{15+21 n+7 n^{2}},\frac{103+182 n+84 n^{2}-u\left(100+182 n+84 n^{2}\right)}{36+56 n+21 n^{2}}\right]$, then
\begin{multline*}
P(u,v)=\frac{103+182n+84n^{2}-u(100+182n+84n^{2})-v(36+56n+21n^{2})}{36+56n+21n^{2}}\mathbf{e}_1+\\
+\frac{(1+n)(11+7n)\big(103+182n+84n^{2}-u(100+182n+84n^{2})-v(36+56n+21n^{2})\big)}{36+56n+21n^{2}}B_{n,4}+\\
+\frac{(15+21 n+7n^{2})\big(103+182n+84n^{2}-u(100+182n+84n^{2})-v(36+56n+21n^{2})\big)}{36+56n+21n^{2}}B_{n+1,1}
\end{multline*}
and
\begin{multline*}
N(u,v) =\big(u(30+58 n+28 n^{2})+v(11+18 n+7 n^{2})-31-58n-28n^{2}\big)B_{n,4} +\\
+\big(u(43+70 n+28 n^{2})+v(15+21 n+7 n^{2})-44-70n-28n^{2}\big)B_{n+1,1}.
\end{multline*}
The same holds when $u\in I_{n,4}^{\prime\prime}$ and $v \in\left[\frac{31+58n+28 n^{2}-u\left(30+58n+28 n^{2}\right)}{11+18n+7 n^{2}},\frac{103+182n+84n^{2}-u\left(100+182 n+84n^{2}\right)}{36+56n+21n^{2}}\right]$.
Moreover, in both cases, we have
$$
P(u,v)\cdot\mathbf{e}_1=(36+56n+21n^{2})\big(103+182n+84n^{2}-u(100+182n+84n^{2})-v(36+56n+21n^{2})\big)
$$
and
$$
\big(P(u,v)\big)^{2}=\big(103+182n+84n^{2}-u(100+182n+84 n^{2})-v(36+56 n+21 n^{2})\big)^{2}
$$
Thus, if $u\in I_{n,4}$, then
$$
t(u)=\frac{103+182n+84n^{2}-u\left(100+182n+84n^{2}\right)}{36+56n+21n^{2}}.
$$

Now, we are ready to compute $S(W_{\bullet,\bullet}^{T};\mathbf{e}_1)$.
Namely, for every $i\in\{1,2,3,4\}$,~we set
$$
S_{n,i}=\frac{3}{14}\int_{I_{n,i}}\int_0^{t(u)}\big(P(u,v)\big)^2dvdu.
$$
Then
$$
S\big(W_{\bullet,\bullet}^{T};\mathbf{e}_1\big)=\sum_{n=0}^{\infty}\Big(S_{n,1}+S_{n,2}+S_{n,3}+S_{n,4}\Big).
$$
On the other hand, integrating, we get
$$
S_{n,1}=
\left\{\aligned
&\frac{84365}{114688}\ \text{if $n=0$}, \\
&\frac{(8+28n+21n^{2})A_{n,1}}{448 n^{4}(1+n)(2+7n)^{4}(3+7n)^{4}(4+7n)^{4} (1+7n+7n^{2})}\ \text{if $n\geqslant 1$}, \\
\endaligned
\right.
$$
where
\begin{multline*}
A_{n,1}=1536+109312 n+2935552 n^{2}+42681728 n^{3}+386407488 n^{4}+2335296292 n^{5}+\\
+9789648099 n^{6}+29038364761 n^{7}+61312905318 n^{8}+91454579804 n^{9}+\\
+94035837280 n^{10}+63317750608 n^{11}+25088413952 n^{12}+4427367168 n^{13}.\quad\quad
\end{multline*}
Similarly, we get
$$
S_{n,2}=\frac{(1+2n)A_{n,2}}{4(1+n)(2+7n)^{4}(3+7n)^{4}(4+7n)^{4} (6+14n+7n^{2})},
$$
where
\begin{multline*}
A_{n,2}=1618654+31459234 n+271069253 n^{2}+1362423916 n^{3}+4419070194 n^{4}+9654348284n^{5}+\\
+14368501182 n^{6}+14362052096 n^{7}+9209328422 n^{8}+3412762192n^9+553420896n^{10}
\end{multline*}
Likewise, we get
$$
S_{n,3}=\frac{(3+2n)A_{n,3}}{4(1+n)(3+7n)^{4}(6+7n)^{4}(8+7n)^{4}(11+7n)(6+14 n+7 n^{2})},
$$
where
\begin{multline*}
A_{n,3}=1167997914+15454923336 n+91492878645 n^{2}+319934133575 n^{3}+\\
+734395997090 n^{4}+1162203105378 n^{5}+1294197714054 n^{6}+1014406754242 n^{7}+\\
+548632346402 n^{8}+195059453722 n^{9}+41045383120 n^{10}+3873946272n^{11}.\quad\quad
\end{multline*}
Finally, we get
$$
S_{n,4}=\frac{(36+56 n+21 n^{2})A_{n,4}}{448(1+n)^{4}(6+7n)^{4}(8+7n)^{4}(10+7n)^{4}(11+7n)(15+21n+7 n^{2})},
$$
where
\begin{multline*}
A_{n,4}=365613573312+4021500121920 n+20341847967024 n^{2}+\\
+62650071283024 n^{3}+131072047236004 n^{4}+196698030664492 n^{5}+217823761840153 n^{6}+\\
+180219167765455 n^{7}+111395400841326 n^{8}+50802960251820 n^{9}+16615457209344 n^{10}+\\
+3690223711216 n^{11}+498816700928 n^{12}+30991570176 n^{13}.\quad\quad\quad\quad
\end{multline*}
Then, adding, we get
$$
S\big(W_{\bullet,\bullet}^{T};\mathbf{e}_1\big)=\sum_{n=0}^{\infty}\Big(S_{n,1}+S_{n,2}+S_{n,3}+S_{n,4}\Big)\approx 0.976712233\ldots<1.
$$

Finally, let us compute $S(W_{\bullet, \bullet,\bullet}^{T,\mathbf{e}_1};P)$.
For every $i\in\{1,2,3,4\}$, we set
\begin{align*}
M_{n,i}^\prime&=\frac{3}{14}\int_{I_{n,i}^\prime}\int_0^{t(u)}\Big(\big(P(u,v)\cdot\mathbf{e}_1\big)\Big)^2dvdu,\\
M_{n,i}^{\prime\prime}&=\frac{3}{14}\int_{I_{n,i}^{\prime\prime}}\int_0^{t(u)}\Big(\big(P(u,v)\cdot\mathbf{e}_1\big)\Big)^2dvdu.\quad\quad\quad\quad\quad\quad\quad
\end{align*}
Then
$$
S\big(W_{\bullet, \bullet,\bullet}^{T,\mathbf{e}_1};P\big)=
\sum_{n=0}^{\infty}\sum_{i=1}^{4}\Big(M_{n,i}^\prime+M_{n,i}^{\prime\prime}\Big)+\frac{3}{7}\int_{0}^1\int_0^{t(u)}\big(P(u,v)\cdot \mathbf{e}_1\big)\mathrm{ord}_P\Big(N(u,v)\big|_{\mathbf{e}_1}\Big)dvdu.
$$
On the other hand, integrating, we get
$$
M_{n,1}^\prime=
\left\{\aligned
&\frac{1403}{22268}\ \text{if $n=0$}, \\
&\frac{(1+n)A_{n,1}^\prime}{448n^{4}(1+3n)^{4}(3+7n)^{4}(1+7n+7 n^{2})}\ \text{if $n\geqslant 1$}, \\
\endaligned
\right.
$$
where
\begin{multline*}
A_{n,1}^\prime=1+81 n+2535 n^{2}+37209 n^{3}+301046 n^{4}+1459736 n^{5}+\\
+4420190 n^{6}+8425410 n^{7}+9821448 n^{8}+6392736 n^{9}+1778112 n^{10}.\quad\quad\quad\quad
\end{multline*}
Similarly, we get
$$
M_{n, 1}^{\prime\prime}=\frac{(1+7 n+7 n^{2})A_{n,1}^{\prime\prime}}{28(1+n)(1+3 n)^{4}(2+7 n)^{4}(3+7 n)^{4}(4+7 n)^{4}},
$$
where
\begin{multline*}
A_{n,1}^{\prime\prime}=480574+12906866 n+157271760 n^{2}+1149521334 n^{3}+5612285145 n^{4}+\\
+19278934535 n^{5}+47770884833 n^{6}+86016481159 n^{7} +111679016743 n^{8}+\\
+101939513907 n^{9}+62077730148 n^{10}+22635902898 n^{11}+3735591048 n^{12}.
\end{multline*}
Likewise, we have
$$
M_{n,2}^{\prime}=\frac{(6+14n+7n^{2})A_{n,2}^{\prime}}{224(1+n)(1+2 n)^{3}(2+7 n)^{4}(3+7 n)^{4}(4+7 n)^{4}},
$$
and
$$
M_{n,2}^{\prime\prime}=\frac{11780+111142 n+430951 n^{2}+875637 n^{3}+978656 n^{4}+566832 n^{5}+131712 n^{6}}{224(1+2 n)^{3}(3+7 n)^{4}( 6+14 n+7 n^{2})}.
$$
where
\begin{multline*}
A_{n,2}^{\prime}=1561176+35176776 n+356105548 n^{2}+2137950448 n^{3}+\\
+8458603286 n^{4}+23158717414 n^{5}+44778314889 n^{6}+61151030584 n^{7}+\\
+57807289939 n^{8}+36026947376 n^{9}+13321631568 n^{10}+2213683584 n^{11}.
\end{multline*}
Similarly, we have
$$
M_{n,3}^{\prime}=\frac{(11+7n)A_{n,3}^{\prime}}{224(1+n)^{3}(3+7 n)^{4}(13+14 n)^{4} (6+14 n+7 n^{2})},
$$
where
\begin{multline*}
A_{n,3}^{\prime}=13726028+164541190 n+859036123 n^{2}+2564002455 n^{3}+4823323519 n^{4}+\\
+5933644367 n^{5}+4776917782 n^{6}+2428774768 n^{7}+708314208 n^{8}+90354432 n^{9}.
\end{multline*}
Likewise, we have
$M_{n,3}^{\prime\prime}=\frac{(6+14 n+7 n^{2})A_{n,3}^{\prime\prime}}{224(1+n)^{3}(6+7 n)^{4}(8+7 n)^{4}(11+7 n)(13+14n)^{4}}$,
where
\begin{multline*}
A_{n,3}^{\prime\prime}=67760261208+703706084640 n+3313300067388 n^{2}+9335574166156 n^{3}+\\
+17489294547578 n^{4}+22873117200584 n^{5}+21308562209725 n^{6}+14139587568253 n^{7}+\\
+6548997703738 n^{8}+2016283621072 n^{9}+371345421216 n^{10}+30991570176 n^{11}.
\end{multline*}
Similarly, we see that
$$
M_{n,4}^{\prime}=\frac{(15+21n+7n^{2})A_{n,4}^{\prime}}{28(1+n)^{4}(6+7 n)^{4}(8+7 n)^{4}(11+7 n)(23+21 n)^{4}},
$$
where
\begin{multline*}
A_{n,4}^{\prime}=88135013250+967134809574 n+4853884596732 n^{2}+\\
+14732868828434 n^{3}+30120687035243 n^{4}+43697011451345 n^{5}+46124583653603 n^{6}+\\
+35692827118809 n^{7}+20096052100397 n^{8}+8028312817917 n^{9}+\\
+2160120347280 n^{10}+351456857766 n^{11}+26149137336 n^{12}.\quad\quad\quad\quad
\end{multline*}
Finally, we have
$$
M_{n,4}^{\prime\prime}=\frac{(11+7n)A_{n,4}^{\prime\prime}}{448(1+n)^{4}(10+7 n)^{4}(23+21 n)^{4} (15+21 n+7 n^{2})},
$$
where
\begin{multline*}
A_{n,4}^{\prime\prime}=7582266167+59702225967 n+210973884925 n^{2}+440580768679 n^{3}+\\
+602090743422 n^{4}+562572998512 n^{5}+363945674554 n^{6}+160955181870 n^{7}+\\
+46566357768 n^{8}+7957643904 n^{9}+609892416 n^{10}.\quad\quad\quad\quad\quad\quad\quad
\end{multline*}
Now, adding terms together, we see that
\begin{equation}
\label{equation:2-7-final}
S\big(W_{\bullet, \bullet,\bullet}^{T,\mathbf{e}_1};P\big)\leqslant 0.974+\frac{3}{7}\int_{0}^1\int_0^{t(u)}\big(P(u,v)\cdot \mathbf{e}_1\big)\mathrm{ord}_P\Big(N(u,v)\big|_{\mathbf{e}_1}\Big)dvdu.
\end{equation}

Now, for every $i\in\{1,2,3,4\}$ and any irreducible component $\ell$ of the curve $B_{n,i}$, we let
\begin{multline*}
\quad\quad\quad\quad\quad F_{n,i}=\frac{3}{7}\int_{0}^1\int_0^{t(u)}\big(P(u,v)\cdot \mathbf{e}_1\big)\mathrm{ord}_\ell\big(N(u,v)\big)\big(\ell\cdot\mathbf{e}_1\big)dvdu=\\
\quad=\frac{3}{7}\int_{I_{n,i-1}}\int_0^{t(u)}\big(P(u,v)\cdot \mathbf{e}_1\big)\mathrm{ord}_\ell\big(N(u,v)\big)\big(\ell\cdot\mathbf{e}_1\big)dvdu+\\
+\frac{3}{7}\int_{I_{n,i}}\int_0^{t(u)}\big(P(u,v)\cdot \mathbf{e}_1\big)\mathrm{ord}_\ell\big(N(u,v)\big)\big(\ell\cdot\mathbf{e}_1\big)dvdu+\\
+\frac{3}{7}\int_{I_{n,i+1}}\int_0^{t(u)}\big(P(u,v)\cdot \mathbf{e}_1\big)\mathrm{ord}_\ell\big(N(u,v)\big)\big(\ell\cdot\mathbf{e}_1\big)dvdu,\quad\quad\quad\quad\quad\quad
\end{multline*}
where we set $I_{0,0}=\varnothing$, $I_{n,5}=I_{n+1,1}$ for $n\geqslant 0$, and $I_{n,0}=I_{n-1,4}$ for $n\geqslant 1$.
Since irreducible components of the~curve $B_{n,i}$ are disjoint, we get
$$
\frac{3}{7}\int_{0}^1\int_0^{t(u)}\big(P(u,v)\cdot \mathbf{e}_1\big)\mathrm{ord}_P\Big(N(u,v)\big|_{\mathbf{e}_1}\Big)dvdu\leqslant \sum_{n=0}^{\infty}\sum_{i=1}^{4}F_{n,i}.
$$
On the other hand, each $F_{n,i}$ is not difficult to compute.
For instance, we have
\begin{multline*}
F_{0,1}=\frac{3}{7}\int_{0}^{\frac{1}{3}}\int_{2-u}^{\frac{7-6u}{3}}(v+u-2)(5-2u-v)dvdu+\\
+\frac{3}{7}\int_{0}^{\frac{1}{3}}\int_{\frac{7-6 u}{3}}^{\frac{19-16u}{8}}8(u+v-2)(19-16u-8v)dvdu+\\
+\frac{3}{7}\int_{\frac{1}{3}}^{\frac{3}{8}}\int_{2-u}^{\frac{19-16 u}{8}}8(u+v-2)(19-16u-8v)dvdu=\frac{281}{32256}.
\end{multline*}
Similarly, we see that
$$
F_{n,1}=\frac{3(1+7 n+7 n^{2} )^{2}}{2 n^{2}(1+3 n)(-1+7 n)(1+7 n)(2+7 n)(3+7 n)^{2}(4+7 n)(2+21 n)}
$$
for $n\geqslant 1$. Likewise, we get
$$
F_{n,2}=
\left\{\aligned
&\frac{5}{3584}\ \text{if $n=0$}, \\
&\frac{1+n}{112 n(1+2 n)(1+3 n)(2+7 n)(3+7 n)^{2}(4+7 n)}\ \text{if $n\geqslant 1$}. \\
\endaligned
\right.
$$
Likewise, for every $n\geqslant 0$, we have
$$
F_{n,3}=\frac{15 (6+14n+7n^{2})^{2}}{4(1+n)^{2}(1+2 n)(2+7 n)(3+7 n)^{2}(4+7 n)(6+7 n)(8+7 n)(13+14 n)}
$$
and
$$
F_{n,4}=\frac{(11+7 n)^{2}}{112(1+n)^{2}(3+7 n)(6+7 n)(8+7 n)(10+7 n)(13+14 n)(23+21 n)}.
$$
Now, one can easily check that the total sum of all $F_{n,1}$, $F_{n,2}$, $F_{n,3}$, $F_{n,4}$ is at most $0.014$.
This and \eqref{equation:2-7-final} give $S(W_{\bullet, \bullet,\bullet}^{T,\mathbf{e}_1};P)\leqslant 0.974+0.014=0.988$.
Using \eqref{equation:2-7-P-in-E-inequality}, we get $\delta_P(X)>1$.

\begin{corollary}
\label{corollary:2-7-final}
All smooth Fano threefolds in the~family \textnumero 2.7 are K-stable.
\end{corollary}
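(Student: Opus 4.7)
The plan is to invoke the valuative criterion of \cite{Fujita2019Crelle,Li}: the corollary will follow once we show $\delta_P(X)>1$ for every point $P\in X$, and the role of this final corollary is simply to assemble the preceding case analysis into a single statement. My first step is to fix $P\in X$ and let $S$ be the fiber of $\phi$ through $P$, then branch on the position of $P$. When $S$ is smooth, Corollary~\ref{corollary:2-7-quartic-del-Pezzo-delta} gives $\delta_P(X)>1$ directly. When $P$ is a smooth point of $S$ not contained in any line of $S$, Corollary~\ref{corollary:2-7-quartic-del-Pezzo-delta-singular} applies. When $P\in\mathrm{Sing}(S)$, Lemma~\ref{lemma:2-7-S-singular-at-P} takes over. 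This leaves the case in which $P$ is a smooth point of $S$ lying on some line $L\subset S$; here Lemma~\ref{lemma:2-7-S-smooth-at-P-not-in-E} handles the subcase $P\notin E$. Hence only the subcase $P\in E$, equivalently $\pi(P)\in\mathscr{C}$, remains.

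The hard part is the subcase $P\in E$, which is the heart of Section~\ref{section:2-7}. The plan for it is to choose a sufficiently general surface $T\in|H|$ containing $P$, so that $\varpi\colon T\to\mathcal{Q}$ blows up eight distinct points $Q_1=\pi(P),Q_2,\ldots,Q_8$ of $\mathcal{Q}\cap\mathscr{C}$, and the induced map $\varphi\colon T\to\mathbb{P}^1$ is an elliptic fibration. Let $\mathbf{e}_1=\varpi^{-1}(Q_1)$. The plan is to combine $S_X(T)=\tfrac{9}{28}$ with the estimate
\[
\delta_P(X)\geqslant\min\Bigl\{\tfrac{1}{S_X(T)},\tfrac{1}{S(W_{\bullet,\bullet}^{T};\mathbf{e}_1)},\tfrac{1}{S(W_{\bullet,\bullet,\bullet}^{T,\mathbf{e}_1};P)}\Bigr\}
\]
from \cite[Theorem~1.7.30]{ACCFKMGSSV}, so that the task reduces to proving $S(W_{\bullet,\bullet}^{T};\mathbf{e}_1)<1$ and $S(W_{\bullet,\bullet,\bullet}^{T,\mathbf{e}_1};P)<1$.

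The main obstacle is controlling the Zariski decomposition of $P(u)|_T-v\mathbf{e}_1$ for all $u\in[0,1]$ and $v\in[0,t(u)]$, because the pseudoeffective threshold $t(u)$ is not a single closed-form function of $u$. My plan here is to use Lemma~\ref{lemma:2-7-elliptic-fibration} on irreducibility of the fibers of $\varphi$ together with the Riemann--Roch theorem and Serre duality to produce, via Corollary~\ref{corollary:2-7-elliptic-fibration}, an infinite sequence of $(-1)$-curves $B_{n,1,1}$, $B_{n,1,2}$, $B_{n,2,i}$, $B_{n,3}$, $B_{n,4,i}$ on $T$ with numerical classes specified by explicit quadratic polynomials in $n$. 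The interval $[0,1)$ then partitions into countably many subintervals $I^{\prime}_{n,j}$, $I^{\prime\prime}_{n,j}$, on each of which the support of the negative part $N(u,v)$ is confined to a controlled subset of these $(-1)$-curves and $P(u,v)$ admits a closed formula.

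The final step is to integrate on each of these subintervals and sum the resulting rational expressions. This yields numerical bounds $S(W_{\bullet,\bullet}^{T};\mathbf{e}_1)\approx 0.977<1$ and $S(W_{\bullet,\bullet,\bullet}^{T,\mathbf{e}_1};P)\leqslant 0.974$ plus a further contribution $\sum_{n,i}F_{n,i}$ from $\mathrm{ord}_P(N(u,v)|_{\mathbf{e}_1})$, which the plan is to bound by $0.014$ using the closed-form expressions for $F_{n,i}$ derived from the piecewise description of $N(u,v)$. Combining everything, we get $S(W_{\bullet,\bullet,\bullet}^{T,\mathbf{e}_1};P)\leqslant 0.988<1$, hence $\delta_P(X)>1$, which together with the earlier lemmas completes the proof of the corollary. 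The convergence of the infinite series $\sum S_{n,i}$, $\sum M^{\prime}_{n,i}$, $\sum M^{\prime\prime}_{n,i}$, and $\sum F_{n,i}$ --- which is far from automatic --- is what makes the $P\in E$ subcase delicate; the asymptotic $O(n^{-2})$ decay of each term, visible from the explicit rational functions, is what ultimately saves the estimate.
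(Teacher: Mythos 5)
Your proposal reproduces the paper's own argument essentially verbatim: the same case analysis (smooth $S$, $P$ off lines, $P$ singular, $P$ smooth but off $E$, and finally $P\in E$), the same flag $P\in\mathbf{e}_1\subset T$ in the hard case, the same piecewise Zariski decomposition indexed by the $(-1)$-curves $B_{n,i}$ and the intervals $I'_{n,j},I''_{n,j}$, and the same numerical bounds $S(W_{\bullet,\bullet}^{T};\mathbf{e}_1)<1$ and $S(W_{\bullet,\bullet,\bullet}^{T,\mathbf{e}_1};P)\leqslant 0.988$. This is correct and is the approach the paper takes.
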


\section*{Declarations}
\medskip
\noindent
\textbf{Competing interests.}
The authors declares that there is no conflict of interest.

\medskip
\noindent
\textbf{Financial Support.}
Cheltsov has been supported by EPSRC grant \textnumero EP/V054597/1.
Fujita has been supported by JSPS KAKENHI Grant \textnumero 22K03269.

This paper has been written during Cheltsov's stay at IHES (Bures-sur-Yvette, France).
He thanks the institute for the hospitality and excellent working conditions.

\end{document}